\documentclass[reqno]{amsart}
\usepackage[margin=1in]{geometry}
\usepackage{amssymb}
\usepackage{mathtools}
\usepackage{mathrsfs}
\usepackage{stmaryrd}
\usepackage{enumitem}
\usepackage{graphicx}
\usepackage{float}
\usepackage[dvipsnames]{xcolor}
\usepackage{tikz-cd}
\usetikzlibrary{positioning,arrows.meta,calc,decorations.pathmorphing}
\usepackage[final,expansion=false]{microtype}
\usepackage[colorlinks]{hyperref}
\hypersetup{
  colorlinks=true,
  linkcolor=blue,
  citecolor=blue,
  filecolor=blue,
  urlcolor=blue
}
\usepackage[noabbrev,capitalize]{cleveref}
\crefname{subsection}{subsection}{subsections}
\Crefname{subsection}{Subsection}{Subsections}
\definecolor{mathblue}{RGB}{20,55,180}
\definecolor{softblue}{RGB}{246,249,255}
\colorlet{figref}{mathblue}
\definecolor{figarrow}{RGB}{204,85,0}

\newcommand{\figref}[2]{\hyperref[#2]{#1\,\ref*{#2}}}

\newcommand{\FFF}{\mathcal{F}}
\newcommand{\A}{\mathcal{A}}
\newcommand{\g}{\mathfrak{g}}

\renewcommand{\a}{\mathfrak{a}}
\newcommand{\K}{\mathcal{K}}
\renewcommand{\L}{\mathcal{L}}

\newcommand{\Z}{\mathbb{Z}}
\renewcommand{\b}{\mathfrak{b}}

\newcommand{\R}{\mathbb{R}}
\newcommand{\G}{\mathcal{G}}
\newcommand{\C}{\mathcal{C}}

\newcommand{\U}{\mathcal{U}}
\newcommand{\tto}{\rightrightarrows}

\newcommand{\sss}{\mathsf{s}}
\newcommand{\ccc}{\mathsf{c}}
\newcommand{\ttt}{\mathsf{t}}
\newcommand{\mtoo}{\longmapsto}
\newcommand{\lto}{\rightsquigarrow}
\newcommand{\HHH}{\mathcal{H}}
\DeclareMathOperator{\ad}{ad}

\newcommand{\D}{\mathcal{D}}
\newcommand{\T}{\mathcal{T}}
\newcommand{\h}{\mathfrak h}
\newcommand{\mmm}{\mathrm{m}}

\DeclareMathOperator{\MC}{MC}
\newcommand{\ppp}{\operatorname{pr}}
\newcommand{\im}{\operatorname{im}}
\newcommand{\defn}[1]{\emph{\textbf{#1}}}
\newcommand{\courant}[1]{\left\llbracket #1 \right\rrbracket}

\usepackage{aliascnt}
\theoremstyle{plain}
\newtheorem{theorem}{Theorem}[section]
\newtheorem{introtheorem}{Theorem}

\crefname{introtheorem}{theorem}{theorems}
\Crefname{introtheorem}{Theorem}{Theorems}
\newcommand{\newsharedtheorem}[2]{
  \newaliascnt{#1}{theorem}
  \newtheorem{#1}[#1]{#2}
  \aliascntresetthe{#1}}
\newsharedtheorem{lemma}{Lemma}
\newsharedtheorem{proposition}{Proposition}
\newsharedtheorem{corollary}{Corollary}
\theoremstyle{definition}
\newsharedtheorem{definition}{Definition}
\newsharedtheorem{question}{Question}
\newsharedtheorem{example}{Example}
\newsharedtheorem{remark}{Remark}
\numberwithin{equation}{section}
\DeclareMathOperator{\Alt}{Alt}
\DeclareMathOperator{\wt}{wt}

\DeclareMathOperator{\id}{id}
\DeclareMathOperator{\rank}{rank}

\DeclareMathOperator{\Sym}{Sym}

\begin{document}
\title{Integration and quantization of 1-shifted coisotropics}

\author[Yassine Ait Mohamed]{Yassine Ait Mohamed}
\address{D\'epartement de math\'ematiques, Universit\'e de Sherbrooke, 2500 Bd de l'Universit\'e, Sherbrooke, QC, J1K 2R1, Canada}
\email{yassine.ait.mohamed@usherbrooke.ca}

\author[Maxence Mayrand]{Maxence Mayrand}
\address{D\'epartement de math\'ematiques, Universit\'e de Sherbrooke, 2500 Bd de l'Universit\'e, Sherbrooke, QC, J1K 2R1, Canada}
\email{maxence.mayrand@usherbrooke.ca}

\date{\today}

\begin{abstract}
To quantize the reduction of a coisotropic submanifold of a Poisson manifold, Cattaneo and Felder constructed a homotopy Poisson algebra together with a deformation quantization. Its degree-zero cohomology is the reduced Poisson algebra, while the full homotopy structure retains information about the formal embedding even when the reduced space is singular or trivial. On the integration side, Cattaneo showed that a coisotropic submanifold integrates to a Lagrangian subgroupoid of any symplectic groupoid integrating the ambient Poisson manifold. It is now recognized that both constructions fit into shifted symplectic geometry: the Lagrangian subgroupoid presents a $1$-shifted Lagrangian morphism of differentiable stacks, and the homotopy Poisson algebra describes the associated $0$-shifted Poisson structure. However, such morphisms form only a special class of $1$-shifted Lagrangians. We extend this picture to $1$-shifted Lagrangians and, more generally, $1$-shifted coisotropics in $1$-shifted symplectic differentiable stacks. We establish an integration theorem for their infinitesimal description in terms of twisted Dirac structures and construct a natural Poisson bracket on invariant functions. Under regularity hypotheses, we equip the associated Lie algebroid complex with a flat $P_\infty$-structure and a curved $A_\infty$-quantization, independent of the auxiliary choices up to isomorphism. The induced bracket in degree zero recovers the reduced Poisson bracket, and suitable cohomological vanishing conditions yield a deformation quantization of invariant functions. This extends the Cattaneo--Felder picture to more general reduction procedures, with the same infinitesimal data governing integration and quantization.
\end{abstract}

\maketitle

\raggedbottom

\tableofcontents

\section{Introduction}\label{1SCC47LDM}

The interplay between \emph{integration}---passing from infinitesimal data to global structures---and \emph{quantization}---passing from classical to quantum systems---has driven some of the most influential developments in modern mathematics and physics. On the physics side, a leading example is Feynman's path integral formulation of quantum mechanics \cite{Feynman1948}. On the mathematical side, a central milestone is Weinstein's notion of symplectic groupoids \cite{Weinstein1987}, which integrate Poisson manifolds and were introduced as a geometric framework for their quantization.

Using insights from Feynman's path integrals, Cattaneo and Felder \cite{CattaneoFelder2000,CattaneoFelder2001} clarified the link between symplectic groupoids and quantization: for an integrable Poisson manifold, the \emph{Poisson sigma model}, a topological field theory, produces both its integrating symplectic groupoid, through classical reduction, and Kontsevich's star product \cite{Kontsevich2003}, through perturbative quantization. Integration and quantization thus emerge from the same model.

This framework was later extended to a relative setting: given a Poisson manifold $M$ together with a coisotropic submanifold $N\subset M$, one seeks both an integration of $N$ and a relative quantization of the pair $(M,N)$ \cite{CattaneoFelder2004Branes,Cattaneo2007,Cattaneo2004OnTheIntegration}. The integration question has a concrete answer. The conormal bundle $\nu_N^*$ is a Lie subalgebroid of $T^*M$ and, when $M$ is integrable, integrates to a possibly immersed Lagrangian subgroupoid $\L$ of a symplectic groupoid $\G$ integrating $M$. The quantization question leads to a more substantial change of viewpoint.

To see why, write $\pi$ for the Poisson bivector on $M$. The characteristic distribution $\pi^\sharp(\nu_N^*)$ of a coisotropic submanifold defines a possibly singular foliation $\mathcal F$ on $N$. If the leaf space $\underline N:=N/\mathcal F$ is a smooth manifold and the quotient map is a submersion, then $\underline N$ inherits a Poisson structure. Without any smoothness assumption on the quotient, the algebra
\[
C^\infty(N)^{\mathcal F}
:=\{f\in C^\infty(N):df|_{\pi^\sharp(\nu_N^*)}=0\}
\]
still carries a natural Poisson bracket, and one would like to quantize it. In the singular case, however, the existence of this bracket gives no general guarantee of a deformation quantization. As Cattaneo and Felder emphasize \cite{Cattaneo2007}, the attempt to obtain a star product from the coisotropic data can encounter \emph{anomalies}, or cohomological obstructions. Moreover, the invariant algebra can be trivial even when the coisotropic embedding has nontrivial geometry. Thus both the obstructions to quantization and the information lost on passing to invariant functions lead one to reconsider the object being quantized.

Cattaneo and Felder's approach is to quantize the complex from which the invariant algebra arises: the Chevalley--Eilenberg complex $\Gamma(\wedge^\bullet\nu_N)$ of the conormal Lie algebroid, whose degree-zero cohomology is $C^\infty(N)^{\mathcal F}$. They equip it with a homotopy Poisson, or $P_\infty$, structure: an $L_\infty$-structure whose brackets are multiderivations of the exterior product and whose induced bracket on degree-zero cohomology is the original Poisson bracket \cite{Cattaneo2007}. Its higher brackets are obtained from the ambient Poisson structure by Voronov's derived-bracket formalism \cite{voronov2005higher}; in the symplectic case, such a homotopy structure was also obtained by Oh and Park \cite{oh2005deformations}, as noted in \cite{Cattaneo2007}.

The crucial point is that this $P_\infty$-algebra always admits a quantization as a possibly curved $A_\infty$-algebra. The anomalies appear when one tries to pass from this quantization to a star product on invariant functions: one must remove the curvature and lift classical invariant functions to quantum ones. These are cohomological problems, with obstructions in degrees two and one, respectively. The homotopy algebra therefore provides a quantization even when this passage is obstructed, and identifies the conditions under which it recovers a quantization of the original reduced Poisson algebra.

In retrospect, one can recognize in the Cattaneo--Felder--Oh--Park construction a differential-geometric incarnation of a general principle in shifted Poisson geometry: an $n$-shifted coisotropic in an $n$-shifted Poisson stack inherits an $(n-1)$-shifted Poisson structure \cite{MelaniSafronov2018II}. Indeed, the symplectic groupoid $\G\tto M$ integrating $M$ presents a $1$-shifted symplectic stack, and the Lagrangian subgroupoid $\L\tto N$ integrating $N$ presents a $1$-shifted Lagrangian morphism
\[
[N/\L]\longrightarrow[M/\G].
\]
The general principle therefore predicts $0$-shifted Poisson geometry on the source. At the infinitesimal level, the source is modeled by the dg manifold $\nu_N^*[1]$, whose algebra of functions is precisely the conormal Lie algebroid complex. A $0$-shifted Poisson structure on this dg manifold is a $P_\infty$-structure with unary bracket equal to the Lie algebroid differential \cite{pridham2018outline}. This is the form of the structure constructed by Cattaneo--Felder and Oh--Park: the homotopy Poisson algebra introduced to address the quantization problem is also the infinitesimal structure expected from shifted geometry. This interpretation does not, however, make the differential-geometric construction a direct corollary of the abstract derived-algebraic results. Constructing the higher brackets on the conormal Lie algebroid complex requires a choice of tubular neighbourhood, and showing that different choices yield $L_\infty$-isomorphic structures is a further geometric result \cite{cattaneo2008equivalences}.

The shifted viewpoint suggests that this differential-geometric construction should extend beyond coisotropic submanifolds. Their integrations account for only a special class of $1$-shifted Lagrangians on differentiable stacks, so extending the classical construction to general $1$-shifted Lagrangians already raises new questions. Moreover, the shifted principle applies more generally to $1$-shifted coisotropics, whose description in terms of Lie groupoids and Dirac structures was developed in \cite{Maxence2025}. This provides a convenient framework in which to ask whether these more general objects, including $1$-shifted Lagrangians, admit an infinitesimal description and a quantization extending the classical coisotropic construction. More precisely:
\begin{enumerate}
    \item What is the infinitesimal analogue of a $1$-shifted coisotropic structure on a differentiable stack, and how does it integrate to a morphism of Lie groupoids?
    \item How can one construct the $0$-shifted Poisson structure associated with an infinitesimal $1$-shifted coisotropic as a homotopy Poisson algebra?
    \item How can one construct a deformation quantization of this homotopy Poisson structure?
\end{enumerate}
The goal of this paper is to address all three questions, under regularity hypotheses for the homotopy Poisson and quantization constructions.

The interest in this extension comes in part from generalized Hamiltonian reduction. Quasi-symplectic groupoids encompass ordinary, Poisson--Lie, and group-valued moment maps \cite{xu2004momentum}, while reduction along submanifolds and strong Dirac maps includes constructions such as symplectic cutting and implosion \cite{CrooksMayrand2022,bualibanu2026reduction}. Hamiltonian Dirac spaces and reduction levels can be described as $1$-shifted coisotropics, with reduction expressed through their intersections \cite{Maxence2025}. Extending the integration and quantization picture to these coisotropics is thus motivated by reduction procedures already present in differential geometry and geometric representation theory.

The passage from the shifted principle to the differential-geometric constructions in these questions requires substantial work. Abstract derived-algebraic existence results do not immediately produce the higher multiderivations on a smooth Lie algebroid complex, relate them to the Poisson bracket on invariant functions, or establish their independence of geometric choices.  General quantization results are available, notably Pridham's work on $0$-shifted Poisson and $1$-shifted coisotropic structures, including differential-geometric formulations \cite{Pridham2025DerivedPoisson,pridham2018outline}. Even with these results, one must identify the relevant structures in the Dirac model and establish their relation to the classical constructions. We carry out this passage directly: the same Lie algebroid supplies the infinitesimal object to be integrated and the cochain algebra to be quantized, and geometric Poisson models extend the Cattaneo--Felder construction to the Dirac setting (see also \v{S}evera \cite{Severa2005DiracQuantization} for a related approach to deformation quantization of Dirac structures).

We now explain the results, beginning with the integration question. To pass beyond the symplectic-groupoid picture, one replaces $\G$ by a quasi-symplectic groupoid $(\G,\omega)\tto(M,\phi)$: a Lie groupoid equipped with a multiplicative $2$-form $\omega$ and a closed $3$-form $\phi$ satisfying compatibility and nondegeneracy conditions \cite{xu2004momentum,bursztyn2004integration}. Up to symplectic Morita equivalence, these groupoids present $1$-shifted symplectic differentiable stacks. Correspondingly, a $1$-shifted Lagrangian morphism $\C\to\G$ is described by a $2$-form on $N:=\C^0$ satisfying suitable conditions \cite{PTVV2013}. The passage from Lagrangians to coisotropics replaces this $2$-form by a twisted Dirac structure $L_N$ \cite{Maxence2025}.

Quasi-symplectic groupoids integrate twisted Dirac structures, just as symplectic groupoids integrate Poisson structures; the relation with infinitesimal shifted symplectic geometry is developed in \cite{PymSafronov2020}. Writing $L_M$ for the $\phi$-twisted Dirac structure integrated by $\G$, differentiation of a $1$-shifted coisotropic therefore leads to a smooth map
\begin{equation}\label{blfbsbsc}
\ccc\colon(N,L_N,\psi)\longrightarrow(M,L_M,\phi)
\end{equation}
satisfying
\begin{equation}\label{8z2ialzl}
\ccc^*\phi=\psi.
\end{equation}
No forward- or backward-Dirac condition on $\ccc$ is imposed. Instead, the infinitesimal coisotropic condition singles out the bundle
\begin{equation}\label{w9xz42xi}
B_\ccc
:=\left\{((u,\alpha),(v,\beta))\in L_N \times L_M:
\ccc_*u=v,\ \alpha=\ccc^*\beta\right\}.
\end{equation}
For a $1$-shifted coisotropic, the Lie algebroid $A_\C$ identifies with $B_\ccc$. This observation led to the following candidate for the infinitesimal notion \cite[Definition 3.13]{Maxence2025}.

\begin{definition}\label{wccbgf99into}
An \defn{infinitesimal $1$-shifted coisotropic} is a map \eqref{blfbsbsc} satisfying \eqref{8z2ialzl} for which $B_\ccc$ has constant rank.
\end{definition}

The constant-rank condition makes $B_\ccc$ a Lie algebroid, with anchor $((u,\alpha),(v,\beta))\mapsto u$, whose projections to $L_N$ and $L_M$ are Lie algebroid morphisms (Theorem \ref{l5e5f2ec}\ref{rojq4m6n}). Thus the candidate already has the algebraic structure needed for integration. The remaining question is geometric: when these Lie algebroid data integrate, does $L_N$ automatically satisfy the compatibility condition with the quasi-symplectic form on $\G$? This is the integration conjecture of \cite[Conjecture 1.6 and \S3.4]{Maxence2025}, which we establish in the following form.

\begin{introtheorem}\label{wvihpe5p}
Let $\ccc\colon(N,L_N,\psi)\to(M,L_M,\phi)$ be an infinitesimal $1$-shifted coisotropic. Suppose that $(M,L_M,\phi)$ integrates to a quasi-symplectic groupoid $(\G,\omega)\tto(M,\phi)$ and that $B_\ccc$ integrates to a source-simply-connected Lie groupoid $\C\tto N$. Let $\ccc\colon\C\to\G$ integrate the projection $B_\ccc\to L_M$. Then $L_N$ is a $1$-shifted coisotropic structure on this groupoid morphism.
\end{introtheorem}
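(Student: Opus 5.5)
We need to recall the definition of a 1-shifted coisotropic structure from \cite{Maxence2025}. Given a Lie groupoid morphism $\ccc\colon\C\to\G$ with $\G$ quasi-symplectic, a $1$-shifted coisotropic structure is a $\psi$-twisted Dirac structure $L_N$ on $N=\C^0$ with $\psi=\ccc^*\phi$. It must satisfy two conditions. First, the pullback $\ccc^*\omega$ must be compatible with $L_N$: the graph-type relation $\sss^*L_N$ and $\ttt^*L_N$ are intertwined by the gauge transformation by $\ccc^*\omega$, equivalently $(\ttt,\sss)\colon(\C,\ccc^*\omega)\to (N,L_N)\times(N,\overline{L_N})$ is suitably Dirac. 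Second, the induced map $A_\C\to B_\ccc$ must be an isomorphism; this is the nondegeneracy condition.

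The plan is to verify these conditions in that order. The nondegeneracy condition will be essentially tautological from the construction, since $\C$ integrates $B_\ccc$ and $\ccc$ integrates the projection $B_\ccc\to L_M$. The content lies in the multiplicative compatibility. I would reduce it to infinitesimal data via the standard correspondence between multiplicative forms and IM forms (Bursztyn--Crainic--Weinstein--Zhu), which applies because $\C$ is source-simply-connected.

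\textbf{Step 1.} Differentiate the multiplicative $2$-form $\ccc^*\omega$ on $\C$; it is $(\ccc^*\phi)$-relatively closed, i.e.\ $d\ccc^*\omega=\sss^*\psi-\ttt^*\psi$. Its IM form is the pullback along $B_\ccc\to L_M$ of the IM form of $\omega$. The latter is $\sigma_M(v,\beta)=\beta$, because $\G$ integrates $L_M$. Hence the IM form of $\ccc^*\omega$ is
\[
\sigma((u,\alpha),(v,\beta))=\ccc^*\beta=\alpha\in T^*N,
\]
using the defining condition $\alpha=\ccc^*\beta$ in \eqref{w9xz42xi}. So $\ccc^*\omega$ is the multiplicative form whose IM form is the map $B_\ccc\to L_N\to T^*N$, $(u,\alpha)\mapsto\alpha$, while the anchor is $(u,\alpha)\mapsto u$.

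\textbf{Step 2.} Show that the compatibility condition between a multiplicative form and $L_N$ is equivalent, for source-connected $\C$, to an infinitesimal condition. That condition says the map $B_\ccc\to TN\oplus T^*N$, $b\mapsto(\rho(b),\sigma(b))$, lands in $L_N$ and is a Lie algebroid morphism $B_\ccc\to L_N$. Both hold by Theorem \ref{l5e5f2ec}\ref{rojq4m6n}, since this map is exactly the projection to $L_N$. To pass from the infinitesimal condition to the global one, I would view the condition as the statement that a certain subgroupoid relation $\Gamma\subset \C\times(\text{pair/gauge data})$ is closed under multiplication. Equivalently, consider the morphism $\C\to \G_N$ into a (local) groupoid integrating $L_N$, or argue directly. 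A vector field computation along source fibres, using right-invariant vector fields, propagates the condition from the unit section, where it is the infinitesimal statement, to all of $\C$ by connectedness of $\sss$-fibres. Multiplicativity of $\ccc^*\omega$ is what makes the condition invariant under flows of right-invariant vector fields.

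\textbf{Main obstacle.} I expect Step 2 to be the hardest part, because $L_N$ need not be integrable, so I cannot simply invoke a morphism into the integration of $L_N$. I would try first to phrase the compatibility pointwise as $\ker$/isotropy conditions on $T_g\C$ expressed through $\ttt^*L_N$, $\sss^*L_N$ and $\ccc^*\omega$. I would then use the decomposition $T_g\C=\ker d\sss\oplus(\text{lift})$ together with the left and right translation identities for multiplicative forms, namely $\omega(R_b, \cdot)=\ttt^*\sigma(b)$, to reduce each check at $g$ to checks at units. A dimension count using constant rank of $B_\ccc$ then upgrades the resulting isotropy to the required maximality.
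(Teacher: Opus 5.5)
\textbf{There is a genuine gap in Step 2.} Your nondegeneracy argument and your Step 1 agree with the paper: the IM form of $\Omega:=\ccc^*\omega$ is $((u,\alpha),(v,\beta))\mapsto\alpha$, and $d\Omega=\sss^*\psi-\ttt^*\psi$. But Step 2 is the whole content of the theorem, and the proposal does not give an argument for it.

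Look first at what the condition says on the unit section. Write $T_{1_x}\C=T_xN\oplus(B_\ccc)_x$. Then the identity $\ttt^*L_N=\sss^*L_N+\Gamma_\Omega$ at $1_x$ is equivalent to the pointwise inclusion $(\rho,\sigma)\bigl((B_\ccc)_x\bigr)\subset(L_N)_x$. The first-order information, namely involutivity of $L_N$ or the bracket compatibility of Theorem~\ref{l5e5f2ec}\ref{rojq4m6n}, is invisible there. Yet it is needed: when $\Omega=0$ and $\psi=0$, the condition $\ttt^*L_N=\sss^*L_N$ expresses invariance of $L_N$ under the arrows of $\C$, and this has to be integrated from infinitesimal invariance.

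This is why the pointwise plan you say you would try first cannot work. The translation identities, such as $i_{\overrightarrow b}\Omega=\ttt^*\sigma(b)$, only produce elements like $(\overrightarrow b_g,\ttt^*\sigma(b))$ that lie in both sides. The rest of the comparison at $g$ relates $(L_N)_{\sss(g)}$ to $(L_N)_{\ttt(g)}$. No computation at $g$ using only data on the unit section, and no dimension count, can produce that relation. Maximality is not an issue anyway: both sides are Lagrangian, so one inclusion suffices.

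\textbf{The flow idea is viable but incomplete as stated.} Multiplicativity alone is not the reason the condition is invariant. Let $\mathcal B_t$ be the local bisection obtained by flowing the units along $\overrightarrow b$, set $\phi_t:=\ttt\circ\mathcal B_t$, and let $\lambda_t(g):=\mathcal B_t(\ttt(g))\,g$. Multiplicativity gives $\lambda_t^*\Omega=\Omega+\ttt^*\mathcal B_t^*\Omega$. So the condition is preserved only if $\phi_t^*L_N=L_N+\Gamma_{\mathcal B_t^*\Omega}$ on $N$. Proving this needs two further facts, and you give neither:
\begin{itemize}
\item $\tfrac{d}{dt}\mathcal B_t^*\Omega=\phi_t^*(d\sigma(b)-i_{\rho(b)}\psi)$, which uses $d\Omega=\sss^*\psi-\ttt^*\psi$;
\item the Courant symmetries generated by the section $(\rho(b),\sigma(b))$ of $L_N$ preserve $L_N$, which is involutivity.
\end{itemize}

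\textbf{The paper instead uses the tool you set aside.} $L_N$ need not be integrable, but it always has a local quasi-symplectic integration $(\mathcal H,\gamma)\tto(N,\psi)$.
\begin{enumerate}
\item The projection $B_\ccc\to L_N$ integrates to a local morphism $f\colon U\to\mathcal H$ over $\id_N$. Pulling back the compatibility on $\mathcal H$ gives $\ttt^*L_N=\sss^*L_N+\Gamma_{f^*\gamma}$ on $U$.
\item Your Step 1 computation, together with $d(f^*\gamma)=d\Omega$, shows that $f^*\gamma$ and $\Omega$ have the same IM form and the same differential. Hence they coincide near the units, by the local uniqueness of \cite{bursztyn2004integration}.
\item The set of arrows where compatibility holds is a subgroupoid. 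It is closed under inversion because $\Omega$ changes sign under inversion, and under multiplication because $\Omega$ is multiplicative and $\mmm$ is a submersion. It contains $U$, so it is all of $\C$ by source-connectedness.
\end{enumerate}
That last step is pointwise. It shows exactly where a pointwise argument can succeed: starting from a neighbourhood of the units, not from the unit section itself.
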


For a coisotropic inclusion $N\hookrightarrow(M,\pi)$ with $L_N=TN\oplus0$ and $L_M=\Gamma_\pi$, one has $B_\ccc\cong\nu_N^*$. Thus the theorem recovers the integration of coisotropic submanifolds to Lagrangian subgroupoids \cite{Cattaneo2004OnTheIntegration}. For a general inclusion with these Dirac structures, $B_\ccc$ identifies with $\nu_N^*\cap(\pi^\sharp)^{-1}(TN)$, and its constant-rank condition is precisely the pre-Poisson condition \cite{CattaneoZambon2009,cattaneo2007pre}. The theorem therefore also recovers Cattaneo--Zambon's integration of pre-Poisson submanifolds to isotropic subgroupoids \cite[Proposition 7.2]{CattaneoZambon2009}. In this case, $B_\ccc$ is also the stabilizer Lie algebroid governing reduction along a submanifold \cite{CrooksMayrand2022}. More generally, allowing an arbitrary twisted Dirac structure $L_M$ and taking $L_N$ to be the graph of a $2$-form recovers the reduction levels of \cite{bualibanu2026reduction}. Thus these examples connect the infinitesimal definition directly with the geometric data used in generalized reduction.

We next construct the Poisson algebra of invariant functions.

\begin{introtheorem}\label{6sipd2id}
Let $\ccc\colon(\C\tto N,L_N)\to(\G\tto M,\omega,\phi)$ be a $1$-shifted coisotropic, and let $L_M$ be the induced Dirac structure on $M$. Then $C^\infty(N)^\C$ carries a natural Poisson bracket
\[
\{f,g\}=df(v_g),\quad f,g\in C^\infty(N)^\C,
\]
where $v_g$ is any smooth vector field for which there is a smooth section $\beta_g$ of $\ccc^*T^*M$ satisfying
\[
(v_g,dg+\ccc^*\beta_g)\in L_N,
\quad
(\ccc_*v_g,\beta_g)\in L_M.
\]
Such pairs exist, and the bracket is independent of their choice. Coisotropic Morita equivalences induce isomorphisms of these Poisson algebras.
\end{introtheorem}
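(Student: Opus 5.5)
The plan is to work infinitesimally through the Lie algebroid $B_\ccc$, which by the discussion preceding \cref{wvihpe5p} is identified with the Lie algebroid $A_\C$ of $\C$. The first step is to describe $C^\infty(N)^\C$. For source-connected $\C$ these are the functions killed by the anchor of $A_\C$. In general one takes functions annihilated by the image of the anchor $((u,\alpha),(v,\beta))\mapsto u$ of $B_\ccc$, together with invariance under the action of the component groupoid.

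\emph{Existence of the pair $(v_g,\beta_g)$.} Consider the bundle map
\[
\Phi\colon L_N\times_N \ccc^*L_M\to T^*N\oplus \ccc^*TM,\qquad ((u,\alpha),(v,\beta))\mapsto(\alpha-\ccc^*\beta,\ \ccc_*u-v).
\]
Its kernel is $B_\ccc$, which has constant rank, so $\Phi$ has constant rank. Its image is therefore a subbundle, and it is the annihilator of $\ker\Phi^*$. Maximal isotropy of $L_N$ and $L_M$ identifies $\Phi^*$ with the map $T N\oplus\ccc^*T^*M\to L_N^*\oplus\ccc^*L_M^*\cong L_N\oplus \ccc^*L_M$. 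From this one computes that the image of $\Phi$ is
\[
\{(\gamma,w):\gamma(u)=0\ \text{and}\ \beta(w)\ \text{compatible}\ \forall\,((u,\alpha),(v,\beta))\in B_\ccc\}.
\]
In particular $(dg,0)$ lies in it exactly when $dg$ kills the anchor image of $B_\ccc$, that is, when $g$ is infinitesimally invariant. Constant rank then gives smooth solutions $((v_g,dg+\ccc^*\beta_g),(\ccc_*v_g,\beta_g))$.

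\emph{Independence and well-definedness.} Two choices differ by a section $((u,\alpha),(v,\beta))$ of $B_\ccc$, so $v_g$ changes by an anchor image. Hence $df(v_g)$ is unchanged for invariant $f$. For skew-symmetry, pair the two elements:
\[
df(v_g)+dg(v_f)+\beta_g(\ccc_*v_f)+\beta_f(\ccc_*v_g)=0
\]
by isotropy of $L_N$, and
\[
\beta_g(\ccc_*v_f)+\beta_f(\ccc_*v_g)=0
\]
by isotropy of $L_M$. Subtracting gives $\{f,g\}=-\{g,f\}$. The Leibniz rule is immediate from linearity of the construction.

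I then have to show that $\{f,g\}$ is again invariant, and prove the Jacobi identity. For these I would use the Courant bracket on $L_N\times\ccc^*L_M$, or more cleanly the Dirac structure $L_N\times\overline{L_M}$ on $N\times M$ pulled back along the graph of $\ccc$. The twists cancel because $\ccc^*\phi=\psi$. The sections $e_g=((v_g,dg+\ccc^*\beta_g),(\ccc_*v_g,\beta_g))$ form a class of sections of this "relative" Dirac structure. Their brackets $[e_f,e_g]$ should equal $e_{\{f,g\}}$ modulo $B_\ccc$, which simultaneously gives invariance and Jacobi via the usual Courant-bracket argument (the Jacobiator vanishes for Dirac structures). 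This step is where I expect the main obstacle. The data are not sections of a genuine Dirac structure on a single manifold, since $\ccc$ is not an embedding, so one has to set up the bracket on sections of $L_N$ and $L_M$ related along $\ccc$ and check that it is well defined. Invariance under the full groupoid $\C$ (not just $A_\C$) should then follow by applying the same argument on connected components, or by using multiplicativity of $L_N$.

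\emph{Morita invariance.} A coisotropic Morita equivalence gives a bibundle along which invariant functions on the two bases correspond, via pullback to a common space $P$ along surjective submersions. I would check locally that admissible pairs $(v_g,\beta_g)$ can be lifted to $P$ compatibly with the gauge transformations of the Dirac structures by the bibundle $2$-form. Then the brackets match, because they are computed by the same formula on $P$.
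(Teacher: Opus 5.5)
\textbf{What works.} Your treatment of existence, smoothness, independence of the choice, antisymmetry and the Leibniz rule is sound. Since $\ker\Phi^*\cong B_\ccc$, one has $(\gamma,w)\in\im\Phi$ iff $\gamma(u)+\beta(w)=0$ for all $((u,\alpha),(v,\beta))\in B_\ccc$. This is the paper's $(0,dg)\in(L_N\cap\ccc^*L_M)^\perp=L_N+\ccc^*L_M$ combined with its constant-rank lemma.

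Your route to Jacobi also works, and it differs from the paper's. The obstacle you anticipate does not arise: $D=L_N\times\overline{L_M}$ is an honest twisted Dirac structure on $N\times M$, and its background form pulls back to zero on the embedded graph $S$. Extend the $e_f$ to sections $\tilde e_f$ of $D$ near $S$, with vector parts tangent to $S$ along $S$ and covector parts pulling back to $df$. The restrictions to $S$ below use only that the vector fields are $j$-related to the $v$'s and that $j^*\eta=0$, so the choice of extensions does not matter. Identifying $D|_S$ with $L_N\oplus\ccc^*L_M$, one gets $\Phi\bigl(\courant{\tilde e_f,\tilde e_g}|_S\bigr)=(d\{g,f\},0)$. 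By your own description of $\im\Phi$, this makes the bracket infinitesimally invariant. Then $\langle\courant{\tilde e_f,\tilde e_g},\tilde e_h\rangle|_S$, which vanishes by involutivity of $D$, is exactly $\{f,\{g,h\}\}+\{g,\{h,f\}\}+\{h,\{f,g\}\}$.

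The paper instead checks Jacobi by hand only for inclusions (Lemma~\ref{1SCDYXFD7}). It reaches general $\ccc$ through the coisotropic Morita equivalence with $\tilde\ccc\colon\C\to\ppp_M^!\G$, whose base map is your graph. Your version makes Jacobi independent of Morita invariance.

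\textbf{First gap: invariance under all of $\C$.} The bracket must land in $C^\infty(N)^\C$ with no connectedness assumption. Anything done through $B_\ccc$ only sees the source-connected part of $\C$, so ``the same argument on connected components'' cannot work. For example, take $\G=\{*\}$, $N=\R^2$ with $L_N$ the graph of $\partial_x\wedge\partial_y$, and $\C=(\Z/2)\ltimes\R^2$ acting by the swap $(x,y)\mapsto(y,x)$. This satisfies nondegeneracy and has the same infinitesimal data ($B_\ccc=0$) as the unit groupoid of $\R^2$. Yet $x+y$ and $xy$ are invariant while their bracket $\pm(x-y)$ is not.

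What rules this out is the compatibility $\ttt^*L_N=\sss^*L_N+\Gamma_{\ccc^*\omega}$ on the non-identity components. So your alternative (multiplicativity) is the right one, but it has to be carried out. For an arrow $k\colon x\to y$, lift $v_g$ to $\tilde v\in T_k\C$ with $\sss_*\tilde v=v_g$. Combine that compatibility with $\ttt^*L_M=\sss^*L_M+\Gamma_\omega$ on $\G$, applied to $\ccc_*\tilde v$ to transport $\beta_g$. This shows that $\ttt_*\tilde v$ is admissible for $g$ at $y$, and then $\{f,g\}(y)=(\ttt^*df)(\tilde v)=(\sss^*df)(\tilde v)=\{f,g\}(x)$.

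\textbf{Second gap: Morita invariance.} Lifting $v_g$ to $\tilde u\in T_zP$ and using $\psi_2^*L_{N_2}=\psi_1^*L_{N_1}+\Gamma_\delta$ only gives $(w,\alpha)\in L_{N_2}$ with $w=\psi_{2*}\tilde u$ and $\psi_2^*\alpha=\psi_1^*(dg+\ccc_1^*\beta_g)+\iota_{\tilde u}\delta$. For $w$ to be admissible for $\Psi(g)$ you still need $\beta'\in T^*M_2$ with $(\ccc_{2*}w,\beta')\in L_{M_2}$ and $\psi_2^*\ccc_2^*\beta'=\psi_1^*\ccc_1^*\beta_g+\iota_{\tilde u}\delta$. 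Nothing on the source side supplies it, because $\beta_g$ lives on the other target $M_1$.

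This is where the real work is. One transports $\beta_g$ in three steps:
\begin{itemize}
\item along $\theta_{1*}\tilde u$, using $\ttt_1^*L_{M_1}=\sss_1^*L_{M_1}+\Gamma_{\omega_1}$;
\item across $\L$, using $\varphi_1^*L_{M_1}=\varphi_2^*L_{M_2}+\Gamma_\gamma$ (Lemma~\ref{1SCT66PWH}, proved via IM forms);
\item back along $\theta_{2*}\tilde u$.
\end{itemize}
The three terms of $\delta=-h^*\gamma+\theta_1^*\omega_1-\theta_2^*\omega_2$ are exactly what make the covectors match. Your sketch speaks only of a gauge transformation on the source side and omits this target-side transport and the natural transformations $\theta_i$. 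Without it, the claim that ``the brackets are computed by the same formula on $P$'' is unjustified.
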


This bracket recovers the usual coisotropic reduced bracket when $N$ is a coisotropic submanifold of a Poisson manifold $M$. In the general setting of the theorem, it defines a Poisson structure on $N/\C$ whenever this quotient is a smooth manifold and the quotient map is a submersion. The invariant Poisson algebra remains available when the orbit space is singular.

To approach the quantization of this Poisson algebra, we follow the Cattaneo--Felder construction and pass to the Chevalley--Eilenberg algebra
\[
\a_\ccc:=\Gamma(\wedge^\bullet B_\ccc^*),
\quad d_{B_\ccc}\colon\a_\ccc^\bullet\longrightarrow\a_\ccc^{\bullet+1}.
\]
Its degree-zero cohomology is the algebra $C^\infty(N)^{B_\ccc}$ of functions annihilated by the anchor of $B_\ccc$. For an integrated $1$-shifted coisotropic with source-connected $\C$, this is $C^\infty(N)^\C$. We call the infinitesimal datum \emph{regular} if $L_N\cap TN$ and $L_M\cap TM$ both have constant rank, as in \Cref{1SCHTFL8N}. This hypothesis concerns the Dirac structures; it does not require the orbit space to be smooth.

\begin{introtheorem}\label{djsx2lmg}
Every regular infinitesimal $1$-shifted coisotropic determines a flat $P_\infty$-structure $(\ell_k)_{k\geq0}$ on $\a_\ccc$ with $\ell_1=d_{B_\ccc}$. Its $L_\infty$-isomorphism class is independent of the auxiliary choices in its construction. If the datum comes from a $1$-shifted coisotropic $\C\to\G$ with $\C$ source-connected, the bracket induced by $\ell_2$ on $H^0(\a_\ccc,\ell_1)$ is the Poisson bracket of Theorem \ref{6sipd2id}.
\end{introtheorem}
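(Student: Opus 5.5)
The plan is to reduce to the Cattaneo--Felder situation by building a geometric Poisson model in which $B_\ccc$ appears as the conormal-type algebroid of a coisotropic-like submanifold, and then apply Voronov's derived-bracket formalism.

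\textbf{Poisson models.} Regularity says $K_M:=L_M\cap TM$ and $K_N:=L_N\cap TN$ are constant-rank, hence involutive distributions. Locally, or on a transversal, $L_M$ is the pullback of a Poisson structure along the leaf projection, and the same holds for $L_N$. Globally, I would realize both by choices:
\begin{enumerate}[label=(\roman*)]
\item a complement to $K_M$, giving a ``Poisson model'' $(\widetilde M,\widetilde\pi)$, namely a presymplectic/Poisson thickening such as a neighbourhood of the zero section in $K_M^*$ carrying a Poisson bivector whose backward image recovers $L_M$ (twisted by $\phi$, absorbed by a gauge transformation by a local primitive, or handled via the $\phi$-twisted Koszul bracket);
\item a compatible model for $N$, together with a map realizing $\ccc$.
\end{enumerate}
The target is an honest Poisson manifold $P$ and a submanifold $C\subset P$ (built from $N$ and the graph of $\ccc$, e.g.\ inside $\widetilde N^{-}\times\widetilde M$) such that the relevant Lie algebroid is $B_\ccc$. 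Concretely, I want $\Gamma(\wedge^\bullet B_\ccc^*)$ to be identified with a quotient or retract of $\Gamma(\wedge^\bullet \nu_C)$-type complexes. The identification \eqref{w9xz42xi} of $B_\ccc$ with the fibre product of $L_N$ and $L_M$ suggests taking $C$ coisotropic or pre-Poisson in a product model, with $B_\ccc$ its characteristic algebroid, as in the pre-Poisson case recalled after Theorem~\ref{wvihpe5p}.

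\textbf{Construction of $(\ell_k)$.} Choose a tubular neighbourhood embedding of $B_\ccc[1]$-type data, i.e.\ an identification of a neighbourhood of $C$ with the total space of a normal bundle, plus a connection. The Poisson bivector then becomes a function on the shifted cotangent $T^*[1]$ of this bundle. Voronov's higher derived brackets, projected to fibre-polynomial functions along $C$, give multiderivations $\ell_k$ on $\a_\ccc$. The $L_\infty$ relations follow from $[\widetilde\pi,\widetilde\pi]=0$ and the fact that the projection kernel is an abelian subalgebra. The structure is flat ($\ell_0=0$) exactly when $\widetilde\pi$ restricted to $C$ lies in the appropriate ideal, which is the coisotropic condition. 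I then check $\ell_1=d_{B_\ccc}$ by computing the unary derived bracket against the anchor $((u,\alpha),(v,\beta))\mapsto u$ and the bracket inherited from $L_N\times L_M$ (Theorem~\ref{l5e5f2ec}).

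\textbf{Independence and degree zero.} For independence, I connect any two sets of choices (complements, tubular neighbourhoods, connections, primitives of $\phi$) by a smooth one-parameter family. Following \cite{cattaneo2008equivalences}, the derivative of the family is a gauge transformation generated by a degree-zero derived-bracket vector field, and integrating the resulting flow gives an $L_\infty$-isomorphism. Changing the Poisson model by a Dirac gauge or Morita-type change needs a separate argument, comparing models via a common refinement.

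For the degree-zero claim, take $f,g\in H^0=C^\infty(N)^{B_\ccc}=C^\infty(N)^\C$ and compute $\ell_2(f,g)$. This is $\{\tilde f,\tilde g\}_{\widetilde\pi}|_C$ for fibre-constant extensions. Unwinding the model, the Hamiltonian vector field of $\tilde g$ projects to a pair $(v_g,\beta_g)$ satisfying the conditions of Theorem~\ref{6sipd2id}, so the bracket is $df(v_g)$. The independence statement in Theorem~\ref{6sipd2id} finishes the comparison.

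\textbf{Main obstacle.} The hardest step is the global construction of the Poisson model. This means simultaneously untwisting $\phi$ (which is only locally exact) and realizing the arbitrary, non-Dirac map $\ccc$ as a coisotropic-type submanifold whose characteristic algebroid is exactly $B_\ccc$, not merely something that contains it. I would first try the product $N\times M$ with $L_N^{-}\times L_M$ and graph-type submanifold, using $\ccc^*\phi=\psi$ so that the twist $\mathrm{pr}_M^*\phi-\mathrm{pr}_N^*\psi$ vanishes on the graph. If that fails globally, I would work with Dirac-twisted derived brackets (Courant algebroid version of Voronov) directly, avoiding the Poisson model.
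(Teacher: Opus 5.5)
Your model-building matches the paper: the thickening of each twisted Dirac manifold to a neighbourhood of the zero section of $E^*$ (\Cref{1SCCGC8W3}), the product $S_M\times\overline{S_N}$ with the graph $\Delta=J(N)$, whose algebroid $T\Delta^\circ\cap\widehat\pi^{-1}(T\Delta)$ is $B_\ccc$ (\Cref{1SCL3WW9S}), and the untwisting via $\ccc^*\phi=\psi$. Local exactness of $\phi$ is not an issue: since $J^*\widehat\phi=0$, the relative Poincar\'e lemma gives a primitive on a whole neighbourhood of $\Delta$ (\Cref{1SCPWPNPJ}).

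The gap is in the construction step. In this model $\Delta$ is pre-Poisson but in general not coisotropic. Already for a coisotropic inclusion $N\subset(M,\pi)$ with $L_N=TN\oplus0$, one has $S_N\subset T^*N$. Then $T\Delta^\circ$ has rank $\dim M+\dim N$, while $B_\ccc$ has rank $\dim M-\dim N$. So derived brackets on $\Gamma(\wedge^\bullet\nu_\Delta)$ live on the wrong algebra, and they are curved, with $\ell_0=-P(\widehat\pi_0)\neq0$. If instead you project onto $\a_\ccc$ via $T\widehat M|_\Delta\to\widehat B^*$, the condition you invoke fails. For Voronov's construction, $\ker P$ must be a Lie subalgebra, and here it is one only when $\Delta$ is coisotropic (\S\ref{1SCS4XYSN}). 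So there is no $V$-datum, and your flatness criterion never applies. The obstacle you name in your last paragraph is therefore the missing idea itself. The Courant-algebroid fallback is not a worked-out substitute.

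The paper closes the gap with the Cattaneo--Zambon coisotropic embedding \cite{CattaneoZambon2009}. The submanifold $\Delta$ lies coisotropically in a Poisson transversal $\Sigma\subset(U,\widehat\pi_0)$. Restriction of covectors $\widehat B\to\nu_\Sigma^*$ is a Lie algebroid isomorphism onto the conormal algebroid of $\Delta$ in $\Sigma$. The flat Cattaneo--Felder derived brackets of $\pi_\Sigma$ on $\Gamma(\wedge^\bullet\nu_\Sigma)$ are then transported to $\a_\ccc$ (\Cref{1SCMMB63H,1SCW34U8K}).

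This step also changes your other two steps.
\begin{itemize}
\item For independence, you need three things. A Moser argument handles the complements and the primitive (\Cref{1SC454C3Y}); it gives a Poisson diffeomorphism fixing $\Delta$, hence a strict isomorphism by \Cref{1SCEXNP25}. The result of \cite{cattaneo2008equivalences} handles tubular neighbourhoods inside $\Sigma$. Finally, you need uniqueness of $\Sigma$ up to a Poisson diffeomorphism fixing $\Delta$ (\Cref{1SCFBTZ34}).
\item In degree zero, for non-coisotropic $\Delta$ the value $\{\tilde f,\tilde g\}|_\Delta$ for fibre-constant extensions depends on the tubular neighbourhood. The paper instead uses admissible extensions, pulled back along a projection onto $\Sigma$ with kernel $\widehat\pi_0(T\Sigma^\circ)$. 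For these, $\ell_2(f,g)=\{F,G\}|_\Delta$ (\Cref{1SCJHB7Z7}). Only then does it write $\widehat\pi_0(dF)$ as $-J_*v_f$ for an admissible pair $(v_f,\beta_f)$ (\Cref{1SC64QN3W}).
\end{itemize}
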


The construction proceeds through pre-Poisson submanifolds, following Cattaneo--Zambon \cite{CattaneoZambon2009}. Their coisotropic embedding theorem allows the Cattaneo--Felder construction to be applied inside a Poisson transversal, a method they already used to obtain deformation quantization of pre-Poisson reduction. Together with their uniqueness theorem and the equivalence results of \cite{cattaneo2008equivalences}, this yields a $P_\infty$-structure on the stabilizer cochains, independent of choices up to $L_\infty$-isomorphism. We use this construction as the starting point for the passage to $1$-shifted coisotropics.

For general Dirac data, we also use coisotropic Poisson extensions of Dirac manifolds with constant-rank kernel \cite{CattaneoZambon2009}. We replace the map $\ccc$ by its graph in a product of auxiliary twisted Poisson manifolds. The equality $\psi=\ccc^*\phi$ allows the product twist to be removed near the graph, yielding an ordinary pre-Poisson submanifold whose associated Lie algebroid is $B_\ccc$. Higher derived brackets then give the required structure. Comparison of the choices uses Poisson neighbourhood equivalences, including the results of \cite{CattaneoZambon2009}, together with the equivalence theory of higher derived brackets \cite{cattaneo2008equivalences}.

Finally, the relative formality theorem of Cattaneo--Felder quantizes these Poisson models, yielding a curved $A_\infty$-structure on the Lie algebroid complex. Its first-order terms recover the homotopy Poisson structure constructed above.

\begin{introtheorem}\label{d4fmvhyq}
Let $\ccc$ be a regular infinitesimal $1$-shifted coisotropic. There is a curved $A_\infty$-structure $(\mu_k)_{k\geq0}$ on $\a_\ccc\llbracket\epsilon\rrbracket$, with multidifferential structure maps, such that
\[
\mu_2(a,b)=a\wedge b+O(\epsilon),\quad
\mu_k=O(\epsilon)\quad(k\neq2),\quad
\mu_0=O(\epsilon^2),
\]
whose semiclassical limit is the $P_\infty$-structure of Theorem \ref{djsx2lmg}:
\[
\mu_1=\epsilon\,d_{B_\ccc}+O(\epsilon^2),\qquad
\mu_2(a,b)-(-1)^{|a||b|}\mu_2(b,a)=\epsilon\,\ell_2(a,b)+O(\epsilon^2),
\]
and more generally $\mu_k\circ\Alt_k=\pm\,\epsilon\,\ell_k+O(\epsilon^2)$ for every $k$, where $\Alt_k$ is graded antisymmetrization and the sign depends only on $k$ (\Cref{1SCQTHM01}). Its curved $A_\infty$-isomorphism class is independent of the choices of local models, Poisson transversals, tubular neighbourhoods, and normal-bundle connections, as well as of the globalization data fixed on $N$ (\Cref{1SCRMKTFC}).

If $H^2(B_\ccc)=0$, the curvature can be removed by a formal change of base point. If also $H^1(B_\ccc)=0$, the resulting degree-zero cohomology can be identified with $C^\infty(N)^{B_\ccc}\llbracket\epsilon\rrbracket$, and its induced product is a star product deforming the Poisson bracket induced by $\ell_2$ on $C^\infty(N)^{B_\ccc}$, which is the bracket of Theorem \ref{6sipd2id} when the datum comes from a $1$-shifted coisotropic $\C\to\G$ with $\C$ source-connected.
\end{introtheorem}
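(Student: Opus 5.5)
Following the strategy already announced for Theorem \ref{djsx2lmg}, I would first reduce to a pre-Poisson submanifold of an honest Poisson manifold, and then apply the Cattaneo--Felder relative formality theorem there.

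\emph{Poisson model.} Using regularity, take coisotropic Poisson extensions of $(N,L_N,\psi)$ and $(M,L_M,\phi)$ in the sense of Cattaneo--Zambon; these are twisted Poisson manifolds $\widetilde N$ and $\widetilde M$. Embed the graph of $\ccc$ in $\widetilde N^-\times\widetilde M$. Since $\psi=\ccc^*\phi$, the product twist is exact near the graph, and a gauge transformation turns a neighbourhood into an ordinary Poisson manifold $P$. In $P$ the graph $S\cong N$ is pre-Poisson with associated Lie algebroid $B_\ccc$. Next choose a Poisson transversal $P'\supset S$ in which $S$ becomes coisotropic, with conormal algebroid identified with $B_\ccc$.

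\emph{Quantization.} Choose a tubular neighbourhood and a normal-bundle connection. These identify $\a_\ccc$ with the functions on the graded manifold $\nu^*_S[1]$, i.e.\ with $\Gamma(\wedge^\bullet B_\ccc^*)$. Cattaneo--Felder's relative formality (with globalization data on $N$) then gives a curved $A_\infty$ structure $(\mu_k)$ on $\a_\ccc\llbracket\epsilon\rrbracket$ with multidifferential structure maps. Its structure is:
\begin{itemize}
\item $\mu_2$ is the wedge product at order zero;
\item the $\epsilon$-linear terms are Kontsevich's first-order graphs, i.e.\ Voronov's higher derived brackets of $\pi_{P'}$;
\item the curvature $\mu_0$ begins at $\epsilon^2$, because the first-order curvature is the derived bracket $\ell_0$, which vanishes by flatness of the $P_\infty$-structure (coisotropy).
\end{itemize}
Antisymmetrizing the first-order terms yields $\pm\ell_k$, with a sign depending only on $k$ that comes from the décalage. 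This gives the semiclassical limit.

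\emph{Independence.} This is the step I expect to be the main obstacle. Different choices of extensions, transversals, tubular neighbourhoods and connections are related by Poisson diffeomorphisms near $S$, by the Cattaneo--Zambon uniqueness results, together with gauge and neighbourhood equivalences. Each such identification must be lifted to a curved $A_\infty$-isomorphism, not merely an $L_\infty$ one. I would try to do this by:
\begin{itemize}
\item applying the $L_\infty$-quasi-isomorphism functoriality of relative formality to the formal Poisson isotopies connecting the choices;
\item integrating the resulting tangent homotopies in the style of Cattaneo--Felder--Tomassini.
\end{itemize}
The delicate points are the gauge change removing the twist and the globalization data, both of which must be tracked through an equivariant version of the formality morphism.

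\emph{Cohomological statements.} These are standard obstruction theory. The curvature is removed order by order by a change of base point $\mu^b$ with $b\in\epsilon\a^1\llbracket\epsilon\rrbracket$. At each order the obstruction is a $d_{B_\ccc}$-cocycle in degree $2$, and it is exact when $H^2(B_\ccc)=0$. With zero curvature, $\mu_1$ becomes a differential deforming $\epsilon d_{B_\ccc}$. Lifting invariant functions to $\mu_1$-cocycles meets obstructions in $H^1(B_\ccc)$, so when this group vanishes $H^0\cong C^\infty(N)^{B_\ccc}\llbracket\epsilon\rrbracket$. The product $\mu_2$ descends to an associative product on $H^0$ whose commutator at first order is $\ell_2$. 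Hence it is a star product for the bracket of Theorem \ref{djsx2lmg}, and therefore of Theorem \ref{6sipd2id} in the integrated case.
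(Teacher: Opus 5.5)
\textbf{Construction, semiclassical limit, cohomology.} These parts match the paper. The steps are the same: twisted Poisson local models for $L_N$ and $L_M$, the graph of $\ccc$ in their product, and removal of the twist. For the last step the primitive of $\widehat\phi$ must vanish at the points of the graph, so that the bivector and the algebroid $\widehat B\cong B_\ccc$ are unchanged there. Then a Poisson transversal, Cattaneo--Felder relative formality transported along the canonical cochain isomorphisms, and Cattaneo--Felder's obstruction argument.

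\textbf{The gap: independence of choices.} You leave this as a strategy, and the strategy does not address the actual difficulty.

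Two normalized tubular neighbourhoods are not related by a Poisson diffeomorphism. They differ by $\delta=\psi_1^{-1}\circ\psi_0$, which fixes $N$ and induces $\id_\nu$, and $\epsilon\pi_{\psi_1}=e^{\ad_X}(\epsilon\pi_{\psi_0})$ with $X=-\log\delta^*$ not divisible by $\epsilon$. So neither the $\epsilon$-adic filtration nor integrating a tangent homotopy along an isotopy controls the induced generator $Y$ on $\widehat\D(\b)\llbracket\epsilon\rrbracket$, which has an $\epsilon^0$ part (for instance $\U_1(X)$). You would have to show two things: that $e^{\widehat Y}$ converges on the completed tensor coalgebra, and that its arity-zero component lies in $\epsilon\,\b\llbracket\epsilon\rrbracket$. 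Without the second, it is not a curved $A_\infty$-morphism at all.

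The missing idea is the fibrewise weight (fibre polynomial degree minus number of fibre derivatives), combined with $\epsilon$-order into $\FFF^k=\{\sum_a\epsilon^aA_a:\wt(A_a)+2a\ge k\}$. With this filtration:
\begin{itemize}
\item coisotropy in the transversal gives $\wt(\pi_\psi)\ge-1$, so $\epsilon\pi_\psi\in\FFF^1$;
\item normalization gives $\wt(X)\ge1$;
\item $\U$ is filtered, by its naturality under fibre dilations;
\item arity-zero operators have weight $\le0$, which forces the arity-zero part of $Y$ into $\epsilon\,\b\llbracket\epsilon\rrbracket$;
\item bounded exterior degree makes $e^{\widehat Y}$ converge.
\end{itemize}

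\textbf{Changing the normal-bundle connection.} This likewise needs a genuine $L_\infty$-homotopy $\U^{\nu,\nabla}\simeq\U^{\nu,\nabla'}$, so that the two Maurer--Cartan elements are gauge equivalent through an $\epsilon$-small generator. Along Kraft--Schnitzer's lines the Fedosov differentials are conjugate, $D'=e^{\ad_\xi}D$. But here $\xi$ is only linear in the fibre coordinates of $TN$, so their shortening step fails. It must be replaced, using that the coefficients of $\xi$ vanish at the zero section and that $\U$ fixes vector fields and kills their higher products. An unspecified ``equivariant formality morphism'' does not supply this.

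\textbf{What you flagged as delicate is handled classically.} Changes of local model (complements and primitive) are absorbed by a Moser argument, and changes of transversal by Cattaneo--Zambon uniqueness. Each produces a Poisson diffeomorphism fixing the graph. After that, only the naturality of $\U$ under linear bundle isomorphisms $\nu\to\nu'$ is needed, and the comparison is a strict isomorphism.
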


The two cohomological hypotheses have distinct roles: $H^2$ controls the removal of curvature, while $H^1$ controls the lifting of invariant functions. In the pre-Poisson case, the resulting deformation of invariant functions recovers the existence result of \cite[Theorem 6.2]{CattaneoZambon2009}. When $M$ is a point, $B_\ccc=L_N\cap TN$ and $\a_\ccc$ is the foliated de Rham complex of the regular foliation integrating $L_N\cap TN$; Theorems \ref{djsx2lmg} and \ref{d4fmvhyq} then recover \cite[Proposition 8.6 and Theorem 8.5]{CattaneoZambon2009}. The homotopy construction also applies when $H^1(B_\ccc)$ or $H^2(B_\ccc)$ does not vanish, retaining the curvature and higher operations as part of the quantized structure.

These results bring us back to the question of quantizing a reduced phase space. In Hamiltonian reduction, one restricts to a level set of the moment map and then divides by the corresponding symmetries. The shifted coisotropic framework also allows more general reduction procedures, such as symplectic reduction along a submanifold \cite{CrooksMayrand2022} and reduction of strong Dirac maps \cite{bualibanu2026reduction}; see \S\ref{uxxbg680}. Under the hypotheses above, the Lie algebroid governing the reduction supplies a homotopy Poisson algebra whose degree-zero cohomology carries the reduced Poisson bracket, and this homotopy algebra admits a curved $A_\infty$-quantization. When the cohomological obstructions vanish, it yields a star product on the invariant functions. Even when the reduced space is smooth, its Poisson algebra does not retain all the information carried by the reduction data. The homotopy construction retains part of this information and makes explicit the obstructions to obtaining an associative deformation of invariant functions. 


\subsection*{Organization of the paper}
In \S\ref{1SCINTSEC}, we recall $1$-shifted coisotropic structures on Lie groupoids and prove the integration theorem. In \S\ref{1SC69WYG8}, we construct the Poisson bracket on invariant functions and show that it is invariant under coisotropic Morita equivalence; this invariance also reduces the Jacobi identity to the case of an inclusion. In \S\ref{1SCYR8NZ4}, we construct the flat $P_\infty$-structure first for pre-Poisson submanifolds and then for regular infinitesimal $1$-shifted coisotropics through local Poisson models, proving Theorem \ref{djsx2lmg}. Finally, \S\ref{1SCC6G85S} quantizes these structures using the relative formality theorem and proves Theorem \ref{d4fmvhyq}.

\subsection*{Acknowledgments}

We thank Hamza Bakhouch, Damien Calaque, Peter Crooks, Matias del Hoyo and Mohamed Moussadek Maiza for useful discussions. Y.A.M.\ thanks the Institut des sciences math\'ematiques (ISM) for financial support. M.M.\ acknowledges support from a Discovery Grant (RGPIN-2023-04587) from the Natural Sciences and Engineering Research Council of Canada (NSERC).

\section{Differentiation and integration of 1-shifted coisotropics}\label{1SCINTSEC}

In this section, we recall the description of $1$-shifted coisotropic structures in terms of Lie groupoids and twisted Dirac structures \cite{Maxence2025}, and prove the integration theorem announced in Theorem \ref{wvihpe5p}. We follow the conventions of \cite{Mackenzie2005}.

The integration theorem is new even in the more familiar setting of $1$-shifted Lagrangians. Working directly with $1$-shifted coisotropics provides a convenient framework for proving this result, as well as greater generality. In particular, the coisotropic formulation is symmetric: both the source and the target carry twisted Dirac structures, whereas the Lagrangian case involves a $2$-form on the source and a twisted Dirac structure on the target.

\subsection{Preliminaries on Lie algebroids and multiplicative forms}\label{1SCA7Z3Y4}

We recall some standard facts about Lie algebroids, their cohomology, and multiplicative forms, fixing the notation and conventions used below.

A \defn{Lie algebroid} over a manifold $M$ is a vector bundle $A\to M$ with a Lie bracket on $\Gamma(A)$ and a bundle map $\rho\colon A\to TM$, called the \defn{anchor}, satisfying
\[
[v,fw]=f[v,w]+\rho(v)(f)w,
\quad v,w\in\Gamma(A),\quad f\in C^\infty(M).
\]
The graded commutative algebra $\Gamma(\wedge^\bullet A^*)$ carries the \defn{Chevalley--Eilenberg differential} $d_A$, given on $\eta\in\Gamma(\wedge^kA^*)$ by
\[
\begin{aligned}
(d_A\eta)(v_0,\ldots,v_k)
&=\sum_{i=0}^k(-1)^i\rho(v_i)
\bigl(\eta(v_0,\ldots,\widehat{v_i},\ldots,v_k)\bigr)\\
&\quad+\sum_{0\leq i<j\leq k}(-1)^{i+j}
\eta([v_i,v_j],v_0,\ldots,\widehat{v_i},\ldots,\widehat{v_j},\ldots,v_k).
\end{aligned}
\]
It is a derivation of degree $1$ satisfying $d_A^2=0$, and is determined by its values on functions and $1$-forms:
\[
\begin{aligned}
(d_Af)(v)&=\rho(v)(f),\\
(d_A\alpha)(v,w)
&=\rho(v)\bigl(\alpha(w)\bigr)
-\rho(w)\bigl(\alpha(v)\bigr)-\alpha([v,w]).
\end{aligned}
\]
The cohomology of this differential graded algebra is the \defn{Lie algebroid cohomology}
\begin{equation}\label{1SCYQ3DDG}
H^k(A,d_A):=
\frac{\ker\bigl(d_A\colon\Gamma(\wedge^kA^*)\to\Gamma(\wedge^{k+1}A^*)\bigr)}
{\operatorname{im}\bigl(d_A\colon\Gamma(\wedge^{k-1}A^*)\to\Gamma(\wedge^kA^*)\bigr)}.
\end{equation}
In particular,
\[
H^0(A,d_A)=\{f\in C^\infty(M):\rho(v)(f)=0
\text{ for all }v\in\Gamma(A)\}.
\]

Every Lie groupoid $\mathcal K\tto X$, with source $\sss$, target $\ttt$, and unit section $\mathsf{1}$, has a Lie algebroid
\[
A_{\mathcal K}=\ker\sss_*|_{\mathsf{1}(X)},
\quad
\rho_{\mathcal K}=\ttt_*|_{A_{\mathcal K}},
\]
viewed as a vector bundle over $X$. Its bracket is induced by the commutator of right-invariant vector fields \cite{Mackenzie2005}.
A $2$-form $\theta$ on $\mathcal K$ is \defn{multiplicative} if
\[
\mmm^*\theta=\ppp_1^*\theta+\ppp_2^*\theta
\]
on the manifold $\mathcal K^{(2)}$ of composable pairs, where $\mmm$ is multiplication and $\ppp_i$ are the projections. Its \defn{infinitesimal multiplicative form} (IM form) is the bundle map
\[
\sigma_\theta\colon A_{\mathcal K}\longrightarrow T^*X,
\quad
\sigma_\theta(a)(w)=\theta_{\mathsf{1}_x}(a,\mathsf{1}_*w),
\]
for $a\in(A_{\mathcal K})_x$ and $w\in T_xX$.

\subsection{1-shifted coisotropics and Morita equivalence}

We recall the Lie groupoid description of $1$-shifted coisotropic structures and their Morita equivalences from \cite{Maxence2025}. We also establish a relation between the base Dirac structures of symplectically Morita equivalent groupoids, which will be used later to prove Morita invariance of the Poisson bracket on invariant functions.

Let $(\G,\omega)\tto(M,\phi)$ be a quasi-symplectic groupoid, $\C\tto N$ a Lie groupoid, and $\ccc\colon\C\to\G$ a Lie groupoid morphism. We use the same symbol for its base map $N\to M$. The quasi-symplectic structure induces a $\phi$-twisted Dirac structure $L_M$ on $M$, with a canonical Lie algebroid isomorphism \cite{bursztyn2004integration}
\[
(\rho_\G,\sigma_\omega)\colon A_\G\overset{\cong}{\longrightarrow}L_M.
\]

A \defn{$1$-shifted coisotropic structure} on $\ccc$ is a twisted Dirac structure $(L_N,\psi)$ on $N$ satisfying the following conditions.
\begin{enumerate}
\item \emph{Compatibility.} We have
\[
\psi=\ccc^*\phi,
\quad
\ttt^*L_N=\sss^*L_N+\Gamma_{\ccc^*\omega}.
\]
Here $\sss$ and $\ttt$ are the source and target of $\C$, pullbacks are backward images of Dirac structures, and addition of $\Gamma_{\ccc^*\omega}$ is the gauge transformation by $\ccc^*\omega$; see \cite[\S2.2]{Maxence2025}.
\item \emph{Nondegeneracy.} The map
\begin{equation}\label{goeol8yy}
\begin{aligned}
A_\C&\longrightarrow L_N\times_\ccc A_\G,\\
a&\mapsto
\bigl((\rho_\C(a),\ccc^*\sigma_\omega(\ccc_*a)),\ccc_*a\bigr)
\end{aligned}
\end{equation}
is fibrewise bijective,\footnote{Our $1$-shifted coisotropics are the strong $1$-shifted coisotropics of \cite{Maxence2025}, whose general definition allows this map only to be surjective. The stronger notion is more natural for the purposes of this paper, since it identifies $A_\C$ with the Lie algebroid determined by the infinitesimal Dirac data. We therefore use the term coisotropic for this notion throughout.} where
\[
L_N\times_\ccc A_\G
:=\left\{((v,\alpha),a)\in L_N\oplus\ccc^*A_\G:
\ccc_*v=\rho_\G(a),\quad \alpha=\ccc^*\sigma_\omega(a)\right\}.
\]
\end{enumerate}
A \defn{$1$-shifted Lagrangian structure} is the special case where $L_N$ is the graph $\Gamma_\gamma = \{(v, \iota_v\gamma) : v \in TN\}$ of a $2$-form $\gamma\in\Omega^2(N)$ \cite{PTVV2013,Maxence2025}. Another example is the identity morphism $\G\to\G$, equipped with the induced Dirac structure $L_M$ on its base \cite[Proposition 3.6]{Maxence2025}.

Let $\ccc_i\colon\C_i\to\G_i$ be Lie groupoid morphisms, with $(\G_i,\omega_i,\phi_i)$ quasi-symplectic for $i=1,2$. A \defn{Morita equivalence} between them consists of a diagram of the following form \cite[Definition 7.1]{Maxence2025}:
\begin{equation}\label{1SCT3HPM7}
\begin{tikzcd}[ampersand replacement=\&,cramped,column sep=small,
row sep=small]
\&\& {\mathcal K} \\
{\C_1} \&\&\&\& {\C_2} \\
\&\& {\mathcal L} \\
{\G_1} \&\&\&\& {\G_2}
\arrow["{\psi_1}"', from=1-3, to=2-1]
\arrow["{\psi_2}", from=1-3, to=2-5]
\arrow["h", from=1-3, to=3-3]
\arrow["{\theta_1}", Rightarrow, shorten <=6pt, shorten >=6pt, from=2-1, to=3-3]
\arrow["{\ccc_1}"', from=2-1, to=4-1]
\arrow["{\theta_2}"', Rightarrow, shorten <=6pt, shorten >=6pt, from=2-5, to=3-3]
\arrow["{\ccc_2}", from=2-5, to=4-5]
\arrow["{\varphi_1}"', from=3-3, to=4-1]
\arrow["{\varphi_2}", from=3-3, to=4-5]
\end{tikzcd}
\end{equation}
in which the $\psi_i$ are Morita morphisms, the span $\G_1\leftarrow\L\to\G_2$ is a symplectic Morita equivalence with $1$-shifted Lagrangian structure $\gamma\in\Omega^2(\L^0)$, and $\theta_i\colon\ccc_i\psi_i\Rightarrow\varphi_i h$ are natural transformations.

If $\ccc_1$ and $\ccc_2$ carry $1$-shifted coisotropic structures $L_1$ and $L_2$, the diagram \eqref{1SCT3HPM7} is a \defn{coisotropic Morita equivalence} if in addition
\begin{equation}\label{1SCU7G6KZ}
\psi_2^*L_2 = \psi_1^*L_1 + \Gamma_\delta,
\quad \delta := -h^*\gamma + \theta_1^*\omega_1 - \theta_2^*\omega_2.
\end{equation}
The $2$-form $\delta$ is called the \defn{connecting $2$-form} of the equivalence. Two $1$-shifted coisotropics are equivalent in this way if and only if they represent isomorphic coisotropic structures on the underlying stacks \cite[\S10]{Maxence2025}.

The proof of Morita invariance in \Cref{1SCD764G8} requires the following result.

\begin{lemma}\label{1SCT66PWH}
Let $(\G_1 \tto M_1,\omega_1,\phi_1)$ and $(\G_2 \tto M_2,\omega_2,\phi_2)$ be quasi-symplectic groupoids with induced base Dirac structures $L_{M_1},L_{M_2}$. Suppose there is a symplectic Morita equivalence
\[
\begin{tikzcd}
  & \L \arrow[dl, "\varphi_1"'] \arrow[dr, "\varphi_2"] & \\
  \G_1 & & \G_2
\end{tikzcd}
\]
with $1$-shifted Lagrangian structure $\gamma\in\Omega^2(\L^0)$. Then $\varphi_1^*L_{M_1} = \varphi_2^*L_{M_2} + \Gamma_\gamma$.
\end{lemma}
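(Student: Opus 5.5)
The plan is to prove the equality pointwise on $P:=\L^0$, where $\varphi_i\colon P\to M_i$ denote the base maps. Both sides are Lagrangian subspaces of $T_pP\oplus T_p^*P$: backward images of Lagrangians are Lagrangian, and gauge transformations preserve this. It therefore suffices to show one inclusion, say $\varphi_1^*L_{M_1}\subseteq\varphi_2^*L_{M_2}+\Gamma_\gamma$. The inclusion must hold as a set-theoretic statement at each point, with no smoothness or constant-rank issues. The two sides are equal as subspaces because both are maximal isotropic.

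First, I would make explicit what a symplectic Morita equivalence with Lagrangian structure $\gamma$ gives us. We have
\[
d\gamma=\varphi_1^*\phi_1-\varphi_2^*\phi_2,
\]
and on the arrows
\[
\ttt^*\gamma-\sss^*\gamma=\varphi_1^*\omega_1-\varphi_2^*\omega_2 .
\]
The $\varphi_i$ are Morita morphisms, so $\L\to\G_i$ are surjective submersions on bases and fully faithful. Nondegeneracy identifies $A_\L$ with $L_{M_1}\times_{\varphi}L_{M_2}$ via $a\mapsto(\varphi_{1*}a,\varphi_{2*}a)$. Infinitesimally, differentiating the multiplicative identity at units gives the IM relation
\[
\sigma_{\varphi_1^*\omega_1}-\sigma_{\varphi_2^*\omega_2}=\sigma_{\ttt^*\gamma-\sss^*\gamma}.
\]
Its right side is computed as $a\mapsto\iota_{\rho_\L(a)}\gamma$, because $\sss^*\gamma$ vanishes on $\ker\sss_*$. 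Hence for $a\in A_\L$,
\[
\varphi_1^*\sigma_{\omega_1}(\varphi_{1*}a)-\varphi_2^*\sigma_{\omega_2}(\varphi_{2*}a)=\iota_{\rho_\L(a)}\gamma .
\]

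Next, I would take an element $(u,\varphi_1^*\beta_1)$ with $(\varphi_{1*}u,\beta_1)\in L_{M_1}$. Via $(\rho_{\G_1},\sigma_{\omega_1})\colon A_{\G_1}\cong L_{M_1}$, this pair equals $(\rho(a_1),\sigma_{\omega_1}(a_1))$ for some $a_1\in A_{\G_1}$. The key step is to lift $a_1$ to $a\in A_\L$ at $p$ with $\varphi_{1*}a=a_1$ and $\rho_\L(a)=u$. For this I use that $\varphi_1$ is a Morita morphism: $\L\cong\varphi_1^*\G_1$ is the pullback groupoid. Its Lie algebroid is therefore the pullback algebroid, consisting of pairs $(u,a_1)$ with $\varphi_{1*}u=\rho(a_1)$, so the lift exists. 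Setting $a_2=\varphi_{2*}a$ gives $(\varphi_{2*}u,\sigma_{\omega_2}(a_2))\in L_{M_2}$. The displayed identity then yields
\[
\varphi_1^*\beta_1=\varphi_2^*\sigma_{\omega_2}(a_2)+\iota_u\gamma ,
\]
so $(u,\varphi_1^*\beta_1)\in\varphi_2^*L_{M_2}+\Gamma_\gamma$, as required.

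The main obstacle is the IM-form computation. It requires checking the sign conventions for $\sigma$ and for gauge transformation, $\Gamma_\gamma$ adding $\iota_u\gamma$, so that the sign of $\gamma$ matches the stated formula. It also requires confirming that the identification of $A_\L$ with the pullback algebroid is compatible with the anchor. If the signs come out opposite, the Lagrangian convention $\ttt^*\gamma-\sss^*\gamma$ versus $\sss^*\gamma-\ttt^*\gamma$ from \cite{Maxence2025} must be used consistently. In that case I would re-derive the relation by evaluating the multiplicativity identity on pairs $(\mathsf 1_*w,a)$ at units.
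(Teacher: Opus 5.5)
Your proposal is correct and follows the paper's proof essentially step by step. You reduce to one inclusion by pointwise Lagrangianity, lift to $\widetilde a\in A_\L$ with $\varphi_{1*}\widetilde a=a_1$ and $\rho_\L(\widetilde a)=u$ (the paper takes this from \cite[Lemma 5.5]{Maxence2025}, you derive it from the pullback-groupoid description), and evaluate the IM form of $\varphi_1^*\omega_1-\varphi_2^*\omega_2=\ttt^*\gamma-\sss^*\gamma$ at $\widetilde a$, with correct signs. Only your identity $d\gamma=\varphi_1^*\phi_1-\varphi_2^*\phi_2$ has the wrong sign (the paper has $-d\gamma$), but it plays no role in the argument.
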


\begin{proof}
The symplectic Morita equivalence gives
\begin{equation}\label{1SCD9DUZ4}
\begin{aligned}
\varphi_1^*\omega_1-\varphi_2^*\omega_2
&=\ttt^*\gamma-\sss^*\gamma,\\
\varphi_1^*\phi_1-\varphi_2^*\phi_2&=-d\gamma.
\end{aligned}
\end{equation}
Both sides of the claimed equality are pointwise Lagrangian, so it suffices to prove one inclusion. Let
$(v,\varphi_1^*\alpha)\in\varphi_1^*L_{M_1}$. Choose $a\in A_{\G_1}$ such that
\[
\varphi_{1*}v=\rho_{\G_1}(a),
\quad
\alpha=\sigma_{\omega_1}(a).
\]
By \cite[Lemma 5.5]{Maxence2025}, there is $\widetilde a\in A_\L$ with
\[
\varphi_{1*}\widetilde a=a,
\quad
\rho_\L(\widetilde a)=v.
\]
Set $b=\varphi_{2*}\widetilde a$, so that $\rho_{\G_2}(b)=\varphi_{2*}v$.
Taking IM forms in the first identity of \eqref{1SCD9DUZ4} and evaluating at $\widetilde a$ gives
\[
\varphi_1^*\alpha-\varphi_2^*\sigma_{\omega_2}(b)=i_v\gamma.
\]
Since $(\varphi_{2*}v,\sigma_{\omega_2}(b))\in L_{M_2}$, it follows that $(v,\varphi_1^*\alpha)\in\varphi_2^*L_{M_2}+\Gamma_\gamma$.
\end{proof}

\subsection{Infinitesimal 1-shifted coisotropics}

We now identify the infinitesimal data associated with a $1$-shifted coisotropic and prove that these data recover the coisotropic structure whenever the relevant Lie algebroids are integrable. We conclude with examples relating this correspondence to coisotropic submanifolds, reduction levels, and strong Dirac maps.

Given a $1$-shifted coisotropic
$\ccc\colon(\C\tto N,L_N)\to(\G,\omega,\phi)$, the base map is a smooth map between twisted Dirac manifolds
\[
\ccc\colon(N,L_N,\psi)\longrightarrow(M,L_M,\phi)
\]
with $\psi=\ccc^*\phi$, but satisfying no other compatibility conditions between the Dirac structures such as forward or backward Dirac. Under the identification $A_\G\cong L_M$, the nondegeneracy condition \eqref{goeol8yy} gives a vector bundle isomorphism
\[
A_\C\cong B_\ccc,
\]
where
\[
B_\ccc:=\left\{((u,\alpha),(v,\beta))\in L_N\oplus\ccc^*L_M:
\ccc_*u=v,\quad\alpha=\ccc^*\beta\right\}.
\]
In particular, $B_\ccc$ has constant rank and depends only on the base map and the two twisted Dirac structures. This motivates the following definition, proposed in \cite[Definition 3.13]{Maxence2025}.

\begin{definition}\label{wccbgf99}
An \defn{infinitesimal $1$-shifted coisotropic} is a smooth map
\[
\ccc\colon(N,L_N,\psi)\longrightarrow(M,L_M,\phi)
\]
between twisted Dirac manifolds such that $\psi=\ccc^*\phi$ and $B_\ccc$ has constant rank.
\end{definition}

Recall that a twisted Dirac manifold $(M,L_M,\phi)$ is integrable if $L_M$ is integrable as a Lie algebroid. In that case, its source-simply-connected integration carries a unique multiplicative $2$-form making it a quasi-symplectic groupoid with induced Dirac structure $L_M$ \cite{bursztyn2004integration}. We call any quasi-symplectic groupoid with this induced twisted Dirac structure an integration of $(M,L_M,\phi)$.

The following theorem establishes the integration conjecture of \cite{Maxence2025}.

\begin{theorem}\label{l5e5f2ec}
Let $\ccc\colon(N,L_N,\psi)\to(M,L_M,\phi)$ be an infinitesimal $1$-shifted coisotropic.
\begin{enumerate}[label={\textup{(\arabic*)}}]
\item \label{rojq4m6n}
$B_\ccc$ is a smooth vector bundle over $N$ with a unique Lie algebroid structure such that the two projections 
\[
\begin{tikzcd}
    & B_\ccc \arrow{dl} \arrow{dr} & \\
    L_N & & L_M
\end{tikzcd}
\]
are Lie algebroid morphisms.
\item \label{fot1xcbs}
Suppose that $(M,L_M,\phi)$ integrates to a quasi-symplectic groupoid $(\G,\omega)\tto(M,\phi)$ and that $B_\ccc$ integrates to a source-simply-connected Lie groupoid $\C\tto N$. Let $\ccc\colon\C\to\G$ integrate the projection $B_\ccc\to L_M\cong A_\G$. Then $L_N$ is a $1$-shifted coisotropic structure on $\ccc$.
\end{enumerate}
\end{theorem}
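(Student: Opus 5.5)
For part \ref{rojq4m6n}, I would realize $B_\ccc$ as a fibre product of Dirac structures. Concretely, consider the twisted Dirac manifold $N\times M$ with twisted Dirac structure $L_N\times\overline{L_M}$, where the bar means the sign of the form component is reversed, and twist $\psi\times 1-1\times\phi$. Let $j=(\id,\ccc)\colon N\to N\times M$ be the graph map. Then
\[
j^*(\psi\times1-1\times\phi)=\psi-\ccc^*\phi=0,
\]
so the backward image $j^*(L_N\times\overline{L_M})$ is an untwisted Dirac structure wherever it is smooth. The kernel of the backward map $(L_N\times\overline{L_M})|_N\to j^*(\cdots)$ consists of those elements $((u,\alpha),(v,-\beta))$ with $u$ and $v$ related so that the vector lies in $j_*TN$ and $j^*(\alpha,-\beta)=\alpha-\ccc^*\beta$ vanishes. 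Up to this sign convention, this kernel is exactly the set of tangent vectors $(u,\ccc_*u)$ with $\alpha=\ccc^*\beta$, that is, $B_\ccc$.

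The constant-rank hypothesis then gives smoothness. I would invoke the standard fact that if $\ker$ of the backward map has constant rank, then this kernel is a Lie subalgebroid of the restricted Dirac algebroid. The proof is the usual one: for sections extended off $j(N)$, the Courant bracket of two sections of $L$ tangent to the graph, with pairing conditions, again satisfies these conditions, using that $j^*$ of the twist vanishes. Uniqueness is immediate because $B_\ccc\to L_N\oplus L_M$ is injective. The projections are Lie algebroid morphisms because the product projections $N\times M\to N,M$ induce algebroid morphisms on $L_N\times\overline{L_M}$, and flipping the sign on $L_M$ is an algebroid isomorphism.

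For part \ref{fot1xcbs}, the projection $B_\ccc\to L_N$ has anchor matching, and by construction $B_\ccc\to L_M\cong A_\G$ is a morphism. Lie's second theorem gives $\ccc\colon\C\to\G$. Nondegeneracy \eqref{goeol8yy} is then the statement that the map $a\mapsto((\rho(a),\ccc^*\sigma_\omega(\ccc_*a)),\ccc_*a)$ is the identity under $A_\C\cong B_\ccc$. This holds because the $L_N$-component of $a\in B_\ccc$ is $(u,\ccc^*\beta)$ with $\beta=\sigma_\omega(\ccc_*a)$.

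The main obstacle is compatibility, $\ttt^*L_N=\sss^*L_N+\Gamma_{\ccc^*\omega}$. My plan is to reformulate it as multiplicativity: the subset
\[
\Lambda:=\{(X,\xi)\in T\C\oplus T^*\C\}
\]
defined by the difference of the two sides should be a multiplicative Dirac structure on $\C$. Equivalently, the $(\ccc^*\phi)$-twisted Dirac structure $\Gamma_{\ccc^*\omega}$ on $\C$ should relate $\sss^*L_N$ and $\ttt^*L_N$. I would use the integration theory of multiplicative Dirac structures (Ortiz, Jotz) and the IM-form correspondence. Both $\ttt^*L_N$ and $\sss^*L_N+\Gamma_{\ccc^*\omega}$ are Lagrangian, so it suffices to prove one inclusion. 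I would prove it along right-invariant directions and the unit section first, where it reduces to the infinitesimal identity encoding $B_\ccc$: for $a\in B_\ccc$, $\iota_{a^R}\ccc^*\omega$ relates $\ccc^*\sigma_\omega$ to the $L_N$-component. I would then propagate along $\sss$-fibres using right-invariant flows, which preserve both sides by multiplicativity of $\omega$ and invariance of $\sss^*L_N$. Source-connectedness ensures that these flows reach every arrow. I expect the delicate step to be checking that these flows preserve $\ttt^*L_N$, using that $\ttt_*a^R$ is the anchor of $a$ and that $L_N$ is preserved by the flows of sections of $B_\ccc\subset L_N$.
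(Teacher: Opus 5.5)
Your Part (1) and the nondegeneracy check in Part (2) are essentially the paper's argument: the graph of $\ccc$ in $N\times M$ with $L_N\times(-L_M)$, the subbundle $D|_{j(N)}\cap(Tj(N)\oplus Tj(N)^\circ)$, and the vanishing of $j^*$ of the twist. One small slip: this subbundle is not the kernel of the backward-image map $(j_*u,\xi)\mapsto(u,j^*\xi)$, whose kernel has $u=0$. It is the preimage of $j^*D\cap TN$; your description of the set itself is correct.

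\textbf{The gap is in the propagation step for compatibility.} Write $\Omega:=\ccc^*\omega$ and $a=((u,\alpha),(v,\beta))\in\Gamma(B_\ccc)$. The flow $\Phi_t$ of $a^R$ does \emph{not} preserve either side.
\begin{itemize}
\item Multiplicativity gives $\iota_{a^R}\Omega=\ttt^*\sigma_\Omega(a)=\ttt^*\alpha$, hence
\[
\L_{a^R}\Omega=\ttt^*\bigl(d\alpha-\iota_u\psi\bigr).
\]
This is generally nonzero; for the identity coisotropic of a Poisson manifold it is $\ttt^*d\alpha$. So $\Phi_t$ does not preserve $\sss^*L_N+\Gamma_\Omega$.
\item Similarly $\Phi_t^*\ttt^*L_N=\ttt^*\phi_t^*L_N$, where $\phi_t$ is the flow of $u$. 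The diffeomorphism $\phi_t$ does not preserve $L_N$. Only the Courant flow of the section $(u,\alpha)$ does, which gives $\phi_t^*L_N=L_N+\Gamma_{B_t}$ with $B_t=\int_0^t\phi_s^*(d\alpha-\iota_u\psi)\,ds$.
\end{itemize}
So what must be proved is not invariance. It is that both sides change by the \emph{same} gauge transformation $\ttt^*B_t$. This matching is exactly where $\alpha=\ccc^*\beta=\sigma_\Omega(a)$ and $d\Omega=\sss^*\psi-\ttt^*\psi$ must be used along the whole source fibre; you use this identity only at the units. The reformulation via multiplicative Dirac structures does not substitute for this: $\ttt^*L_N$ and $\sss^*L_N+\Gamma_\Omega$ are not multiplicative, and that theory is not needed.

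\textbf{How to repair it within your strategy.}
\begin{itemize}
\item The section $\widetilde e:=(a^R,\ttt^*\alpha)$ lies in both $\ttt^*L_N$ and $\sss^*L_N+\Gamma_\Omega$. For the latter, use $\sss_*a^R=0$ and $\iota_{a^R}\Omega=\ttt^*\alpha$.
\item Both are Dirac structures for the same $\ttt^*\psi$-twisted Courant bracket, because $d\Omega=\sss^*\psi-\ttt^*\psi$.
\item The derivation $\courant{\widetilde e,\cdot}_{\ttt^*\psi}$ is $\L_{a^R}$ corrected by the infinitesimal gauge transformation by $-\ttt^*(d\alpha-\iota_u\psi)$. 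Its flow covers $\Phi_t$ and, by involutivity, preserves both Dirac structures.
\item Hence the closed locus where the two coincide is invariant under these flows. Since the $a^R$ span $\ker\sss_*$, it is open in each source fibre.
\item Your pointwise check at the units does work, using isotropy of $L_N$ and $\Omega_{1_x}(a,b)=\sigma_\Omega(a)(\rho(b))$. Source-connectedness then gives compatibility everywhere.
\end{itemize}

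Repaired this way, your route genuinely differs from the paper's and is more elementary. The paper instead:
\begin{itemize}
\item integrates $B_\ccc\to L_N$ to a local morphism $f$ into a local quasi-symplectic integration $(\mathcal H,\gamma)$ of $L_N$;
\item transfers the known compatibility from $\mathcal H$;
\item identifies $f^*\gamma=\Omega$ near the units by uniqueness of multiplicative forms with prescribed IM form and differential;
\item shows that the compatibility locus is a subgroupoid.
\end{itemize}
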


\begin{proof}
\ref{rojq4m6n}
The set $B_\ccc$ is the kernel of the vector bundle map
\[
L_N\oplus\ccc^*L_M\longrightarrow \ccc^*TM\oplus T^*N,
\quad
((u,\alpha),(v,\beta))
\mapsto
(\ccc_*u-v,\alpha-\ccc^*\beta).
\]
Hence, the constant-rank assumption implies that $B_\ccc$ is a
smooth vector bundle.

Let $P=N\times M$, let $j\colon N\to P$ be the graph embedding
$j(x)=(x,\ccc(x))$, and write $S=j(N)$. Consider the twisted Dirac
structure
\[
D=L_N\times(-L_M)
\]
on $P$, with background $3$-form
\[
\eta=\ppp_N^*\psi-\ppp_M^*\phi,
\]
where $-L_M=\{(v,-\beta):(v,\beta)\in L_M\}$. The map
\[
L_N\times L_M\longrightarrow D,
\quad
((u,\alpha),(v,\beta))
\mapsto ((u,v),(\alpha,-\beta))
\]
is an isomorphism of Lie algebroids. Under this identification,
$B_\ccc$ corresponds to
\[
E=D|_S\cap\bigl(TS\oplus(TS)^\circ\bigr),
\]
where $(TS)^\circ\subset T^*P|_S$ is the conormal bundle.
Indeed, $(u,v)\in TS$ precisely when $v=\ccc_*u$, and
$(\alpha,-\beta)\in(TS)^\circ$ precisely when
$\alpha=\ccc^*\beta$. Moreover,
\[
j^*\eta=\psi-\ccc^*\phi=0.
\]

We claim that $E$ is a Lie subalgebroid of $D$. Its anchor takes
values in $TS$ by construction. Let $(X_i,\xi_i)$, $i=1,2$,
be local sections of $D$ whose restrictions to $S$ take values
in $E$. Then
\[
X_i\circ j=j_*u_i,
\quad
j^*\xi_i=0
\]
for some vector fields $u_i$ on $N$. Their twisted Dorfman
bracket is
\[
\llbracket(X_1,\xi_1),(X_2,\xi_2)\rrbracket_\eta
=
\bigl([X_1,X_2],
\mathcal L_{X_1}\xi_2-i_{X_2}d\xi_1
+i_{X_2}i_{X_1}\eta\bigr).
\]
The vector component is tangent to $S$, and the covector
component annihilates $TS$, since
\begin{align*}
j^*\bigl(
\mathcal L_{X_1}\xi_2-i_{X_2}d\xi_1
+i_{X_2}i_{X_1}\eta
\bigr)
&=
\mathcal L_{u_1}j^*\xi_2
-i_{u_2}d j^*\xi_1
+i_{u_2}i_{u_1}j^*\eta\\
&=0.
\end{align*}
Since $D$ is involutive, this bracket restricts to a section
of $E$, proving the claim.

The induced Lie algebroid structure on $E$ therefore gives
a Lie algebroid structure on $B_\ccc$, with anchor
\[
((u,\alpha),(v,\beta))\mapsto u.
\]
The two projections are Lie algebroid morphisms, being the
compositions of the inclusion into the product Lie algebroid
$L_N\times L_M$ with its projections.

Finally, any Lie algebroid structure on $B_\ccc$ for which
both projections are morphisms makes their product
$B_\ccc\to L_N\times L_M$ a Lie algebroid morphism over $j$.
Since this map is a vector bundle embedding over an embedding
of manifolds, its bracket and anchor must agree with the
induced subalgebroid structure. This proves uniqueness.

\ref{fot1xcbs}
Write $\Omega=\ccc^*\omega$. We first show that
\[
\ttt^*L_N=\sss^*L_N+\Gamma_\Omega
\]
holds on a neighbourhood of the unit section.

Let $(\mathcal H,\gamma)\tto(N,\psi)$ be a local
quasi-symplectic integration of $L_N$. By Part \ref{rojq4m6n}, the
projection $B_\ccc\to L_N$ is a Lie algebroid morphism.
It therefore integrates to a local Lie groupoid morphism
\[
f\colon U\longrightarrow\mathcal H
\]
over the identity of $N$, where $U\subset\C$ is a
neighbourhood of the unit section. The proof of
\cite[Proposition 3.6]{Maxence2025} applies to local
quasi-symplectic groupoids and gives
\[
\ttt^*L_N=\sss^*L_N+\Gamma_\gamma
\]
on $\mathcal H$. Since backward images commute with gauge transformations and $f$ covers the identity of $N$, pulling back by $f$ gives
\[
\ttt^*L_N=\sss^*L_N+\Gamma_{f^*\gamma}
\]
on $U$.

Since $f$ covers the identity of $N$, we have
\[
d(f^*\gamma)
=f^*(d\gamma)
=f^*(\sss^*\psi-\ttt^*\psi)
=\sss^*\psi-\ttt^*\psi.
\]
Similarly, since $\ccc$ is a groupoid morphism and
$\ccc^*\phi=\psi$,
\[
d\Omega
=\ccc^*(d\omega)
=\ccc^*(\sss^*\phi-\ttt^*\phi)
=\sss^*(\ccc^*\phi)-\ttt^*(\ccc^*\phi)
=\sss^*\psi-\ttt^*\psi.
\]
Thus, $f^*\gamma$ and $\Omega$ have the same differential. We now compare their IM forms.
Let
\[
a=((u,\alpha),(v,\beta))\in(B_\ccc)_x,
\quad w\in T_xN.
\]
Under the identifications $A_{\mathcal H}=L_N$ and
$A_\G=L_M$, we have
\[
f_*a=(u,\alpha),
\quad
\ccc_*a=(v,\beta).
\]
Since $f$ covers the identity of $N$ and $\gamma$
integrates $L_N$,
\[
\sigma_{f^*\gamma}(a)(w)
=\gamma(f_*a,\mathsf{1}_*w)
=\sigma_\gamma(u,\alpha)(w)
=\alpha(w).
\]
Similarly, since $\omega$ integrates $L_M$,
\[
\sigma_\Omega(a)(w)
=\omega(\ccc_*a,\mathsf{1}_*\ccc_*w)
=\sigma_\omega(v,\beta)(\ccc_*w)
=\beta(\ccc_*w)
=\alpha(w),
\]
where the last equality follows from
$\alpha=\ccc^*\beta$ in the definition of $B_\ccc$.
Thus, $f^*\gamma$ and $\Omega$ have the same
IM form.
The uniqueness argument in
\cite[Proposition 3.5(iii)]{bursztyn2004integration}
applies equally to local Lie groupoids, after shrinking
to a neighbourhood of the unit section with connected
source fibres. Hence, $f^*\gamma=\Omega$ after shrinking $U$.

Now let
\[
\mathcal K=
\{g\in\C:
(\ttt^*L_N)_g=(\sss^*L_N+\Gamma_\Omega)_g\}.
\]
We show that $\mathcal K$ is a subgroupoid of $\C$.
It contains the unit section since $U\subset\mathcal K$.
Let $\iota\colon\C\to\C$ be inversion. Since
$\sss\circ\iota=\ttt$, $\ttt\circ\iota=\sss$, and
$\iota^*\Omega=-\Omega$, pulling back the defining
identity at $g\in\mathcal K$ by $\iota$ gives
\[
(\sss^*L_N)_{g^{-1}}
=(\ttt^*L_N+\Gamma_{-\Omega})_{g^{-1}}.
\]
Applying the gauge transformation by $\Omega$ shows
that $g^{-1}\in\mathcal K$.

Let $h,g\in\mathcal K$ satisfy $\sss(h)=\ttt(g)$, and let
\[
(v,\ttt^*\alpha)\in(\ttt^*L_N)_{hg},
\quad
(\ttt_*v,\alpha)\in(L_N)_{\ttt(h)}.
\]
Since multiplication is a submersion, choose
\[
(v_h,v_g)\in T_{(h,g)}\C^{(2)}
\]
such that $\mmm_*(v_h,v_g)=v$. Then
$\ttt_*v_h=\ttt_*v$, so the defining identity for
$h\in\mathcal K$ gives
\[
\ttt^*\alpha=\sss^*\beta+i_{v_h}\Omega,
\quad
(\sss_*v_h,\beta)\in(L_N)_{\sss(h)}
\]
for some $\beta\in T^*_{\sss(h)}N$.
Since $\sss_*v_h=\ttt_*v_g$, the defining identity for
$g\in\mathcal K$ similarly gives
\[
\ttt^*\beta=\sss^*\delta+i_{v_g}\Omega,
\quad
(\sss_*v_g,\delta)\in(L_N)_{\sss(g)}
\]
for some $\delta\in T^*_{\sss(g)}N$.

At $(h,g)$, multiplicativity of $\Omega$ now implies
\begin{align*}
\mmm^*(\ttt^*\alpha-\sss^*\delta-i_v\Omega)
&=
\ppp_1^*(\ttt^*\alpha-\sss^*\beta-i_{v_h}\Omega)\\
&\quad+\ppp_2^*(\ttt^*\beta-\sss^*\delta-i_{v_g}\Omega)\\
&=0.
\end{align*}
Since $\mmm$ is a submersion, it follows that
\[
\ttt^*\alpha=\sss^*\delta+i_v\Omega.
\]
Moreover,
\[
(\sss_*v,\delta)
=(\sss_*v_g,\delta)\in(L_N)_{\sss(hg)}.
\]
Hence,
\[
(\ttt^*L_N)_{hg}
\subseteq(\sss^*L_N+\Gamma_\Omega)_{hg}.
\]
Both sides are Lagrangian subspaces, so they are equal.
Thus, $hg\in\mathcal K$, proving that $\mathcal K$ is
a subgroupoid.

Since $\C$ is source-connected, every arrow $g$ can be joined to $\mathsf{1}_{\sss(g)}$ by a path $g(t)$ in its source fibre. A sufficiently fine subdivision $0=t_0<\cdots<t_k=1$ satisfies
\[
g(t_i)g(t_{i-1})^{-1}\in U\subset\mathcal K
\]
for every $i$. Their product is $g$, so $\mathcal K=\C$.
This proves compatibility.

Finally, under the identifications
$A_\C=B_\ccc$ and $A_\G=L_M$, the map
\eqref{goeol8yy} is the identity of $B_\ccc$.
It is therefore bijective, proving nondegeneracy.
\end{proof}

In particular, this proves Theorem \ref{wvihpe5p} from the introduction.

For a $1$-shifted coisotropic $\ccc\colon\C\to\G$, the canonical identification $A_\C\cong B_\ccc$ is an isomorphism of Lie algebroids. Indeed, its projection to $L_M$ is the differential of $\ccc$, while its projection to $L_N$ is
\[
(\rho_\C,\sigma_{\ccc^*\omega})\colon A_\C\longrightarrow L_N,
\]
which is a Lie algebroid morphism by the IM-form identities
\cite[Proposition 3.5]{bursztyn2004integration}.
The uniqueness in Part \ref{rojq4m6n} gives the claim.
Thus, when $\C$ is source-simply-connected, its infinitesimal data recover both $\C$ and the morphism $\ccc\colon\C\to\G$, with the target integration fixed.

Theorem \ref{l5e5f2ec} therefore justifies Definition \ref{wccbgf99}: once the relevant Lie algebroids are integrable, there is no further obstruction to the coisotropic compatibility condition. We record some special cases.

\begin{example}\label{integrationexamples}\
\begin{enumerate}
\item \emph{Coisotropic and pre-Poisson submanifolds.}
Let $(M,\pi)$ be a Poisson manifold and
$\ccc\colon N\hookrightarrow M$ a coisotropic submanifold.
Equip $M$ with $L_M=\Gamma_\pi$ and $N$ with
$L_N=TN\oplus0$. Then
\[
(TN)^\circ\overset{\cong}{\longrightarrow}B_\ccc,
\quad
\beta\mapsto
\bigl((\pi^\sharp\beta,0),(\pi^\sharp\beta,\beta)\bigr),
\]
where $(TN)^\circ\subset T^*M|_N$ is the conormal bundle.
The anchor is $\pi^\sharp|_{(TN)^\circ}$, and the bracket
is inherited from the cotangent Lie algebroid of $M$.

Suppose that $(M,\pi)$ integrates to a symplectic groupoid
$(\G,\omega)\tto M$. Since $(TN)^\circ$ is a Lie
subalgebroid of $T^*M$, it is integrable. Let $\C$ be its
source-simply-connected integration. The induced morphism
$\ccc\colon\C\to\G$ is an immersion, and the compatibility
condition of Theorem \ref{l5e5f2ec} reduces to
\[
\ccc^*\omega=0.
\]
Moreover,
\[
\dim\C=\dim N+\operatorname{rank}(TN)^\circ
=\dim M=\tfrac12\dim\G.
\]
Thus, $\ccc$ is a Lagrangian immersion of Lie groupoids,
recovering Cattaneo's integration of coisotropic
submanifolds \cite{Cattaneo2004OnTheIntegration}.

More generally, for an arbitrary submanifold $N\subset M$
with the same Dirac structures, one has
\[
B_\ccc\cong
(TN)^\circ\cap(\pi^\sharp)^{-1}(TN).
\]
The constant-rank condition is equivalent to the usual pre-Poisson condition that $TN+\pi^\sharp((TN)^\circ)$ have constant rank \cite{CattaneoZambon2009}. Its integration is
an isotropic immersion $\C\to\G$, again equipped with
the trivial $1$-shifted Lagrangian structure, recovering \cite[Proposition 7.2]{CattaneoZambon2009}. It need
not be Lagrangian as an immersion. These integrations
are also the stabilizer subgroupoids used in symplectic
reduction along a submanifold
\cite{CrooksMayrand2022}.

\item \emph{Infinitesimal $1$-shifted Lagrangians and
reduction levels.}
Suppose that $L_N=\Gamma_\gamma$ for a $2$-form
$\gamma\in\Omega^2(N)$. The twisting condition becomes
\[
d\gamma=-\ccc^*\phi,
\]
and the defining bundle is naturally identified with
\[
B_\ccc\cong
\left\{
(u,(v,\beta))\in TN\oplus\ccc^*L_M:
\ccc_*u=v,\quad i_u\gamma=\ccc^*\beta
\right\}.
\]
Thus, an infinitesimal $1$-shifted Lagrangian consists
of such a pair $(\ccc,\gamma)$ for which this bundle
has constant rank. Whenever the integrations in
Theorem \ref{l5e5f2ec} exist, its compatibility
condition becomes
\[
\Gamma_{\ttt^*\gamma}
=\Gamma_{\sss^*\gamma+\ccc^*\omega},
\]
or equivalently,
\[
\ccc^*\omega=\ttt^*\gamma-\sss^*\gamma.
\]
Together with nondegeneracy, this makes $\gamma$
a $1$-shifted Lagrangian structure on $\ccc\colon\C\to\G$.

When $\ccc$ is the inclusion of a submanifold, the
second projection identifies $B_\ccc$ with
\[
\left\{
(v,\beta)\in L_M|_N:
v\in TN,\quad \ccc^*\beta=i_v\gamma
\right\}.
\]
The conditions on $(N,\gamma)$ are therefore exactly
those of a reduction level in
\cite[Definition 2.1]{bualibanu2026reduction}, and this
bundle is its stabilizer Lie algebroid. If $L_M$ is
integrable, so is this Lie subalgebroid. The theorem
then shows that its source-simply-connected integration
satisfies the multiplicative-form identity required
of a stabilizer subgroupoid in
\cite[Definition 5.1]{bualibanu2026reduction}.
Thus, every reduction level in an integrable twisted Dirac manifold admits such a stabilizer integration.

\item \emph{Strong Dirac maps.}
Suppose that
\[
\ccc\colon(N,L_N,\psi)\longrightarrow(M,L_M,\phi)
\]
is a strong Dirac map, i.e.\
\[
\ccc_*L_N=L_M,
\quad
\ker\ccc_*\cap(L_N\cap TN)=0,
\quad
\psi=\ccc^*\phi.
\]
For each $(v,\beta)\in(L_M)_{\ccc(x)}$, there is a unique
$u\in T_xN$ such that
\[
\ccc_*u=v,
\quad
(u,\ccc^*\beta)\in(L_N)_x.
\]
Consequently, the second projection gives a vector
bundle isomorphism
\[
B_\ccc\cong\ccc^*L_M.
\]
Its anchor is the infinitesimal action of $L_M$ on $N$,
so $B_\ccc$ is the corresponding action Lie algebroid.

If this action is complete and $\G$ is the
source-simply-connected integration of $L_M$, it
integrates to an action of $\G$ on $N$. Then
$\C=\G\ltimes N$, and the integrated morphism
$\ccc\colon\C\to\G$ is the projection.
Writing $\mathsf a\colon\G\ltimes N\to N$ for the
action, the compatibility condition becomes
\[
\mathsf a^*L_N
=\ppp_N^*L_N+\Gamma_{\ppp_\G^*\omega}.
\]
This recovers the Hamiltonian groupoid action
associated with a complete strong Dirac map
\cite[Theorem 4.7]{bur-cra:05}, and its interpretation
as a $1$-shifted coisotropic in \cite{Maxence2025}.
\end{enumerate}
\end{example}

Finally, the Lie algebroid appearing in the integration
theorem also determines the invariant functions.
Since $\C$ is source-connected,
\begin{align*}
C^\infty(N)^\C
&=\{f\in C^\infty(N):\rho_{B_\ccc}(b)f=0
\text{ for all }b\in\Gamma(B_\ccc)\}\\
&=H^0\bigl(\Gamma(\wedge^\bullet B_\ccc^*),d_{B_\ccc}\bigr).
\end{align*}
Thus, the same infinitesimal object that determines the integrating groupoid supplies the cochain algebra on which we will construct the homotopy Poisson structure and its quantization.

\section{The Poisson algebra of invariant functions}\label{1SC69WYG8}
Let $(\G, \omega) \tto (M, \phi)$ be a quasi-symplectic groupoid and $\ccc\colon\C\to\G$ a morphism of Lie groupoids equipped with a $1$-shifted coisotropic structure $L_N$. By abuse of language, we often call $\ccc$ itself a $1$-shifted coisotropic structure. The algebra of \defn{$\C$-invariant functions} on the base $N = \C^0$ is
\begin{equation}\label{1SCUPN9J7}
C^\infty(N)^\C := \{f\in C^\infty(N) : \sss^*f = \ttt^*f\}.
\end{equation}
By \eqref{1SCYQ3DDG} it is the zeroth Lie algebroid cohomology of $A_\C$ when $\C$ is source-connected. The goal of this section is to equip $C^\infty(N)^\C$ with a Poisson bracket that depends only on  the coisotropic Morita equivalence class of $\ccc$. We first define the bracket in general and check the Jacobi identity by hand when the base map is an inclusion. We then reduce the general case to this one by Morita transport.

\subsection{The bracket}\label{1SCYF566F}
We now construct the bracket on $C^\infty(N)^\C$. Let $L_M$ be the Dirac structure on $M$ induced by $(\G, \omega) \tto (M, \phi)$. In this setting, an admissible vector comes paired with a covector on $M$.

Let
\[
L_N\times_\ccc L_M := \{((u,\alpha),(v,\beta))\in L_N\times L_M :
\ccc_*u = v,\ \alpha = \ccc^*\beta\}.
\]
The nondegeneracy condition says that
\[
A_\C \longrightarrow L_N\times_\ccc L_M,\quad
b \mtoo \bigl((\rho_\C b,\ccc^*\sigma_\omega\ccc_*b),
(\rho_\G\ccc_*b,\sigma_\omega\ccc_*b)\bigr)
\]
is surjective. For $f\in C^\infty(N)^\C$, differentiating $\sss^*f=\ttt^*f$ along $A_\C=\ker\sss_*|_N$ gives $df\circ\rho_\C=0$, so surjectivity forces $df(u) = 0$ for every $((u,\alpha),(v,\beta))\in L_N\times_\ccc L_M$. In particular $df(u) = 0$ for all $(u,\alpha)\in L_N\cap\ccc^*L_M$, so pointwise
\begin{equation}\label{1SC5LWG8H}
(0,df)\in(L_N\cap\ccc^*L_M)^\perp = L_N^\perp+(\ccc^*L_M)^\perp
= L_N+\ccc^*L_M.
\end{equation}
Hence there are $v_f\in TN$ and $\beta_f\in T^*M$ with
\begin{equation}\label{1SCJCKYC8}
(v_f,df+\ccc^*\beta_f)\in L_N
\quad\text{and}\quad
(\ccc_*v_f,\beta_f)\in L_M.
\end{equation}
Such a vector $v_f$ is said to be \defn{admissible} for $f$. It is unique up to a vector $u$ with $(u,\gamma)\in L_N\cap\ccc^*L_M$ for some $\gamma$. We set
\begin{equation}\label{1SC2RZH94}
\{f,g\} := df(v_g).
\end{equation}
The value does not depend on the choice: if $\tilde{v}_g$ is another admissible vector, then $v_g-\tilde{v}_g\in L_N\cap\ccc^*L_M$ and \eqref{1SC5LWG8H} gives $df(v_g) = df(\tilde{v}_g)$. The bracket is bilinear: if $(v_g,\beta_g)$ and $(v_{g'},\beta_{g'})$ satisfy \eqref{1SCJCKYC8} for $g$ and $g'$, then $(v_g+\lambda v_{g'},\beta_g+\lambda\beta_{g'})$ satisfies it for $g+\lambda g'$.

\begin{lemma}\label{1SCLV9K9Y}
The pair $(v_f,\beta_f)$ can be chosen smooth.
\end{lemma}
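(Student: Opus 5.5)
We realize the pointwise choices of \eqref{1SCJCKYC8} as the fibres of a constant-rank vector bundle map; smoothness then follows by a standard argument. Consider the bundle map over $N$
\[
\Phi\colon L_N\oplus\ccc^*L_M\longrightarrow \ccc^*TM\oplus T^*N,\quad
((u,\alpha),(v,\beta))\mapsto(\ccc_*u-v,\ \alpha-\ccc^*\beta),
\]
whose kernel is $B_\ccc=L_N\times_\ccc L_M$. We want smooth sections $(u,\alpha)$ of $L_N$ and $(v,\beta)$ of $\ccc^*L_M$ with $\Phi=(0,df)$. We then set $v_f=u$ and $\beta_f=\beta$, so that $\ccc_*v_f=v$, $\alpha=df+\ccc^*\beta_f$, and both conditions in \eqref{1SCJCKYC8} hold.

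The key observation is that $B_\ccc$ has constant rank. This follows from the nondegeneracy condition, which identifies $B_\ccc$ with $A_\C$. Since the source bundle of $\Phi$ has constant rank, $\Phi$ is then a constant-rank bundle map. Consequently $\im\Phi$ is a smooth subbundle of $\ccc^*TM\oplus T^*N$. Moreover, $\Phi$ admits a smooth right inverse $\Sigma\colon\im\Phi\to L_N\oplus\ccc^*L_M$: choose a fibre metric and let $\Sigma$ invert $\Phi$ on the smooth complement $B_\ccc^\perp$.

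It remains to check that $(0,df)$ is a smooth section of $\im\Phi$. Smoothness is clear, and the pointwise statement is exactly \eqref{1SC5LWG8H}, which gives $(0,df)\in L_N+\ccc^*L_M$ at each point. More precisely, \eqref{1SC5LWG8H} produces $(v_f,df+\ccc^*\beta_f)\in L_N$ and $(\ccc_*v_f,\beta_f)\in L_M$ pointwise, and these pairs are precisely preimages of $(0,df)$ under $\Phi$. Then $\Sigma(0,df)$ is a smooth solution, and its components yield smooth $v_f$ and $\beta_f$, the latter a section of $\ccc^*T^*M$.

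The main point to verify carefully is the constant-rank claim. It rests on the dimension count
\[
\operatorname{rank}\im\Phi=\operatorname{rank}L_N+\operatorname{rank}L_M-\operatorname{rank}B_\ccc,
\]
with all three terms constant. Everything else is linear algebra, carried out fibrewise with smooth dependence on the base point.
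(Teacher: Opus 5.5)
Your proof is correct and follows essentially the paper's argument: both describe the solutions of \eqref{1SCJCKYC8} as the preimage of a smooth section under a bundle map whose kernel is $B_\ccc$, which has constant rank by nondegeneracy, and then choose a smooth preimage. The differences are cosmetic. The paper maps $TN\oplus\ccc^*T^*M$ to the quotients by $L_N$ and $L_M$ and glues local right inverses with a partition of unity, whereas you reuse the map $L_N\oplus\ccc^*L_M\to\ccc^*TM\oplus T^*N$ from the proof of Theorem~\ref{l5e5f2ec} and take a global right inverse from a fibre metric.
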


\begin{proof}
We use the following fact: if $\Phi\colon E\to F$ is a constant-rank bundle map and $\tau$ a smooth section of $F$, then $\Phi^{-1}(\tau(N))$ is a smooth affine bundle as soon as every fibre is nonempty. Indeed, $\tau$ then takes values in the subbundle $\im\Phi$, on which $\Phi$ admits smooth local right inverses, and the resulting local solutions glue by a partition of unity since the fibres are affine. Set $E = TN\oplus\ccc^*T^*M$ and
\[
B = \{(u,\beta)\in E : (u,\ccc^*\beta)\in L_N,\ (\ccc_*u,\beta)\in L_M\},
\]
the image of the embedding $L_N\times_\ccc L_M\hookrightarrow E$, a smooth subbundle. With $F = \bigl((TN\oplus T^*N)/L_N\bigr)\oplus\ccc^*\bigl((TM\oplus T^*M)/L_M\bigr)$,
the map $\Phi\colon E\to F$, $(u,\beta)\mtoo((u,\ccc^*\beta),(\ccc_*u,\beta))$, has kernel $B$ and hence constant rank. Let $\tau$ be the smooth section of $F$ given by $\tau(n):=\bigl([(0,-df_n)],\,0\bigr)$, where $[\,\cdot\,]$ denotes the class modulo $L_N$. For $(v,\beta)\in E_n$, the condition $(v,df_n+\ccc^*\beta)\in L_N$ says that $(v,\ccc^*\beta)\equiv(0,-df_n)$ modulo $L_N$, and $(\ccc_*v,\beta)\in L_M$ says that the second component of $\Phi(v,\beta)$ vanishes; hence
\[
S:=\Phi^{-1}(\tau(N))=\{(v,\beta)\in E : (v,df+\ccc^*\beta)\in L_N,\ (\ccc_*v,\beta)\in L_M\}.
\]
Its fibres are nonempty by \eqref{1SCJCKYC8}, so $S$ is a smooth affine bundle. A smooth section of $S$ gives smooth $(v_f,\beta_f)$ with the required properties.
\end{proof}

\begin{lemma}\label{1SCDG4T4W}
The bracket $\{\cdot,\cdot\}$ is antisymmetric and satisfies the Leibniz rule.
\end{lemma}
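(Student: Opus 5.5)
Both properties will come from the isotropy of the Dirac structures $L_N$ and $L_M$ with respect to the natural pairing $\langle(u,\alpha),(u',\alpha')\rangle=\alpha(u')+\alpha'(u)$, applied to the admissible pairs of \eqref{1SCJCKYC8}. Throughout, I work with $f,g\in C^\infty(N)^\C$ and, by \Cref{1SCLV9K9Y}, fix admissible pairs $(v_f,\beta_f)$ and $(v_g,\beta_g)$. Smoothness is not needed for the pointwise identities, but it keeps the bracket a smooth function.

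\emph{Antisymmetry.} Pair $(v_f,df+\ccc^*\beta_f)$ with $(v_g,dg+\ccc^*\beta_g)$ in $L_N$. Isotropy gives
\[
df(v_g)+dg(v_f)+\beta_f(\ccc_*v_g)+\beta_g(\ccc_*v_f)=0 .
\]
Next pair $(\ccc_*v_f,\beta_f)$ with $(\ccc_*v_g,\beta_g)$ in $L_M$. This gives $\beta_f(\ccc_*v_g)+\beta_g(\ccc_*v_f)=0$. Subtracting the two identities yields $df(v_g)=-dg(v_f)$, that is, $\{f,g\}=-\{g,f\}$. The only point to watch is that the $\ccc^*$ terms cancel. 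This follows from $(\ccc^*\beta)(u)=\beta(\ccc_*u)$, and it is exactly where the second condition in \eqref{1SCJCKYC8} is used.

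\emph{Leibniz rule.} First note that $C^\infty(N)^\C$ is closed under products, since $\sss^*(gh)=\sss^*g\,\sss^*h$. Given admissible pairs for $g$ and $h$, I set
\[
v_{gh}:=g\,v_h+h\,v_g,\qquad \beta_{gh}:=g\,\beta_h+h\,\beta_g,
\]
where $g,h$ act as functions on $N$ and $\beta$ is a section of $\ccc^*T^*M$. Since $L_N$ and $\ccc^*L_M$ are subbundles, they are closed under pointwise $C^\infty(N)$-linear combinations. Hence
\[
(v_{gh},\,d(gh)+\ccc^*\beta_{gh})=g\,(v_h,dh+\ccc^*\beta_h)+h\,(v_g,dg+\ccc^*\beta_g)\in L_N,
\]
and similarly $(\ccc_*v_{gh},\beta_{gh})\in L_M$. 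So this pair is admissible for $gh$. Because the bracket was shown above to be independent of the choice of admissible vector, we get
\[
\{f,gh\}=df(g v_h+h v_g)=g\{f,h\}+h\{f,g\}.
\]
The Leibniz rule in the first slot then follows from antisymmetry.

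I expect no real obstacle here; the argument is a pointwise linear-algebra check. The one step needing care is confirming that the product formula gives an admissible pair in the sense of \eqref{1SCJCKYC8}, including its $T^*M$ component. This holds because both defining conditions are linear over functions on $N$.
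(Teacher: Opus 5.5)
Your proposal is correct and follows essentially the same route as the paper: antisymmetry by subtracting the isotropy identity for $L_M$ from the one for $L_N$, and the Leibniz rule by checking that $(g v_h+h v_g,\ g\beta_h+h\beta_g)$ is admissible for $gh$ and then invoking independence of the choice of admissible vector. You also explicitly verify the $L_M$ condition for the combined pair, which the paper leaves implicit.
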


\begin{proof}
For antisymmetry, isotropy of $L_N$ applied to $(v_f,df+\ccc^*\beta_f)$ and $(v_g,dg+\ccc^*\beta_g)$ gives
\[
df(v_g)+\beta_f(\ccc_*v_g)+dg(v_f)+\beta_g(\ccc_*v_f) = 0,
\]
while isotropy of $L_M$ applied to $(\ccc_*v_f,\beta_f)$ and $(\ccc_*v_g,\beta_g)$ gives $\beta_f(\ccc_*v_g)+\beta_g(\ccc_*v_f) = 0$. Subtracting yields $df(v_g)+dg(v_f) = 0$.

For the Leibniz rule, closure of $L_N$ under multiplication gives $(gv_h+hv_g,\,d(gh)+g\ccc^*\beta_h+h\ccc^*\beta_g)\in L_N$, so $v_{gh} = gv_h+hv_g+u$ with $(u,\gamma)\in L_N\cap\ccc^*L_M$, and \eqref{1SC5LWG8H} gives $\{f,gh\} = g\{f,h\}+h\{f,g\}$.
\end{proof}

The bracket takes values in $C^\infty(N)^\C$. Let $\gamma\in\C$ be an arrow from $x$ to $y$, and let $(v_g,\beta_g)$ satisfy \eqref{1SCJCKYC8} at $x$. Choose $\tilde v\in T_\gamma\C$ with $\sss_*\tilde v=v_g$ and set $w:=\ttt_*\tilde v$. The compatibility $\ttt^*L_N=\sss^*L_N+\Gamma_{\ccc^*\omega}$, together with the compatibility condition of \cite[Proposition 3.6]{Maxence2025} for $\G$ applied to $\ccc_*\tilde v$, gives $\beta'$ with $(\ccc_*w,\beta')\in L_M$ and $(w,dg+\ccc^*\beta')\in L_N$. So $w$ is admissible for $g$ at $y$, and $\{f,g\}(y)=(\ttt^*df)(\tilde v)=(\sss^*df)(\tilde v)=\{f,g\}(x)$.

\begin{definition}\label{1SCR24WYK}
The algebra $C^\infty(N)^\C$ with the bracket $\{\cdot,\cdot\}$ is the \defn{almost Poisson algebra} of the $1$-shifted coisotropic structure.
\end{definition}

The Jacobi identity requires control over $\ccc\colon N\to M$. We first prove it when $\ccc$ is an inclusion. The general case is reduced to this one in \Cref{1SCEMXYJB}.

\begin{lemma}\label{1SCDYXFD7}
If $\ccc\colon N\to M$ is an inclusion, then $\{f,\{g,h\}\}+\{g,\{h,f\}\}+\{h,\{f,g\}\} = 0$ for all $f,g,h\in C^\infty(N)^\C$.
\end{lemma}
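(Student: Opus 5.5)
The idea is to realize the bracket of \eqref{1SC2RZH94} as a genuine Dorfman-bracket computation on the product twisted Dirac structure used in the proof of Theorem \ref{l5e5f2ec}. This reduces the Jacobi identity to involutivity of $L_N$ and $L_M$ together with closedness of the twisting forms. Since $\ccc$ is an inclusion, we regard $N\subset M$. By Lemma \ref{1SCLV9K9Y}, we fix for each $f\in C^\infty(N)^\C$ a smooth pair $(v_f,\beta_f)$ with $\beta_f$ a section of $T^*M|_N$. We then extend $\beta_f$ to a $1$-form on a neighbourhood of $N$ in $M$. The key object is the section
\[
e_f:=\bigl((v_f,df+\ccc^*\beta_f),(v_f,\beta_f)\bigr)
\]
of $L_N\times_\ccc L_M$. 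We would like to regard it as the restriction of a section of $D=L_N\times(-L_M)$ over $S=j(N)\subset N\times M$. Up to the sign convention, it is $(X_f,\xi_f)$ with $X_f\circ j=j_*v_f$ and $\xi_f|_S=(df+\ccc^*\beta_f,-\beta_f)$. In particular $j^*\xi_f=df$, so $\xi_f=\mathrm{pr}_N^*df+\zeta_f$ with $j^*\zeta_f=0$. This is a shift by an exact form plus a conormal term.

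The first step is to compute $\{g,h\}$ via these sections. We have $\{g,h\}=dg(v_h)=\langle e_h,(0,dg)\rangle$. The identity to establish is that $\llbracket (X_g,\xi_g),(X_h,\xi_h)\rrbracket_\eta$, restricted to $S$, is admissible data for $\{g,h\}$. More precisely, we want its vector part $[v_g,v_h]$ to be an admissible vector for $\{g,h\}$, and its $1$-form part to pull back under $j$ to $d\{g,h\}$ modulo the conormal terms. To check this, we use $j^*\eta=\psi-\ccc^*\phi=0$ exactly as in Theorem \ref{l5e5f2ec}. We also use Cartan calculus: the pullback of the $1$-form part is $\mathcal L_{v_g}dh-i_{v_h}d\,dg$ plus terms built from $j^*\zeta$. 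Here the key point is that $j^*\zeta=0$ and $j^*\eta=0$, so only $d(v_g h)=-d\{g,h\}$, up to sign, survives. Involutivity of $D$ then shows that the bracket lies in $D|_S$. This gives $v_{\{g,h\}}=\pm[v_g,v_h]$ up to an element of $L_N\cap\ccc^*L_M$. Such an element is invisible to $df$ for invariant $f$ by \eqref{1SC5LWG8H}.

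The Jacobi identity then follows formally. We have $\{f,\{g,h\}\}=df(v_{\{g,h\}})=\pm df([v_g,v_h])=\pm\bigl(v_g(v_h f)-v_h(v_g f)\bigr)$. Substituting $v_hf=\{f,h\}$ and $v_gf=\{f,g\}$ gives the identity $\{f,\{g,h\}\}=\pm(\{\{f,h\},g\}-\{\{f,g\},h\})$, which is the Jacobi identity once the sign is fixed correctly using antisymmetry from Lemma \ref{1SCDG4T4W}. Since the bracket is independent of the choice of admissible vector, we may use the particular representative $[v_g,v_h]$.

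The main obstacle is that $\beta_f$ and $v_f$ are defined only along $N$. Their extensions to $M$, and the extension of $v_f$ to a vector field $X_f$ on the product, must be chosen as sections of $D$ rather than merely of $TP\oplus T^*P$. The approach is to take any smooth extension of $e_f$ to a local section of $D$ near $S$, which exists because $D$ is a smooth subbundle. The bracket's restriction to $S$ depends on the extension only through terms whose pullback by $j$ vanishes. We verify this by the same tangency and conormality computation as in the proof of Theorem \ref{l5e5f2ec}, replacing the condition $j^*\xi_i=0$ there by $j^*\xi_i$ exact. Finally, we must check the sign of the twisting contribution $i_{X_2}i_{X_1}\eta$. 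Its pullback is $i_{v_h}i_{v_g}j^*\eta=0$, so it causes no trouble.
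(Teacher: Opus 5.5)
Your proposal is correct, and it takes a different route from the paper.

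\emph{The paper's route.} It works entirely on $N$. It sums the three cyclic identities $\langle\courant{a_i,a_j}_{\ccc^*\phi},a_k\rangle=0$ for $a_\ell=(v_\ell,df_\ell+\ccc^*\beta_\ell)\in\Gamma(L_N)$ and isolates the Jacobiator. The leftover terms in $\beta_\ell$ and $\ccc^*\phi$ are then cancelled by the same cyclic identity for extensions $(\tilde v_\ell,\tilde\beta_\ell)\in\Gamma(L_M)$ of $(\ccc_*v_\ell,\beta_\ell)$. Building $\tilde v_\ell$ that is $\ccc$-related to $v_\ell$ is the only place the inclusion hypothesis enters, and arbitrary base maps are handled afterwards by Morita transport (\Cref{1SC3P2RJ6,1SC5V2U32}).

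\emph{Your route.} You prove instead that admissible vectors reverse brackets. Take any extensions of $e_g,e_h$ to sections of $D=L_N\times(-L_M)$ near the graph $S$. Their Dorfman bracket lies in $D|_S$, has vector part $j_*[v_g,v_h]$, and its covector part pulls back to $\mathcal L_{v_g}dh=d(v_gh)$. Write this element as $\bigl((u,\ccc_*u),(\alpha,-\beta)\bigr)$ with $(u,\alpha)\in L_N$, $(\ccc_*u,\beta)\in L_M$ and $\alpha-\ccc^*\beta=d(v_gh)$. This is exactly \eqref{1SCJCKYC8} for $u=[v_g,v_h]$ and the function $v_gh=\{h,g\}$.

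\emph{What your route buys.} It avoids all cyclic bookkeeping. More importantly, $j\colon N\to N\times M$ is an embedding for every smooth $\ccc$, and $j^*\eta=0$ uses only $\psi=\ccc^*\phi$. So you never use that $\ccc$ is an inclusion, and your extension of $\beta_f$ to $M$ is superfluous. Your argument therefore proves the Jacobi identity for arbitrary base maps directly, making the Morita reduction of \S\ref{1SCEMXYJB} unnecessary for this step. The extension-dependence you flag as the main obstacle is harmless: only $[X_g,X_h]|_S$ and the pullback $j^*$ of the covector part enter.

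\emph{Two points to make explicit.}

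First, the sign is forced, not a convention. The computation gives that $[v_h,v_g]$ is admissible for $\{g,h\}$, hence
$\{f,\{g,h\}\}=\{\{f,g\},h\}-\{\{f,h\},g\}$,
which is the Jacobi identity after applying antisymmetry. The opposite sign would instead give $\{f,\{g,h\}\}=\{g,\{h,f\}\}+\{h,\{f,g\}\}$, and antisymmetry cannot repair that. So you should carry the sign you already computed, $d(v_gh)=-d\{g,h\}$, through to the end rather than writing $\pm$.

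Second, reading $v_g(\{f,h\})$ as $\{\{f,h\},g\}$ requires $\{f,h\}\in C^\infty(N)^\C$. The paper proves this just before \Cref{1SCR24WYK}, and you should cite it.
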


\begin{proof}
Write $a_i = (v_i,df_i+\ccc^*\beta_i)$, and similarly $a_j,a_k$, where $f_i = f$, $f_j = g$, $f_k = h$ and the $v_\ell,\beta_\ell$ are smooth by \Cref{1SCLV9K9Y}. The $\ccc^*\phi$-twisted Courant brackets are
\begin{align*}
\courant{a_i,a_j}_{\ccc^*\phi}
&= \bigl([v_i,v_j],\,\L_{v_i}\ccc^*\beta_j+d\iota_{v_i}df_j
-\iota_{v_j}d\ccc^*\beta_i+\iota_{v_j}\iota_{v_i}\ccc^*\phi\bigr),\\
\courant{a_j,a_k}_{\ccc^*\phi}
&= \bigl([v_j,v_k],\,\L_{v_j}\ccc^*\beta_k+d\iota_{v_j}df_k
-\iota_{v_k}d\ccc^*\beta_j+\iota_{v_k}\iota_{v_j}\ccc^*\phi\bigr),\\
\courant{a_k,a_i}_{\ccc^*\phi}
&= \bigl([v_k,v_i],\,\L_{v_k}\ccc^*\beta_i+d\iota_{v_k}df_i
-\iota_{v_i}d\ccc^*\beta_k+\iota_{v_i}\iota_{v_k}\ccc^*\phi\bigr).
\end{align*}
Isotropy of $L_N$ gives $\langle\courant{a_i,a_j}_{\ccc^*\phi},a_k\rangle = 0$ and its cyclic permutations:
\begin{align*}
0 &= df_k[v_i,v_j]+\ccc^*\phi(v_i,v_j,v_k)
+\bigl(\ccc^*\beta_k[v_i,v_j]+\ccc^*\beta_j[v_k,v_i]
+\ccc^*\beta_i[v_j,v_k]\bigr)\\
&\quad+(\L_{v_i}df_j)(v_k)
+\bigl(v_k\ccc^*\beta_i(v_j)+v_i\ccc^*\beta_j(v_k)
+v_j\ccc^*\beta_k(v_i)\bigr),\\
0 &= df_i[v_j,v_k]+\ccc^*\phi(v_j,v_k,v_i)
+\bigl(\ccc^*\beta_i[v_j,v_k]+\ccc^*\beta_k[v_i,v_j]
+\ccc^*\beta_j[v_k,v_i]\bigr)\\
&\quad+(\L_{v_j}df_k)(v_i)
+\bigl(v_i\ccc^*\beta_j(v_k)+v_j\ccc^*\beta_k(v_i)
+v_k\ccc^*\beta_i(v_j)\bigr),\\
0 &= df_j[v_k,v_i]+\ccc^*\phi(v_k,v_i,v_j)
+\bigl(\ccc^*\beta_j[v_k,v_i]+\ccc^*\beta_i[v_j,v_k]
+\ccc^*\beta_k[v_i,v_j]\bigr)\\
&\quad+(\L_{v_k}df_i)(v_j)
+\bigl(v_j\ccc^*\beta_k(v_i)+v_k\ccc^*\beta_i(v_j)
+v_i\ccc^*\beta_j(v_k)\bigr).
\end{align*}
Summing,
\begin{align*}
0 &= \bigl(df_k[v_i,v_j]+df_i[v_j,v_k]+df_j[v_k,v_i]\bigr)
+3\ccc^*\phi(v_i,v_j,v_k)\\
&\quad+3\bigl(\ccc^*\beta_k[v_i,v_j]+\ccc^*\beta_j[v_k,v_i]
+\ccc^*\beta_i[v_j,v_k]\bigr)\\
&\quad+\bigl((\L_{v_i}df_j)(v_k)+(\L_{v_j}df_k)(v_i)
+(\L_{v_k}df_i)(v_j)\bigr)\\
&\quad+3\bigl(v_k\ccc^*\beta_i(v_j)+v_i\ccc^*\beta_j(v_k)
+v_j\ccc^*\beta_k(v_i)\bigr).
\end{align*}
Using $df_k[v_i,v_j]+df_i[v_j,v_k]+df_j[v_k,v_i]
= 2\bigl(\{f_i,\{f_j,f_k\}\}+\mathrm{c.p.}\bigr)$ and
$(\L_{v_i}df_j)(v_k)+\mathrm{c.p.} = \{f_i,\{f_j,f_k\}\}+\mathrm{c.p.}$, and dividing by $3$,
\begin{align*}
\{f_i,\{f_j,f_k\}\}+\mathrm{c.p.}
&+\ccc^*\phi(v_i,v_j,v_k)
+\bigl(\ccc^*\beta_k[v_i,v_j]+\ccc^*\beta_j[v_k,v_i]
+\ccc^*\beta_i[v_j,v_k]\bigr)\\
&+\bigl(v_k\ccc^*\beta_i(v_j)+v_i\ccc^*\beta_j(v_k)
+v_j\ccc^*\beta_k(v_i)\bigr) = 0.
\end{align*}
It remains to show that
\[\ccc^*\phi(v_i,v_j,v_k)
+\bigl(\ccc^*\beta_k[v_i,v_j]+\ccc^*\beta_j[v_k,v_i]
+\ccc^*\beta_i[v_j,v_k]\bigr)
+\bigl(v_k\ccc^*\beta_i(v_j)+v_i\ccc^*\beta_j(v_k)
+v_j\ccc^*\beta_k(v_i)\bigr) = 0.\]
This follows from the isotropy of $L_M$, transported to $N$. Set $b_\ell = (\ccc_*v_\ell,\beta_\ell)\in\Gamma(L_M|_N)$ and extend each to a smooth section $(\tilde{v}_\ell,\tilde{\beta}_\ell)$ of $L_M$ over a neighbourhood $U$ of $N$ with $\tilde{v}_\ell|_N = \ccc_*v_\ell$ and $\tilde{\beta}_\ell|_N = \beta_\ell$. Isotropy of $L_M$ on $U$ gives
\begin{equation}\label{1SCKZZCES}
\phi(\tilde{v}_i,\tilde{v}_j,\tilde{v}_k)
+\bigl(\tilde{\beta}_k[\tilde{v}_i,\tilde{v}_j]
+\tilde{\beta}_j[\tilde{v}_k,\tilde{v}_i]
+\tilde{\beta}_i[\tilde{v}_j,\tilde{v}_k]\bigr)
+\bigl(\tilde{v}_i\tilde{\beta}_j(\tilde{v}_k)
+\tilde{v}_j\tilde{\beta}_k(\tilde{v}_i)
+\tilde{v}_k\tilde{\beta}_i(\tilde{v}_j)\bigr) = 0.
\end{equation}
By naturality of the Lie bracket \cite[Proposition 8.30]{lee2003smooth}, $[\tilde{v}_i,\tilde{v}_j]|_N = \ccc_*[v_i,v_j]$, so $\tilde{\beta}_k[\tilde{v}_i,\tilde{v}_j]|_N = \beta_k(\ccc_*[v_i,v_j])$, and likewise cyclically. Restricting \eqref{1SCKZZCES} to $N$ and using $\ccc_*v_\ell\,\beta_s(\ccc_*v_r) = v_\ell\,\ccc^*\beta_s(v_r)$ gives the claim.
\end{proof}

\begin{corollary}\label{1SCQQ6Q3H}
If $\ccc$ is an inclusion, $(C^\infty(N)^\C,\{\cdot,\cdot\})$ is a Poisson algebra.
\end{corollary}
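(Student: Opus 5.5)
The corollary collects results already proved, so the plan is to check the axioms of a Poisson algebra one at a time and cite the lemma that gives each. We take $\ccc$ to be an inclusion, as in the statement.

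First, $C^\infty(N)^\C$ is a commutative associative unital subalgebra of $C^\infty(N)$. This is immediate from \eqref{1SCUPN9J7}, because $\sss^*$ and $\ttt^*$ are algebra homomorphisms. Next, we record the properties of the bracket \eqref{1SC2RZH94} that are already established:
\begin{itemize}
\item it is well defined and independent of the choice of admissible pair, by \eqref{1SC5LWG8H};
\item it is bilinear;
\item admissible pairs can be chosen smooth, by \Cref{1SCLV9K9Y};
\item it takes values in $C^\infty(N)^\C$, by the arrow-transport argument preceding \Cref{1SCR24WYK}.
\end{itemize}
Together these show that $\{\cdot,\cdot\}$ is a bilinear operation on $C^\infty(N)^\C$, which is the almost Poisson algebra of \Cref{1SCR24WYK}.

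Then we invoke \Cref{1SCDG4T4W} for antisymmetry and for the Leibniz rule in the second slot. Antisymmetry transfers the Leibniz rule to the first slot, so $\{f,\cdot\}$ is a derivation of the product for every $f$. Finally, \Cref{1SCDYXFD7} gives the Jacobi identity under the inclusion hypothesis. With antisymmetry this makes $(C^\infty(N)^\C,\{\cdot,\cdot\})$ a Lie algebra whose bracket acts by derivations, that is, a Poisson algebra.

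The only point needing care is the order of the steps. The Jacobi identity has to be evaluated on $\{g,h\}$, so we must know that the bracket of two invariant functions is again invariant. Only then does $\{f,\{g,h\}\}$ make sense and admit smooth admissible pairs, as the proof of \Cref{1SCDYXFD7} requires. That is why closure is established before Jacobi is invoked. Everything else is direct citation.
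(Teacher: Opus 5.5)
Your proposal is correct and matches the paper. There the corollary is stated without a separate proof, as the immediate combination of the closure argument preceding \Cref{1SCR24WYK}, \Cref{1SCDG4T4W} (antisymmetry and Leibniz rule), and \Cref{1SCDYXFD7} (the Jacobi identity for inclusions). Your point that closure must come before Jacobi is well taken: the proof of \Cref{1SCDYXFD7} rewrites terms such as $df_k[v_i,v_j]$ via antisymmetry applied to $\{f_k,f_j\}$, which is only legitimate once $\{f_k,f_j\}$ is known to be invariant.
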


\begin{remark}\label{1SCVYWVJN}
Set
\[
L_N-\ccc^*L_M:=\{(v,\alpha):\exists\,\beta,\ (v,\alpha+\ccc^*\beta)\in L_N,\ (\ccc_*v,\beta)\in L_M\}.
\]
By \eqref{1SCJCKYC8}, $v_f$ is admissible for $f$ if and only if $(v_f,df)\in L_N-\ccc^*L_M$. Hence, whenever $L_N-\ccc^*L_M$ is smooth, the bracket \eqref{1SC2RZH94} agrees with the bracket on invariant functions of \textnormal{\cite[Corollary 4.8]{Maxence2025}}.
\end{remark}

\subsection{Morita invariance}\label{1SCD764G8}
We now show that the almost Poisson algebra of invariant functions from \Cref{1SCR24WYK} depends only on the coisotropic Morita equivalence class. This is the algebraic counterpart of \cite[\S10]{Maxence2025}. It will also allow us to extend \Cref{1SCDYXFD7} from inclusions to arbitrary base maps.

\begin{theorem}\label{1SC5V2U32}
Let $\ccc_1\colon(\C_1\tto N_1,L_{N_1})\to(\G_1,\omega_1,\phi_1)$ and $\ccc_2\colon(\C_2\tto N_2,L_{N_2})\to(\G_2,\omega_2,\phi_2)$ be coisotropic Morita equivalent $1$-shifted coisotropic structures. Then there is an isomorphism of almost Poisson algebras
\[
\Psi\colon(C^\infty(N_1)^{\C_1},\{\cdot,\cdot\}_1)\to
(C^\infty(N_2)^{\C_2},\{\cdot,\cdot\}_2).
\]
\end{theorem}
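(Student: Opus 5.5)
The plan is to transport invariant functions through the Morita morphisms $\psi_i\colon\mathcal K\to\C_i$. Since each $\psi_i$ is a Morita morphism, its base map $\psi_i\colon\mathcal K^0\to N_i$ is a surjective submersion. Pullback $\psi_i^*$ is then an algebra isomorphism $C^\infty(N_i)^{\C_i}\to C^\infty(\mathcal K^0)^{\mathcal K}$. This is the standard fact that invariant functions are functions on the orbit space, which is preserved by Morita morphisms: injectivity comes from surjectivity of the base map, and surjectivity comes from descending a $\mathcal K$-invariant function along the submersion, using that $\mathcal K\cong\psi_i^*\C_i$ is the pullback groupoid. We set $\Psi:=(\psi_2^*)^{-1}\circ\psi_1^*$. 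It then remains to show that $\Psi$ intertwines the brackets, i.e.\ that $\psi_1^*\{f_1,g_1\}_1=\psi_2^*\{f_2,g_2\}_2$ whenever $\psi_1^*f_1=\psi_2^*f_2=:F$ and $\psi_1^*g_1=\psi_2^*g_2=:G$.

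To compare brackets, work at a point $k\in\mathcal K^0$, with $x_i=\psi_i(k)$, $y=h(k)\in\L^0$ and $m_i=\ccc_i(x_i)$. Take admissible data $(v_2,\beta_2)$ for $g_2$ at $x_2$, so that $(v_2,dg_2+\ccc_2^*\beta_2)\in L_{N_2}$ and $(\ccc_{2*}v_2,\beta_2)\in L_{M_2}$. The aim is to produce admissible data for $g_1$ at $x_1$ whose vector $v_1$ satisfies $df_1(v_1)=df_2(v_2)$. The steps are as follows.

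First, lift $v_2$ to some $w\in T_k\mathcal K^0$ with $\psi_{2*}w=v_2$. Then $(w,dG+\psi_2^*\ccc_2^*\beta_2)\in\psi_2^*L_{N_2}=\psi_1^*L_{N_1}+\Gamma_\delta$, using \eqref{1SCU7G6KZ}.

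Second, use the natural transformations $\theta_i$ to rewrite $\ccc_i\circ\psi_i$ as $\varphi_i\circ h$ up to the arrows $\theta_i(k)$. Transport the covector $\beta_2$ from $T^*_{m_2}M_2$ to $T^*_{\varphi_2(y)}M_2$ via the compatibility $\ttt^*L_{M_2}=\sss^*L_{M_2}+\Gamma_{\omega_2}$ of \cite[Proposition 3.6]{Maxence2025}. This is applied to a tangent vector to $\G_2$ at $\theta_2(k)$ chosen by lifting. The transport produces the correction term $\theta_2^*\omega_2$.

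Third, pass from $L_{M_2}$ to $L_{M_1}$ on $\L^0$ by \Cref{1SCT66PWH}, which yields $-h^*\gamma$. Then transport back along $\theta_1$, which gives $\theta_1^*\omega_1$.

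These three corrections add up exactly to $\delta$, by the definition of the connecting $2$-form in \eqref{1SCU7G6KZ}. Combining them with the first step, one obtains $\beta_1\in T^*_{m_1}M_1$ and $w'\in T_k\mathcal K^0$ with $(\psi_{1*}w',dg_1+\ccc_1^*\beta_1)\in L_{N_1}$ and $(\ccc_{1*}\psi_{1*}w',\beta_1)\in L_{M_1}$. Here $w'$ differs from $w$ by a vector that is killed by $dF$ modulo admissibility ambiguities. Concretely, one should be able to choose all lifts so that $w'=w$. Then $\{f_1,g_1\}_1(x_1)=df_1(\psi_{1*}w)=dF(w)=df_2(v_2)=\{f_2,g_2\}_2(x_2)$. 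The independence of the bracket from the choice of admissible vector, established right after \eqref{1SC2RZH94}, absorbs any remaining freedom.

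The main obstacle is the bookkeeping in the second and third steps. One must choose tangent vectors on $\G_i$ at $\theta_i(k)$ and on $\L^0$ compatibly with a single $w$, so that the pullbacks $\theta_i^*\omega_i$ and $h^*\gamma$ are all evaluated on the same $w$ and the resulting covectors match. I would first try to do the whole computation in the covector identity $dG+\psi_2^*\ccc_2^*\beta_2-\iota_w\delta\in\psi_1^*(\cdot)$ on $\mathcal K^0$. This means choosing $\tilde w_i=\theta_{i*}w\in T\G_i$ directly, which automatically satisfy $\sss_*\tilde w_i=\ccc_{i*}\psi_{i*}w$ and $\ttt_*\tilde w_i=\varphi_{i*}h_*w$. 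This is a pointwise linear-algebra check, and it avoids the need for smooth extensions.
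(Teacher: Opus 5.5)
Your proposal is correct and is essentially the paper's proof: your $\Psi=(\psi_2^*)^{-1}\circ\psi_1^*$ is the paper's map, and \Cref{1SC6XHMFW} carries out your pointwise computation on a single lift $\tilde u$, using the coisotropic Morita condition, the groupoid compatibilities along $\theta_{1*}\tilde u$ and $\theta_{2*}\tilde u$, and \Cref{1SCT66PWH}; the paper transports admissible vectors from $N_1$ to $N_2$, the opposite direction to yours. With the choice $\tilde w_i=\theta_{i*}w$ that you settle on at the end, one gets $w'=w$ exactly, so your hedge about $w'$ differing from $w$ is unnecessary.
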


Let $\K\tto P$ be the Lie groupoid of \eqref{1SCT3HPM7}, with Morita morphisms $\psi_1\colon\K\to\C_1$ and $\psi_2\colon\K\to\C_2$. Define $\Psi\colon C^\infty(N_1)^{\C_1}\to C^\infty(N_2)^{\C_2}$ by
\[ \Psi(f)(n_2) := f(\psi_1(z)),\quad z\in\psi_2^{-1}(n_2).
\]
\Crefrange{1SCXJU3SK}{1SCRHRUSQ} below show that $\Psi$ is a well-defined isomorphism of almost Poisson algebras; together they prove \Cref{1SC5V2U32}.

\begin{lemma}\label{1SCXJU3SK}
For each $f\in C^\infty(N_1)^{\C_1}$, the function $\Psi(f)$ is well defined, smooth and $\C_2$-invariant.
\end{lemma}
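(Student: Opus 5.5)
We prove the three properties in turn, using only that $\psi_1,\psi_2\colon\K\to\C_i$ are Morita morphisms. Recall that a Morita morphism is fully faithful and essentially surjective. Concretely, its base map $\psi_i\colon P\to N_i$ is a surjective submersion, and the square
\[
\K\to(P\times P)\times_{N_i\times N_i}\C_i
\]
is a diffeomorphism. In particular, for $z,z'\in P$, arrows $z'\to z$ in $\K$ correspond bijectively to arrows $\psi_i(z')\to\psi_i(z)$ in $\C_i$.

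\emph{Well-definedness.} Let $z,z'\in\psi_2^{-1}(n_2)$. Applying full faithfulness of $\psi_2$ to the unit arrow $\mathsf 1_{n_2}$, we obtain an arrow $k\in\K$ from $z'$ to $z$ with $\psi_2(k)=\mathsf 1_{n_2}$. Then $\psi_1(k)$ is an arrow of $\C_1$ from $\psi_1(z')$ to $\psi_1(z)$. Since $f$ is $\C_1$-invariant, $f(\psi_1(z))=f(\psi_1(z'))$. Nonemptiness of the fibre $\psi_2^{-1}(n_2)$ follows from surjectivity of the base map of $\psi_2$.

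\emph{Smoothness.} The base map $\psi_2\colon P\to N_2$ is a surjective submersion, so it admits smooth local sections $\tau\colon V\to P$. On $V$ we then have $\Psi(f)=f\circ\psi_1\circ\tau$, which is smooth. Equivalently, $\psi_2^*\Psi(f)=\psi_1^*f$ is smooth and constant on the fibres of $\psi_2$.

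\emph{$\C_2$-invariance.} Let $g\colon n\to n'$ be an arrow of $\C_2$. Choose $z\in\psi_2^{-1}(n)$ and $z'\in\psi_2^{-1}(n')$. By full faithfulness, $g=\psi_2(k)$ for some arrow $k\colon z\to z'$ in $\K$. Then $\psi_1(k)$ is an arrow $\psi_1(z)\to\psi_1(z')$, and invariance of $f$ gives $\Psi(f)(n)=f(\psi_1 z)=f(\psi_1 z')=\Psi(f)(n')$.

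The only delicate point is this use of full faithfulness, that is, the pullback description of Morita morphisms. Once that is in place, all three claims are immediate.
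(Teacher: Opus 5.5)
Your proposal is correct and follows essentially the same route as the paper: it uses the pullback description of the Morita morphism $\psi_2$ to lift arrows of $\C_2$ (including units) to arrows of $\K$ between prescribed points of $P$, deduces well-definedness and $\C_2$-invariance from $\C_1$-invariance of $f$ via $\psi_1$, and gets smoothness from local sections of the surjective submersion $\psi_2\colon P\to N_2$. Your reading of ``Morita morphism'' as surjective submersion on bases plus pullback matches the paper's convention from \cite[Definition 5.1]{Maxence2025}, which is what makes the nonemptiness and local-section steps valid.
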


\begin{proof}
Since $\psi_2$ is a Morita morphism, $\psi_2\colon P\to N_2$ is a surjective submersion and $\K$ is the pullback of $\C_2$ along it \cite[Definition 5.1]{Maxence2025}. Hence, for $z,z'\in P$ and $\eta\in\C_2$ with $\sss_2(\eta)=\psi_2(z)$ and $\ttt_2(\eta)=\psi_2(z')$, there is $k\in\K$ with \begin{equation}\label{1SCKKKRFG}
   \sss(k)=z,\quad \ttt(k)=z',\quad \psi_2(k)=\eta .
   \end{equation}

Let $n_2\in N_2$ and $z,z'\in\psi_2^{-1}(n_2)$, which is nonempty. By \eqref{1SCKKKRFG} with $\eta=1_{n_2}$ there is $k\in\K$ from $z$ to $z'$, and $\gamma:=\psi_1(k)\in\C_1$ satisfies $\sss_1(\gamma)=\psi_1(z)$, $\ttt_1(\gamma)=\psi_1(z')$. Since $\sss_1^* f = \ttt_1^* f$,
\[
f(\psi_1(z')) = f(\ttt_1(\gamma)) = f(\sss_1(\gamma)) = f(\psi_1(z)),
\]
so $\Psi(f)(n_2)$ does not depend on $z$.

For smoothness, let $U \subset N_2$ be an open set over which $\psi_2$ admits a smooth section $\varphi\colon U \to P$. Then $\Psi(f)|_U = f \circ \psi_1 \circ \varphi$ is smooth, and $N_2$ is covered by such open sets.

For $\C_2$-invariance, let $\eta \in \C_2$ with $\sss_2(\eta) = n_2$ and $\ttt_2(\eta) = n_2'$, and choose $z\in\psi_2^{-1}(n_2)$, $z'\in\psi_2^{-1}(n_2')$. By \eqref{1SCKKKRFG} there is $k\in\K$ from $z$ to $z'$, and as above $f(\psi_1(z'))=f(\psi_1(z))$, that is, $\Psi(f)(n_2') = \Psi(f)(n_2)$. So $\sss_2^*\Psi(f) = \ttt_2^*\Psi(f)$.
\end{proof}

\begin{lemma}\label{1SCZJMN36}
The map $\Psi$ is a unital algebra homomorphism.
\end{lemma}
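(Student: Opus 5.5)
The plan is to read off the homomorphism property from the defining formula $\Psi(f)(n_2)=f(\psi_1(z))$, with $z\in\psi_2^{-1}(n_2)$, using the well-definedness already established in \Cref{1SCXJU3SK}. The key freedom is that the value does not depend on which $z$ in the fibre is chosen.

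For each $n_2\in N_2$ I will fix one point $z\in\psi_2^{-1}(n_2)$ and use this same $z$ for every function involved. For $f,g\in C^\infty(N_1)^{\C_1}$ and $\lambda\in\R$, evaluating at $z$ gives
\[
\Psi(f+\lambda g)(n_2)=f(\psi_1(z))+\lambda g(\psi_1(z))=\Psi(f)(n_2)+\lambda\Psi(g)(n_2),
\qquad
\Psi(fg)(n_2)=\Psi(f)(n_2)\,\Psi(g)(n_2).
\]
Since invariant functions are closed under sums and products, $f+\lambda g$ and $fg$ lie in $C^\infty(N_1)^{\C_1}$, so $\Psi$ is defined on them. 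The constant function $1$ is invariant, and $\Psi(1)=1$ is immediate.

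An equivalent formulation, which may be convenient later, is that $\Psi$ is characterized by $\psi_2^*\Psi(f)=\psi_1^*f$ on $P$. Indeed, $\Psi(f)$ is the unique function whose pullback along the surjective submersion $\psi_2$ is $\psi_1^*f$. Pullback along smooth maps is a unital algebra homomorphism, and $\psi_2^*$ is injective because $\psi_2$ is surjective. Together these facts give the result directly.

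There is no real obstacle here. The only point needing care is that every evaluation uses the same fibre point $z$, and this is legitimate precisely because of the independence proved in \Cref{1SCXJU3SK}.
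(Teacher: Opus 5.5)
Your proposal is correct and is the same pointwise argument as the paper, whose proof just says the claim is immediate from the definition of $\Psi$ and the pointwise algebra structure, resting on the well-definedness from \Cref{1SCXJU3SK}. Your reformulation $\psi_2^*\Psi(f)=\psi_1^*f$, with $\psi_2^*$ injective because $\psi_2$ is surjective, is an equivalent way of saying the same thing.
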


\begin{proof}
Immediate from the definition of $\Psi$ and the pointwise algebra structure on $C^\infty(N_2)^{\C_2}$.
\end{proof}

\begin{lemma}\label{1SC7XJ6UF}
The map $\Psi$ is bijective.
\end{lemma}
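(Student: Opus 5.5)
The plan is to exhibit an explicit inverse by symmetry of the construction. Define $\Psi'\colon C^\infty(N_2)^{\C_2}\to C^\infty(N_1)^{\C_1}$ by
\[
\Psi'(g)(n_1):=g(\psi_2(z)),\quad z\in\psi_1^{-1}(n_1).
\]
The proof of \Cref{1SCXJU3SK} uses only two facts: $\psi_2$ is a Morita morphism, and $\psi_1$ is a groupoid morphism. Both roles are played symmetrically in the diagram \eqref{1SCT3HPM7}, since $\psi_1$ is also a Morita morphism. So the same argument with indices exchanged shows that $\Psi'(g)$ is well defined, smooth and $\C_1$-invariant. It remains to check that $\Psi'\circ\Psi=\id$ and $\Psi\circ\Psi'=\id$.

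For $\Psi'\circ\Psi$, take $n_1\in N_1$ and $z\in\psi_1^{-1}(n_1)$. Evaluating $\Psi(f)$ at $\psi_2(z)$, we may use the same point $z\in\psi_2^{-1}(\psi_2(z))$ as representative, because $\Psi(f)$ is independent of the choice of representative by \Cref{1SCXJU3SK}. This gives
\[
\Psi'(\Psi(f))(n_1)=\Psi(f)(\psi_2(z))=f(\psi_1(z))=f(n_1).
\]
The identity $\Psi\circ\Psi'=\id$ follows by the same computation with indices exchanged.

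The only point needing care is the symmetric use of \Cref{1SCXJU3SK}. Its proof relied on the pullback property \eqref{1SCKKKRFG} for $\psi_2$ and on surjectivity of $\psi_2$ on objects. I will note that both properties hold for $\psi_1$ by \cite[Definition 5.1]{Maxence2025}. The symplectic and Dirac data play no role in this lemma. Granting that, bijectivity is immediate.
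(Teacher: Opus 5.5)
Your proposal is correct and follows the paper's proof essentially verbatim. You use the same inverse $\Psi'$, justified by \Cref{1SCXJU3SK} with the roles of $\C_1$ and $\C_2$ exchanged (legitimate since $\psi_1$ is also a Morita morphism), and the same one-line evaluation with a common representative $z$; the only cosmetic difference is that you check $\Psi'\circ\Psi=\id$ explicitly and get $\Psi\circ\Psi'=\id$ by symmetry, whereas the paper does the reverse.
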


\begin{proof}
Define $\Psi'\colon C^\infty(N_2)^{\C_2}\to C^\infty(N_1)^{\C_1}$ by $\Psi'(g)(n_1) := g(\psi_2(z))$ for $z\in\psi_1^{-1}(n_1)$, well defined by  \Cref{1SCXJU3SK} with the roles of $\C_1$ and $\C_2$ exchanged. For $g\in C^\infty(N_2)^{\C_2}$ and $n_2\in N_2$, choose $z\in\psi_2^{-1}(n_2)$. Then
\[
(\Psi\circ\Psi')(g)(n_2) = \Psi'(g)(\psi_1(z)) = g(\psi_2(z)) = g(n_2).
\]
The identity $\Psi'\circ\Psi = \id$ follows by the same computation with $\C_1$ and $\C_2$ exchanged, so $\Psi' = \Psi^{-1}$.
\end{proof}

The bracket is preserved because admissible vectors can be transported through $\K$. This is where the connecting $2$-form and  \Cref{1SCT66PWH} enter.

\begin{lemma}\label{1SC6XHMFW}
Let $n_2\in N_2$, $z\in\psi_2^{-1}(n_2)$, $n_1 := \psi_1(z)$, and $g\in C^\infty(N_1)^{\C_1}$. If $v_g\in T_{n_1}N_1$ satisfies condition \eqref{1SCJCKYC8} for $g$, then for every $\tilde{u}\in T_zP$ with $\psi_{1*}(\tilde{u}) = v_g$, the vector $w := \psi_{2*}(\tilde{u})$ satisfies \eqref{1SCJCKYC8} for $\Psi(g)$.
\end{lemma}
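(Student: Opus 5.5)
We transport the two conditions in \eqref{1SCJCKYC8} separately through the Morita data, using \eqref{1SCU7G6KZ} for the condition in $L_{N_i}$ and \Cref{1SCT66PWH} for the condition in $L_{M_i}$. The connecting $2$-form $\delta$ is the bridge between them. Write $\Psi(g)\circ\psi_2=g\circ\psi_1$ on $P$; this holds by definition of $\Psi$. Write $\ell:=h|_P\colon P\to\L^0$. Let $\vartheta_i\colon P\to\G_i$ be the base components of the natural transformations $\theta_i$, so that $\vartheta_i(z)$ is an arrow from $\ccc_i\psi_1(z)$ (resp.\ $\ccc_2\psi_2(z)$) to $\varphi_i\ell(z)$, and $\theta_i^*\omega_i=\vartheta_i^*\omega_i$.

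\textbf{Step 1: pull back along $\psi_1$.} Since $(v_g,dg+\ccc_1^*\beta_g)\in L_{N_1}$ and $\psi_{1*}\tilde u=v_g$, we get
\[
(\tilde u,\ d(g\circ\psi_1)+\psi_1^*\ccc_1^*\beta_g)\in\psi_1^*L_{N_1}.
\]
By \eqref{1SCU7G6KZ},
\[
\bigl(\tilde u,\ d(\Psi(g)\circ\psi_2)+\psi_1^*\ccc_1^*\beta_g+\iota_{\tilde u}\delta\bigr)\in\psi_2^*L_{N_2}.
\]
The goal is therefore to rewrite the covector $\psi_1^*\ccc_1^*\beta_g+\iota_{\tilde u}\delta$ as $\psi_2^*\ccc_2^*\beta'$ for some $\beta'$ with $(\ccc_{2*}w,\beta')\in L_{M_2}$. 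Given this, the element above lies in $\psi_2^*L_{N_2}$ with covector $\psi_2^*(d\Psi(g)+\ccc_2^*\beta')$. Since $\psi_2$ is a submersion, backward image along it means exactly that $(w,d\Psi(g)+\ccc_2^*\beta')\in L_{N_2}$, which is \eqref{1SCJCKYC8} for $\Psi(g)$ at $n_2$.

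\textbf{Step 2: move the $L_{M_1}$-element along the Morita data.} Start from $(\ccc_{1*}v_g,\beta_g)\in L_{M_1}$. Use the compatibility of \cite[Proposition 3.6]{Maxence2025} for $\G_1$, namely $\ttt^*L_{M_1}=\sss^*L_{M_1}+\Gamma_{\omega_1}$, applied to $\vartheta_{1*}\tilde u$. This produces $\beta_1$ with
\[
(\varphi_{1*}\ell_*\tilde u,\beta_1)\in L_{M_1},\qquad \vartheta_1^*\ttt^*\beta_1=\psi_1^*\ccc_1^*\beta_g+\iota_{\tilde u}\vartheta_1^*\omega_1.
\]
Here one uses $\ttt\vartheta_1=\varphi_1\ell$ and $\sss\vartheta_1=\ccc_1\psi_1$. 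Next, \Cref{1SCT66PWH} gives $\beta_2$ with $(\varphi_{2*}\ell_*\tilde u,\beta_2)\in L_{M_2}$ and $\ell^*\varphi_1^*\beta_1=\ell^*\varphi_2^*\beta_2+\iota_{\tilde u}\ell^*\gamma$. Finally, apply the $\G_2$ compatibility in reverse along $\vartheta_2$, from $\varphi_2\ell(z)$ back to $\ccc_2\psi_2(z)$. This produces $\beta'$ with $(\ccc_{2*}w,\beta')\in L_{M_2}$ and
\[
\psi_2^*\ccc_2^*\beta'=\ell^*\varphi_2^*\beta_2-\iota_{\tilde u}\vartheta_2^*\omega_2.
\]

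\textbf{Step 3: combine.} Chaining the three identities gives
\[
\psi_2^*\ccc_2^*\beta'=\psi_1^*\ccc_1^*\beta_g+\iota_{\tilde u}\bigl(\vartheta_1^*\omega_1-h^*\gamma-\vartheta_2^*\omega_2\bigr)=\psi_1^*\ccc_1^*\beta_g+\iota_{\tilde u}\delta,
\]
which is exactly what Step 1 requires.

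The main obstacle is Step 2. One has to check that the backward-image statements really pin down the covectors at the level of pullbacks to $T_zP$, and that the signs and orientations of $\theta_i$ match the definition of $\delta$. In particular, the pullback of $\ttt^*L_{M_i}=\sss^*L_{M_i}+\Gamma_{\omega_i}$ along $\vartheta_i$ yields identities only after composing with $\vartheta_{i*}$. To extract genuine covectors $\beta_1,\beta'$ on $M_i$ rather than merely pulled-back ones, I would first try lifting $\tilde u$ to the arrow $\vartheta_i(z)$ and working pointwise there. If that fails, I would replace $\tilde u$ by the pair of vectors at the source and target of $\vartheta_i(z)$, as in the subgroupoid argument in the proof of \Cref{l5e5f2ec}.
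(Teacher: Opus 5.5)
Your proposal is correct and matches the paper's proof step for step. You pull back along $\psi_1$ and apply \eqref{1SCU7G6KZ}. You then transport $(\ccc_{1*}v_g,\beta_g)$ forward through the $\G_1$ compatibility at $\theta_1(z)$, then through \Cref{1SCT66PWH}, then backward through the $\G_2$ compatibility at $\theta_2(z)$. You conclude, as the paper does, from the injectivity of $\psi_2^*$.

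The worry in your last paragraph is unfounded. Apply the compatibility at the single arrow $\vartheta_i(z)$ to the vector $\vartheta_{i*}\tilde u$. The definition of the backward image then directly gives genuine covectors $\beta_1\in T^*M_1$ and $\beta'\in T^*M_2$, with identities holding on $T_{\vartheta_i(z)}\G_i$. Pulling these back by $\vartheta_i$ yields your identities on $T_zP$, which is exactly what the paper does.
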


\begin{proof}
Let $\beta_g\in T^*_{\ccc_1(n_1)}M_1$ be given by \eqref{1SCJCKYC8}:
\begin{equation}\label{1SCRGE77K}
(v_g,\,dg+\ccc_1^*\beta_g)\in L_{N_1},\quad
(\ccc_{1*}v_g,\,\beta_g)\in L_{M_1}.
\end{equation}
Differentiating $\Psi(g)\circ\psi_2 = g\circ\psi_1$ gives
\begin{equation}\label{1SCVPX3CR}
\psi_2^*\,d\Psi(g) = \psi_1^*\,dg.
\end{equation}
We construct $\beta'\in T^*_{\ccc_2(n_2)}M_2$ with
\begin{equation}\label{1SCRT3ZWU}
(w,\,d\Psi(g)+\ccc_2^*\beta')\in L_{N_2},\quad
(\ccc_{2*}w,\,\beta')\in L_{M_2}.
\end{equation}
From \eqref{1SCRGE77K}, $(\tilde{u},\psi_1^*(dg+\ccc_1^*\beta_g))\in\psi_1^*L_{N_1}$. By \eqref{1SCU7G6KZ},
$(\tilde{u},\,\psi_1^*(dg+\ccc_1^*\beta_g)+\iota_{\tilde{u}}\delta)\in\psi_2^*L_{N_2}$,
so there is $\alpha\in T^*_{n_2}N_2$ with
\begin{equation}\label{1SCHB5SCS}
(w,\alpha)\in L_{N_2},\quad
\psi_2^*\alpha = \psi_1^*(dg+\ccc_1^*\beta_g)+\iota_{\tilde{u}}\delta.
\end{equation}
Substituting \eqref{1SCVPX3CR} into the second identity of \eqref{1SCHB5SCS},
\begin{equation}\label{1SC2UQLKP}
\psi_2^*(\alpha-d\Psi(g)) = \psi_1^*(\ccc_1^*\beta_g)+\iota_{\tilde{u}}\delta.
\end{equation}
Recall that $\delta = -h^*\gamma+\theta_1^*\omega_1-\theta_2^*\omega_2$, with
\begin{equation}\label{1SCTYPD8Y}
\sss_i\circ\theta_i = \ccc_i\circ\psi_i,\quad
\ttt_i\circ\theta_i = \varphi_i\circ h.
\end{equation}
Since $(\ccc_{1*}v_g,\beta_g)\in L_{M_1}$ and $\sss_{1*}(\theta_{1*}(\tilde{u})) = \ccc_{1*}v_g$, we have $(\theta_{1*}(\tilde{u}),\sss_1^*\beta_g)\in\sss_1^*L_{M_1}$. By the compatibility condition of \cite[Proposition 3.6]{Maxence2025} for $\G_1$ we have
\begin{equation}\label{1SCKQBADU}
\bigl(\theta_{1*}(\tilde{u}),\,
\sss_1^*\beta_g+\iota_{\theta_{1*}(\tilde{u})}\omega_1\bigr)
\in\ttt_1^*L_{M_1},
\end{equation}
so there is $\beta_1$ with $(\ttt_{1*}(\theta_{1*}(\tilde{u})),\beta_1)\in L_{M_1}$ and $\ttt_1^*\beta_1 = \sss_1^*\beta_g+\iota_{\theta_{1*}(\tilde{u})}\omega_1$. Since $\ttt_{1*}(\theta_{1*}(\tilde{u})) = \varphi_{1*}(h_*(\tilde{u}))$,
\begin{equation}\label{1SCGHPXFF}
(h_*(\tilde{u}),\,\varphi_1^*\beta_1)\in\varphi_1^*L_{M_1}.
\end{equation}
\Cref{1SCT66PWH} gives $\varphi_1^*L_{M_1} = \varphi_2^*L_{M_2}+\Gamma_\gamma$, so $(h_*(\tilde{u}),\varphi_1^*\beta_1-\iota_{h_*(\tilde{u})}\gamma)\in\varphi_2^*L_{M_2}$.
Hence there is $\beta_2$ with $(\varphi_{2*}(h_*(\tilde{u})),\beta_2)\in L_{M_2}$ and
\begin{equation}\label{1SCQ7JG7V}
\varphi_2^*\beta_2 = \varphi_1^*\beta_1-\iota_{h_*(\tilde{u})}\gamma.
\end{equation}
By \eqref{1SCTYPD8Y}, $\varphi_{2*}(h_*(\tilde{u})) = \ttt_{2*}(\theta_{2*}(\tilde{u}))$, so $(\theta_{2*}(\tilde{u}),\ttt_2^*\beta_2)\in\ttt_2^*L_{M_2}$.  For $\G_2$, we have  $(\theta_{2*}(\tilde{u}),\ttt_2^*\beta_2-\iota_{\theta_{2*}(\tilde{u})}\omega_2)\in\sss_2^*L_{M_2}$. Since $\sss_{2*}(\theta_{2*}(\tilde{u})) = \ccc_{2*}w$, there is $\beta'$ with $(\ccc_{2*}w,\beta')\in L_{M_2}$ and
\begin{equation}\label{1SCNHBDA2}
\sss_2^*\beta' = \ttt_2^*\beta_2-\iota_{\theta_{2*}(\tilde{u})}\omega_2.
\end{equation}
It remains to verify $\psi_2^*(\ccc_2^*\beta') = \psi_1^*(\ccc_1^*\beta_g)+\iota_{\tilde{u}}\delta$. By \eqref{1SCTYPD8Y}, $\psi_2^*(\ccc_2^*\beta') = \theta_2^*(\sss_2^*\beta')$, and applying \eqref{1SCNHBDA2}, \eqref{1SCQ7JG7V}, \eqref{1SCTYPD8Y} and \eqref{1SCKQBADU} in turn,
\begin{align*}
\theta_2^*(\sss_2^*\beta')
&= \theta_2^*(\ttt_2^*\beta_2)-\iota_{\tilde{u}}(\theta_2^*\omega_2)\\
&= (\varphi_2\circ h)^*\beta_2-\iota_{\tilde{u}}(\theta_2^*\omega_2)\\
&= h^*\!\bigl(\varphi_1^*\beta_1-\iota_{h_*(\tilde{u})}\gamma\bigr)
-\iota_{\tilde{u}}(\theta_2^*\omega_2)\\
&= (\varphi_1\circ h)^*\beta_1-\iota_{\tilde{u}}(h^*\gamma)
-\iota_{\tilde{u}}(\theta_2^*\omega_2)\\
&= \theta_1^*(\ttt_1^*\beta_1)-\iota_{\tilde{u}}(h^*\gamma)
-\iota_{\tilde{u}}(\theta_2^*\omega_2)\\
&= \theta_1^*(\sss_1^*\beta_g)+\iota_{\tilde{u}}(\theta_1^*\omega_1)
-\iota_{\tilde{u}}(h^*\gamma)-\iota_{\tilde{u}}(\theta_2^*\omega_2)\\
&= \psi_1^*(\ccc_1^*\beta_g)+\iota_{\tilde{u}}\delta.
\end{align*}
Since $\psi_2$ is a submersion, \eqref{1SC2UQLKP} gives $\alpha = d\Psi(g)+\ccc_2^*\beta'$. With $(w,\alpha)\in L_{N_2}$ and $(\ccc_{2*}w,\beta')\in L_{M_2}$, this is \eqref{1SCRT3ZWU}.
\end{proof}

\begin{lemma}\label{1SCRHRUSQ}
$\Psi(\{f,g\}_1) = \{\Psi(f),\Psi(g)\}_2$ for all $f,g\in C^\infty(N_1)^{\C_1}$.
\end{lemma}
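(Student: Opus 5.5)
The proof is a pointwise check at an arbitrary $n_2\in N_2$, using Lemma \ref{1SC6XHMFW} to transport admissible vectors. Fix $f,g\in C^\infty(N_1)^{\C_1}$ and $n_2\in N_2$. Choose $z\in\psi_2^{-1}(n_2)$, which exists because $\psi_2\colon P\to N_2$ is a surjective submersion, and set $n_1:=\psi_1(z)$.

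By the discussion following Lemma \ref{1SCLV9K9Y}, there is an admissible vector $v_g\in T_{n_1}N_1$ for $g$, that is, one satisfying \eqref{1SCJCKYC8}. Since $\psi_1$ is also a Morita morphism, its base map $\psi_1\colon P\to N_1$ is a submersion. Hence there is $\tilde u\in T_zP$ with $\psi_{1*}\tilde u=v_g$. Lemma \ref{1SC6XHMFW} then says that $w:=\psi_{2*}\tilde u$ is admissible for $\Psi(g)$ at $n_2$.

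The bracket \eqref{1SC2RZH94} is independent of the admissible vector chosen, as shown right after its definition. We may therefore compute
\[
\{\Psi(f),\Psi(g)\}_2(n_2)=d\Psi(f)_{n_2}(w)=\bigl(\psi_2^*d\Psi(f)\bigr)_z(\tilde u).
\]
Differentiating $\Psi(f)\circ\psi_2=f\circ\psi_1$ gives $\psi_2^*d\Psi(f)=\psi_1^*df$; this is \eqref{1SCVPX3CR} with $g$ replaced by $f$. Substituting,
\[
\{\Psi(f),\Psi(g)\}_2(n_2)=df_{n_1}(\psi_{1*}\tilde u)=df_{n_1}(v_g)=\{f,g\}_1(n_1).
\]

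To conclude, note that $\{f,g\}_1$ lies in $C^\infty(N_1)^{\C_1}$, as shown before Definition \ref{1SCR24WYK}. So $\Psi(\{f,g\}_1)$ is defined, and by the definition of $\Psi$ its value at $n_2$ is $\{f,g\}_1(\psi_1(z))=\{f,g\}_1(n_1)$. This equals $\{\Psi(f),\Psi(g)\}_2(n_2)$, and since $n_2$ was arbitrary the two functions coincide.

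The only substantive input is Lemma \ref{1SC6XHMFW}. The one remaining point to check is that the base map of $\psi_1$ is a submersion, which is part of the definition of a Morita morphism \cite[Definition 5.1]{Maxence2025}; it is needed to lift $v_g$ to $\tilde u$. Everything else is pullback bookkeeping together with the choice-independence of the bracket.
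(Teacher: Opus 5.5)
Your proposal is correct and follows the paper's proof essentially step for step. You pick $z\in\psi_2^{-1}(n_2)$, lift an admissible $v_g$ to $\tilde u$, transport it by Lemma \ref{1SC6XHMFW}, and use $\psi_2^*d\Psi(f)=\psi_1^*df$. You also make explicit two points the paper leaves implicit: that the base map of $\psi_1$ is a submersion, and that $\{f,g\}_1$ is invariant.
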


\begin{proof}
Fix $n_2\in N_2$, pick $z\in\psi_2^{-1}(n_2)$, and set $n_1 := \psi_1(z)$. Let $v_g\in T_{n_1}N_1$ be admissible for $g$ at $n_1$, so $\{f,g\}_1(n_1) = df(v_g)$. Choose $\tilde{u}\in T_zP$ with $\psi_{1*}(\tilde{u}) = v_g$ and set $w := \psi_{2*}(\tilde{u})$. By  \Cref{1SC6XHMFW}, $w$ is admissible for $\Psi(g)$ at $n_2$, so $\{\Psi(f),\Psi(g)\}_2(n_2) = d\Psi(f)(w)$. Differentiating $\Psi(f)\circ\psi_2 = f\circ\psi_1$ gives $\psi_2^*\,d\Psi(f) = \psi_1^*\,df$, hence
\[
\{\Psi(f),\Psi(g)\}_2(n_2)
= d\Psi(f)(w) = \psi_1^*\,df(\tilde{u}) = df(v_g) = \{f,g\}_1(n_1).
\]
By \Cref{1SCXJU3SK}, $\Psi(\{f,g\}_1)(n_2) = \{f,g\}_1(n_1)$.
\end{proof}

\begin{proof}[\textnormal{\textbf{Proof of \Cref{1SC5V2U32}}}]
By \Cref{1SCXJU3SK}, $\Psi(f)$ is well defined and smooth; by \Cref{1SCZJMN36}, $\Psi$ is a unital algebra homomorphism; by \Cref{1SC7XJ6UF}, it is bijective; by  \Cref{1SCRHRUSQ}, it preserves brackets.
\end{proof}

\subsection{Reduction to inclusions and functoriality}\label{1SCEMXYJB}
We now show that every $1$-shifted coisotropic structure is coisotropic Morita equivalent to one whose base map is an inclusion. Combined with  \Cref{1SCDYXFD7,1SC5V2U32}, this gives the Jacobi identity in general.

Let $\G\tto M$ be a Lie groupoid, $N$ a manifold, and $\ppp_M\colon N\times M\to M$ the projection. The \defn{pullback Lie groupoid} is
\[
\ppp_M^!\G := \bigl((N\times M)\times(N\times M)\bigr) \times_{\sss,\ttt,\ppp_M}\G \tto N\times M,
\]
with structure maps inherited from $\G$. The projection $\ppp := (\ppp_2,\ppp_M)\colon\ppp_M^!\G\to\G$ is a morphism, and when $(\G,\omega,\phi)$ is quasi-symplectic, $(\ppp_M^!\G,\ppp^*\omega,\ppp_M^*\phi)$ is Morita equivalent to $(\G,\omega,\phi)$ by \cite[Proposition 4.8]{xu2004momentum}. For a morphism $\ccc\colon\C\tto N\to\G\tto M$, define the lift $\tilde{\ccc}\colon\C\tto N\to\ppp_M^!\G\tto N\times M$ by
\[
\tilde{\ccc}(g) = \bigl((\sss(g),\ccc\sss(g)),(\ttt(g),\ccc\ttt(g)),\ccc(g)\bigr),\quad
\tilde{\ccc}(n) = (n,\ccc(n)).
\]
Its base map $n\mapsto(n,\ccc(n))$ is an inclusion.

\begin{proposition}\label{1SC3P2RJ6}
Let $\ccc\colon(\C\tto N,L)\to(\G\tto M,\omega,\phi)$ be a $1$-shifted coisotropic structure. Then:
\begin{itemize}
\item[(1)] $\tilde{\ccc}\colon(\C\tto N,L)\to
(\ppp_M^!\G\tto N\times M,\ppp^*\omega,\ppp_M^*\phi)$ carries a $1$-shifted coisotropic structure;
\item[(2)] $\ccc$ and $\tilde{\ccc}$ are coisotropic Morita equivalent.
\end{itemize}
\end{proposition}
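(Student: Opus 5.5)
The plan is to prove both parts by direct verification, reducing every condition for $\tilde\ccc$ to the corresponding condition for $\ccc$. The key identity is $\ppp\circ\tilde\ccc=\ccc$, on arrows and on bases, where the base map of $\ppp$ is $\ppp_M$. For part (1), the twist condition is immediate: $\tilde\ccc^*\ppp_M^*\phi=\ccc^*\phi=\psi$. The pulled-back multiplicative form is also unchanged: $\tilde\ccc^*(\ppp^*\omega)=\ccc^*\omega$. Hence the compatibility condition $\ttt^*L=\sss^*L+\Gamma_{\ccc^*\omega}$ for $\tilde\ccc$ is literally the one already assumed for $\ccc$.

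For nondegeneracy I will first describe the infinitesimal data of $(\ppp_M^!\G,\ppp^*\omega)$.
\begin{itemize}
\item Its Lie algebroid is the pullback algebroid $\{(X,a)\in T(N\times M)\oplus\ppp_M^*A_\G:\ppp_{M*}X=\rho_\G(a)\}$, with anchor $(X,a)\mapsto X$.
\item Since $\ppp$ is the second projection on arrows, the IM form is $\sigma_{\ppp^*\omega}(X,a)=\ppp_M^*\sigma_\omega(a)$, computed directly from the definition of $\sigma$.
\end{itemize}
With these formulas, an element $((v,\alpha),(X,a))$ of $L\times_{\tilde\ccc}A_{\ppp_M^!\G}$ satisfies the following:
\begin{itemize}
\item $X=\tilde\ccc_*v=(v,\ccc_*v)$;
\item $\rho_\G(a)=\ccc_*v$;
\item $\alpha=\tilde\ccc^*\ppp_M^*\sigma_\omega(a)=\ccc^*\sigma_\omega(a)$.
\end{itemize}
Forgetting $X$ therefore gives a linear isomorphism $L\times_{\tilde\ccc}A_{\ppp_M^!\G}\cong L\times_\ccc A_\G$. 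Under this isomorphism, the map \eqref{goeol8yy} for $\tilde\ccc$ becomes the one for $\ccc$, since $\ppp_*\tilde\ccc_*=\ccc_*$ on $A_\C$. Fibrewise bijectivity thus transfers.

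For part (2), I will write down the diagram \eqref{1SCT3HPM7} explicitly:
\begin{itemize}
\item $\K=\C$ and $\psi_1=\psi_2=\id_\C$, which are trivially Morita morphisms;
\item $\L=\ppp_M^!\G$, with $\varphi_1=\ppp\colon\ppp_M^!\G\to\G$, $\varphi_2=\id$, and $h=\tilde\ccc$;
\item $\theta_1$ and $\theta_2$ are the identity natural transformations, $\theta_1=\mathsf 1\circ\ccc$ and $\theta_2=\mathsf 1\circ\tilde\ccc$ as maps $N\to\G$ and $N\times M\to\ppp_M^!\G$. These are legitimate because $\ccc\psi_1=\ppp\tilde\ccc$ and $\tilde\ccc\psi_2=\id\circ\tilde\ccc$.
\end{itemize}
The span $\G\leftarrow\ppp_M^!\G\to\ppp_M^!\G$ should be a symplectic Morita equivalence with $\gamma=0$. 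The identities \eqref{1SCD9DUZ4} hold trivially, since $\ppp^*\omega-\id^*\ppp^*\omega=0$ and $\ppp_M^*\phi-\ppp_M^*\phi=0$. Nondegeneracy of the $1$-shifted Lagrangian structure $\gamma=0$ is exactly the statement that the Morita morphism $\ppp$ pulls back the quasi-symplectic structure, and this is what \cite[Proposition 4.8]{xu2004momentum} provides. The remaining conditions concern the identity leg and are automatic. For the connecting form, multiplicativity gives $\mathsf 1^*\omega=0$, so $\theta_1^*\omega_1=\theta_2^*\omega_2=0$; together with $\gamma=0$ this yields $\delta=0$. Condition \eqref{1SCU7G6KZ} then reads $L=L$.

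The step I expect to be the main obstacle is matching this span precisely with the definition of symplectic Morita equivalence in \cite{Maxence2025}. In particular, I must check that the Lagrangian nondegeneracy condition there, for a span whose legs are a Morita morphism and the identity with $\gamma=0$, reduces to the fact that $\ppp$ is a Morita morphism with $\ppp^*$ of the forms equal to the given forms. If the paper's convention requires it, I would instead invoke the general statement that a Morita morphism of quasi-symplectic groupoids preserving $(\omega,\phi)$ yields such a span with zero $2$-form. A secondary point is the computation of the IM form of $\ppp^*\omega$, which needs the unit of $\ppp_M^!\G$ at $(n,m)$ to be $((n,m),(n,m),\mathsf 1_m)$.
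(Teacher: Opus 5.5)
Your argument is correct. For part (2) it is the paper's argument: the same diagram ($\K=\C$, $\psi_1=\psi_2=\id$, $\L=\ppp_M^!\G$, $\varphi_1=\ppp$, $\varphi_2=\id$, $h=\tilde\ccc$, identity natural transformations, $\gamma=0$), with $\delta=0$ because $\mathsf 1^*\omega=0$.

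The difference is part (1). The paper never checks the coisotropic conditions for $\tilde\ccc$ by hand. Once the diagram is in place, it invokes the transfer theorem \cite[Theorem 7.2]{Maxence2025}, which gives at once the coisotropic structure on $\tilde\ccc$ (namely $L$ itself, since $\delta=0$) and the coisotropic Morita equivalence. You instead verify (1) directly. Compatibility is literally the old condition because $\ppp\circ\tilde\ccc=\ccc$. Nondegeneracy transfers through your correct description of $A_{\ppp_M^!\G}$ as the pullback algebroid with IM form $\ppp_M^*\sigma_\omega$. Your route is self-contained and shows why nothing changes; the paper's is shorter but relies on the general Morita-transfer machinery.

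The step you flag as the main obstacle is settled in the paper by \cite[Lemma 6.3]{Maxence2025}: a span of Morita morphisms satisfying \eqref{1SCD9DUZ4} is a symplectic Morita equivalence. You can also check it by hand. An element $(v,(a_1,a_2))$ of the relevant fibre product satisfies $(\rho_\G,\sigma_\omega)(a_1)=(\rho_\G,\sigma_\omega)(\ppp_*a_2)$, using that $\ppp_M^*$ is injective on covectors. Injectivity of $(\rho_\G,\sigma_\omega)$ then gives $a_1=\ppp_*a_2$ and $v=\rho(a_2)$, which is the nondegeneracy of $\gamma=0$.

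Two minor points:
\begin{itemize}
\item Quasi-symplecticity of $(\ppp_M^!\G,\ppp^*\omega,\ppp_M^*\phi)$, from \cite[Proposition 4.8]{xu2004momentum}, is already needed in (1) for the definition to apply, not only in (2).
\item The natural transformations have components indexed by $\K^0=N$. So $\theta_2=\mathsf 1\circ\tilde\ccc$ is a map $N\to\ppp_M^!\G$, not a map out of $N\times M$.
\end{itemize}
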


\begin{proof}
By \cite[Proposition 4.8]{xu2004momentum}, $(\ppp^*\omega,\ppp_M^*\phi)$ is quasi-symplectic, and $\ppp$ is a Morita morphism since $\ppp_M$ is a surjective submersion. The span $\G\xleftarrow{\ppp}\ppp_M^!\G\xrightarrow{\id}\ppp_M^!\G$ with $\gamma=0$ satisfies \eqref{1SCD9DUZ4}, so it is a symplectic Morita equivalence \cite[Lemma 6.3]{Maxence2025}. Take $\K=\C$, $\psi_1=\psi_2=\id$ and $h=\tilde\ccc$ in \eqref{1SCT3HPM7}. Since $\ppp\circ\tilde\ccc=\ccc$, both natural transformations are identities, and the connecting form \eqref{1SCU7G6KZ} vanishes because $1^*\omega=0$ for a multiplicative form. \cite[Theorem 7.2]{Maxence2025} gives (1), with the same $L$, and (2).
\end{proof}

\begin{theorem}\label{1SCCJY5LE}
Every $1$-shifted coisotropic structure $\ccc\colon\C\to\G$ induces a Poisson algebra structure on $C^\infty(N)^\C$, defined by \eqref{1SC2RZH94}.
\end{theorem}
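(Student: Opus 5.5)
We assemble the results of this section. By \Cref{1SCLV9K9Y,1SCDG4T4W}, together with the invariance argument after \Cref{1SCDG4T4W}, the formula \eqref{1SC2RZH94} gives a well-defined bilinear, antisymmetric bracket on $C^\infty(N)^\C$. It satisfies the Leibniz rule and takes values in $C^\infty(N)^\C$. Thus $(C^\infty(N)^\C,\{\cdot,\cdot\})$ is an almost Poisson algebra in the sense of \Cref{1SCR24WYK}, and only the Jacobi identity remains.

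For the Jacobi identity, we pass to the lift $\tilde\ccc\colon\C\to\ppp_M^!\G$. By \Cref{1SC3P2RJ6}(1), $\tilde\ccc$ carries a $1$-shifted coisotropic structure, with the same Dirac structure $L$ on $N$. Its base map $n\mapsto(n,\ccc(n))$ is an embedding $N\hookrightarrow N\times M$. Hence \Cref{1SCDYXFD7} and \Cref{1SCQQ6Q3H} apply, and $C^\infty(N)^\C$ with the bracket $\{\cdot,\cdot\}^\sim$ defined by $\tilde\ccc$ is a Poisson algebra. The proof of \Cref{1SCDYXFD7} uses only that the base map is an embedding, through the extension of sections of $L_M|_N$ to a neighbourhood and naturality of brackets. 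So the fact that $N$ is a possibly non-closed embedded submanifold is handled by working in a tubular neighbourhood $U$, as that proof already does. This is the point to double-check.

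By \Cref{1SC3P2RJ6}(2), $\ccc$ and $\tilde\ccc$ are coisotropic Morita equivalent. \Cref{1SC5V2U32} then gives an isomorphism of almost Poisson algebras $\Psi\colon(C^\infty(N)^\C,\{\cdot,\cdot\})\to(C^\infty(N)^\C,\{\cdot,\cdot\}^\sim)$. In the equivalence of \Cref{1SC3P2RJ6}, $\K=\C$ and $\psi_1=\psi_2=\id$, so $\Psi$ is the identity map. The two brackets therefore coincide. One can also check this directly: $\tilde\ccc^*L_{N\times M}$, with $L_{N\times M}$ induced by $\ppp^*\omega$, relates to $\ccc^*L_M$ so that admissible vectors agree. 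Either way, the Jacobi identity for $\{\cdot,\cdot\}^\sim$ transfers to $\{\cdot,\cdot\}$. This already holds abstractly, since a bracket-preserving algebra isomorphism transports the Jacobi identity.

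The main subtlety is in the inclusion case. In \Cref{1SCDYXFD7}, applied to $\tilde\ccc$, the Dirac structure on $N\times M$ is the one induced by $(\ppp_M^!\G,\ppp^*\omega)$. We use it only through its isotropy and involutivity, which hold because it is a $\ppp_M^*\phi$-twisted Dirac structure. So no explicit description of it is needed, and the argument closes.
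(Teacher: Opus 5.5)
Your proposal is correct and follows the paper's proof: pass to the lift $\tilde\ccc$ of \Cref{1SC3P2RJ6}, whose base map is the graph embedding, get the Jacobi identity there from \Cref{1SCDYXFD7}, and transport it back along the coisotropic Morita equivalence via \Cref{1SC5V2U32}. Your observation that $\Psi$ is literally the identity here (since $\K=\C$ and $\psi_1=\psi_2=\id$), so the two brackets coincide, is accurate, and the embedding issue you flag is harmless because the graph of $\ccc$ is a closed embedded submanifold of $N\times M$.
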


\begin{proof}
By \Cref{1SC3P2RJ6}, $\ccc$ is coisotropic Morita equivalent to $\tilde{\ccc}$, whose base map is an inclusion. The bracket on $C^\infty(N)^{\C}$ defined by $\tilde \ccc$ satisfies the Jacobi identity by \Cref{1SCDYXFD7}, and \Cref{1SC5V2U32} transports it to $C^\infty(N)^\C$.
\end{proof}

\begin{corollary}[Morita invariance]\label{1SC5ZDRV2}
If $\ccc_1\colon\C_1\to\G_1$ and $\ccc_2\colon\C_2\to\G_2$ are coisotropic Morita equivalent, then $C^\infty(N_1)^{\C_1}$ and $C^\infty(N_2)^{\C_2}$ are isomorphic as Poisson algebras.
\end{corollary}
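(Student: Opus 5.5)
The plan is to combine the two results just established: \Cref{1SC5V2U32} and \Cref{1SCCJY5LE}. Suppose $\ccc_1$ and $\ccc_2$ are coisotropic Morita equivalent via a diagram of the form \eqref{1SCT3HPM7}. \Cref{1SC5V2U32} then supplies the map $\Psi\colon C^\infty(N_1)^{\C_1}\to C^\infty(N_2)^{\C_2}$. By construction it is a bijective unital algebra homomorphism intertwining the brackets $\{\cdot,\cdot\}_1$ and $\{\cdot,\cdot\}_2$ of \eqref{1SC2RZH94}. In other words, it is an isomorphism of almost Poisson algebras.

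Next I would observe that being a Poisson algebra is a property of an almost Poisson algebra, not extra structure. Antisymmetry and Leibniz hold by \Cref{1SCDG4T4W}, and the only remaining condition is the Jacobi identity. \Cref{1SCCJY5LE} establishes Jacobi for each of $\{\cdot,\cdot\}_1$ and $\{\cdot,\cdot\}_2$ separately. It does so by passing through the lift $\tilde\ccc_i$ of \Cref{1SC3P2RJ6}, whose base map is an inclusion, and applying \Cref{1SCDYXFD7}. So both almost Poisson algebras are in fact Poisson algebras. A bracket-preserving algebra isomorphism between Poisson algebras is by definition a Poisson isomorphism, which gives the claim.

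One point needs care: that \Cref{1SCCJY5LE} really concerns the bracket \eqref{1SC2RZH94} on $C^\infty(N)^\C$ for the original $\ccc$, and not a different transported bracket. This holds because \Cref{1SC3P2RJ6} takes $\K=\C$ with $\psi_1=\psi_2=\id$. The resulting $\Psi$ is therefore the identity on functions, so it identifies the bracket of $\tilde\ccc$ with that of $\ccc$, and Jacobi passes across.

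I do not expect a genuine obstacle here. The nontrivial work is the transport of admissible vectors, which was already carried out in \Cref{1SC6XHMFW}.
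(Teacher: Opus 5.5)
Your proof is correct and follows the paper's route: the corollary comes from combining the almost Poisson isomorphism $\Psi$ of \Cref{1SC5V2U32} with the Jacobi identity established in \Cref{1SCCJY5LE}. Your extra check is also right and worth making: since \Cref{1SC3P2RJ6} uses $\psi_1=\psi_2=\id$, the transported map is the identity on functions, so the bracket shown to satisfy Jacobi is literally the original bracket \eqref{1SC2RZH94}.
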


Together with Theorem \ref{1SCCJY5LE}, this proves Theorem \ref{6sipd2id} from the introduction.

The construction is functorial with respect to coisotropic Morita equivalences, composed by homotopy fibre product as in \cite[\S 8.3]{Maxence2025}. On objects, we write
\[
\mathscr{T}(\C,\ccc,L):=(C^\infty(N)^\C,\{\cdot,\cdot\}).
\]
For a Morita equivalence $\K\tto P$ with Morita morphisms $\psi_i\colon\K\to\C_i$, let $\mathscr{T}(\K)$ be the Poisson algebra isomorphism $\Psi(f)(n_2)=f(\psi_1(z))$ of \Cref{1SC5V2U32}. The identity equivalence $\C\xleftarrow{\id}\C\xrightarrow{\id}\C$ gives the identity. For composable equivalences $\K_{12}$ and $\K_{23}$ with composite $\K_{13}:=\K_{12}\times^h_{\C_2}\K_{23}$, write $\Psi_{ij}:=\mathscr{T}(\K_{ij})$. For $f\in C^\infty(N_1)^{\C_1}$ and $n_3\in N_3$, choose $z_{23}\in(\psi_2^{(23)})^{-1}(n_3)$, set $n_2:=\psi_1^{(23)}(z_{23})$, and choose $z_{12}\in(\psi_2^{(12)})^{-1}(n_2)$. Since $(z_{12},1_{n_2},z_{23})$ is an object of $\K_{13}$ lying in $(\psi_2^{(13)})^{-1}(n_3)$,
\[
\Psi_{13}(f)(n_3)
= f\bigl(\psi_1^{(13)}(z_{12},1_{n_2},z_{23})\bigr)
= f\bigl(\psi_1^{(12)}(z_{12})\bigr)
= \Psi_{12}(f)(n_2)
= (\Psi_{23}\circ\Psi_{12})(f)(n_3).
\]
Hence
\[
\mathscr{T}(\K_{23}\circ\K_{12})=\mathscr{T}(\K_{23})\circ\mathscr{T}(\K_{12}).
\]

\begin{remark}
This compatibility can be formalized as a functor between categories by extending the partial category of coisotropic Morita equivalences to a genuine category using the Wehrheim--Woodward trick \cite{weh-woo:10}. We do not pursue these details here. For comparison, see \cite[\S\S 3.2--3.3]{crooks2024moore}, where this construction is carried out for $1$-shifted Lagrangian correspondences.
\end{remark}

\subsection{Examples}\label{uxxbg680}
The Poisson algebra of Theorem \ref{1SCCJY5LE} recovers the following familiar constructions.

\begin{example}\label{39d9asdt}\
\begin{enumerate}
\item \emph{Coisotropic reduction.}
Let $(M, \pi)$ be a Poisson manifold and $N \subset M$ a coisotropic submanifold, with characteristic foliation $\mathcal F=\pi(TN^\circ)$. For the $1$-shifted coisotropic of Example \ref{integrationexamples}\textup{(1)}, the invariant algebra is $C^\infty(N)^{\mathcal F}$, with bracket
\[
\{f,g\}=\{F,G\}_M|_N,
\]
where $F,G$ extend $f,g$ and their Hamiltonian vector fields are tangent to $N$. Indeed, $(v_g,\beta_g)=(-\pi(dG)|_N,-dG|_N)$ is admissible in \eqref{1SCJCKYC8}. This recovers the usual coisotropic reduced bracket.

\item \emph{Hamiltonian reduction.}
A $1$-shifted coisotropic on the trivial quasi-symplectic groupoid $\{*\}\tto\{*\}$ is equivalent to a Lie groupoid $\C\tto N$ with an untwisted Dirac structure $L_N$ satisfying $\sss^*L_N=\ttt^*L_N$ and for which $\rho_\C\colon A_\C\to\ker L_N$ is bijective; see \cite[Proposition 3.11]{Maxence2025}. If the orbit space $N/\C$ is smooth and the quotient map is a submersion, then $L_N$ is the pullback of a Poisson structure on $N/\C$ \cite[Corollary 5.11]{Maxence2025}. More generally, without requiring the quotient to be smooth, \S\ref{1SCYF566F} constructs a Poisson bracket on $C^\infty(N)^\C$. Explicitly, $\{f,g\}=df(v_g)$, where $v_g$ is any vector field satisfying $(v_g,dg)\in L_N$.

In particular, the fibre product of two $1$-shifted coisotropics $\C_1\to\G\leftarrow\C_2$ is a $1$-shifted coisotropic over a point (under suitable regularity hypotheses) and hence induces a Poisson algebra as above; see \cite[\S4]{Maxence2025}. Under these hypotheses, this principle recovers symplectic reduction \cite{MarsdenWeinstein1974,mey:73} and quasi-hamiltonian reduction \cite{ale-mal-mei:98}, as explained in \cite{cal:14,cal:15,saf:16,cal:21,ane-cal:22}, as well as symplectic reduction along a submanifold \cite[\S6]{CrooksMayrand2022} and reduction of strong Dirac maps \cite[\S5]{bualibanu2026reduction}.
\end{enumerate}
\end{example}

\section{Homotopy Poisson structures on 1-shifted coisotropics}\label{1SCYR8NZ4}\label{1SCT2VEBZ}
The goal of this section is to construct the flat $P_\infty$-structure associated with a regular infinitesimal $1$-shifted coisotropic, as stated in Theorem \ref{djsx2lmg}. We begin with a pre-Poisson submanifold $N$ of a Poisson manifold $(M,\pi)$.\footnote{By abuse of notation, we use the same letter $\pi$ for the anchor $\pi^\sharp\colon T^*M\to TM$.} The derived-bracket construction of Cattaneo and Felder does not apply when $N$ is not coisotropic. We instead embed $N$ coisotropically in a Poisson transversal and transport their structure. We then reduce the general case to this construction through local Poisson models, prove independence of the auxiliary choices, and compare the resulting bracket in degree zero with the Poisson algebra of \S\ref{1SC69WYG8}.

\subsection{Preliminaries on \texorpdfstring{$L_\infty$}{L-infinity}-algebras, \texorpdfstring{$P_\infty$}{P-infinity}-algebras and Maurer--Cartan elements}\label{1SCS7HS5G}
We recall the definitions and basic properties used below, fixing our sign conventions and following \cite{lada1992introduction,stasheff2006differential,voronov2004higher,voronov2005higher}.

An \defn{$L_\infty$-algebra} is a graded vector space $\a=\bigoplus_j\a^j$ over a field $\Bbbk$ of characteristic zero, together with multilinear maps $\ell_k\colon\a^{\otimes k}\to\a$ of degree $2-k$ for $k\ge0$ (with $\a^{\otimes0}:=\Bbbk$), which are graded skew-symmetric,
\begin{equation}\label{1SCE8TBTU}
\ell_k(a_{\sigma(1)},\dots,a_{\sigma(k)})=\chi(\sigma;a)\,\ell_k(a_1,\dots,a_k),
\quad \sigma\in\mathfrak S_k,
\end{equation}
and satisfy, for every $k\geq 0$ and all homogeneous $a_1,\dots,a_k\in\a$,
\begin{equation}\label{1SCCM8TQN}
\sum_{i=0}^{k}
\sum_{\sigma\in \mathrm{Sh}(i,k-i)}
(-1)^{i(k-i)}\,
\chi(\sigma;a)\;
\ell_{k-i+1}\Bigl(
\ell_i(a_{\sigma(1)},\dots,a_{\sigma(i)}),
a_{\sigma(i+1)},\dots,a_{\sigma(k)}
\Bigr)=0 .
\end{equation}
Here $\mathrm{Sh}(i,k-i)$ is the set of permutations $\sigma$ of $\{1,\dots,k\}$ with
$\sigma(1)<\dots<\sigma(i)$ and $\sigma(i+1)<\dots<\sigma(k)$, and $\chi(\sigma;a)=\operatorname{sgn}(\sigma)\,\varepsilon(\sigma;a)$ combines the signature of $\sigma$ with the Koszul sign $\varepsilon(\sigma;a)=(-1)^{\sum_{p<q,\ \sigma(p)>\sigma(q)}|a_{\sigma(p)}|\,|a_{\sigma(q)}|}$.

The element $\ell_0\in\a^2$ is the \defn{curvature}. For $k=0$, \eqref{1SCCM8TQN} reads $\ell_1(\ell_0)=0$; for $k=1$ it reads $\ell_1^2=-\ell_2(\ell_0,\cdot)$. We call $\a$ \defn{flat} if $\ell_0=0$; in that case $\ell_1$ is a differential.

\medskip
Our main source of examples is Voronov's higher derived bracket construction \cite{voronov2004higher,voronov2005higher}. A \defn{curved $V$-datum} is a quadruple $(\g,\a,P,\Pi)$: a graded Lie algebra $\g$, an abelian graded subalgebra $\a\subset\g$, a projection $P\colon\g\to\a$ whose kernel $\h:=\ker P$ is a subalgebra, and an element $\Pi\in\g^1$ with $[\Pi,\Pi]=0$. The \defn{derived brackets}
\[
\{a_1,\dots,a_k\}:=P\bigl[\cdots[[\Pi,a_1],a_2],\dots,a_k\bigr]\ (k\ge1),
\quad \{\varnothing\}:=P(\Pi),
\]
are graded symmetric of degree $1$ with respect to the grading of $\g$. They satisfy the curved $L_\infty[1]$-relations: for every $k\ge0$,
\[
\sum_{i+j=k}\ \sum_{\sigma\in\mathrm{Sh}(i,j)}\varepsilon(\sigma;a)\,
\{\{a_{\sigma(1)},\dots,a_{\sigma(i)}\},a_{\sigma(i+1)},\dots,a_{\sigma(k)}\}=0,
\]
where $\varepsilon(\sigma;a)$ is the Koszul sign for the grading $\deg$. The proof uses $[\Pi,\Pi]=0$, $[\a,\a]=0$, the Jacobi identity in $\g$, and the fact that $\h$ is a subalgebra. To pass to the conventions above, regrade $\a$ by $|a|:=\deg(a)+1$ and set
\begin{equation}\label{EQDECALAGE}
\ell_k(a_1,\dots,a_k):=\operatorname{dec}_k(a)\,\{a_1,\dots,a_k\},\qquad
\operatorname{dec}_k(a):=(-1)^{\sum_{i=1}^{k}(k-i)|a_i|},\qquad
\ell_0:=P(\Pi).
\end{equation}
The sign $\operatorname{dec}_k(a)$ is the Koszul sign of $(s^{-1})^{\otimes k}$ for the desuspension $s^{-1}$ of degree $-1$, and it intertwines the Koszul-symmetric relations above with \eqref{1SCE8TBTU}--\eqref{1SCCM8TQN}: $\operatorname{dec}_k(a_\sigma)\,\chi(\sigma;a)=\operatorname{dec}_k(a)\,\varepsilon(\sigma;a)$ for every $\sigma\in\mathfrak S_k$.
Then $\ell_k$ has degree $2-k$ and satisfies \eqref{1SCE8TBTU} and \eqref{1SCCM8TQN}, and $(\a,\{\ell_k\})$ is flat exactly when $P(\Pi)=0$, for instance when $\Pi\in\h$ \cite[\S 2.6]{Cattaneo2007}.\footnote{The sign $(-1)^{\sum_i(i-1)\deg(a_i)}$ displayed in \cite[\S 2.6]{Cattaneo2007} does not satisfy \eqref{1SCCM8TQN} for $k=2$. This does not affect \cite{Cattaneo2007}: the structure of Proposition 2.1 there is obtained through Proposition 4.4, and the d\'ecalage of Appendix A.6 is the Koszul sign of $s^{\otimes k}$, which differs from $\operatorname{dec}_k$ by $(-1)^{k(k-1)/2}$.}

\medskip
An \defn{$L_\infty$-morphism} $\varrho\colon(\a,\ell_k)\lto(\a',\ell'_k)$
between flat $L_\infty$-algebras is a collection of graded skew-symmetric maps $\varrho_k\colon\a^{\otimes k}\to\a'$ of degree $1-k$, $k\ge1$, satisfying, for every $k\geq 1$,
\begin{equation}\label{1SCFV8US5}
\begin{split}
&\sum_{\substack{i+j=k+1\\ i,j\geq 1}}
\ \sum_{\sigma\in\mathrm{Sh}(i,k-i)}
(-1)^{i(j-1)}\,
\chi(\sigma;a)\,
    \varrho_j\Bigl(
\ell_i(a_{\sigma(1)},\dots,a_{\sigma(i)}),
a_{\sigma(i+1)},\dots,a_{\sigma(k)}
\Bigr)
\\
&=
\sum_{r=1}^k \frac{1}{r!}
\sum_{\substack{k_1+\cdots+k_r=k\\ k_s\geq 1}}
\ \sum_{\sigma\in\mathrm{Sh}(k_1,\dots,k_r)}
\chi'(\sigma;a)\,
\ell'_r\Bigl(
\varrho_{k_1}(a_{\sigma(1)},\dots,a_{\sigma(k_1)}),
\dots,
\varrho_{k_r}(a_{\sigma(k-k_r+1)},\dots,a_{\sigma(k)})
\Bigr),
\end{split}
\end{equation}
where, for $K_s:=k_1+\dots+k_s$,
\[
\chi'(\sigma;a):=\chi(\sigma;a)\,
(-1)^{\sum_{s=1}^{r}(r-s)(k_s-1)}\,
(-1)^{\sum_{s=1}^{r}(k_s-1)\bigl(|a_{\sigma(1)}|+\dots+|a_{\sigma(K_{s-1})}|\bigr)} .
\]
The last factor is the Koszul sign of moving $\varrho_{k_s}$, of degree $1-k_s$, past the arguments preceding its block; the middle factor depends only on the arities. For $k=1$ this gives $\varrho_1\ell_1=\ell'_1\varrho_1$. We call $\varrho$ a \defn{quasi-isomorphism} if $\varrho_1$ induces an isomorphism on cohomology, and an \defn{isomorphism} if $\varrho_1$ is bijective. It is \defn{strict} if $\varrho_k=0$ for every $k\ge2$; since $\deg \varrho_1=0$, \eqref{1SCFV8US5} then reduces to $\varrho_1(\ell_k(a_1,\dots,a_k))=\ell'_k(\varrho_1(a_1),\dots,\varrho_1(a_k))$ for $k\ge1$. Between differential graded Lie algebras, a strict $L_\infty$-morphism is a morphism of DGLAs.

We write $L_\infty$-morphisms as $\varrho\colon(\a,\ell)\lto(\a',\ell')$, and identify a strict $L_\infty$-morphism with its linear component $\varrho_1$. Decorated letters $\varrho,\vartheta$ denote $L_\infty$- or $A_\infty$-morphisms, and $\U$ denotes the formality morphisms of \Cref{1SCC6G85S}.

\medskip
A \defn{$P_\infty$-algebra} \cite{Cattaneo2007} (or \emph{homotopy Poisson algebra} \cite[Definition 2.1]{Mehta2011}) is a graded commutative associative $\Bbbk$-algebra $\a$ with an $L_\infty$-structure $\{\ell_k\}_{k\ge0}$ whose brackets are multiderivations: for every $k\ge1$ and all homogeneous $a_1,\dots,a_{k-1}\in\a$, the map $a\mapsto\ell_k(a_1,\dots,a_{k-1},a)$ is a graded derivation of degree $2-k+\sum_{s=1}^{k-1}|a_s|$. For $k=1$ this says that $\ell_1$ is a derivation of degree $1$. If $\a$ is flat, $\ell_1$ is a differential, the case $k=2$ of \eqref{1SCCM8TQN} says that $\ell_1$ is a derivation of the bracket $\ell_2$, and the case $k=3$ says that $\ell_2$ satisfies the graded Jacobi identity up to the $\ell_1$-homotopy $\ell_3$. Hence the product and $\ell_2$ descend to $\ell_1$-cohomology, which is therefore a graded Poisson algebra.

We often transport $P_\infty$-structures along algebra isomorphisms.

\begin{lemma}\label{1SCU92AQH}
Let $\Phi\colon\a\to\b$ be an isomorphism of degree $0$ between graded commutative associative algebras, and let $\{\ell_k^{\b}\}_{k\ge0}$ be a $P_\infty$-structure on $\b$. Define $\ell_k^{\a}:=\Phi^{-1}\circ \ell_k^{\b}\circ \Phi^{\otimes k}$ for $k\ge1$, and $\ell_0^{\a}:=\Phi^{-1}(\ell_0^{\b})$. Then:
\begin{itemize}
\item[(1)] $\{\ell_k^{\a}\}_{k\ge0}$ is a $P_\infty$-structure on $\a$, flat if and only if $\{\ell_k^{\b}\}_{k\ge0}$ is, and $\Phi$ is a strict isomorphism of $L_\infty$-algebras.
\item[(2)] If moreover $d_{\a}\colon\a\to\a$ is a linear map of degree $1$ with $\Phi\circ d_{\a}=\ell_1^{\b}\circ \Phi$, then $\ell_1^{\a}=d_{\a}$.
\item[(3)] Let $\Phi'\colon\a'\to\b'$ be another such isomorphism, $\{\ell_k^{\b'}\}$ a $P_\infty$-structure on $\b'$ with transport $\{\ell_k^{\a'}\}$, and $\varrho\colon(\b,\ell^{\b})\lto(\b',\ell^{\b'})$ an $L_\infty$-morphism. Then $\Phi'^{-1}\circ\varrho_k\circ\Phi^{\otimes k}$, $k\ge1$, is an $L_\infty$-morphism $(\a,\ell^{\a})\lto(\a',\ell^{\a'})$, which is an isomorphism if and only if $\varrho$ is
\end{itemize}
\end{lemma}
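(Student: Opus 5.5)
The whole lemma is transport of structure: every identity that defines a $P_\infty$-structure on $\b$ is carried to $\a$ by conjugating with $\Phi$. The key fact used throughout is that $\Phi$ has degree $0$. It therefore preserves the grading, so the Koszul signs $\varepsilon(\sigma;a)$ and $\chi(\sigma;a)$ computed from $a_1,\dots,a_k\in\a$ coincide with those computed from $\Phi(a_1),\dots,\Phi(a_k)\in\b$.

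For (1), I check the axioms one at a time.
\begin{itemize}
\item \emph{Degrees.} The map $\ell_k^{\a}=\Phi^{-1}\ell_k^{\b}\Phi^{\otimes k}$ has degree $2-k$, and $\ell_0^{\a}\in\a^2$.
\item \emph{Skew-symmetry.} Graded skew-symmetry \eqref{1SCE8TBTU} follows by applying $\Phi^{-1}$ to the identity for $\ell_k^{\b}$ evaluated at $\Phi(a_{\sigma(i)})$.
\item \emph{Generalized Jacobi.} In each term of \eqref{1SCCM8TQN}, the inner $\Phi\Phi^{-1}$ cancels. Hence the left-hand side of \eqref{1SCCM8TQN} for $\ell^{\a}$ at $(a_1,\dots,a_k)$ equals $\Phi^{-1}$ applied to the left-hand side for $\ell^{\b}$ at $(\Phi a_1,\dots,\Phi a_k)$. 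This vanishes; the case $i=0$ is handled by $\Phi(\ell_0^{\a})=\ell_0^{\b}$.
\item \emph{Multiderivation property.} Since $\Phi$ is an algebra isomorphism, so is $\Phi^{-1}$, and
\[
\ell_k^{\a}(a_1,\dots,a_{k-1},ab)=\Phi^{-1}\bigl(\ell_k^{\b}(\dots,\Phi a)\,\Phi b\pm\Phi a\,\ell_k^{\b}(\dots,\Phi b)\bigr).
\]
The sign here is the same as the one required on $\a$, because the degrees agree.
\item \emph{Flatness.} $\ell_0^{\a}=0$ if and only if $\ell_0^{\b}=0$, since $\Phi^{-1}$ is injective.
\item \emph{Strict isomorphism.} By construction $\Phi\circ\ell_k^{\a}=\ell_k^{\b}\circ\Phi^{\otimes k}$. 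This is exactly the strictness condition stated after \eqref{1SCFV8US5}, and $\Phi$ is bijective.
\end{itemize}

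For (2), the hypothesis gives $\ell_1^{\a}=\Phi^{-1}\ell_1^{\b}\Phi=\Phi^{-1}\Phi d_{\a}=d_{\a}$.

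For (3), I set $\vartheta_k:=\Phi'^{-1}\varrho_k\Phi^{\otimes k}$. This map is graded skew-symmetric of degree $1-k$, again by degree preservation. I then substitute into \eqref{1SCFV8US5}.
\begin{itemize}
\item On the left, $\vartheta_j(\ell_i^{\a}(\cdots),\cdots)=\Phi'^{-1}\varrho_j\bigl(\ell_i^{\b}(\Phi\cdots),\Phi\cdots\bigr)$.
\item On the right, $\ell'^{\a'}_r(\vartheta_{k_1}(\cdots),\dots)=\Phi'^{-1}\ell_r^{\b'}\bigl(\varrho_{k_1}(\Phi\cdots),\dots\bigr)$.
\item The signs $\chi$ and $\chi'$ depend only on the degrees of the arguments and on the arities, so they are unchanged.
\end{itemize}
Thus \eqref{1SCFV8US5} for $\vartheta$ is $\Phi'^{-1}$ applied to \eqref{1SCFV8US5} for $\varrho$ at $\Phi(a_i)$, and it holds. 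Finally, $\vartheta_1=\Phi'^{-1}\varrho_1\Phi$ is bijective if and only if $\varrho_1$ is.

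The main point needing care is bookkeeping: confirming that no sign in \eqref{1SCCM8TQN} or \eqref{1SCFV8US5} depends on anything other than degrees and arities. Since $\Phi$ and $\Phi'$ have degree $0$, this is immediate, and the rest is routine cancellation of $\Phi\Phi^{-1}$.
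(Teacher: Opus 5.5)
Your proposal is correct and matches the paper's proof: since $\Phi$ (and $\Phi'$) is a degree-$0$ algebra isomorphism, it preserves Koszul signs and multiderivation properties, so conjugation carries the $L_\infty$-relations, the multiderivation condition, and the morphism relation \eqref{1SCFV8US5} termwise from $\b$ to $\a$. You spell out each axiom, including the $\ell_0$ terms and bijectivity of $\vartheta_1$, in more detail than the paper does.
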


\begin{proof}
Since $\Phi$ is an algebra isomorphism of degree $0$, it preserves Koszul signs and multiderivation properties. Conjugation by $\Phi$ therefore carries the relations \eqref{1SCCM8TQN} for $\{\ell_k^{\b}\}$ termwise into the same relations for $\{\ell_k^{\a}\}$. Part (2) is immediate from the definition of $\ell_1^\a$. For (3), the signs $\chi$ and $\chi'$ in \eqref{1SCFV8US5} depend only on degrees, which $\Phi$ and $\Phi'$ preserve, so conjugation carries \eqref{1SCFV8US5} for $\varrho$ termwise into the same relation for the transported maps.
\end{proof}

\begin{remark}\label{1SCRESCALE}
Let $t\in\Bbbk^\times$. If $\{\ell_k\}_{k\ge0}$ is a $P_\infty$-structure on $\a$, then so is $\{t^{k-1}\ell_k\}_{k\ge0}$: in \eqref{1SCCM8TQN}, the term $\ell_{k-i+1}\circ\ell_i$ acquires the factor $t^{(k-i)+(i-1)}=t^{k-1}$, independent of $i$, and multiderivations and flatness are preserved. Likewise, if $\varrho$ is an $L_\infty$-morphism, then $\{t^{k-1}\varrho_k\}_{k\ge1}$ is an $L_\infty$-morphism between the rescaled structures, since both sides of \eqref{1SCFV8US5} acquire the factor $t^{k-1}$; it is an isomorphism, respectively strict, if and only if $\varrho$ is. The rescaling fixes $\ell_1$ and multiplies $\ell_2$ by $t$.

For $t=-1$ it has the following description. The parity automorphism $D(a):=(-1)^{|a|}a$ of $\a$ satisfies $D^{-1}\circ\ell_k\circ D^{\otimes k}=(-1)^k\ell_k$, because $\ell_k$ has degree $2-k$. If $\ell_k=\ell_k^{\Pi}$ is obtained from a curved $V$-datum $(\g,\a,P,\Pi)$ by \eqref{EQDECALAGE}, then $\ell_k^{-\Pi}=-\ell_k^{\Pi}$, hence
\[
(-1)^{k-1}\ell_k^{\Pi}=D^{-1}\circ\ell_k^{-\Pi}\circ D^{\otimes k}\quad(k\ge0).
\]
Thus the rescaled structure is the derived-bracket structure of $-\Pi$, conjugated by $D$.
\end{remark}

\medskip

Let $\a$ be a differential graded Lie algebra over $\Bbbk$. A \defn{filtration} on $\a$ is a decreasing sequence of graded subspaces $\a=\FFF^{k_0}\a\supset \FFF^{k_0+1}\a\supset\cdots$, indexed by integers $k\ge k_0$ with $k_0\le 1$ and the convention $\FFF^k\a=\a$ for $k\le k_0$, such that $d(\FFF^k\a)\subset \FFF^k\a$ and $[\FFF^k\a,\FFF^l\a]\subset \FFF^{k+l}\a$. It is \defn{complete} if the canonical map $\a\to\varprojlim_k\a/\FFF^k\a$ is an isomorphism.

If $\a$ is complete and $X\in \FFF^1\a^0$, then $\ad_X(\FFF^k\a)\subset \FFF^{k+1}\a$, so $\exp(\ad_X)$ and $\frac{\exp(\ad_X)-1}{\ad_X}:=\sum_{k\ge0}\frac{1}{(k+1)!}\ad_X^{\,k}$ converge on $\a$. The \defn{Maurer--Cartan equation} is
\begin{equation}\label{1SCVMMUSM}
d\varpi+\tfrac12[\varpi,\varpi]=0,\quad
\varpi\in \FFF^1\a^1 ,
\end{equation}
and $\MC(\a)$ denotes its solution set in $\FFF^1\a^1$. For $X\in \FFF^1\a^0$ the \defn{gauge action} is
\begin{equation}\label{1SCFR6FY2}
X\cdot\varpi:=\exp(\ad_X)\varpi-\frac{\exp(\ad_X)-1}{\ad_X}(dX).
\end{equation}
It preserves $\MC(\a)$, and the Baker--Campbell--Hausdorff product $(X,Y)\mapsto X\bullet Y$ makes $\FFF^1\a^0$ a group acting on $\MC(\a)$. Two Maurer--Cartan elements in the same orbit are \defn{gauge equivalent}; see \cite{getzler2009,berglund2023exponential,yalin2016maurer}.

\begin{remark}\label{1SCV8CXM4}
If $d=0$, then $dX=0$, the second term of \eqref{1SCFR6FY2} vanishes, and
\begin{equation}\label{1SCDGGZKR}
X\cdot\varpi=e^{\ad_X}\varpi .
\end{equation}
This applies to the graded Lie algebra of multivector fields used in \Cref{1SCC6G85S}.
\end{remark}

An $L_\infty$-morphism $\varrho=(\varrho_1,\varrho_2,\dots)\colon\g\lto\g'$ between complete filtered differential graded Lie algebras is \defn{filtered} if
\begin{equation}\label{1SCLC9EC8}
\varrho_n\bigl(\FFF^{k_1}\g,\dots,\FFF^{k_n}\g\bigr)\subset\FFF^{k_1+\cdots+k_n}\g'\quad(n\ge1).
\end{equation}
For $\alpha\in\FFF^1\g^1$ we set
\begin{equation}\label{1SCS3HGKV}
\varrho_*(\alpha):=\sum_{n\ge1}\frac1{n!}\,\varrho_n(\alpha,\dots,\alpha).
\end{equation}
The $n$-th term lies in $\FFF^n\g'$, so the series converges.

Following \cite[Definition 3.3]{kraftschnitzer2021}, two filtered $L_\infty$-morphisms $\varrho,\varrho'\colon\g\lto\g'$ are \defn{homotopic} if they are homotopy equivalent Maurer--Cartan elements of the convolution $L_\infty$-algebra, filtered as in \cite[Eq.(22)]{kraftschnitzer2021}. We use the next lemma to transport gauge equivalences along formality morphisms in \Cref{1SCQPGX26}.

\begin{lemma}\label{1SCMF2KEY}
Let $\varrho\colon\g\lto\g'$ be a filtered $L_\infty$-morphism between complete filtered differential graded Lie algebras. Then:
\begin{itemize}
\item[(1)] $\varrho_*$ maps $\MC(\g)$ to $\MC(\g')$;
\item[(2)] $\varrho_*$ maps gauge equivalent elements to gauge equivalent elements;
\item[(3)] if $\varrho$ and $\varrho'$ are homotopic filtered $L_\infty$-morphisms, then $\varrho_*$ and $\varrho'_*$ induce the same map on gauge equivalence classes.
\end{itemize}
\end{lemma}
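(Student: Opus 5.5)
The plan is to reduce all three parts to the standard bar/coalgebra description of $L_\infty$-morphisms, so that the statements become formal consequences of convergence in the complete filtrations. Write $\MC$ in $\FFF^1$ as above. A filtered $L_\infty$-morphism $\varrho$ is equivalent to a morphism of complete filtered dg coalgebras $\widehat{S}^c(\g[1])\to\widehat{S}^c(\g'[1])$. Under this correspondence, a Maurer--Cartan element $\alpha$ is a grouplike element $e^\alpha=\sum\alpha^{\otimes n}/n!$ annihilated by the codifferential. The filtration condition \eqref{1SCLC9EC8} guarantees that all series converge; in particular it gives convergence of $\varrho_*(\alpha)$ in \eqref{1SCS3HGKV}.

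For (1), I would compute $d\varrho_*(\alpha)+\frac12[\varrho_*\alpha,\varrho_*\alpha]$ directly. Apply \eqref{1SCFV8US5} with all arguments equal to $\alpha$ and sum over $k$ with weights $1/k!$. The left side becomes $\sum_j\frac1{(j-1)!}\varrho_j(\ell(\alpha),\alpha,\dots,\alpha)$ with $\ell(\alpha)=d\alpha+\frac12[\alpha,\alpha]=0$, so it vanishes. The right side becomes the Maurer--Cartan curvature of $\varrho_*(\alpha)$, since $\ell'_r=0$ for $r\ge3$ in a DGLA. The shuffle counts reproduce the exponential combinatorics, because $\alpha$ has even degree in $\g[1]$, so all Koszul signs are trivial. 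Completeness justifies rearranging the sums filtration degree by filtration degree.

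For (2), I would use the path (Sullivan/polynomial de Rham) description of gauge equivalence rather than the BCH formula \eqref{1SCFR6FY2}. Fix $X\in\FFF^1\g^0$. The family $\alpha(t)=(tX)\cdot\alpha$ solves $\dot\alpha(t)=[X,\alpha(t)]-dX=-d_{\alpha(t)}X$. Equivalently, $\alpha(t)+X\,dt$ is a Maurer--Cartan element of the completed tensor product $\g\widehat\otimes\Omega^\bullet([0,1])$, with polynomial forms. Extend $\varrho$ $\Omega$-linearly to a filtered $L_\infty$-morphism $\g\widehat\otimes\Omega\lto\g'\widehat\otimes\Omega$ and apply (1). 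The image has the form $\beta(t)+Y(t)\,dt$ with $\dot\beta=-d_{\beta(t)}Y(t)$, where $\beta(t)=\varrho_*(\alpha(t))$ and $Y(t)=\sum\frac1{n!}\varrho_{n+1}(X,\alpha(t),\dots,\alpha(t))\in\FFF^1\g'^0$. The key lemma is that any such path with time-dependent gauge parameter connects gauge equivalent endpoints. I would prove it by solving the ordinary differential equation $\dot Z=\dots$ for a $Z(t)\in\FFF^1\g'^0$ with $\beta(t)=Z(t)\cdot\beta(0)$. This can be solved degree by degree in $\FFF^\bullet$ using completeness, as in \cite{getzler2009,yalin2016maurer}.

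For (3), the homotopy in the convolution algebra of \cite{kraftschnitzer2021} is a Maurer--Cartan element of $\mathrm{Hom}(\widehat S^c\g[1],\g')\widehat\otimes\Omega([0,1])$ interpolating $\varrho$ and $\varrho'$. Evaluating it on $e^{\alpha}$ yields a path of Maurer--Cartan elements in $\g'\widehat\otimes\Omega$ joining $\varrho_*\alpha$ and $\varrho'_*\alpha$. The lemma from (2) then shows that these endpoints are gauge equivalent. Combined with (2), this shows $\varrho_*$ and $\varrho'_*$ agree on classes. Alternatively, one can cite \cite[Prop.~3.9 or Thm.~3.10]{kraftschnitzer2021}, where exactly this statement is proved.

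The main obstacle is the comparison in (2) and (3) between path-homotopy and the gauge action \eqref{1SCFR6FY2} in the complete filtered, rather than nilpotent, setting. One must check that the integrated gauge parameter lies in $\FFF^1$ and that the filtration-wise ODE solutions converge. My first attempt is to solve inductively modulo $\FFF^k$, where everything is nilpotent, and pass to the limit; I would cite the equivalence of these notions from \cite{getzler2009,berglund2023exponential} if the bookkeeping becomes heavy.
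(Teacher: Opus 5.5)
Your proposal is correct and is essentially an unpacking of what the paper simply cites from \cite{kraftschnitzer2021}. Part (1) is their Lemma 2.7(iii), the grouplike-element argument. Part (2) is their Proposition 2.15 for the path (homotopy) relation, combined with their Theorem 2.12; that theorem is exactly your key lemma that homotopy and gauge equivalence agree in complete filtered DGLAs. Part (3) is their Proposition 3.5(iii), proved by evaluating the convolution homotopy on $e^\alpha$ as you describe.

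One caution for (1): do the computation in the shifted coalgebra picture you set up in your first paragraph. If you apply \eqref{1SCFV8US5} literally in its unshifted conventions, the factor $(-1)^{i(j-1)}$ and the extra signs in $\chi'$ do not cancel when every argument is the odd element $\alpha$. So the left side does not collapse to $\varrho_j\bigl(d\alpha+\tfrac12[\alpha,\alpha],\alpha,\dots,\alpha\bigr)$ as you claim.
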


\begin{proof}
(1) is \cite[Lemma 2.7(iii)]{kraftschnitzer2021}, which goes back to Dolgushev. (2) is \cite[Proposition 2.15]{kraftschnitzer2021}, stated there for the homotopy relation; for differential graded Lie algebras with complete filtrations the homotopy relation and the gauge relation coincide \cite[Theorem 2.12]{kraftschnitzer2021}. (3) is \cite[Proposition 3.5(iii)]{kraftschnitzer2021}, whose proof is given in \cite{bursztyndolgushevwaldmann2012}.
\end{proof}

\subsection{The Cattaneo--Felder construction}\label{1SC9E2V5Z}
Let $N$ be a submanifold of a Poisson manifold $(M,\pi)$. Let $\g$ be the graded Lie algebra of multivector fields on the formal neighbourhood of $N$,
\[
\g=\varprojlim_n\ C^\infty(\wedge^\bullet TM)\big/
I_N^{\,n}\,C^\infty(\wedge^\bullet TM),
\]
where $I_N$ is the ideal of functions vanishing on $N$. In coordinates $(x_1,\dots,x_m,y_1,\dots,y_n)$ in which $N$ is the locus $y_1=\cdots=y_n=0$, the algebra $\g$ is generated by
$C^\infty(x_1,\dots,x_m)\llbracket y_1,\dots,y_n\rrbracket$, by the
$\frac{\partial}{\partial x_i}$ and by the $\frac{\partial}{\partial y_i}$. The class of $\pi$ is a Maurer--Cartan element of $\g$.

Suppose first that $N$ is coisotropic and let $\nu_N:=TM|_N/TN$ be its normal bundle. Then $\nu_N^*$ is a Lie algebroid over $N$, and
\[
\b_N=C^\infty(\wedge^\bullet\nu_N)
\]
is an abelian graded Lie algebra with a surjection $P\colon\g\to\b_N$, restriction to $N$ followed by the projection $TM|_N\to\nu_N$. A tubular neighbourhood of $N$ in $M$ gives an inclusion $j\colon\b_N\hookrightarrow\g$ for which $P$ is a projection onto $\b_N$. Explicitly, a tubular neighbourhood identifies a neighbourhood of $N$ in $M$ with a vector bundle $\ppp_E\colon E\to N$. Since $TE|_N=TN\oplus\ker d\ppp_E$, we may identify $\nu_N$ with $\ker d\ppp_E|_N\subset TM|_N$, and the inclusion $\nu_N\hookrightarrow TM|_N$ extends to $\b_N\hookrightarrow\g$. In coordinates, the image of $\b_N$ is generated by $C^\infty(x_1,\dots,x_m)$ and the $\frac{\partial}{\partial y_i}$, an abelian Lie subalgebra of $\g$. One checks that
\begin{equation}\label{1SCXSWR3W}
P[a,b]=P[Pa,b]+P[a,Pb]
\end{equation}
for all $a,b\in\g$. Since $\b_N$ is abelian and $P$ is a projection, \eqref{1SCXSWR3W} says that $\ker P$ is closed under the Schouten bracket, so $(\g,\b_N,P,\pi)$ is a curved $V$-datum in the sense of \Cref{1SCS7HS5G}. Voronov's construction then makes $\b_N$ a $P_\infty$-algebra, flat because $P(\pi)=0$ by coisotropy. Up to the sign conventions of \Cref{1SCS7HS5G}, this is the Cattaneo--Felder $P_\infty$-structure \cite[Proposition 2.1]{Cattaneo2007}.

\subsection{Reduction to the coisotropic case}\label{1SCS4XYSN}
Recall that $N$ is \defn{pre-Poisson} if $TN+\pi(TN^\circ)$ has constant rank \cite{CattaneoZambon2009}. Let $N$ be pre-Poisson. Then $L:=\pi^{-1}(TN)\cap TN^\circ$ is a Lie subalgebroid of $T^*M$ for the Lie algebroid structure induced by $\pi$, and the natural candidate for the $P_\infty$-algebra is $\a:=C^\infty(\wedge^\bullet L^*)$. Since $\pi^{-1}(TN)^\circ=\pi(TN^\circ)$, the sequence
\begin{equation}\label{1SC2QNAK4}
0\to L\to T^*M|_N\to\bigl(TN+\pi(TN^\circ)\bigr)^*\to0
\end{equation}
is exact; the bundle $TN+\pi(TN^\circ)$ is smooth because $N$ is pre-Poisson.

The two ingredients of \Cref{1SC9E2V5Z} still exist in this setting. Dualizing \eqref{1SC2QNAK4} gives a surjection $TM|_N\to L^*$ with kernel $TN+\pi(TN^\circ)$, which extends to a projection $P\colon\g\to\a$, and a tubular neighbourhood of $N$ in $M$ gives an inclusion $j\colon\a\hookrightarrow\g$ as an abelian subalgebra with $P\circ j=\id$. However, $P$ satisfies \eqref{1SCXSWR3W}, equivalently $\ker P$ is a Lie subalgebra of $\g$, if and only if $N$ is coisotropic. When $N$ is not coisotropic, $(\g,\a,P,\pi)$ is therefore not a curved $V$-datum, and the $P_\infty$-structure on $\a$ cannot be obtained by derived brackets inside $M$. We proceed differently and change the ambient manifold instead of the algebra: we replace $M$ by a smaller Poisson manifold $S\supset N$ in which $N$ is coisotropic, and import the flat structure that \Cref{1SC9E2V5Z} provides there.

Cattaneo and Zambon \cite[Theorem 3.3]{CattaneoZambon2009} proved that every pre-Poisson submanifold $N\subset(M,\pi)$ sits coisotropically inside a \defn{Poisson transversal} (cosymplectic) submanifold $S\subset M$, unique up to neighbourhood equivalence: given two such $S$ and $S'$, after shrinking both to tubular neighbourhoods of $N$, there is a Poisson diffeomorphism $\varphi\colon S\to S'$ restricting to the identity on $N$ \cite[Theorem 4.3]{CattaneoZambon2009}.

Set $\nu_S:=TS|_N/TN$ and $\b_S:=C^\infty(\wedge^\bullet\nu_S)$. By
\cite[Proposition 3.6]{CattaneoZambon2009}, restriction of covectors
\begin{equation}\label{1SCPEDZZK}
r_S\colon L\to\nu_S^*,\quad r_S(\xi)=\xi|_{TS|_N},
\end{equation}
is an isomorphism of Lie algebroids onto the conormal bundle $\nu_S^*$ of $N$ in $S$, inverse to the extension isomorphism constructed there. Dualizing gives an isomorphism of graded algebras
\begin{equation}\label{1SCVX2YHQ}
\Phi_S:=\wedge^\bullet(r_S^*)^{-1}\colon
\a=C^\infty(\wedge^\bullet L^*)\to\b_S .
\end{equation}

Since $S$ is a Poisson transversal, it carries a canonical Poisson structure in which $N$ is coisotropic. The construction of  \Cref{1SC9E2V5Z} with $S$ in place of $M$ yields the graded Lie algebra
\[ \g_S=\varprojlim_n\ C^\infty(\wedge^\bullet TS)\big/
I_N^{\,n}\,C^\infty(\wedge^\bullet TS),
\]
with Maurer--Cartan element $\pi_S\in\g_S$ the class of the Poisson bivector of $S$. A tubular neighbourhood $\psi$ of $N$ in $S$ determines an inclusion $j_S\colon\b_S\hookrightarrow\g_S$ and a projection
$P_S\colon\g_S\to\b_S$,
\[
P_S(X):=(\wedge^\bullet\ppp_S)(X|_N),
\]
where $\ppp_S\colon TS|_N\to\nu_S$ is the projection. The Cattaneo--Felder construction applied to $N\subset S$ makes $\b_S$ a flat $P_\infty$-algebra with $\ell_1^{\b_S}$ the Chevalley--Eilenberg differential of $\nu_S^*$. Its brackets are the derived brackets in the conventions of \Cref{1SCS7HS5G}, rescaled as in \Cref{1SCRESCALE} with $t=-1$: for homogeneous $b_1,\dots,b_k\in\b_S$, with $|b_i|$ the exterior degree, set $\varepsilon(b):=(-1)^{k-1}\operatorname{dec}_k(b)=(-1)^{k-1+\sum_{i=1}^k(k-i)|b_i|}$, with $\operatorname{dec}_k$ as in \eqref{EQDECALAGE}; then
\begin{equation}\label{1SCRZZPXA}
\ell_k^{\b_S}(b_1,\dots,b_k)
=\varepsilon(b)\,P_S\bigl[\cdots[\pi_S,j_S(b_1)],\dots,j_S(b_k)\bigr],
\quad \ell_0^{\b_S}=-P_S(\pi_S)=0.
\end{equation}
The vanishing of $\ell_0^{\b_S}$ is the coisotropy of $N$ in $S$. The factor $(-1)^{k-1}$ leaves $\ell_1^{\b_S}$ unchanged and reverses the sign of $\ell_2^{\b_S}$; it is chosen so that $\ell_2$ induces the reduced Poisson bracket, see \eqref{1SCQHGY3K}. It remains to transport this structure to $\a$ along $\Phi_S$.

\begin{proposition}\label{1SCMMB63H}
Let $S\subset M$ be a Poisson transversal with $N$ coisotropic in $S$, and let $\psi$ be a tubular neighbourhood of $N$ in $S$. Then $\a$ carries a flat $P_\infty$-structure $\{\ell_k^{\a,S,\psi}\}_{k\ge0}$ with
$\ell_1^{\a,S,\psi}=d_L$.
\end{proposition}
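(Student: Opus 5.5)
The plan is to define the structure on $\a$ by transport along the algebra isomorphism $\Phi_S$ of \eqref{1SCVX2YHQ}, and then check the unary bracket. Concretely, set
\[
\ell_k^{\a,S,\psi}:=\Phi_S^{-1}\circ\ell_k^{\b_S}\circ\Phi_S^{\otimes k}\quad(k\ge1),\qquad \ell_0^{\a,S,\psi}:=\Phi_S^{-1}(\ell_0^{\b_S})=0,
\]
where $\{\ell_k^{\b_S}\}$ is the Cattaneo--Felder structure \eqref{1SCRZZPXA} on $\b_S$ built from the tubular neighbourhood $\psi$.

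First I will justify that $\{\ell_k^{\b_S}\}$ is a flat $P_\infty$-structure. Since $S$ is a Poisson transversal, it is Poisson, and $N$ is coisotropic in $S$. \Cref{1SC9E2V5Z} applied to $N\subset S$ then gives a curved $V$-datum $(\g_S,\b_S,P_S,\pi_S)$. Voronov's derived brackets, with the décalage \eqref{EQDECALAGE}, form an $L_\infty$-structure. The brackets are multiderivations because $[\cdot,j_S(b)]$ is a derivation of the wedge product on $\g_S$ and $P_S$ is an algebra morphism onto the image of $j_S$ (restriction to $N$ followed by projection). Flatness is $P_S(\pi_S)=0$, which is exactly coisotropy. \Cref{1SCRESCALE} with $t=-1$ shows that the rescaled family \eqref{1SCRZZPXA} is still a flat $P_\infty$-structure.

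Next, \Cref{1SCU92AQH}(1) applied to the degree-zero graded algebra isomorphism $\Phi_S$ shows that the transported family is a flat $P_\infty$-structure on $\a$.

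For the identity $\ell_1^{\a,S,\psi}=d_L$, I will use \Cref{1SCU92AQH}(2). It suffices to show $\Phi_S\circ d_L=\ell_1^{\b_S}\circ\Phi_S$, which has two ingredients.

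The first ingredient is that $\ell_1^{\b_S}$ is the Chevalley--Eilenberg differential $d_{\nu_S^*}$ of the conormal Lie algebroid $\nu_S^*$. This is the standard Cattaneo--Felder identification. Since both sides are derivations, it suffices to check generators. On a function $f$, we have $P_S[\pi_S,f]=\pm\,\ppp_S(\pi_S^\sharp df|_N)$, which pairs with $\xi\in\nu_S^*$ to give $\pm\pi_S^\sharp(\xi)f$. On a normal vector field $b$, the corresponding check uses the cotangent bracket. The main obstacle is the sign bookkeeping between the rescaling factor $\varepsilon(b)$ and the sign convention for the anchor $\pi^\sharp$. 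I would check the case $k=1$ directly in coordinates $(x,y)$ adapted to $\psi$, where $j_S$ identifies $\b_S$ with polynomials in $\partial_{y}$ with coefficients in $C^\infty(x)$. For $k=1$, $\varepsilon=1$ and $\operatorname{dec}_1=1$.

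The second ingredient is that $r_S\colon L\to\nu_S^*$ is a Lie algebroid isomorphism by \cite[Proposition 3.6]{CattaneoZambon2009}. Lie algebroid isomorphisms intertwine Chevalley--Eilenberg differentials under the induced map $\wedge^\bullet(r_S^*)^{-1}$, which gives $\Phi_S\circ d_L=d_{\nu_S^*}\circ\Phi_S$. Combining the two ingredients yields $\ell_1^{\a,S,\psi}=d_L$.
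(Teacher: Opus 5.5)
Your proposal is correct and follows essentially the same route as the paper's proof. Both transport the Cattaneo--Felder structure along $\Phi_S$ using \Cref{1SCU92AQH}(1), then obtain $\ell_1^{\a,S,\psi}=d_L$ from \Cref{1SCU92AQH}(2), since $\ell_1^{\b_S}$ is the Chevalley--Eilenberg differential of $\nu_S^*$ and $r_S$ is a Lie algebroid isomorphism. Your additional checks (multiderivations, flatness from coisotropy, the degree-one sign computation) cover facts the paper takes as given from \S\ref{1SC9E2V5Z} and Cattaneo--Felder, so they add detail without changing the argument.
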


\begin{proof}
By \Cref{1SCU92AQH} applied to $\Phi_S$, the maps
$\ell_k^{\a,S,\psi}:=\Phi_S^{-1}\circ\ell_k^{\b_S}\circ\Phi_S^{\otimes k}$ form a flat $P_\infty$-structure on $\a$. Since $\ell_1^{\b_S}$ is the algebroid differential of $\nu_S^*$ and $r_S$ is an isomorphism of Lie algebroids, $\Phi_S\circ d_L=\ell_1^{\b_S}\circ\Phi_S$, so \Cref{1SCU92AQH}(2) gives $\ell_1^{\a,S,\psi}=d_L$.
\end{proof}

\subsection{Independence of the choices}\label{1SC4DJUA9}
The structure of \Cref{1SCMMB63H} depends on the Poisson transversal $S$ and on the tubular neighbourhood $\psi$. We show that its $L_\infty$-isomorphism class does not. There are two comparisons. Two transversals are compared through the Poisson diffeomorphism of \cite[Theorem 4.3]{CattaneoZambon2009} and a tubular neighbourhood of $N$ in $S'$ adapted to $\psi$; this comparison is strict (\Cref{1SCFBTZ34}). Two tubular neighbourhoods inside a fixed $S$ are compared through \cite[Theorem 3.2]{cattaneo2008equivalences}; this comparison need not be strict (\Cref{1SC229L5W}). \Cref{1SC4NFQ6C} composes the two.

The first comparison is a special case of a naturality statement that we will reuse in \Cref{1SCSWUFL4}, so we state it in that generality.

\begin{lemma}[Naturality]\label{1SCEXNP25}
Let $\Theta\colon(M,\pi)\to(M',\pi')$ be a Poisson diffeomorphism between neighbourhoods of pre-Poisson submanifolds $N\subset M$ and $N'\subset M'$, with $\Theta(N)=N'$. Let $S\subset M$ be a Poisson transversal containing $N$ coisotropically, $\psi$ a tubular neighbourhood of $N$ in $S$, and set $S':=\Theta(S)$, $\Lambda:=\nu(\Theta|_S)\colon\nu_S\to\nu_{S'}$ and $\psi':=\Theta\circ\psi\circ\Lambda^{-1}$. Let $\Theta_L\colon L\to L'$ be the restriction of the cotangent lift $(d\Theta^{-1})^*$ and $\widehat\Theta:=\wedge^\bullet(\Theta_L^*)^{-1}\colon\a\to\a'$. Then $S'$ is a Poisson transversal containing $N'$ coisotropically, $\psi'$ is a tubular neighbourhood of $N'$ in $S'$, and $\widehat\Theta$ is a strict isomorphism of $L_\infty$-algebras from $(\a,\{\ell_k^{\a,S,\psi}\})$ to $(\a',\{\ell_k^{\a',S',\psi'}\})$.
\end{lemma}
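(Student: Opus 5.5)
The plan is to prove the lemma in three layers: the geometric statements about $S'$ and $\psi'$; a strict isomorphism at the level of the Cattaneo--Felder data on $\b_S$ and $\b_{S'}$; and a transport to $\a$ and $\a'$ through $\Phi_S$ and $\Phi_{S'}$ using \Cref{1SCU92AQH}(3).

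\emph{Geometric statements.} Since $\Theta$ is a Poisson diffeomorphism, it carries the Poisson transversal $S$ to a Poisson transversal $S'$. Indeed, transversality $TS+\pi(TS^\circ)=TM|_S$ and $TS\cap\pi(TS^\circ)=0$ are preserved by $d\Theta$, because $\Theta_*\pi=\pi'$ and $(d\Theta^{-1})^*$ maps $TS^\circ$ onto $TS'^\circ$. Moreover $\Theta|_S\colon S\to S'$ is Poisson for the induced structures, since these are determined intrinsically by $\pi$ and $S$. It follows that $N'=\Theta(N)$ is coisotropic in $S'$. The map $\psi'=\Theta\circ\psi\circ\Lambda^{-1}$ is a diffeomorphism from a neighbourhood of the zero section of $\nu_{S'}$ onto a neighbourhood of $N'$ in $S'$. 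It restricts to the identity on the zero section, and its normal derivative along $N'$ is $\Lambda\circ\Lambda^{-1}=\id$. Hence $\psi'$ is a tubular neighbourhood.

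\emph{Strict isomorphism on $\b_S$.} Let $\Theta_S:=\Theta|_S$. Pushforward $\Theta_{S*}$ of formal multivector fields is an isomorphism of graded Lie algebras $\g_S\to\g_{S'}$, since it preserves the Schouten bracket and maps $I_N$ to $I_{N'}$. It sends $\pi_S$ to $\pi_{S'}$. By the choice of $\psi'$, it intertwines $j_S$ with $j_{S'}\circ\wedge^\bullet\Lambda$, where the tubular identifications are transported to one another, and it intertwines $P_S$ with $P_{S'}$ because $\Lambda$ is the normal derivative of $\Theta_S$. Substituting into \eqref{1SCRZZPXA}, with the sign $\varepsilon(b)$ depending only on degrees, shows that $\wedge^\bullet\Lambda\colon\b_S\to\b_{S'}$ satisfies
\[
\wedge^\bullet\Lambda\circ\ell_k^{\b_S}=\ell_k^{\b_{S'}}\circ(\wedge^\bullet\Lambda)^{\otimes k}.
\]
It is therefore a strict $L_\infty$-isomorphism.

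\emph{Transport to $\a$.} By \Cref{1SCU92AQH}(3) applied with $\varrho=\wedge^\bullet\Lambda$, the map $\Phi_{S'}^{-1}\circ\wedge^\bullet\Lambda\circ\Phi_S$ is a strict isomorphism from $(\a,\ell^{\a,S,\psi})$ to $(\a',\ell^{\a',S',\psi'})$. It remains to identify this map with $\widehat\Theta$, which amounts to checking that the square
\[
r_{S'}\circ\Theta_L=(\Lambda^*)^{-1}\circ r_S
\]
commutes, as maps $L\to\nu_{S'}^*$. For $\xi\in L_x$ and $w\in T_{\Theta(x)}S'$ we have
\[
(\Theta_L\xi)(w)=\xi\bigl(d\Theta^{-1}w\bigr),
\]
and $d\Theta^{-1}$ maps $TS'$ to $TS$ and induces $\Lambda^{-1}$ on normal bundles. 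Restricting both sides to $TS'$ then gives the identity. Dualizing and taking exterior powers yields $\Phi_{S'}\circ\widehat\Theta=\wedge^\bullet\Lambda\circ\Phi_S$, as required. Along the way one should also confirm that $\Theta_L$ maps $L=\pi^{-1}(TN)\cap TN^\circ$ onto $L'$, which is immediate from $\Theta_*\pi=\pi'$ and $\Theta(N)=N'$.

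The main obstacle is the second step. There one must track carefully that pushforward by $\Theta_S$ matches the tubular inclusions $j_S$ and $j_{S'}$ exactly, not merely up to the projection. This is precisely why $\psi'$ is defined as $\Theta\circ\psi\circ\Lambda^{-1}$, and I would verify it in local tubular coordinates, where $\Theta_S$ becomes the identity map composed with $\Lambda$.
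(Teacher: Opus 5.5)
Your proposal is correct and follows the paper's proof. The paper takes the same two steps: $\wedge^\bullet\Lambda$ is a strict isomorphism between the derived-bracket structures on $\b_S$ and $\b_{S'}$, because pushforward by $\Theta|_S$ preserves the Schouten bracket, sends $\pi_S$ to $\pi_{S'}$ and intertwines $j_S,P_S$ with $j_{S'},P_{S'}$. It then transports this along $\Phi_S,\Phi_{S'}$ using $r_{S'}\circ\Theta_L=(\Lambda^{-1})^*\circ r_S$, that is, $\Phi_{S'}\circ\widehat\Theta=\wedge^\bullet\Lambda\circ\Phi_S$, doing the final conjugation by hand rather than citing \Cref{1SCU92AQH}(3); this difference is immaterial.

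One harmless slip: the normal derivative of $\psi'$ is $\Lambda\circ\nu(\psi)\circ\Lambda^{-1}$, which is the identity only if $\psi$ is normalized. This is not needed for $\psi'$ to be a tubular neighbourhood.
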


\begin{proof}
Since $\Theta$ is Poisson and $\Theta(N)=N'$, the image $S'$ is a Poisson transversal in which $N'$ is coisotropic, and $\varphi:=\Theta|_S\colon S\to S'$ is a Poisson diffeomorphism with $\varphi(N)=N'$. Set
\[
\theta:=\wedge^\bullet\Lambda\colon\b_S\to\b_{S'},
\quad
\Upsilon:=\wedge^\bullet\varphi_*\colon\g_S\to\g_{S'} .
\]
Pushforward of multivector fields commutes with the Schouten bracket, and $\varphi(N)=N'$, so $\Upsilon$ is a morphism of graded Lie algebras with $\Upsilon(\pi_S)=\pi_{S'}$. From the definitions of $P_S$ and $j_S$, and $\psi'\circ\Lambda=\varphi\circ\psi$, we get $P_{S'}\circ\Upsilon=\theta\circ P_S$ and $\Upsilon\circ j_S=j_{S'}\circ\theta$. Since $\theta$ preserves degrees, $\varepsilon(\theta b)=\varepsilon(b)$, and \eqref{1SCRZZPXA} gives, for every $k\ge1$,
\begin{align*}
\theta\bigl(\ell_k^{\b_S}(b_1,\dots,b_k)\bigr)
&=\varepsilon(b)\,P_{S'}\Bigl(\Upsilon\bigl[\cdots[[\pi_S,j_S(b_1)],j_S(b_2)],\dots,
j_S(b_k)\bigr]\Bigr)\\
&=\varepsilon(b)\,P_{S'}\bigl[\cdots[[\pi_{S'},j_{S'}(\theta b_1)],j_{S'}(\theta b_2)],\dots,
j_{S'}(\theta b_k)\bigr]\\
&=\ell_k^{\b_{S'}}(\theta b_1,\dots,\theta b_k),
\end{align*}
while $\ell_0^{\b_S}=0=\ell_0^{\b_{S'}}$. As $\theta$ is an isomorphism of graded algebras of degree $0$, it is a strict $L_\infty$-isomorphism.

Restriction of covectors commutes with $d\Theta$, so $r_{S'}\circ\Theta_L=(\Lambda^{-1})^*\circ r_S$, hence $\theta\circ\Phi_S=\Phi_{S'}\circ\widehat\Theta$. Combining this with $\Phi_S\circ\ell_k^{\a,S,\psi}=\ell_k^{\b_S}\circ\Phi_S^{\otimes k}$ and the strictness of $\theta$,
\[
\widehat\Theta\circ\ell_k^{\a,S,\psi}
=\Phi_{S'}^{-1}\theta\,\ell_k^{\b_S}\Phi_S^{\otimes k}
=\Phi_{S'}^{-1}\ell_k^{\b_{S'}}\theta^{\otimes k}\Phi_S^{\otimes k}
=\Phi_{S'}^{-1}\ell_k^{\b_{S'}}\Phi_{S'}^{\otimes k}\widehat\Theta^{\otimes k}
=\ell_k^{\a',S',\psi'}\circ\widehat\Theta^{\otimes k}
\]
for every $k\ge1$, and $\widehat\Theta(\ell_0^{\a,S,\psi})=0=\ell_0^{\a',S',\psi'}$ by flatness. Being a composite of algebra isomorphisms, $\widehat\Theta$ is bijective, so it is a strict $L_\infty$-isomorphism.
\end{proof}

\begin{lemma}\label{1SCFBTZ34}
Let $S,S'\subset M$ be Poisson transversal submanifolds containing $N$ coisotropically, and fix a tubular neighbourhood $\psi$ of $N$ in $S$. Then there are a tubular neighbourhood $\psi'$ of $N$ in $S'$ and a strict isomorphism of $L_\infty$-algebras $\Psi\colon(\a,\{\ell_k^{\a,S,\psi}\})\to(\a,\{\ell_k^{\a,S',\psi'}\})$.
\end{lemma}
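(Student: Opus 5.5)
The plan is to reduce the statement to the Naturality Lemma~\ref{1SCEXNP25}, applied with $M=M'$, $N=N'$, and $\Theta$ a Poisson diffeomorphism carrying $S$ to $S'$ and fixing $N$ pointwise.

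First, invoke the uniqueness theorem \cite[Theorem 4.3]{CattaneoZambon2009}. After shrinking $S$ and $S'$ to tubular neighbourhoods of $N$, it gives a Poisson diffeomorphism $\varphi\colon S\to S'$ with $\varphi|_N=\id_N$. Shrinking is harmless, because the structures $\ell_k^{\a,S,\psi}$ only depend on the formal neighbourhood of $N$ in $S$. We replace $\psi$ by its restriction to a smaller disc bundle, which does not change $j_S$ or $P_S$ on $\g_S$.

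The Naturality Lemma is phrased for an ambient Poisson diffeomorphism $\Theta$ of neighbourhoods in $M$. Its proof, however, only uses $\varphi=\Theta|_S$ and the relation $r_{S'}\circ\Theta_L=(\Lambda^{-1})^*\circ r_S$. So I would either extend $\varphi$ to an ambient Poisson diffeomorphism, which is in fact how Cattaneo--Zambon construct it (their equivalence arises from a Poisson flow in $M$), or rerun the proof of Lemma~\ref{1SCEXNP25} directly with $\varphi$.

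In the second route, define $\psi':=\varphi\circ\psi\circ\Lambda^{-1}$ with $\Lambda:=\nu(\varphi)\colon\nu_S\to\nu_{S'}$. Then $\theta=\wedge^\bullet\Lambda$ is a strict $L_\infty$-isomorphism $\b_S\to\b_{S'}$, exactly as in the first half of that proof. Set
\[
\Psi:=\Phi_{S'}^{-1}\circ\theta\circ\Phi_S\colon\a\to\a .
\]
This is an algebra isomorphism of degree $0$, and it is a strict $L_\infty$-isomorphism by the same conjugation computation, using Lemma~\ref{1SCU92AQH}. When an ambient $\Theta$ with $d\Theta|_N$ inducing the identity on $L$ is available, $\Psi$ coincides with $\widehat\Theta$.

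The main obstacle is this compatibility at $N$. There is no need to show that $\Psi$ is the identity; the statement only asks for some strict isomorphism. What must be checked is that $\Psi$ is a well-defined map $\a\to\a$, which holds since $\Phi_S$, $\theta$ and $\Phi_{S'}$ are all algebra isomorphisms. So the second route avoids any ambient extension, and I would take it, citing the first half of the proof of Lemma~\ref{1SCEXNP25} verbatim for the identities $P_{S'}\circ\Upsilon=\theta\circ P_S$ and $\Upsilon\circ j_S=j_{S'}\circ\theta$.
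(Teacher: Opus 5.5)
Your second route is exactly the paper's proof. The paper obtains $\varphi\colon S\to S'$ from \cite[Theorem 4.3]{CattaneoZambon2009} after passing to germs along $N$, sets $\psi':=\varphi\circ\psi\circ\Lambda^{-1}$, reuses the first half of the proof of Lemma~\ref{1SCEXNP25} to see that $\theta=\wedge^\bullet\Lambda$ is a strict isomorphism at the level of $\b_S,\b_{S'}$, and takes $\Psi:=\Phi_{S'}^{-1}\circ\theta\circ\Phi_S$. The discussion of an ambient extension $\Theta$ is unnecessary but harmless, and the proposal is correct as it stands.
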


\begin{proof}
By \cite[Theorem 4.3]{CattaneoZambon2009} there is a Poisson diffeomorphism $\varphi\colon S\to S'$ with $\varphi|_N=\id_N$, defined after shrinking $S$ and $S'$ to smaller tubular neighbourhoods of $N$. Since $\g_S$, $\pi_S$ and $\b_S$ depend only on the germ of $S$ along $N$, and likewise for $S'$, we may regard $\varphi$ as defined on all of $S$. Let $\Lambda\colon\nu_S\to\nu_{S'}$ be induced by $\varphi$, $\psi':=\varphi\circ\psi\circ\Lambda^{-1}$ and $\theta:=\wedge^\bullet\Lambda\colon\b_S\to\b_{S'}$. The first part of the proof of \Cref{1SCEXNP25} applies verbatim with $\varphi$ in place of $\Theta|_S$: it uses only that $\varphi$ is a Poisson diffeomorphism with $\varphi(N)=N$ and $\psi'\circ\Lambda=\varphi\circ\psi$, and shows that $\theta$ is a strict isomorphism $(\b_S,\{\ell_k^{\b_S,\psi}\})\to(\b_{S'},\{\ell_k^{\b_{S'},\psi'}\})$. Conjugating by the algebra isomorphisms $\Phi_S$ and $\Phi_{S'}$ of \eqref{1SCVX2YHQ}, the map $\Psi:=\Phi_{S'}^{-1}\circ\theta\circ\Phi_S$ is a strict isomorphism of $L_\infty$-algebras from $(\a,\{\ell_k^{\a,S,\psi}\})$ to $(\a,\{\ell_k^{\a,S',\psi'}\})$. 
\end{proof}

\begin{lemma}\label{1SC229L5W}
Let $S\subset M$ be a Poisson transversal containing $N$ coisotropically, and let $\psi,\psi'$ be tubular neighbourhoods of $N$ in $S$. Then $(\a,\{\ell_k^{\a,S,\psi}\})$ and $(\a,\{\ell_k^{\a,S,\psi'}\})$ are $L_\infty$-isomorphic.
\end{lemma}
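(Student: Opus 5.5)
The plan is to reduce the statement to the Cattaneo--Felder equivalence theorem \cite[Theorem 3.2]{cattaneo2008equivalences} for the coisotropic submanifold $N\subset(S,\pi_S)$. After that reduction, all that remains is to transport the resulting isomorphism through $\Phi_S$ using \Cref{1SCU92AQH}(3). The ambient data $(\g_S,\pi_S)$ and the target algebra $\b_S=C^\infty(\wedge^\bullet\nu_S)$ do not depend on $\psi$. Only the inclusion $j_S$ and the projection $P_S$ change, and with them the derived brackets \eqref{1SCRZZPXA}. So the whole question takes place inside the fixed curved-$V$-data framework of \Cref{1SC9E2V5Z} applied to $N\subset S$.

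First, I will show that $(\b_S,\{\ell_k^{\b_S,\psi}\})$ and $(\b_S,\{\ell_k^{\b_S,\psi'}\})$ are $L_\infty$-isomorphic. Cattaneo--Felder prove this for the structures attached to two tubular neighbourhoods (equivalently, two embeddings $j,j'$ of $\b_S$ as abelian subalgebras complementary to $\ker P_S$). Their argument goes as follows. One connects $\psi$ and $\psi'$ by a smooth family $\psi_t$, after shrinking, using linear interpolation of the induced splittings/Euler-type vector fields. One then realizes the variation $\frac{d}{dt}$ of the derived brackets as generated by a degree-$0$ formal vector field on the formal neighbourhood. Integrating, formally in the ideal filtration, yields an $L_\infty$-isomorphism whose linear component is the identity on $\b_S$. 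A cleaner alternative I would also check is this: any two tubular neighbourhoods differ near $N$ by a diffeomorphism $\chi$ of $S$ fixing $N$ with $d\chi|_N$ inducing the identity on $\nu_S$. Then $\chi_*$ transports $(j_S^\psi,P_S)$ to $(j_S^{\psi'},P_S)$ but moves $\pi_S$ to $\chi_*\pi_S$. By Moser/formal-gauge arguments, $\chi_*\pi_S$ and $\pi_S$ are gauge equivalent Maurer--Cartan elements of $\g_S$ in the sense of \eqref{1SCFR6FY2}. Gauge-equivalent Maurer--Cartan elements in $\ker P_S$ produce $L_\infty$-isomorphic derived-bracket structures, which is the content of \cite[Theorem 3.2]{cattaneo2008equivalences}. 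I also need to check that the sign rescaling of \Cref{1SCRESCALE} ($t=-1$) is compatible. By that remark, an $L_\infty$-isomorphism between the unrescaled structures gives one between the rescaled ones via $\{(-1)^{k-1}\varrho_k\}$, so the sign conventions cause no harm.

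Second, I set $\varrho^{\a}_k:=\Phi_S^{-1}\circ\varrho_k\circ\Phi_S^{\otimes k}$. Since $\Phi_S$ from \eqref{1SCVX2YHQ} does not depend on $\psi$, \Cref{1SCU92AQH}(3) with $\Phi=\Phi'=\Phi_S$ shows that $\varrho^\a$ is an $L_\infty$-isomorphism from $(\a,\{\ell_k^{\a,S,\psi}\})$ to $(\a,\{\ell_k^{\a,S,\psi'}\})$. This completes the proof.

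The main obstacle is the first step. One must make sure that the hypotheses of \cite[Theorem 3.2]{cattaneo2008equivalences} match the present setup: formal neighbourhood, germ of $S$ along $N$ only, and possibly noncompact $N$, so that the flow or gauge element converges in the $I_N$-adic filtration. One must also check that their sign conventions differ from \eqref{1SCRZZPXA} only by the décalage and rescaling discussed in the footnote and \Cref{1SCRESCALE}. I would handle convergence using the completeness of $\g_S$ with respect to powers of $I_N$, since the relevant gauge element lies in $\FFF^1$.
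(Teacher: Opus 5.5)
Your proposal is correct and is essentially the paper's proof: the $L_\infty$-isomorphism on $\b_S$ is taken from \cite[Theorem 4.3]{cattaneo2008equivalences}, which checks the hypotheses of Theorem 3.2 there for an isotopy between $\psi$ and $\psi'$; it is converted to the present conventions by applying $\operatorname{dec}_k$ and then the $t=-1$ rescaling of \Cref{1SCRESCALE}, and transported along the $\psi$-independent isomorphism $\Phi_S$ by \Cref{1SCU92AQH}(3). Only your alternative gauge route would need repair: the $I_N$-adic filtration is not multiplicative for the Schouten bracket (e.g.\ $[y\,\partial_x,\partial_y]=-\partial_x$), so convergence of $e^{\ad_X}$ must come either from a genuine isotopy, as in loc.\ cit., or from a weight filtration as in \eqref{1SCG34NBG}, after first normalizing $\chi$ so that $\nu(\chi)=\id$ (cf.\ \Cref{1SC3P8VEF}).
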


\begin{proof}
By \cite[Theorem 4.3]{cattaneo2008equivalences}, whose proof checks the hypotheses of \cite[Theorem 3.2]{cattaneo2008equivalences} for an isotopy between $\psi$ and $\psi'$, there is an $L_\infty$-isomorphism $\varrho=\{\varrho_k\}_{k\ge1}$ from $(\b_S,\{\ell_k^{\b_S,\psi}\})$ to
$(\b_S,\{\ell_k^{\b_S,\psi'}\})$ with $\varrho_1$ bijective. The structures compared in loc.\ cit.\ are the graded-symmetric higher derived brackets $\{\cdots\}^{\psi}$ and $\{\cdots\}^{\psi'}$ of Voronov, that is, the brackets $P_S[\cdots[\pi_S,j_S(\cdot)],\dots,j_S(\cdot)]$ of \eqref{1SCRZZPXA} before the sign $\varepsilon(b)$ is applied \cite[Remark 4.1]{cattaneo2008equivalences}, and $\varrho$ is an $L_\infty[1]$-isomorphism between them. Applying $\operatorname{dec}_k$ of \eqref{EQDECALAGE} to its components converts it to a morphism for \eqref{1SCFV8US5}, because $\operatorname{dec}$ carries the symmetric morphism relations to \eqref{1SCFV8US5} with the sign $\chi'$ specified there, and the rescaling of \Cref{1SCRESCALE} with $t=-1$ then gives a morphism between the structures \eqref{1SCRZZPXA}; this conversion is independent of the desuspension convention used in loc.\ cit., as only the symmetric structures enter. We transport $\varrho$ to $\a$.

By \Cref{1SCU92AQH}(3) with $\Phi=\Phi'=\Phi_S$, the maps $\Phi_S^{-1}\circ\varrho_k\circ\Phi_S^{\otimes k}$ form an $L_\infty$-isomorphism from $(\a,\{\ell_k^{\a,S,\psi}\})$ to $(\a,\{\ell_k^{\a,S,\psi'}\})$.
\end{proof}

\begin{theorem}\label{1SC4NFQ6C}
The $P_\infty$-structure on $\a$ is independent, up to $L_\infty$-isomorphism, of the choice of Poisson transversal submanifold $S\subset M$ containing $N$ coisotropically and of the tubular neighbourhood $\psi$ of $N$ in $S$.
\end{theorem}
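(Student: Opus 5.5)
The plan is to compose the two comparisons already established. Take two pairs $(S,\psi)$ and $(S',\psi'')$, where $S,S'\subset M$ are Poisson transversals containing $N$ coisotropically and $\psi,\psi''$ are tubular neighbourhoods of $N$ in $S$ and $S'$ respectively. We must produce an $L_\infty$-isomorphism
\[
(\a,\{\ell_k^{\a,S,\psi}\})\lto(\a,\{\ell_k^{\a,S',\psi''}\}).
\]

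The first step changes the transversal. By \Cref{1SCFBTZ34}, there is a tubular neighbourhood $\psi'$ of $N$ in $S'$, namely $\psi'=\varphi\circ\psi\circ\Lambda^{-1}$ for the Cattaneo--Zambon Poisson diffeomorphism $\varphi\colon S\to S'$. This comes with a strict $L_\infty$-isomorphism
\[
\Psi\colon(\a,\{\ell_k^{\a,S,\psi}\})\to(\a,\{\ell_k^{\a,S',\psi'}\}).
\]
One point needs attention: $\varphi$ is only defined after shrinking $S$ and $S'$. I would note that every ingredient lives on the formal neighbourhood of $N$. The algebra $\g_S$, the class $\pi_S$, and the maps $j_S$ and $P_S$ depend only on germs along $N$, so shrinking does not change $\ell_k^{\a,S,\psi}$. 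Here $\psi$ is restricted to a smaller disc bundle and reparametrized if needed; this is harmless because only the infinite jet of $\psi$ along $N$ enters. The same remark applies to $S'$.

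The second step changes the tubular neighbourhood inside $S'$. \Cref{1SC229L5W}, applied to $\psi'$ and $\psi''$, gives an $L_\infty$-isomorphism
\[
\varrho\colon(\a,\{\ell_k^{\a,S',\psi'}\})\lto(\a,\{\ell_k^{\a,S',\psi''}\}).
\]

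The third step composes the two. The composite $\varrho\circ\Psi$ is an $L_\infty$-morphism. Since $\Psi$ is strict, its components are simply $\varrho_k\circ\Psi^{\otimes k}$, so \eqref{1SCFV8US5} for the composite follows directly from the relation for $\varrho$ together with the strictness identities of $\Psi$. Its linear part is $\varrho_1\circ\Psi$, which is bijective, so the composite is an $L_\infty$-isomorphism. The special cases $S=S'$ and $\psi''=\psi'$ are covered by taking $\Psi$ or $\varrho$ to be the identity.

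The main obstacle is not the composition but making sure the comparison in \Cref{1SC229L5W} applies to $\psi'$ and $\psi''$. That lemma rests on \cite{cattaneo2008equivalences}, which needs an isotopy of tubular neighbourhoods. I would supply this by the standard uniqueness of tubular neighbourhoods up to isotopy, after composing $\psi'$ with a linear bundle automorphism of $\nu_{S'}$ so that the two tubular neighbourhoods induce the same normal map. That bundle automorphism $\wedge^\bullet A$ acts on $\b_{S'}$ by a strict algebra isomorphism. Conjugating through $\Phi_{S'}$, this only inserts one more strict $L_\infty$-isomorphism into the chain, and the composition argument above absorbs it.
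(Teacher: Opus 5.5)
Your proposal is correct and follows the paper's proof. You change the transversal via \Cref{1SCFBTZ34} to get the strict isomorphism $\Psi$ onto the structure for $(S',\psi')$, change the tubular neighbourhood inside $S'$ via \Cref{1SC229L5W}, and precompose with $\Psi$; the paper phrases that last step as \Cref{1SCU92AQH}(3) with $\Phi=\Psi$ and $\Phi'=\id$, which is exactly your observation that the components are $\varrho_k\circ\Psi^{\otimes k}$. Your remarks on germs and on arranging an isotopy up to a linear bundle automorphism (which contributes at most a strict isomorphism) concern the hypotheses of those two lemmas rather than the composition itself, and they are harmless refinements.
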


\begin{proof}
Let $S,S'\subset M$ be Poisson transversals containing $N$ coisotropically, with tubular neighbourhoods $\psi$ of $N$ in $S$ and $\psi''$ of $N$ in $S'$. Let $\psi'$ be the tubular neighbourhood of $N$ in $S'$ adapted to $\psi$ as in \Cref{1SCFBTZ34}, and let $\Psi$ be the resulting strict isomorphism from $\{\ell_k^{\a,S,\psi}\}$ to $\{\ell_k^{\a,S',\psi'}\}$. It is bijective of degree $0$ and satisfies
\begin{equation}\label{1SC8VUMG3}
\Psi\circ\ell_k^{\a,S,\psi}=\ell_k^{\a,S',\psi'}\circ\Psi^{\otimes k} \quad\text{for every }k\ge0.
\end{equation}
By \Cref{1SC229L5W} applied to $\psi'$ and $\psi''$ inside $S'$, there is an $L_\infty$-isomorphism $\tilde\varrho=\{\tilde\varrho_k\}_{k\ge1}$ from
$\{\ell_k^{\a,S',\psi'}\}$ to $\{\ell_k^{\a,S',\psi''}\}$ with $\tilde\varrho_1$
bijective. By \eqref{1SC8VUMG3}, $\{\ell_k^{\a,S,\psi}\}$ is the transport of $\{\ell_k^{\a,S',\psi'}\}$ along $\Psi$. Hence \Cref{1SCU92AQH}(3), with $\Phi=\Psi$ and $\Phi'=\id$, shows that the maps $\tilde\varrho_k\circ\Psi^{\otimes k}$ form an $L_\infty$-isomorphism from $(\a,\{\ell_k^{\a,S,\psi}\})$ to $(\a,\{\ell_k^{\a,S',\psi''}\})$.
\end{proof}

\subsection{Relation to symplectic reduction along a submanifold}\label{1SCVT88UN}
Crooks and Mayrand \cite[Proposition 2.23]{CrooksMayrand2022} attach a Poisson algebra to a pre-Poisson submanifold: the functions on the quotient of $N$ by a stabilizer subgroupoid. We show that this algebra is $H^0(\a,\ell_1)$ with the bracket induced by $\ell_2$. Both constructions pass through a Poisson transversal containing $N$ coisotropically, so we may compute the two brackets inside the submanifold $S$ used to define the $P_\infty$-structure; the comparison is then pointwise.

A \defn{stabilizer subgroupoid} of $N$ is an isotropic Lie subgroupoid $\HHH\tto N$ with Lie algebroid $L$, inside a symplectic groupoid integrating $(M,\pi)$ \cite[Definition 2.8]{CrooksMayrand2022}. We assume throughout that $\HHH$ is source-connected. Its orbits are then the connected integral leaves of $\rho_L(L)$. When $N/\HHH$ is a manifold, $q\colon N\to N/\HHH$ denotes the quotient map. Fix a Poisson transversal $S$ containing $N$ coisotropically and a tubular neighbourhood with projection $p\colon U\to N$. Write $\ell_k:=\ell_k^{\a,S,\psi}$ and $r:=r_S$ for the isomorphism \eqref{1SCPEDZZK}. We write $\{\cdot,\cdot\}$ for the bracket of $S$ and $\{\cdot,\cdot\}_M$ for that of $M$. For $f\in C^\infty(S)$ and $F\in C^\infty(M)$ we write $X^S_f:=\pi_S(df)$ and $X^M_F:=\pi(dF)$ for the Hamiltonian vector fields. Given $f\in C^\infty(N)$, a function $F\in C^\infty(V)$ on an open neighbourhood $V$ of $N$ in $M$ is an \defn{admissible extension} of $f$ if $F|_N=f$ and $dF\bigl(\pi(TN^\circ)\bigr)=0$ along $N$.

By \Cref{1SCMMB63H} the algebra $\a$ is flat and $\ell_1=d_L$. On $\a^0=C^\infty(N)$ we have $(d_Lf)(\xi)=\L_{\rho_L(\xi)}f$, so
\[
H^0(\a,\ell_1)=\bigl\{f\in C^\infty(N):df\bigl(\rho_L(L)\bigr)=0\bigr\}.
\]
By \cite[\S 2.5]{CrooksMayrand2022}, $\rho_L(L)=TN\cap\pi(TN^\circ)$ is the distribution tangent to the orbits of $\HHH$, so $H^0(\a,\ell_1)=C^\infty(N)^\HHH$. Since $\a$ is flat, $\ell_2$ descends to $H^\bullet(\a,\ell_1)$ and makes $\bigl(C^\infty(N)^\HHH,\ell_2\bigr)$ a Poisson algebra.

Since $\Phi_S$ is the identity in degree zero, \eqref{1SCRZZPXA} gives
\begin{equation}\label{1SCQHGY3K}
\ell_2(f,g)=\{p^*f,p^*g\}|_N,\qquad f,g\in C^\infty(N).
\end{equation}
Indeed $\varepsilon(f,g)=-1$ and $[\pi_S,p^*f]=-\pi_S(d\,p^*f)$ in the convention $[X,h]=X(h)$ for the Schouten bracket, with $\pi(\alpha):=\pi(\alpha,\cdot)$ and $\{h,h'\}:=\pi(dh,dh')$, so $\ell_2(f,g)=-([\pi_S,p^*f])(p^*g)|_N=\{p^*f,p^*g\}|_N$.

\begin{proposition}\label{1SCJHB7Z7}
\begin{itemize}
\item[(1)] For all $f,g\in H^0(\a,\ell_1)$ and all admissible extensions $F,G$ of $f$ and $g$ respectively, $\ell_2(f,g)=\{F,G\}_M|_N$.
\item[(2)] If $N/\HHH$ is a manifold and $q$ is a submersion, then
\[
q^*\colon\bigl(C^\infty(N/\HHH),\{\cdot,\cdot\}_{N/\HHH}\bigr)\to
\bigl(H^0(\a,\ell_1),\ell_2\bigr)
\]
is an isomorphism of Poisson algebras.
\end{itemize}
\end{proposition}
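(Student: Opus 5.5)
For (1), I will compute inside $S$ using \eqref{1SCQHGY3K}, which gives $\ell_2(f,g)=\{p^*f,p^*g\}|_N$. The task is then to compare this with $\{F,G\}_M|_N$ for admissible extensions $F,G$ in two steps.

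The first step is independence of the extension in $M$. If $F,F'$ are admissible extensions of $f$, then $d(F-F')$ vanishes on $TN+\pi(TN^\circ)$ along $N$. Hence $d(F-F')|_N\in L=\pi^{-1}(TN)\cap TN^\circ$, because $(TN+\pi(TN^\circ))^\circ=TN^\circ\cap\pi^{-1}(TN)$. Then $\{F-F',G\}_M|_N=-dG(\pi\,d(F-F'))$. Here $\pi\,d(F-F')\in\rho_L(L)\subset TN$, so this equals $-dg(\rho_L(\xi))$, which vanishes since $g\in H^0(\a,\ell_1)$. The same argument applies in the second slot.

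The second step is a convenient choice of extensions. Since $S$ is a Poisson transversal, for any $f_S\in C^\infty(S)$ one can extend $f_S$ to $F\in C^\infty(M)$ with $dF$ annihilating $\pi(TS^\circ)$ along $S$. This uses the splitting $TM|_S=TS\oplus\pi(TS^\circ)$. For such extensions, $\pi(dF)|_S=\pi_S(d f_S)$, and therefore $\{F,G\}_M|_S=\{f_S,g_S\}$; this is the standard description of the induced Poisson structure on a Poisson transversal. I will apply this with $f_S=p^*f$ and $g_S=p^*g$.

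It remains to check that these $F,G$ are admissible in $M$, i.e.\ $dF(\pi(TN^\circ))=0$ along $N$. Decompose $\xi\in TN^\circ$ using $T^*M|_N=TS^\circ\oplus(\pi(TS^\circ))^\circ$. The $TS^\circ$-part maps under $\pi$ into $\pi(TS^\circ)$, where $dF$ vanishes. The other part restricts to an element of $T^*S$ annihilating $TN$, and $\pi$ sends it to $\pi_S$ of a conormal covector of $N$ in $S$. On that vector, $d(p^*f)$ pairs to $df(\rho(\cdot))$, using that $p_*$ is the identity on $TN$ and the vector is tangent to $N$ by coisotropy. This vanishes by invariance of $f$, since $\pi_S(\nu_S^*)$ corresponds to $\rho_L(L)$ under $r_S$. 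This verification is the main point to handle carefully, particularly the identification of the components via \cite[Proposition 3.6]{CattaneoZambon2009}. Combining the two steps proves (1).

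For (2), if $q$ is a submersion with connected fibres, then $q^*$ is a bijection from $C^\infty(N/\HHH)$ onto the invariant functions. By the first computation above, these are exactly $H^0(\a,\ell_1)$. The Crooks--Mayrand bracket \cite[Proposition 2.23]{CrooksMayrand2022} is defined by $q^*\{\bar f,\bar g\}_{N/\HHH}=\{F,G\}_M|_N$ for admissible extensions $F,G$; I will recall it in that form, or reduce it to that form via their construction through $S$. Part (1) then gives $q^*\{\bar f,\bar g\}=\ell_2(q^*\bar f,q^*\bar g)$, so $q^*$ is an isomorphism of Poisson algebras.
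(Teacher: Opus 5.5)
Your proposal is correct and follows essentially the same route as the paper: you show independence of the admissible extensions via $d(F-F')|_N\in L$, take extensions pulled back along a projection onto $S$ whose kernel along $S$ is $\pi(TS^\circ)$, so that $X^M_F=X^S_{p^*f}$ along $N$ and \eqref{1SCQHGY3K} applies, and read off (2) from the Crooks--Mayrand description of the reduced bracket. The differences are cosmetic. You kill the extension ambiguity using invariance of $g$ where the paper uses admissibility of $G$. You also check admissibility directly through the splitting $T^*M|_S=TS^\circ\oplus\pi^{-1}(TS)$, instead of citing $\pi(TN^\circ)=\rho_L(L)\oplus\pi(TS^\circ)$ from Crooks--Mayrand.
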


\begin{proof}
The function $\{F,G\}_M|_N$ does not depend on the admissible extensions (cf.\ the proof of \cite[Proposition 2.23]{CrooksMayrand2022}). Indeed, if $F|_N=0$ and $dF(\pi(TN^\circ))=0$ along $N$, then $dF\in TN^\circ\cap\pi^{-1}(TN)=L$ along $N$, so $X^M_F|_N\in\pi(L)\subset\pi(TN^\circ)$, which $dG$ annihilates; hence $\{F,G\}_M|_N=dG(X^M_F)|_N=0$, and symmetrically in $G$. It is therefore enough to exhibit one pair of admissible extensions for which the identity holds.

Since $S$ is a Poisson transversal, $TM|_S=TS\oplus\pi(TS^\circ)$, so $\pi(TS^\circ)$ realizes the normal bundle of $S$ in $M$. The tubular neighbourhood theorem provides a projection $\ppp_W\colon W\to S$, defined on an open neighbourhood $W$ of $S$ in $M$, with $\ppp_W|_S=\id_S$ and $\ker d\ppp_W|_S=\pi(TS^\circ)$. Set $V:=\ppp_W^{-1}(U)$, an open neighbourhood of
$N$ in $M$, and
\[
F:=(p\circ\ppp_W)^*f,\quad G:=(p\circ\ppp_W)^*g\ \in C^\infty(V).
\]
Then $F|_U=p^*f$ and $G|_U=p^*g$. Hence $F|_N=f$ and $G|_N=g$, and $dF$ and $dG$ annihilate $\ker d\ppp_W=\pi(TS^\circ)$ along $S$. By the proof of \cite[Proposition 2.23]{CrooksMayrand2022},
$\pi(TN^\circ)=\rho_L(L)\oplus\pi(TS^\circ)$. Also, $dF|_{TN}=df$ annihilates $\rho_L(L)$ because $f\in H^0(\a,\ell_1)$. Hence $F$ is an admissible extension of $f$, and likewise $G$ is one of $g$.

Fix $x\in N$. Since $dF_x$ annihilates $\pi(T_xS^\circ)$ and $\pi(T_xS^\circ)^\circ=\pi^{-1}(T_xS)$, we have $dF_x\in\pi^{-1}(T_xS)$. As $S$ is a Poisson transversal, restriction of covectors $\pi^{-1}(T_xS)\to T^*_xS$ is bijective, and $\pi_S$ is defined by $\pi_S(\xi)=\pi(\hat\xi)$ for $\hat\xi$ the preimage of $\xi$ \cite[\S 2.2]{CrooksMayrand2022}. Since $dF_x$ restricts to $d(p^*f)_x$,
\[X^M_F(x)=\pi(dF_x)=\pi_S\bigl(d(p^*f)_x\bigr)=X^S_{p^*f}(x)\in T_xS .
\]
Therefore $\{F,G\}_M(x)=dG_x\bigl(X^M_F(x)\bigr)
=d(p^*g)_x\bigl(X^S_{p^*f}(x)\bigr) =\{p^*f,p^*g\}(x)$, which is $\ell_2(f,g)(x)$ by \eqref{1SCQHGY3K}.

For (2), $q^*$ identifies $C^\infty(N/\HHH)$ with $C^\infty(N)^\HHH$ because $q$ is a submersion, and by (1) it carries $\{\cdot,\cdot\}_{N/\HHH}$ to $\ell_2$.
\end{proof}

\subsection{Local models for infinitesimal $1$-shifted coisotropics}\label{1SCHTFL8N}
We now pass from pre-Poisson submanifolds to regular infinitesimal $1$-shifted coisotropics. We replace each twisted Dirac manifold by a twisted Poisson local model in which it embeds coisotropically, and the map $\ccc$ by its graph, which is a pre-Poisson submanifold once the twist is removed. The $P_\infty$-structure is then transported along a canonical isomorphism between $B_\ccc$ and the Lie algebroid of this graph.

Let $(M,L_M,\phi)$ and $(N,L_N,\psi)$ be twisted Dirac manifolds, and let $\ccc\colon N\to M$ be a smooth map such that $\ccc^*\phi=\psi$ and
\begin{equation}\label{1SC8M26B9}
B_\ccc := \left\{ \bigl((v,\alpha),(w,\beta)\bigr)
\in L_N\times L_M : \ccc_*v=w,\; \alpha=\ccc^*\beta
\right\}
\end{equation}
has constant rank; that is, $(\ccc,L_N,L_M)$ is an infinitesimal $1$-shifted coisotropic \cite[Definition 3.13]{Maxence2025}. Then $B_\ccc$ is a Lie algebroid over $N$ with anchor $\rho_B\bigl((v,\alpha),(w,\beta)\bigr):=v$, and $\ppp_N\colon B_\ccc\to L_N$ is a morphism of Lie algebroids.

Write $E_M:=L_M\cap TM$ and $E_N:=L_N\cap TN$. The triple $(\ccc,L_N,L_M)$ is a \defn{regular datum} if $E_M$ and $E_N$ have constant rank. Set $\a:=C^\infty(\wedge^\bullet B_\ccc^*)$.

We first build a local model that replaces a twisted Dirac structure by a twisted Poisson one, together with a canonical lift of $L$ to the model. 

\begin{lemma}\label{1SCCGC8W3}
Let $(M,L,\phi)$ be a twisted Dirac manifold such that $E:=L\cap TM$ has constant rank. Fix a complement $TM=E\oplus V$, and let $\ppp_E\colon TM\to E$ be the projection along $V$. Let $p\colon E^*\to M$ be the bundle projection, $\iota\colon E^*\hookrightarrow T^*M$ the inclusion $\iota(\eta)=\eta\circ\ppp_E$, and $\theta$ the tautological $1$-form on $T^*M$. Set $\Omega:=-d(\iota^*\theta)$. Then there is a neighbourhood $S$ of the zero section $j\colon M\hookrightarrow E^*$ such that the following hold.
\begin{itemize}
\item[(1)] There is a $p^*\phi$-twisted Poisson bivector $\pi_S$ with $e^\Omega p^*L=\Gamma_{\pi_S}$.
\item[(2)] $j^*\Gamma_{\pi_S}=L$, and $j(M)$ is coisotropic\footnote{For an arbitrary bivector field $\pi$ on a manifold $S$, a submanifold $C\subset S$ is \emph{coisotropic} if $\pi(TC^\circ)\subset TC$, where $TC^\circ\subset T^*S|_C$ is the annihilator of $TC$.} in $(S,\pi_S)$.
\item[(3)] For every $(v,\alpha)\in L_x$ there is a unique $\zeta\in T^*_{j(x)}S$ with
\begin{equation}\label{1SCZJU22M}
\pi_S(\zeta)=j_*v,\quad j^*\zeta=\alpha,
\end{equation}
namely $h_L(v,\alpha):=p^*\alpha+\iota_{j_*v}\Omega$. The map $h_L\colon L\to j^*T^*S$ is smooth.
\item[(4)] $\dim S=\dim M+\rank E$.
\end{itemize}
\end{lemma}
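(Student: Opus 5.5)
The strategy is to compute everything fibrewise along the zero section and then extend by openness, using only linear algebra of Dirac structures and the gauge transformation rules for twisted Dirac structures. First I would note that $\Omega=-d(\iota^*\theta)$ is a closed $2$-form on $E^*$, so the gauge transformation $e^\Omega$ carries the $p^*\phi$-twisted Dirac structure $p^*L$ to another $p^*\phi$-twisted Dirac structure; here $p^*L$ is the backward image, which is smooth because $p$ is a submersion. In local coordinates $x$ on $M$ and fibre coordinates $y$ on $E^*$ relative to a frame $e_a$ of $E$, one has $\iota^*\theta=\sum_a y_a\,\epsilon^a$, where $\epsilon^a:=e_a^\flat\circ\ppp_E$ is the dual coframe extended by zero on $V$. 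Hence along the zero section
\[
\Omega|_{j(M)}=-\textstyle\sum_a dy_a\wedge\epsilon^a ,
\]
and the term $y_a\,d\epsilon^a$ vanishes there.

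For (1) I would check that $e^\Omega p^*L\cap T(E^*)=0$ at points of $j(M)$. Take $X\in T_{j(x)}E^*$ with $(X,-\iota_X\Omega)\in p^*L$. Since $p^*L$ only contains covectors pulled back from $M$, the form $\iota_X\Omega$ must kill the vertical directions. By the formula above this forces $\epsilon^a(p_*X)=0$ for all $a$, so $p_*X\in V$. We also have $(p_*X,\alpha)\in L$ with $p^*\alpha=-\iota_X\Omega$. Pairing $\alpha$ with $E$, where $L\cap TM=E$, gives
\[
\alpha(e_a)=\bigl\langle (e_a,0),(p_*X,\alpha)\bigr\rangle=0 ,
\]
using isotropy. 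Comparing components in $dy_a\wedge\epsilon^a$ then gives $dy_a(X)=0$ and $\alpha|_V=0$. So $\alpha=0$, which means $p_*X\in E\cap V=0$, hence $X=0$. Transversality to $T(E^*)$ is an open condition, so it holds on a neighbourhood $S$ of the zero section. There $e^\Omega p^*L$ is the graph of a bivector $\pi_S$, which is $p^*\phi$-twisted Poisson because $\Omega$ is closed.

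For (2), I would use the identity $j^*(e^\Omega D)=e^{j^*\Omega}j^*D$ together with $j^*\Omega=0$ and $j^*p^*L=(p\circ j)^*L=L$; this gives $j^*\Gamma_{\pi_S}=L$. For coisotropy, take $\zeta\in Tj(M)^\circ$, which means $\zeta$ is vertical, a combination of the $dy_a$. The task is to show $\pi_S(\zeta)$ is tangent to the zero section. If $\pi_S(\zeta)=X$, then $(X,\zeta-\iota_X\Omega)\in p^*L$, so $\zeta-\iota_X\Omega$ must be basic. This forces $\zeta=\sum_a c_a\,dy_a$ with $c_a=-\epsilon^a(p_*X)$, and the remaining constraint reads $(p_*X,\alpha)\in L$ with $p^*\alpha$ equal to the horizontal part of $-\iota_X\Omega$. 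The aim is to show $X=j_*p_*X$, meaning the vertical part of $X$ vanishes. That vertical part appears in $\iota_X\Omega$ as $\sum_a dy_a(X)\,\epsilon^a$, so it is detected by $\alpha|_E$, and $\alpha|_E=0$ by isotropy as in (1). This sign and component bookkeeping is where I expect the main care to be needed.

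For (3), given $(v,\alpha)\in L_x$, set $\zeta:=p^*\alpha+\iota_{j_*v}\Omega$. Then $(j_*v,\zeta-\iota_{j_*v}\Omega)=(j_*v,p^*\alpha)$ lies in $p^*L$, because $p_*j_*v=v$. Therefore $(j_*v,\zeta)\in\Gamma_{\pi_S}$, that is, $\pi_S(\zeta)=j_*v$. Also $j^*\zeta=\alpha+\iota_v j^*\Omega=\alpha$. For uniqueness, the difference of two solutions $\zeta'$ satisfies $\pi_S(\zeta')=0$ and $j^*\zeta'=0$. Since $\Gamma_{\pi_S}$ is a graph, $\pi_S(\zeta')=0$ places $(0,\zeta')$ in $e^\Omega p^*L$, so $\zeta'=p^*\beta$ with $(0,\beta)\in L$. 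Then $j^*\zeta'=\beta=0$. The formula for $h_L$ is manifestly smooth. Finally, (4) is immediate, since $S$ is open in $E^*$ and $\dim E^*=\dim M+\rank E$.
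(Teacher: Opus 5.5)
Your proposal is correct and follows the paper's proof essentially step by step. The paper likewise computes $\Omega$ along the zero section (in the canonical splitting $T_{0_x}E^*=T_xM\oplus E_x^*$ rather than a local coframe), and uses isotropy of $L$ against $E=L\cap TM$ to get transversality and coisotropy at $j(M)$, then extends by openness and uses $j^*\Omega=0$ and the graph property for (2) and (3). The only slip is a harmless sign in the coisotropy step: one gets $c_a=+\epsilon^a(p_*X)$, and the vertical part enters $\iota_X\Omega$ as $-\sum_a dy_a(X)\,\epsilon^a$. This does not affect the conclusion.
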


\begin{proof}
We prove (1), then the first half of (2), then (3), and finally the coisotropy assertion in (2). Since $p$ is a submersion, $p^*L$ is a smooth $p^*\phi$-twisted Dirac structure, and since $d\Omega=0$, so is $e^\Omega p^*L$.

We first describe $e^\Omega p^*L$ along the zero section. At $0_x\in E^*$ there is a canonical splitting $T_{0_x}E^*=T_xM\oplus E_x^*$, in which $j_*v=(v,0)$ and $p_*(v,\eta)=v$. Since $\ppp_M\circ \iota=p$, where $\ppp_M\colon T^*M\to M$ is the projection, $(\iota^*\theta)_\eta(X)=\eta(\ppp_E\,p_*X)$. In particular $\iota^*\theta$ vanishes at the points of the zero section, and a direct computation gives
\begin{equation}\label{1SCV2A689}
\Omega_{0_x}\bigl((v,\eta),(w,\beta)\bigr)=\beta(\ppp_Ev)-\eta(\ppp_E w).
\end{equation}
Since $(p^*L)_{0_x}=\{((v,\eta),p^*\alpha):(v,\alpha)\in L_x,\ \eta\in E_x^*\}$,
\begin{equation}\label{1SCUL4NDR}
(e^\Omega p^*L)_{0_x}=\Bigl\{\bigl((v,\eta),\,p^*\alpha+\iota_{(v,\eta)}\Omega\bigr): (v,\alpha)\in L_x,\ \eta\in E_x^*\Bigr\}.
\end{equation}
Let $(v,\alpha)\in L_x$, $\eta\in E_x^*$ and $\zeta:=p^*\alpha+\iota_{(v,\eta)}\Omega$. By \eqref{1SCV2A689},
\begin{equation}\label{1SC7W492D}
\zeta(w,0)=\alpha(w)-\eta(\ppp_Ew),\quad \zeta(0,\beta)=\beta(\ppp_Ev).
\end{equation}

\emph{Claim: if $j^*\zeta=0$, then $\eta=0$, $\alpha=0$ and $v\in E_x$.} Indeed, \eqref{1SC7W492D} gives $\alpha=\eta\circ\ppp_E$. For $u\in E_x$ we have $(u,0)\in L_x$, so isotropy of $L_x$ gives $0=\alpha(u)=\eta(u)$. Hence $\eta=0$ and $\alpha=0$. Then $(v,0)\in L_x$, that is, $v\in E_x$.

(1) Let $((v,\eta),\zeta)$ be an element of \eqref{1SCUL4NDR} with $\zeta=0$. By the claim, $\eta=0$ and $v\in E_x$. By \eqref{1SC7W492D}, $\beta(v)=0$ for all $\beta\in E_x^*$, so $v=0$. Hence $(e^\Omega p^*L)_{0_x}\cap T_{0_x}E^*=0$. Transversality to $TE^*$ is an open condition, so it holds on an open neighbourhood $S$ of the zero section, on which $e^\Omega p^*L$ is the graph of a bivector $\pi_S$, which is $p^*\phi$-twisted Poisson. Part (4) holds since $S$ is open in $E^*$.

(2), first part. Since $(\iota^*\theta)_{0_x}=0$ for all $x$, we have $j^*(\iota^*\theta)=0$, hence $j^*\Omega=0$ and $j^*\Gamma_{\pi_S}=j^*(p^*L)$. Let $v\in T_xM$. By \eqref{1SCUL4NDR} with $\Omega$ omitted, $(j_*v,\zeta)\in p^*L$ if and only if $\zeta=p^*\alpha$ with $(v,\alpha)\in L_x$, and then $j^*\zeta=\alpha$ since $p\circ j=\id_M$. Hence $j^*\Gamma_{\pi_S}=L$.

(3) Let $(v,\alpha)\in L_x$ and $\zeta:=h_L(v,\alpha)$. By \eqref{1SCUL4NDR} with $\eta=0$, $(j_*v,\zeta)\in\Gamma_{\pi_S}$, so $\pi_S(\zeta)=j_*v$, and by \eqref{1SC7W492D}, $j^*\zeta=\alpha$. For uniqueness, let $\zeta_0$ be the difference of two solutions of \eqref{1SCZJU22M}. Then $\pi_S(\zeta_0)=0$ and $j^*\zeta_0=0$. The first condition says $(0,\zeta_0)\in\Gamma_{\pi_S}$, so by \eqref{1SCUL4NDR} there are $(v,\alpha_0)\in L_x$ and $\eta\in E_x^*$ with $(0,\zeta_0)=\bigl((v,\eta),p^*\alpha_0+\iota_{(v,\eta)}\Omega\bigr)$. Hence $v=0$, $\eta=0$, and $\zeta_0=p^*\alpha_0$. Since $p\circ j=\id_M$, $\alpha_0=j^*p^*\alpha_0=j^*\zeta_0=0$, so $\zeta_0=0$. Finally, $j^*T^*S$ is a smooth vector bundle over $M$, and $h_L$ is the restriction to the smooth subbundle $L\subset TM\oplus T^*M$ of the bundle map $(v,\alpha)\mapsto p^*\alpha+\iota_{j_*v}\Omega$, which is smooth since $p$, $j$ and $\Omega$ are.

(2), coisotropy. Let $\zeta\in(T_{0_x}j(M))^\circ$, so that $j^*\zeta=0$, and write $\pi_S(\zeta)=(v,\eta)$. Since $(\pi_S(\zeta),\zeta)\in\Gamma_{\pi_S}$, \eqref{1SCUL4NDR} gives $\zeta=p^*\alpha+\iota_{(v,\eta)}\Omega$ for some $\alpha$ with $(v,\alpha)\in L_x$. As $j^*\zeta=0$, the claim gives $\eta=0$ and $v\in E_x$. Hence $\pi_S(\zeta)=(v,0)=j_*v\in T_{0_x}j(M)$.
\end{proof}

We apply \Cref{1SCCGC8W3} to $(M,L_M,\phi)$ and $(N,L_N,\psi)$. This gives $(S_M,\pi_M,p_M,j_M)$ and $(S_N,\pi_N,p_N,j_N)$, together with lifts $h_M$ and $h_N$. Set
\begin{equation}\label{1SCJ9SYME}
\widehat M := S_M\times\overline{S_N},
\quad \widehat\pi := \pi_M\oplus(-\pi_N),
\quad \widehat\phi := \ppp_{S_M}^*p_M^*\phi - \ppp_{S_N}^*p_N^*\psi, 
\end{equation}
where $\overline{S_N}$ denotes $S_N$ equipped with the bivector $-\pi_N$. Then $\widehat\pi$ is a $\widehat\phi$-twisted Poisson bivector. We replace $\ccc$ by the embedding
\[J\colon N\to\widehat M,\quad J(n)=\bigl(j_M(\ccc(n)),j_N(n)\bigr),
\]
and set $\Delta:=J(N)$ and $\widehat B := T\Delta^\circ\cap \widehat\pi^{-1}(T\Delta)$.
Since $\ccc^*\phi=\psi$,
\begin{equation}\label{1SCRPX4HC}
J^*\widehat\phi = 0.
\end{equation}
If $\widehat B$ has constant rank, it carries the Lie algebroid structure induced by the twisted cotangent Lie algebroid $T^*_{\widehat\pi}\widehat M$ of $(\widehat M,\widehat\pi,\widehat\phi)$.

\begin{lemma}\label{1SCL3WW9S}
Let $(\ccc,L_N,L_M)$ be a regular datum. The map
\[
\varphi\colon B_\ccc\longrightarrow\widehat B,
\quad \varphi\bigl((v,\alpha),(\ccc_*v,\beta)\bigr)
= \bigl(h_M(\ccc_*v,\beta),-h_N(v,\alpha)\bigr),
\]
is an isomorphism of Lie algebroids over $J\colon N\to\Delta$.
\end{lemma}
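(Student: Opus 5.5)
The plan has four steps: show that $\varphi$ lands in $\widehat B$, show that it is a fibrewise bijection, identify the anchors, and compare the brackets. The main tool is \Cref{1SCCGC8W3}(3). It says that $h_M(w,\beta)$ is the unique covector $\zeta$ at $j_M(x)$ with $\pi_M(\zeta)=j_{M*}w$ and $j_M^*\zeta=\beta$, and similarly for $h_N$.

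\emph{Step 1: $\varphi$ lands in $\widehat B$.} Set $\widehat\zeta:=\bigl(h_M(\ccc_*v,\beta),-h_N(v,\alpha)\bigr)$. Then
\[
\widehat\pi(\widehat\zeta)=\bigl(\pi_M h_M(\ccc_*v,\beta),\,\pi_N h_N(v,\alpha)\bigr)=(j_{M*}\ccc_*v,\,j_{N*}v)=J_*v,
\]
which lies in $T\Delta$. Moreover $J^*\widehat\zeta=\ccc^*j_M^*h_M(\ccc_*v,\beta)-j_N^*h_N(v,\alpha)=\ccc^*\beta-\alpha=0$, so $\widehat\zeta\in T\Delta^\circ$. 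Hence $\varphi$ takes values in $\widehat B$ and covers $J$, and its anchor is $J_*\circ\rho_B$. Smoothness follows from \Cref{1SCCGC8W3}(3).

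\emph{Step 2: bijectivity.} Injectivity is immediate, since $v$ is recovered from $\widehat\pi(\widehat\zeta)=J_*v$, and then $\alpha=j_N^*h_N(v,\alpha)$ and $\beta=j_M^*h_M(\ccc_*v,\beta)$. For surjectivity, let $(\zeta_M,\zeta_N)\in\widehat B$ at $J(n)$, with $\widehat\pi(\zeta_M,\zeta_N)=J_*v$. Then $\pi_M(\zeta_M)=j_{M*}\ccc_*v$ and $-\pi_N(\zeta_N)=j_{N*}v$. Put $\beta:=j_M^*\zeta_M$ and $\alpha:=-j_N^*\zeta_N$. Because $j_M^*\Gamma_{\pi_M}=L_M$ by \Cref{1SCCGC8W3}(2), we get $(\ccc_*v,\beta)\in L_M$, and likewise $(v,\alpha)\in L_N$. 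The annihilation condition $J^*(\zeta_M,\zeta_N)=0$ gives $\ccc^*\beta=\alpha$. Uniqueness in \Cref{1SCCGC8W3}(3) then gives $\zeta_M=h_M(\ccc_*v,\beta)$ and $\zeta_N=-h_N(v,\alpha)$. This proves bijectivity. As a consequence, $\widehat B$ has constant rank, so its Lie algebroid structure is defined.

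\emph{Step 3: brackets.} This is the expected main obstacle. I would use the uniqueness statement of \Cref{l5e5f2ec}\ref{rojq4m6n}, namely that $B_\ccc$ carries the unique Lie algebroid structure for which both projections to $L_N$ and $L_M$ are morphisms. It then suffices to show that the compositions $\widehat B\cong B_\ccc\to L_M$ and $\widehat B\to L_N$ are Lie algebroid morphisms for the structure on $\widehat B$ induced from $T^*_{\widehat\pi}\widehat M$. In fact it is cleaner to transport the structure of $\widehat B$ to $B_\ccc$ and check that the embedding $B_\ccc\to L_N\times L_M$ preserves anchors and brackets.

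To compare brackets, I would use the Dirac picture. The twisted cotangent bracket on $T^*_{\widehat\pi}\widehat M$ is the $\widehat\phi$-twisted Dorfman bracket on $\Gamma_{\widehat\pi}$. By construction,
\[
\Gamma_{\widehat\pi}=e^{\Omega_M}p_M^*L_M\times e^{-\Omega_N}p_N^*(-L_N)
\]
near $\Delta$, with the sign conventions of \eqref{1SCJ9SYME}, and gauge transformations by closed forms and backward images along submersions are compatible with Dorfman brackets. Thus on $\Delta\subset j_M(M)\times j_N(N)$ the bracket is computed by pulling back along $j_M\times j_N$. One point has to be checked carefully here. The pullback $j^*\Gamma_{\pi_S}=L$ is a clean backward image, and $j^*\Omega=0$. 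Hence sections of $\Gamma_{\pi_S}$ of the form $(j_*v,h_L(v,\alpha))$ along $j(M)$ have brackets whose pullback is the Dorfman bracket of the sections $(v,\alpha)$ of $L$.

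The argument is the standard one used in the proof of \Cref{l5e5f2ec}\ref{rojq4m6n}. Extend sections of $\widehat B$ to sections of $\Gamma_{\widehat\pi}$ and pull back with $J^*$, using $J^*\widehat\phi=0$ from \eqref{1SCRPX4HC}. This shows that the induced bracket on $\widehat B$ corresponds, under $\varphi$, to the subalgebroid bracket of $E\subset L_N\times(-L_M)$ from that proof, which is the structure of $B_\ccc$.

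The delicate point is the bookkeeping of signs. One has to track the sign flip $-h_N$, the bivector $-\pi_N$, the twist $-p_N^*\psi$, and the identification $L_N\times L_M\cong L_N\times(-L_M)$. I expect these signs to match exactly because the product $S_M\times\overline{S_N}$ mirrors $N\times M$ with $D=L_N\times(-L_M)$, up to the factor swap.
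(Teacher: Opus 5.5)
Your Steps 1 and 2 match the paper's argument essentially verbatim. The idea in the middle of your Step 3 is also the paper's route: compute brackets by backward image along $\iota:=j_M\times j_N$, using $j^*\Gamma_{\pi_S}=L$.

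The paper assembles that step as follows. Consider the map $\zeta\mapsto(u,\iota^*\zeta)$, where $\widehat\pi\zeta=\iota_*u$, on $\widehat\pi^{-1}(T\iota(M\times N))$; this is a Lie subalgebroid of $T^*_{\widehat\pi}\widehat M$ over $\iota(M\times N)$, since its anchor is tangent. The map sends it onto $\iota^*\Gamma_{\widehat\pi}=L_M\times L_N^-$, with inverse $((w,\beta),(v,-\alpha))\mapsto(h_M(w,\beta),-h_N(v,\alpha))$ by \Cref{1SCCGC8W3}(3). It preserves brackets: $\iota^*$ of the $\widehat\phi$-twisted Dorfman bracket of sections with $\iota$-related vector parts equals the $\iota^*\widehat\phi$-twisted bracket of the pullbacks. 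Compose with $(v,\alpha)\mapsto(v,-\alpha)$ from $L_N$ to $L_N^-$ and restrict to the graph of $\ccc$; the result is $\varphi$. The bracket of $\widehat B$ is the restriction to $\Delta$ of the bracket of this intermediate algebroid, and the bracket of $B_\ccc$ is the restriction of that of $L_N\times L_M$ to the graph. Hence the two brackets agree. At this stage you need neither the gauge description of $\Gamma_{\widehat\pi}$ nor $j^*\Omega=0$. The uniqueness clause of \Cref{l5e5f2ec}(1) is only an optional alternative way to conclude.

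The last paragraph of your Step 3, however, does not do what it claims. Every element of $\widehat B$ lies in $T\Delta^\circ$, so pulling extensions back along $J$ kills their covector parts along $\Delta$. The argument of \Cref{l5e5f2ec}(1) with $J^*\widehat\phi=0$ therefore only shows that $\widehat B$ is closed under the bracket of $T^*_{\widehat\pi}\widehat M$. That closure is already how its Lie algebroid structure is defined. It does not show that this bracket corresponds to the bracket of $E$ or of $B_\ccc$.

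The identification must be made with $\iota^*$ on the larger bundle over $\iota(M\times N)$, as above. With that replacement your proof is complete. The sign bookkeeping you anticipate then reduces to two things: the factor swap, and the global covector sign flip $-(L_N^-\times L_M)=L_N\times(-L_M)$, which is a Lie algebroid isomorphism.
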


\begin{proof}
We first show that $\varphi$ is an isomorphism of vector bundles. A covector $(\xi,\eta)$ lies in $\widehat B$ if and only if $\ccc^*j_M^*\xi+j_N^*\eta=0$ and
\[
\pi_M\xi=j_{M*}\ccc_*v,\quad
\pi_N(-\eta)=j_{N*}v
\]
for some $v\in T_nN$, which is then unique because $j_{N*}$ is injective.

Let $b=\bigl((v,\alpha),(\ccc_*v,\beta)\bigr)\in B_\ccc$. By \Cref{1SCCGC8W3}(3), $\pi_M h_M(\ccc_*v,\beta)=j_{M*}\ccc_*v$ and $\pi_N h_N(v,\alpha)=j_{N*}v$. Moreover
$\ccc^*j_M^*h_M(\ccc_*v,\beta)-j_N^*h_N(v,\alpha)=\ccc^*\beta-\alpha=0$.
Hence $\varphi(b)\in\widehat B$. If $\varphi(b)=0$, then $\beta=0$ and $j_{N*}v=0$, so $b=0$.

Conversely, let $(\xi,\eta)\in\widehat B$ with $v$ as above. Set $\beta:=j_M^*\xi$ and $\alpha:=-j_N^*\eta$. Since $j_M^*\Gamma_{\pi_M}=L_M$ and $j_N^*\Gamma_{\pi_N}=L_N$, we have $(\ccc_*v,\beta)\in L_M$ and $(v,\alpha)\in L_N$. The conormality condition gives $\alpha=\ccc^*\beta$. By uniqueness in \Cref{1SCCGC8W3}(3), $\xi=h_M(\ccc_*v,\beta)$ and $-\eta=h_N(v,\alpha)$. So $\varphi$ is an isomorphism of vector bundles.

The anchors are compatible by the tangency condition: $\widehat\pi\circ\varphi=J_*\circ\rho_B$.

It remains to compare the brackets. Let $\iota:=j_M\times j_N\colon M\times N\to\widehat M$, and $L_N^-:=\{(v,-\alpha):(v,\alpha)\in L_N\}=j_N^*\Gamma_{-\pi_N}$. Then $\iota^*\Gamma_{\widehat\pi}=L_M\times L_N^-$. The map $\zeta\mapsto(u,\iota^*\zeta)$, where $\widehat\pi\zeta=\iota_*u$, sends $\widehat\pi^{-1}(T\iota(M\times N))$ onto $L_M\times L_N^-$. It preserves brackets: if $\widehat\pi\zeta_i$ is $\iota$-related to $u_i$, then $[\widehat\pi\zeta_1,\widehat\pi\zeta_2]$ is $\iota$-related to $[u_1,u_2]$, and $\iota^*$ commutes with $\L$ (the Lie derivative along $\iota$-related vector fields), $d$ and contractions, so $\iota^*\courant{\zeta_1,\zeta_2}_{\widehat\phi}=\courant{\iota^*\zeta_1,\iota^*\zeta_2}_{\iota^*\widehat\phi}$ on covector parts. By \Cref{1SCCGC8W3}(3) applied to each factor, its inverse is
$\bigl((w,\beta),(v,-\alpha)\bigr)\mapsto\bigl(h_M(w,\beta),-h_N(v,\alpha)\bigr)$.
Since $(v,\alpha)\mapsto(v,-\alpha)$ is a Lie algebroid isomorphism $L_N\to L_N^-$, we obtain a Lie algebroid isomorphism $L_M\times L_N\to\widehat\pi^{-1}(T\iota(M\times N))$. Its restriction to the graph of $\ccc$ is $\varphi$, and $\iota(\ccc(n),n)=J(n)$. The bracket of $B_\ccc$ is the restriction of that of $L_N\times L_M$ to the graph of $\ccc$ \cite[Definition 3.13]{Maxence2025}, and that of $\widehat B$ is the restriction of the bracket of $T^*_{\widehat\pi}\widehat M$ to $\Delta$. Hence $[\varphi(b_1),\varphi(b_2)]_{\widehat B}=\varphi([b_1,b_2]_{B_\ccc})$.
\end{proof}

We now remove the background $3$-form without changing $\widehat\pi$ along $\Delta$.

\begin{lemma}\label{1SCPWPNPJ}
There are a neighbourhood $U$ of $\Delta$ in $\widehat M$, a $2$-form $\lambda\in\Omega^2(U)$ with $d\lambda=\widehat\phi|_U$ and $\lambda_q=0$ for all $q\in\Delta$,
and a Poisson bivector $\widehat\pi_0$ on $U$ with
\[
e^\lambda\Gamma_{\widehat\pi}=\Gamma_{\widehat\pi_0},
\quad
\widehat\pi_0|_\Delta=\widehat\pi|_\Delta.
\]
Moreover $A\colon T^*_{\widehat\pi}U\longrightarrow T^*_{\widehat\pi_0}U$, $A(\zeta)=\zeta+\iota_{\widehat\pi(\zeta)}\lambda$,
is an isomorphism of Lie algebroids with $A_q=\id$ for $q\in\Delta$. In particular
\[
\widehat B=T\Delta^\circ\cap\widehat\pi_0^{-1}(T\Delta),
\]
and the Lie algebroid structures induced on $\widehat B$ by $(\widehat\pi,\widehat\phi)$ and by $\widehat\pi_0$ coincide.
\end{lemma}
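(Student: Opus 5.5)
The plan is to construct $\lambda$ by a relative Poincaré lemma along $\Delta$, gauge transform, and read off everything else from $\lambda|_\Delta=0$.

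\emph{Step 1: the primitive $\lambda$.} By \eqref{1SCRPX4HC}, $J^*\widehat\phi=0$, and $\widehat\phi$ is closed. Choose a tubular neighbourhood $U$ of $\Delta$ in $\widehat M$, identified with a vector bundle over $\Delta$ with zero section $\Delta$. The fibrewise scaling homotopy $\mu_t(x,y)=(x,ty)$ gives the standard homotopy operator $Q$. This operator satisfies
\[
\widehat\phi-\mu_0^*\widehat\phi=dQ\widehat\phi+Q\,d\widehat\phi .
\]
Here $\mu_0^*\widehat\phi=\mu_0^*J^*$-type pullback through $\Delta$ vanishes, and $d\widehat\phi=0$. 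So $\lambda:=Q\widehat\phi$ satisfies $d\lambda=\widehat\phi|_U$. The defining formula $Q\beta=\int_0^1\mu_t^*(\iota_{\mathcal E}\beta)\,dt/t$, with $\mathcal E$ the Euler vector field, which vanishes on the zero section, shows that $\lambda_q=0$ for every $q\in\Delta$, and not merely that its pullback vanishes. This is the step I would check most carefully: one needs the pointwise vanishing of the full form $\lambda_q$, and this comes precisely from $\mathcal E|_\Delta=0$.

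\emph{Step 2: the gauge transform.} Since $d\lambda=\widehat\phi$, the gauge transformation $e^\lambda\Gamma_{\widehat\pi}$ is a $(\widehat\phi-d\lambda)=0$-twisted Dirac structure, i.e.\ an untwisted one. It is the graph of a bivector wherever $\id+\lambda^\flat\widehat\pi^\sharp$ is invertible on $T^*$. Along $\Delta$ we have $\lambda=0$, so this is the identity there. Invertibility is open, so after shrinking $U$ we get a Poisson bivector $\widehat\pi_0$ with $e^\lambda\Gamma_{\widehat\pi}=\Gamma_{\widehat\pi_0}$ and $\widehat\pi_0|_\Delta=\widehat\pi|_\Delta$.

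\emph{Step 3: the map $A$.} The map $A$ is the restriction to graphs of the gauge map $(X,\zeta)\mapsto(X,\zeta+\iota_X\lambda)$. This gauge map is an isomorphism of Courant algebroids from the $\widehat\phi$-twisted to the untwisted Dorfman bracket, because $d\lambda=\widehat\phi$. It therefore carries the Lie algebroid $\Gamma_{\widehat\pi}\cong T^*_{\widehat\pi}U$ isomorphically onto $\Gamma_{\widehat\pi_0}\cong T^*_{\widehat\pi_0}U$, with anchors matching via $\widehat\pi_0(A\zeta)=\widehat\pi(\zeta)$. Since $\lambda_q=0$ for $q\in\Delta$, we get $A_q=\id$.

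\emph{Step 4: the two structures on $\widehat B$.} Using $A_q=\id$ and $\widehat\pi_0|_\Delta=\widehat\pi|_\Delta$, the sets $T\Delta^\circ\cap\widehat\pi^{-1}(T\Delta)$ and $T\Delta^\circ\cap\widehat\pi_0^{-1}(T\Delta)$ coincide. Both brackets on $\widehat B$ are computed by extending sections and restricting to $\Delta$. $A$ is a Lie algebroid isomorphism that maps such extensions to such extensions and is the identity along $\Delta$. Hence $[A\zeta_1,A\zeta_2]_0|_\Delta=A[\zeta_1,\zeta_2]|_\Delta=[\zeta_1,\zeta_2]|_\Delta$, and the two induced brackets agree. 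The anchors agree by Step 2.
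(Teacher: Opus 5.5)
Your proposal is correct and follows essentially the same route as the paper. Both arguments take a primitive $\lambda$ vanishing pointwise on $\Delta$, apply a gauge transformation to obtain an untwisted Dirac structure that is a graph near $\Delta$ because $A=\id$ there, deduce that $A$ is a Lie algebroid isomorphism from the Courant intertwining, and compute the bracket on $\widehat B$ using the extensions $A\zeta_i$. The only difference is that the paper cites da Silva's relative Poincar\'e lemma, whereas you prove it directly with the fibrewise homotopy operator, correctly noting that $\lambda_q=0$ pointwise on $\Delta$ follows because the Euler field vanishes there.
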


\begin{proof}
The form $\widehat\phi$ is closed and $J^*\widehat\phi=0$ by \eqref{1SCRPX4HC}, so by the relative Poincar\'e lemma \cite[Proposition 6.8]{da2004lectures} there are a neighbourhood $U$ of $\Delta$ and $\lambda\in\Omega^2(U)$ with $d\lambda=\widehat\phi|_U$ and $\lambda_q=0$ for all $q\in\Delta$.

By \cite[\S 3]{vsevera2001poisson}, the gauge transformation $e^\lambda(X,\alpha)=(X,\alpha+\iota_X\lambda)$ intertwines the $\widehat\phi$-twisted Courant bracket with the $(\widehat\phi-d\lambda)$-twisted one. Hence $e^\lambda\Gamma_{\widehat\pi}$ is untwisted, and
$e^\lambda\Gamma_{\widehat\pi}=\bigl\{(\widehat\pi\zeta,A\zeta):\zeta\in T^*U\bigr\}$.
Since $\lambda_q=0$, we have $A_q=\id$ for $q\in\Delta$, so $A$ is invertible after shrinking $U$. Then $e^\lambda\Gamma_{\widehat\pi}=\Gamma_{\widehat\pi_0}$ with $\widehat\pi_0:=\widehat\pi\circ A^{-1}$, which is a bivector because the graph is Lagrangian, and Poisson because the graph is an untwisted Dirac structure. At points of $\Delta$, $A_q=\id$ gives $\widehat\pi_0=\widehat\pi$.

The map $A$ is the composite of $\zeta\mapsto(\widehat\pi\zeta,\zeta)$, of $e^\lambda$, and of the projection $\Gamma_{\widehat\pi_0}\to T^*U$. Each preserves brackets, so $A$ is an isomorphism of Lie algebroids, and $\widehat\pi_0\circ A=\widehat\pi$.

Since $\widehat\pi_0=\widehat\pi$ along $\Delta$, the two descriptions of $\widehat B$ agree. Let $b_1,b_2\in\Gamma(\widehat B)$ with extensions $\zeta_1,\zeta_2$. As $A_q=\id$ along $\Delta$, the sections $A\zeta_i$ extend $b_i$, and every extension is of this form. Hence
\[
[A\zeta_1,A\zeta_2]_{\widehat\pi_0}\big|_\Delta
=A[\zeta_1,\zeta_2]_{\widehat\pi}\big|_\Delta
=[\zeta_1,\zeta_2]_{\widehat\pi}\big|_\Delta=[b_1,b_2]_{\widehat B},
\]
the last equality by \Cref{1SCL3WW9S}.
\end{proof}

The model depends on auxiliary choices: the complements $V_M,V_N$ of \Cref{1SCCGC8W3} and the primitive $\lambda$ of \Cref{1SCPWPNPJ}. We now show that it is unique up to a Poisson diffeomorphism fixing $\Delta$. Only $\Omega$, $\lambda$ and hence $\widehat\pi_0$ vary; $E_M^*$, $E_N^*$, $J$ and $\Delta$ do not.

We view a $2$-form $\lambda$ also as the map $X\mapsto\iota_X\lambda$, and write $\Theta^*\pi:=(\Theta^{-1})_*\pi$ for a bivector $\pi$. We use Moser's theorem in the following form \cite[\S 3.3]{AlekseevMeinrenken2007}: if $\pi_t$ is a smooth family of Poisson bivectors near $\Delta$ with $\Gamma_{\pi_t}=e^{\beta_t}\Gamma_{\pi_0}$, where $\beta_0=0$ and $\dot\beta_t=d\gamma_t$ with $\gamma_t$ vanishing at the points of $\Delta$, then the time-$1$ flow of $-\pi_t(\gamma_t)$ fixes $\Delta$ pointwise and carries $\pi_0$ to $\pi_1$.  

Write $\widehat p :=p_M\times p_N$, $\widehat\Omega:=\ppp_{S_M}^*\Omega_M-\ppp_{S_N}^*\Omega_N$ and $L_N^-:=\{(v,-\alpha):(v,\alpha)\in L_N\}$. By \Cref{1SCCGC8W3}(1),
\begin{equation}\label{1SC5T4EWN}
\Gamma_{\widehat\pi_0}=e^{\lambda+\widehat\Omega}\widehat p^*(L_M\times L_N^-).
\end{equation}

\begin{proposition}\label{1SC454C3Y}
Let $\widehat\pi_0$ and $\widehat\pi_0'$ come from two choices of $(V_M,V_N)$ and of primitive. Then there is a diffeomorphism $\Theta$ between neighbourhoods of $\Delta$ with $\Theta|_\Delta=\id_\Delta$ and $\Theta^*\widehat\pi_0'=\widehat\pi_0$. Its cotangent lift restricts to an isomorphism of Lie algebroids from $T\Delta^\circ\cap\widehat\pi_0'^{-1}(T\Delta)$ onto $\widehat B$ over $\id_\Delta$.
\end{proposition}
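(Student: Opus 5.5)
The plan is to connect the two models by a one-parameter family of gauge transformations and then apply the Moser theorem recalled just before the statement. By \eqref{1SC5T4EWN}, the two Poisson structures satisfy
\[
\Gamma_{\widehat\pi_0}=e^{\lambda+\widehat\Omega}\widehat p^*(L_M\times L_N^-),\qquad
\Gamma_{\widehat\pi_0'}=e^{\lambda'+\widehat\Omega'}\widehat p^*(L_M\times L_N^-),
\]
with the same $\widehat p$ and the same backward image, because $E_M^*$, $E_N^*$, $J$ and $\Delta$ do not depend on the choices. Hence $\Gamma_{\widehat\pi_0'}=e^{\beta}\Gamma_{\widehat\pi_0}$ with $\beta:=(\lambda'-\lambda)+(\widehat\Omega'-\widehat\Omega)$. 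Since $d\lambda=d\lambda'=\widehat\phi$ and $\widehat\Omega,\widehat\Omega'$ are exact, $\beta$ is closed. I will set $\beta_t:=t\beta$ and check that, near $\Delta$, $e^{\beta_t}\Gamma_{\widehat\pi_0}$ is the graph of a Poisson bivector $\pi_t$. It is untwisted because $d\beta_t=0$. It is transverse to $T^*$ along $\Delta$ if $\beta_t$ vanishes there; in general, it suffices that $\widehat\pi_0|_\Delta$ is unchanged, which I will arrange below. Transversality is then open, so after shrinking, with $[0,1]$ compact, it holds on a uniform neighbourhood.

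The main step is to write $\beta=d\gamma$ with $\gamma$ vanishing at the points of $\Delta$, and to check that the resulting family behaves well along $\Delta$. For the $\lambda$ part, $\lambda'-\lambda$ is closed and vanishes at each point of $\Delta$, so the relative Poincar\'e lemma \cite[Proposition 6.8]{da2004lectures} gives a primitive vanishing on $\Delta$. For the $\Omega$ part, the difference is $-d\bigl(\iota'^*\theta-\iota^*\theta\bigr)$ on each factor. Since $\iota^*\theta$ and $\iota'^*\theta$ vanish along the zero section, their difference is a primitive $1$-form vanishing on $j(M)$, and hence on $\Delta$. Thus I take
\[
\gamma:=\mu-\bigl(\ppp_{S_M}^*(\iota_M'^*\theta-\iota_M^*\theta)-\ppp_{S_N}^*(\iota_N'^*\theta-\iota_N^*\theta)\bigr),
\]
where $\mu$ is the primitive of $\lambda'-\lambda$. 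This gives $\dot\beta_t=\beta=d\gamma$ with $\gamma|_\Delta=0$.

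I expect the obstacle to be that $\beta$ itself need not vanish along $\Delta$. By \eqref{1SCV2A689}, $\Omega_{0_x}$ depends on $\ppp_E$, so $\widehat\Omega'-\widehat\Omega$ is nonzero at points of $\Delta$. Accordingly, $\pi_t|_\Delta$ may vary with $t$, and I will need to verify transversality of $e^{t\beta}\Gamma_{\widehat\pi_0}$ at $\Delta$ directly. I would attempt this by repeating the argument of \Cref{1SCCGC8W3}(1), now with the interpolated $2$-form $(1-t)\Omega+t\Omega'$, which equals $-d$ of the tautological form pulled back along the interpolated projection $(1-t)\ppp_E+t\ppp_E'$. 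The key point is that any convex combination of projections onto $E$ is again a projection onto $E$ along some complement, so the claim in that proof applies verbatim. Moser's theorem then applies, since its hypotheses only need $\gamma_t=\gamma$ to vanish on $\Delta$. The time-one flow $\Theta$ of $-\pi_t(\gamma)$ fixes $\Delta$ pointwise and satisfies $\Theta^*\widehat\pi_0'=\widehat\pi_0$.

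For the last assertion, $\Theta$ is a Poisson diffeomorphism with $\Theta|_\Delta=\id$. Its cotangent lift therefore maps $T\Delta^\circ$ to $T\Delta^\circ$ and intertwines $\widehat\pi_0$ with $\widehat\pi_0'$. As a consequence, it sends $\widehat\pi_0^{-1}(T\Delta)$ to $\widehat\pi_0'^{-1}(T\Delta)$ and identifies the anchors over $\id_\Delta$. The cotangent lift of a Poisson diffeomorphism is an isomorphism of cotangent Lie algebroids, so it preserves brackets. The restrictions to $\Delta$ define the brackets on both sides, as in \Cref{1SCPWPNPJ}, so we obtain the stated isomorphism onto $\widehat B$, using its inverse for the direction.
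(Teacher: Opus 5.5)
Your proposal is correct and follows the paper's proof essentially step for step: the family $e^{t\beta}\Gamma_{\widehat\pi_0}=e^{\lambda_t+\widehat\Omega_t}\widehat p^*(L_M\times L_N^-)$ with $\widehat\Omega_t$ coming from the interpolated projection $(1-t)\ppp_E+t\ppp_E'$, transversality along $\Delta$ from the claim in the proof of \Cref{1SCCGC8W3}(1), the primitive $\gamma=\gamma_V+\gamma_\lambda$ vanishing on $\Delta$, then Moser and the cotangent lift. Two slips are purely expository. The aside in your first paragraph that $\widehat\pi_0|_\Delta$ is unchanged is false in general, as you note later, and should be dropped. The condition needed for a graph of a bivector is $\Gamma\cap T\widehat M=0$, i.e.\ transversality to $T\widehat M$, not to $T^*$.
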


\begin{proof}
Denote by primes the objects of the second choice. For $t\in[0,1]$, the map $(1-t)\ppp_{E_M}+t\ppp_{E_M}'$ is a projection onto $E_M$; let $V_{M,t}$ be its kernel, and define $V_{N,t}$ similarly. Since $\iota^*\theta$ is linear in $\ppp_E$, the forms of \Cref{1SCCGC8W3} are $\widehat\Omega_t=(1-t)\widehat\Omega+t\widehat\Omega'$. Set $\lambda_t:=(1-t)\lambda+t\lambda'$, so $d\lambda_t=\widehat\phi$ and $\lambda_t$ vanishes at the points of $\Delta$. Let
\[
\beta_t:=t\bigl[(\lambda'-\lambda)+(\widehat\Omega'-\widehat\Omega)\bigr],
\quad
\Gamma_t:=e^{\lambda_t+\widehat\Omega_t}\widehat p^*(L_M\times L_N^-)=e^{\beta_t}\Gamma_{\widehat\pi_0}.
\]
Each $\Gamma_t$ is the graph of a Poisson bivector $\pi_t$ near $\Delta$. Its background $3$-form is $\widehat\phi-d\lambda_t-d\widehat\Omega_t=0$. Along $\Delta$, where $\lambda_t=0$, it is the model of \Cref{1SCCGC8W3} for $(V_{M,t},V_{N,t})$, hence transverse to $T\widehat M$ by the proof of \Cref{1SCCGC8W3}(1). Transversality is open and $[0,1]$ is compact, so it holds on $[0,1]\times U$ for some neighbourhood $U$ of $\Delta$. Here $\pi_0=\widehat\pi_0$ and $\pi_1=\widehat\pi_0'$.

We write $\dot\beta_t$ as $d\gamma$ with $\gamma$ vanishing at the points of $\Delta$. Set
\[
\gamma_V:=\ppp_{S_M}^*\bigl(\iota_M^*\theta_M-\iota_M'^*\theta_M\bigr)-\ppp_{S_N}^*\bigl(\iota_N^*\theta_N-\iota_N'^*\theta_N\bigr).
\]
Since $\Omega=-d(\iota^*\theta)$, we get $d\gamma_V=\widehat\Omega'-\widehat\Omega$, and $\gamma_V$ vanishes at the points of $\Delta$ because $\iota^*\theta$ and $\iota'^*\theta$ vanish at the zero section. The form $\lambda'-\lambda$ is closed and vanishes at the points of $\Delta$, so $\lambda'-\lambda=d\gamma_\lambda$ with $\gamma_\lambda$ vanishing there \cite[Proposition 6.8]{da2004lectures}. Take $\gamma:=\gamma_V+\gamma_\lambda$. Moser's theorem gives $\Theta$ with $\Theta|_\Delta=\id_\Delta$ and $\Theta^*\widehat\pi_0'=\widehat\pi_0$. Its cotangent lift is an isomorphism of cotangent Lie algebroids, and $d\Theta$ preserves $T\Delta$ along $\Delta$, so it maps $T\Delta^\circ\cap\widehat\pi_0'^{-1}(T\Delta)$ onto $\widehat B$, preserving brackets.
\end{proof}

\subsection{Construction and independence}\label{1SCSWUFL4}
We now construct the $P_\infty$-structure on $\a=C^\infty(\wedge^\bullet B_\ccc^*)$. By \Cref{1SCL3WW9S,1SCPWPNPJ} this reduces to the pre-Poisson case of \Cref{1SCS4XYSN}, applied to $\Delta\subset(U,\widehat\pi_0)$. Since $S$ and $\psi$ are already in use, we write $\Sigma$ for a Poisson transversal containing $\Delta$ coisotropically and $\varsigma$ for a tubular neighbourhood of $\Delta$ in $\Sigma$.

A \defn{choice} $\mathbf{e}$ consists of complements $V_M,V_N$ as in \Cref{1SCCGC8W3} and of a primitive $\lambda$ as in \Cref{1SCPWPNPJ}. For each $\mathbf{e}$, write $\widehat\pi_0^{\mathbf{e}}$, $\widehat B^{\mathbf{e}}$ and $\varphi^{\mathbf{e}}$ for the objects of \Cref{1SCL3WW9S,1SCPWPNPJ}, and set
\[
\widehat\a^{\mathbf{e}}:=\Gamma\bigl(\wedge^\bullet(\widehat B^{\mathbf{e}})^*\bigr),
\quad
\Phi^{\mathbf{e}}:=\wedge^\bullet\bigl((\varphi^{\mathbf{e}})^*\bigr)^{-1}\colon\a\to\widehat\a^{\mathbf{e}},
\]
where functions are identified through $J$. Then $\Phi^{\mathbf{e}}$ is an isomorphism of graded commutative algebras of degree $0$ and
\begin{equation}\label{1SCGM8GSM}
\Phi^{\mathbf{e}}\circ d_{B_\ccc}=d_{\widehat B^{\mathbf{e}}}\circ\Phi^{\mathbf{e}}.
\end{equation}

\begin{theorem}\label{1SCW34U8K}
Let $(\ccc,L_N,L_M)$ be a regular datum and $\mathbf{e}$ a choice.
\begin{itemize}
\item[(1)] $\Delta$ is a pre-Poisson submanifold of $(U,\widehat\pi_0^{\mathbf{e}})$, with Lie algebroid $\widehat B^{\mathbf{e}}$.
\item[(2)] For every choice $\tau=(\Sigma,\varsigma)$ as in \Cref{1SCMMB63H} for $\Delta\subset(U,\widehat\pi_0^{\mathbf{e}})$, the maps
\[
\ell_k^{(\mathbf{e},\tau)}:=(\Phi^{\mathbf{e}})^{-1}\circ\ell_k^{\widehat\a^{\mathbf{e}},\Sigma,\varsigma}\circ(\Phi^{\mathbf{e}})^{\otimes k},
\quad k\ge1,
\quad \ell_0^{(\mathbf{e},\tau)}:=0,
\]
form a flat $P_\infty$-structure on $\a$ with $\ell_1^{(\mathbf{e},\tau)}=d_{B_\ccc}$.
\item[(3)] The $L_\infty$-isomorphism class of $\{\ell_k^{(\mathbf{e},\tau)}\}$ does not depend on $(\mathbf{e},\tau)$.
\end{itemize}
\end{theorem}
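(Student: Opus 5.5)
For (1), we will verify that $\Delta$ is pre-Poisson in $(U,\widehat\pi_0^{\mathbf e})$, that is, that $T\Delta+\widehat\pi_0(T\Delta^\circ)$ has constant rank. Since $\widehat\pi_0^{\mathbf e}=\widehat\pi$ along $\Delta$ by \Cref{1SCPWPNPJ}, this bundle can be computed with $\widehat\pi$. There is an exact sequence
\[
0\to\widehat B\to T\widehat M|_\Delta\to\bigl(T\Delta+\widehat\pi(T\Delta^\circ)\bigr)^*\to0,
\]
dual to \eqref{1SC2QNAK4} and valid pointwise for any bivector. Hence the rank of $T\Delta+\widehat\pi(T\Delta^\circ)$ equals $\dim\widehat M-\rank\widehat B$. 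By \Cref{1SCL3WW9S}, $\widehat B\cong B_\ccc$ fibrewise, and $B_\ccc$ has constant rank. The dimension of $\widehat M$ is constant by \Cref{1SCCGC8W3}(4); here regularity is used, because it makes $E_M$ and $E_N$ genuine bundles and hence $S_M$ and $S_N$ manifolds. The Lie algebroid of the pre-Poisson submanifold $\Delta\subset(U,\widehat\pi_0)$ is $T\Delta^\circ\cap\widehat\pi_0^{-1}(T\Delta)$ with the bracket induced by $T^*_{\widehat\pi_0}U$. By the last assertion of \Cref{1SCPWPNPJ}, this is $\widehat B^{\mathbf e}$ with the structure of \Cref{1SCL3WW9S}.

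For (2), we apply \Cref{1SCMMB63H} to $\Delta\subset(U,\widehat\pi_0^{\mathbf e})$ and the choice $\tau$. This gives a flat $P_\infty$-structure on $\widehat\a^{\mathbf e}$ with $\ell_1=d_{\widehat B^{\mathbf e}}$. \Cref{1SCL3WW9S} shows that $\varphi^{\mathbf e}$ is a Lie algebroid isomorphism, so $\Phi^{\mathbf e}$ is an algebra isomorphism of degree $0$ satisfying \eqref{1SCGM8GSM}. Parts (1) and (2) of \Cref{1SCU92AQH} then show that the transported maps form a flat $P_\infty$-structure on $\a$ with $\ell_1=d_{B_\ccc}$.

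For (3), we treat the two kinds of choices separately. First fix $\mathbf e$ and let $\tau,\tau'$ be two choices. \Cref{1SC4NFQ6C} gives an $L_\infty$-isomorphism between the two structures on $\widehat\a^{\mathbf e}$, and \Cref{1SCU92AQH}(3) with $\Phi=\Phi'=\Phi^{\mathbf e}$ transports it to $\a$. Now let $\mathbf e,\mathbf e'$ be two choices. \Cref{1SC454C3Y} provides a Poisson diffeomorphism $\Theta$ near $\Delta$ with $\Theta|_\Delta=\id$ carrying $\widehat\pi_0^{\mathbf e}$ to $\widehat\pi_0^{\mathbf e'}$; note that $\Theta^*\widehat\pi_0'=\widehat\pi_0$ means $\Theta_*\widehat\pi_0=\widehat\pi_0'$. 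The Naturality \Cref{1SCEXNP25} then gives a strict isomorphism
\[
\widehat\Theta\colon(\widehat\a^{\mathbf e},\ell^{\Sigma,\varsigma})\to(\widehat\a^{\mathbf e'},\ell^{\Theta(\Sigma),\varsigma'}).
\]
To conclude, we need $\widehat\Theta\circ\Phi^{\mathbf e}$ to agree with $\Phi^{\mathbf e'}$, or at least to differ from it by an $L_\infty$-automorphism of $(\a,\ell)$. Equivalently, we need the cotangent lift $\Theta_L$ to satisfy $\Theta_L\circ\varphi^{\mathbf e}=\varphi^{\mathbf e'}$ on $B_\ccc$.

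This compatibility is the main obstacle. Because $\Theta|_\Delta=\id$, $\Theta_L$ is the map $\zeta\mapsto(d\Theta^{-1})^*\zeta$ at points of $\Delta$, and $d\Theta$ need not be the identity in normal directions. On the other hand, $\varphi^{\mathbf e}$ and $\varphi^{\mathbf e'}$ are characterized by the uniqueness in \Cref{1SCCGC8W3}(3): $\varphi(b)$ is the unique covector $\zeta$ with $\widehat\pi_0\zeta=J_*\rho_B(b)$ and with prescribed pullback $\iota^*\zeta$ to $M\times N$. We first try to show that $\Theta_L\varphi^{\mathbf e}(b)$ satisfies the same two conditions for $\widehat\pi_0^{\mathbf e'}$. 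The first condition holds because $\Theta$ is Poisson and fixes $\Delta$. The second holds if $\Theta$ preserves the submanifold $\iota(M\times N)$ to first order along $\Delta$; we expect this to follow from the Moser flow, since the generating vector field $-\pi_t(\gamma_t)$ vanishes along $\Delta$, so its linearization is governed by $d\gamma_t$, and we would check that this linearization preserves $T\iota(M\times N)$. If this fails, the fallback is to show that $(\Phi^{\mathbf e'})^{-1}\widehat\Theta\Phi^{\mathbf e}$ is induced by an automorphism of $B_\ccc$ that commutes with $d_{B_\ccc}$, and then compose with \Cref{1SCU92AQH}(3). In either case, composing the isomorphism obtained for a change of $\tau$ with the one for a change of $\mathbf e$ proves (3).
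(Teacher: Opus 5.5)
Your parts (1) and (2), and the change of $\tau$ for fixed $\mathbf e$ in (3), follow the paper's proof. One small slip: the sequence in (1) should read $0\to\widehat B\to T^*\widehat M|_\Delta\to\bigl(T\Delta+\widehat\pi(T\Delta^\circ)\bigr)^*\to0$, which is \eqref{1SC2QNAK4} itself rather than its dual.

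\textbf{The gap is in the change of $\mathbf e$.} You correctly obtain $\Theta$ from \Cref{1SC454C3Y} and the strict isomorphism $\widehat\Theta$ from \Cref{1SCEXNP25}. You then assert that one needs $\widehat\Theta\circ\Phi^{\mathbf e}=\Phi^{\mathbf e'}$, equivalently $\Theta_L\circ\varphi^{\mathbf e}=\varphi^{\mathbf e'}$, and you leave this unproved. As written, (3) therefore does not close. The route you sketch also does not work as stated:
\begin{itemize}
\item Since $\Theta$ is Poisson and $\Theta|_\Delta=\id$, the anchor condition of \Cref{1SCCGC8W3}(3) does hold.
\item For the pullback condition, first-order preservation of $\iota(M\times N)$ is not enough. $d\Theta^{-1}$ may still act nontrivially on directions of $T\iota(M\times N)|_\Delta$ normal to $\Delta$, and $\iota^*\varphi^{\mathbf e}(b)$ does not annihilate those directions. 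You would need finer control, for instance that $d\Theta$ is the identity on $T\iota(M\times N)|_\Delta$. That depends on how the primitive $\gamma_\lambda$ in the Moser argument is chosen, and it is not part of \Cref{1SC454C3Y}.
\item Your fallback is also misdirected. An automorphism of $B_\ccc$ commuting with $d_{B_\ccc}$ need not commute with the higher $\ell_k$, so it would not give an $L_\infty$-automorphism of a fixed structure.
\end{itemize}

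\textbf{The missing observation is that no compatibility is needed.} Statement (3) only concerns isomorphism classes. Write $\tau^\Theta:=(\Theta(\Sigma),\varsigma')$ for the transported choice and set $\Psi:=(\Phi^{\mathbf e'})^{-1}\circ\widehat\Theta\circ\Phi^{\mathbf e}$, a degree-$0$ algebra automorphism of $\a$. By definition, $\ell^{(\mathbf e,\tau)}$ and $\ell^{(\mathbf e',\tau^\Theta)}$ are the transports along $\Phi^{\mathbf e}$ and $\Phi^{\mathbf e'}$. The naturality identity then gives, for $k\ge1$,
\[
\Psi\circ\ell_k^{(\mathbf e,\tau)}
=(\Phi^{\mathbf e'})^{-1}\widehat\Theta\,\ell_k^{\widehat\a^{\mathbf e},\Sigma,\varsigma}(\Phi^{\mathbf e})^{\otimes k}
=(\Phi^{\mathbf e'})^{-1}\ell_k^{\widehat\a^{\mathbf e'},\tau^\Theta}\widehat\Theta^{\otimes k}(\Phi^{\mathbf e})^{\otimes k}
=\ell_k^{(\mathbf e',\tau^\Theta)}\circ\Psi^{\otimes k}.
\]
Both structures are flat, so $\Psi$ is already a strict $L_\infty$-isomorphism between them, whether or not $\Psi=\id$. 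This is exactly the paper's argument. You then finish by composing with the $\tau$-change isomorphism for $\mathbf e'$ (from $\tau^\Theta$ to $\tau'$), transported by \Cref{1SCU92AQH}(3).
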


\begin{proof}
(1) By \Cref{1SCL3WW9S}, $\varphi^{\mathbf{e}}$ is an isomorphism onto $\widehat B^{\mathbf{e}}$, so $\widehat B^{\mathbf{e}}$ has constant rank. By \Cref{1SCPWPNPJ}, $\widehat B^{\mathbf{e}}=T\Delta^\circ\cap(\widehat\pi_0^{\mathbf{e}})^{-1}(T\Delta)$. By \eqref{1SC2QNAK4} for $\Delta\subset(U,\widehat\pi_0^{\mathbf{e}})$, the rank of $T\Delta+\widehat\pi_0^{\mathbf{e}}(T\Delta^\circ)$ equals $\dim U-\rank\widehat B^{\mathbf{e}}$, hence is constant, so $\Delta$ is pre-Poisson. By \Cref{1SCPWPNPJ}, the Lie algebroid structure of $\widehat B^{\mathbf{e}}$ induced by $\widehat\pi_0^{\mathbf{e}}$ is that of \Cref{1SCL3WW9S}.

(2) By (1), \Cref{1SCMMB63H} gives a flat $P_\infty$-structure on $\widehat\a^{\mathbf{e}}$ with $\ell_1=d_{\widehat B^{\mathbf{e}}}$. \Cref{1SCU92AQH} applied to $\Phi^{\mathbf{e}}$, with \eqref{1SCGM8GSM}, gives (2).

(3) Let $(\mathbf{e},\tau)$ and $(\mathbf{e}',\tau')$ be two choices, and denote by primes the objects of $\mathbf{e}'$. By \Cref{1SC454C3Y} there is $\Theta$ with $\Theta|_\Delta=\id_\Delta$ and $\Theta_*\widehat\pi_0=\widehat\pi_0'$, and
$\Theta_B:=(d\Theta^{-1})^*|_{\widehat B}\colon\widehat B\to\widehat B'$
is an isomorphism of Lie algebroids over $\id_\Delta$. Set $\widehat\Theta:=\wedge^\bullet(\Theta_B^*)^{-1}$.

Let $\tau=(\Sigma,\varsigma)$ and let $\tau^\Theta:=(\Sigma',\varsigma')$ be its transport by $\Theta$ as in \Cref{1SCEXNP25}. That lemma gives
\[
\widehat\Theta\circ\ell_k^{\widehat\a,\Sigma,\varsigma}
=\ell_k^{\widehat\a',\tau^\Theta}\circ\widehat\Theta^{\otimes k},
\quad k\ge1.
\]
Set $\Psi:=(\Phi')^{-1}\circ\widehat\Theta\circ\Phi$, an algebra automorphism of $\a$ of degree $0$. By (2),
\[
\Psi\circ\ell_k^{(\mathbf{e},\tau)}
=(\Phi')^{-1}\widehat\Theta\,\ell_k^{\widehat\a,\Sigma,\varsigma}\Phi^{\otimes k}
=(\Phi')^{-1}\ell_k^{\widehat\a',\tau^\Theta}\widehat\Theta^{\otimes k}\Phi^{\otimes k}
=\ell_k^{(\mathbf{e}',\tau^\Theta)}\circ\Psi^{\otimes k}
\]
for $k\ge1$, and both structures are flat, so $\Psi$ is a strict isomorphism from $\{\ell_k^{(\mathbf{e},\tau)}\}$ to $\{\ell_k^{(\mathbf{e}',\tau^\Theta)}\}$.

It remains to compare $\tau^\Theta$ with $\tau'$ for $\mathbf{e}'$. By \Cref{1SC4NFQ6C} applied to $\Delta\subset(U',\widehat\pi_0')$, there is an $L_\infty$-isomorphism $\widehat\varrho$ from $\{\ell_k^{\widehat\a',\tau^\Theta}\}$ to $\{\ell_k^{\widehat\a',\tau'}\}$. By \Cref{1SCU92AQH}(3), applied to $\Phi'$ on both sides, the maps $\varrho_k:=(\Phi')^{-1}\circ\widehat\varrho_k\circ(\Phi')^{\otimes k}$ form an $L_\infty$-isomorphism from $\{\ell_k^{(\mathbf{e}',\tau^\Theta)}\}$ to $\{\ell_k^{(\mathbf{e}',\tau')}\}$. Since $\{\ell_k^{(\mathbf{e},\tau)}\}$ is the transport of $\{\ell_k^{(\mathbf{e}',\tau^\Theta)}\}$ along $\Psi$, \Cref{1SCU92AQH}(3), applied to $\Psi$ and $\id$, shows that the maps $\varrho_k\circ\Psi^{\otimes k}$ form an $L_\infty$-isomorphism from $\{\ell_k^{(\mathbf{e},\tau)}\}$ to $\{\ell_k^{(\mathbf{e}',\tau')}\}$.
\end{proof}

We now specialize to a Dirac structure on a submanifold of a Poisson manifold. Let $(M,\pi)$ be a Poisson manifold, $\iota\colon N\hookrightarrow M$ an embedded submanifold and $L_N$ a Dirac structure on $N$. Set
\begin{equation}\label{1SCCTBG6Z}
B:=\bigl\{\xi\in T^*M|_N\ :\ \pi(\xi)\in TN\ \text{ and }\
(\pi(\xi), \iota^*\xi)\in L_N\bigr\}.
\end{equation}
The triple $(\iota,L_N,\Gamma_\pi)$, with $\phi=\psi=0$, is of the type considered in \Cref{1SCHTFL8N}.

\begin{corollary}\label{1SC3AQRTR}
Assume that $B$ and $E_N=L_N\cap TN$ have constant rank. Then $(\iota,L_N,\Gamma_\pi)$ is a regular datum, and
\[
\Psi\colon B_\iota\to B,\quad \Psi\bigl((v,\alpha),(w,\beta)\bigr)=\beta,
\]
is an isomorphism of Lie algebroids over $\id_N$ onto the Lie subalgebroid $B$ of $T^*_\pi M$. In particular, transporting the structures of \Cref{1SCW34U8K} along $\wedge^\bullet\Psi^*$ gives a flat $P_\infty$-structure on $\Gamma(\wedge^\bullet B^*)$ with $\ell_1=d_B$, unique up to $L_\infty$-isomorphism.
\end{corollary}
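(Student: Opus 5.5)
The plan is to check the three ingredients in turn: regularity, the bundle isomorphism $\Psi$, and compatibility of brackets and anchors. The final transport statement then follows from \Cref{1SCW34U8K} and \Cref{1SCU92AQH}.

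\emph{Regularity.} For $L_M=\Gamma_\pi$ we have $E_M=\Gamma_\pi\cap TM=0$, which has constant rank. $E_N$ has constant rank by hypothesis. Constant rank of $B_\iota$ will follow from the isomorphism with $B$ established next, so $(\iota,L_N,\Gamma_\pi)$ is an infinitesimal $1$-shifted coisotropic and a regular datum.

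\emph{The isomorphism $\Psi$.} An element of $B_\iota$ over $n$ is $((v,\alpha),(w,\beta))$ with $(v,\alpha)\in L_N$, $(w,\beta)\in\Gamma_\pi$, $\iota_*v=w$ and $\alpha=\iota^*\beta$. The condition $(w,\beta)\in\Gamma_\pi$ forces $w=\pi(\beta)$, so $\pi(\beta)=v\in TN$ and $(\pi(\beta),\iota^*\beta)=(v,\alpha)\in L_N$; hence $\beta\in B$. Conversely, given $\xi\in B$, set $v:=\pi(\xi)$ and $\alpha:=\iota^*\xi$. This gives the inverse $\xi\mapsto((\pi\xi,\iota^*\xi),(\pi\xi,\xi))$. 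Both maps are smooth bundle maps, so $\Psi$ is a fibrewise linear bijection covering $\id_N$. Consequently $B_\iota$ has constant rank iff $B$ does, which settles the first part of the argument above.

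\emph{Lie algebroid structure.} The anchor of $B_\iota$ is $(v,\alpha)\mapsto v=\pi(\beta)$, which is the anchor of $T^*_\pi M$ restricted to $B$. For the brackets, I would use the uniqueness in \Cref{l5e5f2ec}\ref{rojq4m6n}. The projection $B_\iota\to\Gamma_\pi\cong T^*_\pi M$ is a Lie algebroid morphism, and under $\Psi$ it becomes the inclusion $B\hookrightarrow T^*M|_N$. The key step is to show that $B$ is a Lie subalgebroid of $T^*_\pi M$ along $N$, that is, closed under brackets of extensions. One could verify this directly with the Dorfman bracket as in the proof of \Cref{l5e5f2ec}. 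It is cleaner to argue via $\Psi$ itself: the image of the Lie algebroid $B_\iota$ under the injective algebroid morphism $B_\iota\to L_N\times\Gamma_\pi$ over the graph embedding is a subalgebroid. Its second component then identifies $B$ with a subalgebroid of $T^*_\pi M$ over the embedding $N\hookrightarrow M$ (here $\pi$ is untwisted, so $\Gamma_\pi\cong T^*_\pi M$). The induced bracket on $B$ is thus the restriction of the cotangent bracket, and $\Psi$ intertwines it with the bracket of $B_\iota$.

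\emph{Transport.} Applying \Cref{1SCU92AQH} to the algebra isomorphism $\wedge^\bullet\Psi^*$, which intertwines $d_B$ and $d_{B_\iota}$ because $\Psi$ is an algebroid isomorphism, transports each structure of \Cref{1SCW34U8K}(2) to a flat $P_\infty$-structure on $\Gamma(\wedge^\bullet B^*)$ with $\ell_1=d_B$. By \Cref{1SCW34U8K}(3) together with \Cref{1SCU92AQH}(3), the $L_\infty$-isomorphism class is independent of all choices. The main obstacle is the subalgebroid identification; the rest is bookkeeping.
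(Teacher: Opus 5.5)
Your proposal is correct and follows essentially the paper's own argument. Both proofs use the bijection coming from $w=\pi(\beta)$, with the same explicit inverse, get regularity from $\Gamma_\pi\cap TM=0$, get bracket compatibility from the fact that $\Psi$ is the restriction of the Lie algebroid morphism $L_N\times\Gamma_\pi\to\Gamma_\pi\cong T^*_\pi M$ over $\iota$, and finish by transport via Theorem~\ref{1SCW34U8K} and Lemma~\ref{1SCU92AQH}. Your check that an injective morphism over an embedding has a subalgebroid as its image only spells out what the paper leaves implicit, and you never actually need the uniqueness clause of Theorem~\ref{l5e5f2ec}.
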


\begin{proof}
Let $\bigl((v,\alpha),(w,\beta)\bigr)\in B_\iota$. Since $(w,\beta)\in\Gamma_\pi$, we have $w=\pi(\beta)$, so $\pi(\beta)=\iota_*v\in TN$ and $(\pi(\beta),\iota^*\beta)=(v,\alpha)\in L_N$. Hence $\Psi$ is a bijection onto $B$, with inverse $\xi\mapsto\bigl((\pi(\xi),\iota^*\xi),(\pi(\xi),\xi)\bigr)$. Since $\Gamma_\pi\cap TM=0$, $(\iota,L_N,\Gamma_\pi)$ is a regular datum. As for $\ppp_N$, the map $\Psi$ is the restriction of the projection $L_N\times\Gamma_\pi\to\Gamma_\pi\cong T^*_\pi M$, which is a morphism of Lie algebroids over $\iota$. This proves the first assertion. The last one follows from \Cref{1SCW34U8K,1SCU92AQH}.
\end{proof}

\begin{corollary}\label{1SCUASLVL}
Let $L_N$ be the graph of a closed $2$-form $\omega$ with $\ker\omega$ of constant rank. Then
\[
B=\bigl\{\xi\in T^*M|_N\ :\ \pi(\xi)\in TN\ \text{ and }\ \iota_{\pi(\xi)}\omega=\iota^*\xi\bigr\},
\]
which corresponds to the subbundle $A_{N,\omega}$ of \textnormal{\cite[Definition 2.1]{bualibanu2026reduction}}. If $(N,\omega)$ is a reduction level, that is, if $B$ has constant rank, then \Cref{1SC3AQRTR} applies.
\end{corollary}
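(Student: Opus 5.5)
The corollary has two claims. The first is a description of $B$ when $L_N=\Gamma_\omega$. The second is that \Cref{1SC3AQRTR} applies once $B$ has constant rank. I will first specialize the definition \eqref{1SCCTBG6Z} and then check the two hypotheses of \Cref{1SC3AQRTR}.

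For the description of $B$, I substitute $L_N=\Gamma_\omega=\{(v,\iota_v\omega):v\in TN\}$ into \eqref{1SCCTBG6Z}. A pair $(u,\alpha)$ lies in $\Gamma_\omega$ if and only if $u\in TN$ and $\alpha=\iota_u\omega$. Hence, for $\xi\in T^*M|_N$ with $\pi(\xi)\in TN$, the condition $(\pi(\xi),\iota^*\xi)\in L_N$ becomes $\iota^*\xi=\iota_{\pi(\xi)}\omega$. This gives the displayed formula. Comparing with \cite[Definition 2.1]{bualibanu2026reduction}, I expect that definition to read $\{\xi:\pi(\xi)\in TN,\ \iota^*\xi=\iota_{\pi(\xi)}\omega\}$, up to the sign convention for $\iota_v\omega$. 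If their convention uses $\omega(\cdot,v)$, I will note that replacing $\omega$ by $-\omega$ or using the opposite contraction convention identifies the two bundles. This is also consistent with \Cref{integrationexamples}(2), where $B_\ccc$ was already identified with the stabilizer algebroid of a reduction level.

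For the applicability of \Cref{1SC3AQRTR}, two constant-rank conditions are needed. The first is that $B$ has constant rank, which is exactly the reduction-level hypothesis. The second is that $E_N=L_N\cap TN$ has constant rank. For a graph I compute $\Gamma_\omega\cap TN=\{(v,0):\iota_v\omega=0\}=\ker\omega\oplus 0$. This has constant rank by the assumption that $\ker\omega$ has constant rank. Closedness of $\omega$ is what makes $\Gamma_\omega$ an (untwisted) Dirac structure, which matches $\psi=\phi=0$ in \Cref{1SC3AQRTR}.

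There is no real obstacle here. The only point requiring care is matching sign conventions with \cite{bualibanu2026reduction}, and I would handle that by the remark above.
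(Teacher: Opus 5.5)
Your proposal is correct and follows the paper's proof: you specialize \eqref{1SCCTBG6Z} to $L_N=\Gamma_\omega$, and you observe that $E_N=\Gamma_\omega\cap TN=\ker\omega\oplus 0$ has constant rank, so both hypotheses of \Cref{1SC3AQRTR} hold once $B$ has constant rank. Your remarks that closedness of $\omega$ makes $\Gamma_\omega$ an untwisted Dirac structure, and on sign conventions, are harmless additions that the paper leaves implicit.
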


\begin{proof}
The description of $B$ is \eqref{1SCCTBG6Z} with $L_N=\Gamma_\omega$. Moreover $E_N=\Gamma_\omega\cap TN=\ker\omega$ has constant rank, so the hypotheses of \Cref{1SC3AQRTR} hold as soon as $B$ has constant rank.
\end{proof}

\subsection{Comparison with the Poisson algebra of invariant functions}\label{1SCFRV7MU}
Let $\ccc\colon(\C\tto N,L_N)\to(\G\tto M,\omega,\phi)$ be a $1$-shifted coisotropic structure with induced base Dirac structure $L_M$. Then $L_N$ is $\ccc^*\phi$-twisted and $B_\ccc$ is the fibre product $L_N\times_\ccc L_M$ of \Cref{1SCYF566F}, so $(\ccc,L_N,L_M)$ is of the type considered in \Cref{1SCHTFL8N}. Assume that $E_M$ and $E_N$ have constant rank, so that $(\ccc,L_N,L_M)$ is a regular datum. Since $\a$ is flat, $\ell_2$ descends to a Poisson bracket on $H^0(\a,\ell_1)$. We show that this bracket is the bracket \eqref{1SC2RZH94} on invariant functions.

\begin{proposition}\label{1SC64QN3W}
Suppose in addition that $\C$ is source-connected. Then $H^0(\a,\ell_1)=C^\infty(N)^\C$, and for every $P_\infty$-structure $\{\ell_k\}$ on $\a$ given by \Cref{1SCW34U8K}, the bracket induced by $\ell_2$ on $H^0(\a,\ell_1)$ is the bracket \eqref{1SC2RZH94} of \Cref{1SCYF566F}.
\end{proposition}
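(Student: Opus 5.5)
The plan has two parts: identify $H^0$, then compute the degree-zero bracket through the chain of models of \Cref{1SCW34U8K}, reducing to \Cref{1SCJHB7Z7}(1).

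For the first part, $\ell_1=d_{B_\ccc}$ and the anchor of $B_\ccc$ is $\bigl((v,\alpha),(w,\beta)\bigr)\mapsto v$. Hence $H^0(\a,\ell_1)$ consists of the functions annihilated by the anchor of $B_\ccc$. Under $A_\C\cong B_\ccc$ this anchor is $\rho_\C$, and source-connectedness gives $H^0(\a,\ell_1)=C^\infty(N)^\C$, as at the end of \S\ref{1SCINTSEC}. By \Cref{1SCW34U8K}(3), changing $(\mathbf e,\tau)$ changes $\ell_2$ only by an $L_\infty$-isomorphism. The linear component of such an isomorphism is a chain map that induces the identity on $H^0$: in the strict steps it is $\Psi=(\Phi')^{-1}\widehat\Theta\Phi$ with $\Theta|_\Delta=\id$, and in degree zero the non-strict steps of \cite{cattaneo2008equivalences} have linear component the identity. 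It also intertwines the induced brackets. It therefore suffices to fix one choice $(\mathbf e,\tau)$.

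For the second part, $\Phi^{\mathbf e}$ is the identity on functions after identifying $N\cong\Delta$ via $J$. So $\ell_2(f,g)$ is the degree-zero bracket of the pre-Poisson $P_\infty$-structure for $\Delta\subset(U,\widehat\pi_0)$, evaluated on $f\circ J^{-1}$ and $g\circ J^{-1}$. By \Cref{1SCJHB7Z7}(1), this equals $\{F,G\}_{\widehat\pi_0}|_\Delta$ for any admissible extensions $F,G$. Since $\widehat\pi_0=\widehat\pi$ along $\Delta$ by \Cref{1SCPWPNPJ}, this is $dF(\widehat\pi(dG))$ on $\Delta$. Admissibility also only involves $\widehat\pi|_\Delta$.

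The main step is to produce an admissible $G$ and read off an admissible pair $(v_g,\beta_g)$ from it. Take a smooth admissible pair $(v_g,\beta_g)$ from \Cref{1SCLV9K9Y}, and set $\xi:=h_M(\ccc_*v_g,\beta_g)$ and $\eta:=-h_N(v_g,dg+\ccc^*\beta_g)$. Then $(\xi,\eta)$ is a covector along $\Delta$ with:
\begin{itemize}
\item[(i)] $J^*(\xi,\eta)=\ccc^*\beta_g-dg-\ccc^*\beta_g=-dg$;
\item[(ii)] $\widehat\pi(\xi,\eta)=J_*v_g$, by \Cref{1SCCGC8W3}(3), since the sign of $-\pi_N$ cancels the minus sign in $\eta$.
\end{itemize}
So $-(\xi,\eta)$ restricts to $dg$ on $T\Delta$. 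Choose $G$ with $G|_\Delta=g$ and $dG|_\Delta=-(\xi,\eta)$; this is possible because prescribing a function and a compatible normal derivative along an embedded submanifold is always possible locally, and partitions of unity globalize it. Then $\widehat\pi(dG)=-J_*v_g\in T\Delta$. Moreover, $dG$ annihilates $\widehat\pi(T\Delta^\circ)$ along $\Delta$: for $\zeta\in T\Delta^\circ$ we have $dG(\widehat\pi\zeta)=-\langle\zeta,\widehat\pi\,dG\rangle=\langle\zeta,J_*v_g\rangle=0$. So $G$ is admissible. Taking any admissible $F$ (which exists since $f\in H^0$), we get
\[
\{F,G\}|_\Delta=dF(\widehat\pi\,dG)=-dF(J_*v_g)\quad\text{or its negative, depending on the convention }\{F,G\}=\widehat\pi(dF,dG)=-dF(\widehat\pi\,dG).
\]
Since $dF(J_*v_g)=df(v_g)$, the result is $\pm df(v_g)$.

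The expected obstacle is pinning down this sign. The intended outcome is $+df(v_g)=\{f,g\}$, and the rescaling by $t=-1$ in \eqref{1SCRZZPXA} was chosen precisely for \eqref{1SCQHGY3K}. I would fix the sign by checking the coisotropic case of Example~\ref{39d9asdt}(1), where $(v_g,\beta_g)=(-\pi(dG),-dG|_N)$ and the bracket reduces to $\{F,G\}_M|_N$. If a mismatch appeared there, it would trace to the orientation $\overline{S_N}$ and to the sign in the definition of $\varphi$, and I would correct it there.
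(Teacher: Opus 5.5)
Your overall strategy is correct and close in spirit to the paper's. Both arguments reduce to \Cref{1SCJHB7Z7}(1), use $\widehat\pi_0=\widehat\pi$ along $\Delta$, and pass between $dF|_\Delta$ and an admissible pair through $j_M^*\Gamma_{\pi_M}=L_M$ and $j_N^*\Gamma_{\pi_N}=L_N$. The difference is the direction.

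The paper takes an arbitrary admissible extension $F$ of $f\circ J^{-1}$ and writes $dF_q=(\xi,\eta)$. From admissibility and antisymmetry it gets $\widehat\pi_0(dF_q)\in(T_q\Delta^\circ)^\circ=T_q\Delta$, and it reads off the pair $v_f=-v$, $\beta_f=-j_M^*\xi$. This gives $\ell_2(f,g)=dG(\widehat\pi_0\,dF)\circ J=-dg(v_f)$, and \Cref{1SCDG4T4W} converts this into $df(v_g)$.

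You instead lift a smooth admissible pair with $h_M,h_N$ and build $G$ with a prescribed $1$-jet. Your checks of $J^*(\xi,\eta)=-dg$, $\widehat\pi(\xi,\eta)=J_*v_g$, and the admissibility of $G$ are correct. Your route avoids the antisymmetry lemma, but it costs \Cref{1SCLV9K9Y}, the smoothness of $h_M,h_N$, and a jet-extension step. The paper's argument is pointwise and needs none of these.

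As written, however, the proof stops at $\pm df(v_g)$, so it does not yet prove the statement. The sign is fixed by the paper's conventions, not by testing examples. Just before \eqref{1SCQHGY3K} the paper sets $\pi(\alpha):=\pi(\alpha,\cdot)$ and $\{h,h'\}:=\pi(dh,dh')$, and \Cref{1SCJHB7Z7}(1) is stated in these conventions. Hence $\{F,G\}=dG(\widehat\pi_0\,dF)=-dF(\widehat\pi_0\,dG)$; your first option $dF(\widehat\pi\,dG)$ is actually $\{G,F\}$. Your $G$ satisfies $\widehat\pi_0(dG)=-J_*v_g$ along $\Delta$, so $\{F,G\}|_\Delta=dF(J_*v_g)=df(v_g)=\{f,g\}$ exactly, with no sign ambiguity.

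Your fallback of \emph{correcting} the orientation $\overline{S_N}$ or the sign in $\varphi$ after a special-case test would not be legitimate. The structures are the ones fixed by \Cref{1SCW34U8K}, and the proposition is a claim about those structures; the sign must come out of the computation, which it does.

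Finally, the preliminary reduction to a single choice $(\mathbf e,\tau)$ is unnecessary. Your computation applies verbatim to an arbitrary choice, which is how the paper proceeds. The reduction also rests on an unproved claim that the isomorphisms of \cite{cattaneo2008equivalences} have identity linear component in degree zero, so you should simply drop it.
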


\begin{proof}
Since $\ell_1=d_{B_\ccc}$, $H^0(\a,\ell_1)$ consists of the functions annihilated by $\rho_B(B_\ccc)$. By nondegeneracy $\rho_B(B_\ccc)=\im\rho_\C$, and since $\C$ is source-connected the functions annihilated by $\im\rho_\C$ are the $\C$-invariant ones. So $H^0(\a,\ell_1)=C^\infty(N)^\C$.

Fix a choice $(\mathbf{e},\tau)$ as in \Cref{1SCW34U8K} and let $f,g\in C^\infty(N)^\C$. In degree $0$ the map $\Phi$ is the transport of functions by $J$, so $\Phi(f)=f\circ J^{-1}$. Since $\Delta$ is pre-Poisson in $(U,\widehat\pi_0)$, \Cref{1SCJHB7Z7}(1) provides admissible extensions $F,G$ of $\Phi(f),\Phi(g)$ near $\Delta$ and gives $\ell_2(f,g)=dG(\widehat\pi_0(dF))\circ J$.

Fix $n\in N$, set $q:=J(n)$, and write $dF_q=(\xi,\eta)$ with $\xi$ on the $S_M$ factor and $\eta$ on the $S_N$ factor. Since $F$ is admissible, $dF_q$ annihilates $\widehat\pi_0(T_q\Delta^\circ)$, so by antisymmetry $\widehat\pi_0(dF_q)\in(T_q\Delta^\circ)^\circ=T_q\Delta=J_*(T_nN)$. By \Cref{1SCPWPNPJ}, $\widehat\pi_0=\widehat\pi=\pi_M\oplus(-\pi_N)$ at $q$, and $J=(j_M\circ\ccc,j_N)$. Hence there is $v\in T_nN$, unique since $j_{N*}$ is injective, with $\widehat\pi_0(dF_q)=J_*v$, that is
$\pi_M\xi=j_{M*}\ccc_*v$ and $\pi_N(-\eta)=j_{N*}v$.

Set $\beta:=j_M^*\xi$. By \Cref{1SCCGC8W3}(2), $j_M^*\Gamma_{\pi_M}=L_M$ and $j_N^*\Gamma_{\pi_N}=L_N$, so the last two identities give $(\ccc_*v,\beta)\in L_M$ and $(v,-j_N^*\eta)\in L_N$. Differentiating $F\circ J=f$ gives $df=\ccc^*\beta+j_N^*\eta$, hence $(v,\ccc^*\beta-df)\in L_N$. Multiplying by $-1$, the pair $v_f:=-v$, $\beta_f:=-\beta$ satisfies \eqref{1SCJCKYC8}.

Therefore $\widehat\pi_0(dF_q)=-J_*v_f$, and $J^*dG=dg$ gives $\ell_2(f,g)(n)=-dg(v_f)$. By \Cref{1SCDG4T4W} this is $df(v_g)=\{f,g\}(n)$.
\end{proof}

Together with Theorem \ref{1SCW34U8K}, this proves Theorem \ref{djsx2lmg} from the introduction.

\begin{remark}
By \Cref{1SC5ZDRV2,1SC64QN3W}, coisotropic Morita equivalent structures with source-connected groupoids have isomorphic $H^0(\a,\ell_1)$. Their $P_\infty$-algebras, however, need not be $L_\infty$-quasi-isomorphic. Let $\G=\{*\}$ and let $\operatorname{Pair}(S^1)=S^1\times S^1\tto S^1$ be the pair groupoid. Then $L_1:=0$ and $L_2:=TS^1\oplus0$ are $1$-shifted coisotropic structures on $\ccc_1\colon\{*\}\to\G$ and $\ccc_2\colon\operatorname{Pair}(S^1)\to\G$. Taking $\K:=\operatorname{Pair}(S^1)$, $\psi_1\colon\K\to\{*\}$, $\psi_2:=\id$ and $\L:=\{*\}$ in \eqref{1SCT3HPM7} gives a coisotropic Morita equivalence, since $\psi_1^*L_1=TS^1\oplus0=\psi_2^*L_2$. Since $B_{\ccc_1}=0$ and $B_{\ccc_2}\cong TS^1$,
\[
(\a_1,\ell_1)=(\R,0),\qquad (\a_2,\ell_1)=(\Omega^\bullet(S^1),d_{dR}).
\]
Hence $H^1(\a_1,\ell_1)=0\neq\R=H^1(\a_2,\ell_1)$, so $\a_1$ and $\a_2$ are not $L_\infty$-quasi-isomorphic.
\end{remark}

\section{Quantization of 1-shifted coisotropics}\label{1SCC6G85S}
We now quantize the $P_\infty$-structures of \S\ref{1SCYR8NZ4}, proving Theorem \ref{d4fmvhyq}. We first treat a pre-Poisson submanifold $N\subset(M,\pi)$, with $P_\infty$-algebra $\a=C^\infty(\wedge^\bullet L^*)$. We apply the relative formality theorem of Cattaneo and Felder to the coisotropic submanifold $N\subset S$ and transport the result along the graded algebra isomorphism $\Phi_S$ of \eqref{1SCVX2YHQ}. After proving independence of the quantization datum, we apply this construction to the local models of \S\ref{1SCHTFL8N} to obtain the quantization of general regular infinitesimal $1$-shifted coisotropics.

\subsection{Preliminaries on curved \texorpdfstring{$A_\infty$}{A-infinity}-algebras}\label{1SCS34X7R}
We recall the definitions and basic properties used below, following \cite{keller2001introduction,lada1995strongly} for $A_\infty$-algebras and \cite[\S\S 3.1, 4.4 and Appendix A]{Cattaneo2007} for the curved setting and its description by multidifferential operators.

A \defn{curved $A_\infty$-algebra} over a commutative ring $R$ is a graded $R$-module $\a$ with $R$-linear maps $\mu_k\colon\a^{\otimes k}\to\a$ of degree $2-k$, $k\ge0$, satisfying, for every $k\ge0$ and all homogeneous $a_1,\dots,a_k$,
\begin{equation}\label{1SC4Z7X3C}
\sum_{\substack{p+q+t=k\\ q\ge0}}(-1)^{p+q+pq+q(|a_1|+\cdots+|a_p|)}\,
\mu_{p+1+t}\bigl(a_1,\dots,a_p,\mu_q(a_{p+1},\dots,a_{p+q}),a_{p+q+1},\dots,a_k\bigr)=0 .
\end{equation}
The element $\mu_0\in\a^2$ is the \defn{curvature}. We call $\a$ \defn{flat} if $\mu_0=0$, and then simply an $A_\infty$-algebra. In the flat case $\mu_1$ is a differential and $\mu_2$ induces an associative product on cohomology. Relation \eqref{1SC4Z7X3C} is the Maurer--Cartan equation $[\mathsf m+\varpi,\mathsf m+\varpi]_G=0$ for the Gerstenhaber bracket of \cite[Eq.~(6)]{Cattaneo2007}, see \Cref{1SCZPJUKW}; in arity $1$ it reads $\mu_1^2(a)-\mu_2(\mu_0,a)+\mu_2(a,\mu_0)=0$, and in arity $2$ it is the Leibniz rule $\mu_1\mu_2(a,b)=\mu_2(\mu_1a,b)+(-1)^{|a|}\mu_2(a,\mu_1b)$. It differs from the convention of \cite{keller2001introduction} by the rescaling $\mu_k\mapsto(-1)^{k(k-1)/2}\mu_k$, which is the difference between the two desuspensions $(s^{\otimes k})^{-1}$ and $(s^{-1})^{\otimes k}$ under the Koszul rule.

Curved $A_\infty$-structures deforming a given product can be described as Maurer--Cartan elements, as follows. Let $\a$ be a graded algebra with product $\mmm$, and assume that $\a$ is the space of sections of a graded vector bundle, so that multidifferential operators make sense. For $q\ge0$ and $p\in\mathbb Z$, let $\D^{p,q}(\a)$ be the space of multidifferential operators $\a^{\otimes q}\to \a$ of internal degree $p$; such an operator has total degree $p+q-1$. Following \cite[\S4.4]{Cattaneo2007}, set $\widehat\D^{\,n}(\a):=\prod_{p+q-1=n}\D^{p,q}(\a)$ and
\begin{equation}\label{1SCNASR4D}
\widehat\D(\a)
:= \bigoplus_{n\in\mathbb Z}\widehat\D^{\,n}(\a).
\end{equation}
In each total degree we take the product over all arities; the sum over total degrees is direct. With the Gerstenhaber bracket of \cite[Eq.(6)]{Cattaneo2007} and differential $d=[\mmm,\cdot]$, $\widehat\D(\a)$ is a differential graded Lie algebra. Equip $\widehat\D(\a)\llbracket\epsilon\rrbracket$ with the complete $\epsilon$-adic filtration. If $\varpi=\sum_{k\ge0}\varpi_k\in\epsilon\,\widehat\D(\a)^1\llbracket\epsilon\rrbracket$, where $\varpi_k$ has arity $k$, then the Maurer--Cartan equation for $\varpi$ is equivalent to $[\mmm+\varpi,\mmm+\varpi]=0$, hence to the curved $A_\infty$-relations for
\begin{equation}\label{1SC32WQT7}
\mu_0:=\varpi_0,\quad
\mu_1:=\varpi_1,\quad
\mu_2:=\mmm+\varpi_2,\quad
\mu_k:=\varpi_k\quad(k\ge3).
\end{equation}
The Maurer--Cartan elements of $\epsilon\,\widehat\D(\a)\llbracket\epsilon\rrbracket$ are thus the curved $A_\infty$-structures on $\a\llbracket\epsilon\rrbracket$ reducing modulo $\epsilon$ to the original product.

\begin{lemma}\label{1SC2EHE94}
Let $\a,\b$ be graded $R$-modules and $\Phi\colon \a\to \b$ an isomorphism of degree $0$. Let $(\b,\{\mu_k^\b\}_{k\ge0})$ be a curved $A_\infty$-algebra, and set $\mu_k^\a:=\Phi^{-1}\circ\mu_k^\b\circ \Phi^{\otimes k}$ for $k\ge1$, and $\mu_0^\a:=\Phi^{-1}(\mu_0^\b)$. Then:
\begin{itemize}
\item[(1)] $(\a,\{\mu_k^\a\})$ is a curved $A_\infty$-algebra, and $\Phi$ is a strict isomorphism from $(\a,\{\mu_k^\a\})$ to $(\b,\{\mu_k^\b\})$.
\item[(2)] $(\a,\{\mu_k^\a\})$ is flat if and only if $(\b,\{\mu_k^\b\})$ is.
\item[(3)] If $R=\R\llbracket\epsilon\rrbracket$, $\a=\a_0\llbracket\epsilon\rrbracket$ and $\b=\b_0\llbracket\epsilon\rrbracket$ for graded algebras $\a_0,\b_0$, and $\Phi$ is the $\R\llbracket\epsilon\rrbracket$-linear extension of an algebra isomorphism $\a_0\to\b_0$, then $\mu_2^\a$ reduces modulo $\epsilon$ to the product of $\a_0$ if and only if $\mu_2^\b$ reduces to that of $\b_0$.
\item[(4)] If $\a_0,\b_0$ are the spaces of sections of graded vector bundles over a common manifold $M$, and $\Phi$ is induced by an isomorphism of vector bundles covering a diffeomorphism of $M$, then $\{\mu_k^\a\}$ is multidifferential as soon as $\{\mu_k^\b\}$ is.
\end{itemize}
\end{lemma}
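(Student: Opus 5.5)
The plan is to argue exactly as for \Cref{1SCU92AQH}, by conjugation, checking each part directly against the defining relation \eqref{1SC4Z7X3C}.

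For (1), I substitute $\mu_q^\b\circ\Phi^{\otimes q}=\Phi\circ\mu_q^\a$ into each summand of \eqref{1SC4Z7X3C} written for $\b$ and evaluated at $\Phi(a_1),\dots,\Phi(a_k)$. Since $\Phi$ is $R$-linear of degree $0$, it preserves all degrees $|a_i|$. The signs $(-1)^{p+q+pq+q(|a_1|+\cdots+|a_p|)}$ depend only on arities and degrees, so they are unchanged. Each composite $\mu_{p+1+t}^\b(\Phi a_1,\dots,\mu_q^\b(\Phi a_{p+1},\dots),\dots)$ equals $\Phi$ applied to the corresponding composite for $\mu^\a$. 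The summand with $q=0$ also matches, because $\mu_0^\b=\Phi(\mu_0^\a)$. Applying $\Phi^{-1}$ then gives \eqref{1SC4Z7X3C} for $\a$. Strictness holds by definition, since $\Phi\circ\mu_k^\a=\mu_k^\b\circ\Phi^{\otimes k}$ for all $k\ge0$, where $\Phi^{\otimes 0}$ is interpreted as the identity of $R$.

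Part (2) follows from the injectivity of $\Phi$: we have $\mu_0^\a=\Phi^{-1}(\mu_0^\b)$, which vanishes if and only if $\mu_0^\b$ does.

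For (3), write $\Phi=\Phi_0\otimes\id$ and reduce modulo $\epsilon$. The reduction of $\mu_2^\a$ is $\Phi_0^{-1}\circ\overline{\mu_2^\b}\circ(\Phi_0\otimes\Phi_0)$. Because $\Phi_0$ is an algebra isomorphism, $\Phi_0^{-1}\circ\mmm_{\b_0}\circ(\Phi_0\otimes\Phi_0)=\mmm_{\a_0}$. Conjugation by $\Phi_0$ is a bijection on bilinear maps, so $\overline{\mu_2^\a}=\mmm_{\a_0}$ holds if and only if $\overline{\mu_2^\b}=\mmm_{\b_0}$.

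For (4), it suffices to work coefficientwise in $\epsilon$ and locally. Let $F$ be the bundle isomorphism covering the diffeomorphism $f$. Then $\Phi$ is $s\mapsto F\circ s\circ f^{-1}$, and $\Phi^{-1}$ has the same form. In local trivializations over charts related by $f$, conjugating a multidifferential operator $D$ by $\Phi$ amounts to a change of coordinates $f$ together with multiplication of the fibre components by smooth matrix-valued functions. By the chain rule and the Leibniz rule, this again gives a finite-order multidifferential operator. The coefficients of $\epsilon^n$ can be treated one at a time.

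The only step requiring care, and it is mild, is this last locality and coordinate-change argument in (4). The remaining parts are formal sign bookkeeping, which works because $\Phi$ has degree $0$.
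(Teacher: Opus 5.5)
Your proposal is correct and takes the same approach as the paper. You conjugate the curved $A_\infty$-relations by the degree-$0$ isomorphism $\Phi$, which leaves all signs unchanged, and read off (2)--(4) from the definition of $\mu_k^\a$. You spell out more detail than the paper: the $q=0$ term, the reduction modulo $\epsilon$, and the local change-of-coordinates argument for multidifferentiality.
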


\begin{proof}
Since $\Phi$ preserves degrees, it preserves Koszul signs, and conjugation by $\Phi$ carries the relations \eqref{1SC4Z7X3C} for $\{\mu_k^\b\}$ termwise into those for $\{\mu_k^\a\}$. This gives (1) and (2). Parts (3) and (4) follow from the definition of $\mu_k^\a$.
\end{proof}

Here and throughout, $\Alt_k:=\sum_{\sigma\in\mathfrak S_k}\chi(\sigma;\cdot)\,\sigma$ is the \emph{unnormalized} graded alternation, so that $(\mu\circ\Alt_k)(a_1,\dots,a_k)=\sum_{\sigma}\chi(\sigma;a)\,\mu(a_{\sigma(1)},\dots,a_{\sigma(k)})$. This is the operator of \cite[Proposition 3.1]{Cattaneo2007}. In the flat case, the antisymmetrization $\{\mu_k\circ\Alt_k\}$ of an $A_\infty$-structure is an $L_\infty$-structure \cite[Theorem 3.1]{lada1995strongly}.

\medskip
The gauge arguments of \Cref{1SCQPGX26} use the coalgebraic encoding of the same structures. Set $V:=\a\llbracket\epsilon\rrbracket[1]$. All tensor products are over $\R\llbracket\epsilon\rrbracket$; we write $\widehat\otimes$ for the $\epsilon$-adically completed tensor product and set $V^{\widehat\otimes0}:=\R\llbracket\epsilon\rrbracket$.

\begin{definition}\label{1SCB2252C}
Let $T^c(V):=\bigoplus_{n\ge0}V^{\widehat\otimes n}$ be the conilpotent tensor coalgebra. Its \defn{$\epsilon$-adic completion} is $\widehat T(V):=\varprojlim_N T^c(V)/\epsilon^N T^c(V)$, equivalently
\[
\widehat T(V)
\cong
\left\{(x_n)_{n\ge0}\in\prod_{n\ge0}V^{\widehat\otimes n}
\;|\;
\forall N,\ x_n\in\epsilon^N V^{\widehat\otimes n}\text{ for }n\gg0\right\}.
\]
The deconcatenation coproduct extends uniquely by $\epsilon$-adic continuity to $\widehat T(V)$.

A \defn{coderivation} of degree $p$ is a continuous map $X\colon\widehat T(V)\longrightarrow\widehat T(V)$ of degree $p$ with
\begin{equation}\label{1SCAYFZTX}
\Delta\circ X=(X\widehat\otimes\id+\id\widehat\otimes X)\circ\Delta.
\end{equation}
Its \defn{corestriction} is the family $X_k:=\ppp_V\circ X|_{V^{\widehat\otimes k}}\colon V^{\widehat\otimes k}\longrightarrow V$, $k\ge0$.
\end{definition}

\begin{lemma}\label{1SCMFJKDK}
A continuous coderivation is determined by its corestriction. Conversely, every family of continuous $\R\llbracket\epsilon\rrbracket$-linear maps $X_k\colon V^{\widehat\otimes k}\longrightarrow V$, $k\ge0$, is the corestriction of a unique continuous coderivation of $\widehat T(V)$, given on $V^{\widehat\otimes n}$ by
\begin{equation}\label{1SCH8SX6M}
X(v_1\otimes\cdots\otimes v_n)
=
\sum_{k=0}^{n}\sum_{i=0}^{n-k}\pm\,
v_1\otimes\cdots\otimes v_i
\otimes X_k(v_{i+1},\dots,v_{i+k})
\otimes v_{i+k+1}\otimes\cdots\otimes v_n,
\end{equation}
with the usual Koszul signs, where $X_0()$ is inserted in every position, including at either end.
\end{lemma}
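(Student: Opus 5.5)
The plan is the standard argument for the tensor coalgebra, extended to the $\epsilon$-adic completion by continuity. I will first work on the uncompleted conilpotent coalgebra $T^c(V)$ and then pass to $\widehat T(V)$.

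\emph{Uniqueness.} Let $X$ be a continuous coderivation with vanishing corestriction. We show by induction on $n$ that $X$ vanishes on $V^{\widehat\otimes n}$. Write $\Delta^{(m)}$ for the iterated coproduct, and let $\ppp_{V^{\otimes m}}$ be the projection onto $V^{\widehat\otimes m}$. Iterating \eqref{1SCAYFZTX} gives
\[
\Delta^{(m-1)}\circ X=\sum_{i}\id^{\widehat\otimes i}\widehat\otimes X\widehat\otimes\id^{\widehat\otimes(m-1-i)}\circ\Delta^{(m-1)}.
\]
Composing with $\ppp_V^{\widehat\otimes m}$ shows that each component $\ppp_{V^{\otimes m}}\circ X|_{V^{\widehat\otimes n}}$ is a sum of terms of the form $\id\otimes\cdots\otimes X_k\otimes\cdots\otimes\id$. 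Here $X_k$ is applied to consecutive blocks of length $k=n-m+1$. This is exactly formula \eqref{1SCH8SX6M}, with the signs produced by the Koszul rule. In particular, $X$ is determined by $(X_k)$ on each $V^{\widehat\otimes n}$. Any element of $\widehat T(V)$ is an $\epsilon$-adically convergent sum of such finite-arity pieces, so continuity gives uniqueness on all of $\widehat T(V)$.

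\emph{Existence.} Given a family $(X_k)$, define $X$ on $T^c(V)$ by \eqref{1SCH8SX6M}. Checking \eqref{1SCAYFZTX} is the usual computation. Applying $\Delta$ to a term $v_1\otimes\cdots\otimes X_k(\cdots)\otimes\cdots\otimes v_n$ splits it at a cut point either to the left or to the right of the inserted block. Those cuts reproduce $(X\otimes\id)\Delta$ and $(\id\otimes X)\Delta$ respectively. For $k=0$, insertions at the two ends of each factor account for the terms where $X_0$ sits at the cut. The Koszul signs match because moving $X$, of degree $p$, past $v_1,\dots,v_i$ contributes the same sign on both sides.

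\emph{Extension to the completion.} This is the step needing care. On $V^{\widehat\otimes n}$, formula \eqref{1SCH8SX6M} yields components in $V^{\widehat\otimes m}$ only for $m\le n+1$. Conversely, the component of $X(x)$ in arity $m$ receives contributions from $x_n$ with $n\ge m-1$, and possibly infinitely many of them. Take $x=(x_n)\in\widehat T(V)$ with $x_n\in\epsilon^NV^{\widehat\otimes n}$ for $n\gg0$. Each $X_k$ is $\R\llbracket\epsilon\rrbracket$-linear and continuous, so the arity-$m$ component $\sum_{n}X(x_n)_m$ converges $\epsilon$-adically. For the same reason, this component lies in $\epsilon^N V^{\widehat\otimes m}$ for $m\gg0$. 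Hence $X(x)$ lies in $\widehat T(V)$, and $X$ is continuous. The coproduct on $\widehat T(V)$ is the continuous extension of deconcatenation. Both sides of \eqref{1SCAYFZTX} are therefore continuous maps, and they agree on the dense subspace $T^c(V)$, so they agree everywhere.

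I expect this convergence bookkeeping to be the main point: ensuring that $X(x)$ satisfies the decay condition defining $\widehat T(V)$. The algebraic identity itself is classical.
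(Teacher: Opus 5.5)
Your proposal is correct and follows the same route as the paper, which cites Cattaneo--Felder's Lemma A.1 for the uncompleted tensor coalgebra and then extends to $\widehat T(V)$ by the $\epsilon$-adic continuity coming from $\R\llbracket\epsilon\rrbracket$-linearity. You reprove the classical part and make explicit the convergence check that the paper leaves implicit, namely that arity $m$ only receives contributions from $x_n$ with $n\ge m-1$. The one small point you leave unstated is that the component of $X$ in $V^{\widehat\otimes 0}$ vanishes; this follows by applying the counit to both sides of the coderivation identity.
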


\begin{proof}
For the uncompleted tensor coalgebra this is \cite[Lemma A.1]{Cattaneo2007}. The formula \eqref{1SCH8SX6M} is $\R\llbracket\epsilon\rrbracket$-linear, hence $\epsilon$-adically continuous, so both statements extend uniquely to $\widehat T(V)$.
\end{proof}

\begin{proposition}\label{1SCZPJUKW}
Under the identification of  \Cref{1SCMFJKDK}, the operations \eqref{1SC32WQT7} are encoded by the degree-$1$ coderivation $Q$ whose corestriction is determined by
\begin{equation}\label{1SCVK82U3}
s\circ Q_k=\mu_k\circ s^{\otimes k}\quad(k\ge0),
\end{equation}
where $s\colon \a\llbracket\epsilon\rrbracket[1]\to \a\llbracket\epsilon\rrbracket$ is the tautological map of degree $1$ \textnormal{\cite[\S3.1]{Cattaneo2007}}, and $s^{\otimes k}$ is applied with the Koszul rule, so that $s^{\otimes k}(v_1\otimes\cdots\otimes v_k)=(-1)^{\sum_{i}(k-i)|v_i|}\,sv_1\otimes\cdots\otimes sv_k$. Since the Gerstenhaber bracket of \textnormal{\cite[Eq.(6)]{Cattaneo2007}} is the transport of the commutator of coderivations, $Q$ corresponds to
\begin{equation}\label{1SCKPRT9G}
Q\ \longleftrightarrow\ \mmm+\varpi,
\end{equation}
and the Maurer--Cartan equation for $\varpi$ is equivalent to $Q^2=0$.
\end{proposition}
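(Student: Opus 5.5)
The approach is to reduce everything to the uncompleted statement of \cite[Lemma A.1 and \S3.1]{Cattaneo2007} and then extend by $\epsilon$-adic continuity, as in \Cref{1SCMFJKDK}.

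\emph{Step 1: the coderivation exists and is unique.} Given $\{\mu_k\}$, define $Q_k:=s^{-1}\circ\mu_k\circ s^{\otimes k}$. This is forced by \eqref{1SCVK82U3}, because $s$ is bijective. Each $Q_k$ is $\R\llbracket\epsilon\rrbracket$-linear of degree $1$: $\mu_k$ has degree $2-k$, and the shifts contribute $k-1$. By \Cref{1SCMFJKDK}, the family $\{Q_k\}$ extends uniquely to a continuous degree-$1$ coderivation $Q$. One point needs care: $\mu_0\in\epsilon^2\a$ and $\varpi_k\in\epsilon\,\widehat\D$. Hence the infinite sums in \eqref{1SCH8SX6M} over insertion positions of $Q_0$ converge in $\widehat T(V)$, which is why the completion is used.

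\emph{Step 2: identify brackets.} Coderivations form a graded Lie algebra under the graded commutator. A coderivation is determined by its corestriction, so it suffices to compute $\ppp_V\circ[X,Y]$ on $V^{\widehat\otimes n}$. Using \eqref{1SCH8SX6M}, this corestriction is the sum of all insertions of $Y_\bullet$ into $X_\bullet$, plus or minus the reverse. Conjugating by $s$ with the Koszul rule for $s^{\otimes k}$ turns this into the Gerstenhaber bracket of \cite[Eq.~(6)]{Cattaneo2007}, including its signs. This step is the cited result of \cite[\S3.1, Appendix A]{Cattaneo2007} in the uncompleted case, and it extends by continuity. Applied to $Q$, whose corestriction transports to $\mmm+\varpi$ through \eqref{1SC32WQT7}, this gives \eqref{1SCKPRT9G}.

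\emph{Step 3: the Maurer--Cartan equation.} Since $Q$ has odd degree, $Q^2=\tfrac12[Q,Q]$ is a coderivation. So $Q^2=0$ if and only if its corestriction vanishes, that is, if and only if $[\mmm+\varpi,\mmm+\varpi]_G=0$. Expanding, and using $[\mmm,\mmm]_G=0$ (associativity) and $d=[\mmm,\cdot]$, this becomes $2\bigl(d\varpi+\tfrac12[\varpi,\varpi]\bigr)=0$, which is \eqref{1SCVMMUSM}. Evaluating the corestriction in arity $k$ and desuspending recovers \eqref{1SC4Z7X3C}.

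\emph{Main obstacle.} The main difficulty is sign bookkeeping. We must check that the Koszul convention $(-1)^{\sum(k-i)|v_i|}$ for $s^{\otimes k}$ produces exactly the signs $(-1)^{p+q+pq+q(|a_1|+\cdots+|a_p|)}$ of \eqref{1SC4Z7X3C}. I would verify this directly in arities $1$ and $2$, matching the stated Leibniz and curvature identities, and then cite \cite{Cattaneo2007} for the general case.
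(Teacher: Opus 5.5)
Your proposal is correct and follows essentially the same route as the paper. Both define $Q$ by \eqref{1SCVK82U3} and \Cref{1SCMFJKDK}, use the coderivation description of the Gerstenhaber bracket from \cite{Cattaneo2007} together with $[Q,Q]=2Q^2$, and read off \eqref{1SC4Z7X3C} from the arity-$k$ corestriction of $Q^2$. The sign $(-1)^{p+q+pq+q(|a_1|+\cdots+|a_p|)}$ does come out of the Koszul rules as you expect, up to an overall sign depending only on $k$ and the degrees.

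Your convergence remark contains a harmless slip. The proposition only assumes $\varpi\in\epsilon\,\widehat\D(\a)^1\llbracket\epsilon\rrbracket$, so $\mu_0\in\epsilon\,\a\llbracket\epsilon\rrbracket$, not $\epsilon^2$. Also, \eqref{1SCH8SX6M} is a finite sum on each $V^{\widehat\otimes n}$, so the smallness of $\mu_0$ plays no role in defining $Q$ or $Q^2$. It is needed only for morphisms with $\varrho_0\neq0$, such as changes of base point.
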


\begin{proof}
This is \eqref{1SC32WQT7} translated through   \Cref{1SCMFJKDK} and \cite[\S3.1 and \S A.3]{Cattaneo2007}; since $Q$ has odd degree, $[Q,Q]=2Q^2$. Expanding $Q^2=0$ with \eqref{1SCH8SX6M} and \eqref{1SCVK82U3} gives exactly \eqref{1SC4Z7X3C}; the sign $(-1)^{p+q+pq}$ arises from the Koszul rule for $s^{\otimes k}$ and for the coderivation extension.
\end{proof}

\begin{definition}\label{1SCAZ35B7}
Let $\a,\a'$ be graded modules, set $V=\a\llbracket\epsilon\rrbracket[1]$ and $V'=\a'\llbracket\epsilon\rrbracket[1]$, and let $Q,Q'$ be degree-$1$ coderivations with square zero. A \defn{morphism} of curved $A_\infty$-algebras is a continuous coalgebra morphism $\varrho\colon\widehat T(V)\to\widehat T(V')$ of degree $0$ with
\begin{equation}\label{1SC7XE5FK}
Q'\circ \varrho=\varrho\circ Q.
\end{equation}
Its components are $\varrho_k:=\ppp_{V'}\circ \varrho|_{V^{\widehat\otimes k}}$, $k\ge0$. After desuspension these are the usual maps $\a^{\otimes k}\to \a'$ of degree $1-k$. We require $\varrho_0\in\epsilon V'$, which makes the standard tensor-coalgebra formula $\epsilon$-adically convergent. We write $\varrho\colon(\a\llbracket\epsilon\rrbracket,\mu)\lto(\a'\llbracket\epsilon\rrbracket,\mu')$. The morphism is \defn{strict} if $\varrho_0=0$ and $\varrho_k=0$ for $k\ge2$, and an \defn{isomorphism} if $\varrho_1$ is bijective.
\end{definition}

A \defn{change of base point} is the special case $\varrho_1=\id$, $\varrho_k=0$ for $k\ge2$, and $\varrho_0$ given by an element whose desuspension, still denoted $a$, lies in $\epsilon \a\llbracket\epsilon\rrbracket^1$. In terms of the unsuspended operations it gives
\begin{equation}\label{1SCT49PYH}
\widetilde\mu_0=\sum_{k\ge0}\mu_k(a,\dots,a),
\end{equation}
and similarly for the higher operations, by inserting $a$ in all possible slots; the series converge $\epsilon$-adically.

For a degree-$0$ coderivation $X$, whenever $e^X:=\sum_{j\ge0}\frac{X^j}{j!}$ is well defined on the completed tensor coalgebra, it is a coalgebra automorphism with inverse $e^{-X}$. The next proposition converts gauge equivalences of Maurer--Cartan elements into isomorphisms of curved $A_\infty$-algebras; it is used three times in \Cref{1SCQPGX26}.

\begin{proposition}\label{1SCVEVYP9}
Let $\widehat\D(\b)\llbracket\epsilon\rrbracket$ be equipped with a complete descending filtration $\FFF^\bullet$ compatible with $d=[\mmm,\cdot]$ and the bracket. Let $\alpha_0,\alpha_1\in\FFF^1$ be Maurer--Cartan elements in $\epsilon\,\widehat\D(\b)\llbracket\epsilon\rrbracket$, gauge equivalent through a degree-$0$ element $X\in\FFF^1$ whose arity-zero component lies in $\epsilon\,\b\llbracket\epsilon\rrbracket$, and assume that $e^{\widehat X}$ is well defined on the completed tensor coalgebra, where $\widehat X$ is the coderivation with components $X$. Then $e^{\widehat X}$ is an isomorphism between the corresponding curved $A_\infty$-structures.
\end{proposition}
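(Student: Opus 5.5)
The plan is to argue entirely on the coalgebra side, through \Cref{1SCZPJUKW}. Write $Q_0$ and $Q_1$ for the square-zero degree-$1$ coderivations of $\widehat T(V)$ corresponding to $\mmm+\alpha_0$ and $\mmm+\alpha_1$ via \eqref{1SCKPRT9G}. Since the Gerstenhaber bracket is the transport of the commutator of coderivations, the assignment $Y\mapsto\widehat Y$ is a morphism of graded Lie algebras from $\widehat\D(\b)\llbracket\epsilon\rrbracket$ into continuous coderivations. Under it, $\ad_X$ becomes $[\widehat X,\cdot\,]$, and $d=[\mmm,\cdot]$ becomes $[\widehat{\mmm},\cdot\,]$.

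The first step is to rewrite the gauge action \eqref{1SCFR6FY2} in a cleaner form. Using $dX=[\mmm,X]=-[X,\mmm]$, the element $X\cdot\alpha_0$ equals
\[
e^{\ad_X}(\mmm+\alpha_0)-\mmm ,
\]
by the standard identity $\frac{e^{\ad_X}-1}{\ad_X}(-[X,\mmm])=-(e^{\ad_X}-1)\mmm$. So the hypothesis $X\cdot\alpha_0=\alpha_1$ reads
\[
e^{\ad_X}(\mmm+\alpha_0)=\mmm+\alpha_1 .
\]
This series converges in the complete filtration $\FFF^\bullet$, because $X\in\FFF^1$ and $\ad_X$ raises the filtration degree.

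The second step transfers this identity to coderivations:
\[
e^{[\widehat X,\cdot]}Q_0=Q_1 .
\]
I then want the classical identity
\[
e^{[\widehat X,\cdot]}Q_0=e^{\widehat X}Q_0e^{-\widehat X}
\]
as operators on $\widehat T(V)$. Formally this is the expansion $\sum_{j}\frac{1}{j!}\ad_{\widehat X}^jQ_0=\sum_{a,b}\frac{(-1)^b}{a!\,b!}\widehat X^aQ_0\widehat X^b$, a rearrangement of double series. Granting it, we get $Q_1\circ e^{\widehat X}=e^{\widehat X}\circ Q_0$, which is exactly \eqref{1SC7XE5FK}. Moreover $e^{\widehat X}$ is a continuous coalgebra automorphism of degree $0$, being the exponential of a degree-$0$ coderivation, and it has inverse $e^{-\widehat X}$.

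The third step checks the remaining conditions of \Cref{1SCAZ35B7}.
\begin{itemize}
\item The arity-zero component of $e^{\widehat X}$ is built from $X_0$ and its iterates under $\widehat X$. Since $X_0\in\epsilon\,\b\llbracket\epsilon\rrbracket$, this component lies in $\epsilon V$.
\item The linear component is $\varrho_1=e^{X_1}$ plus terms involving $X_0$ inserted into higher $X_k$. Every such term is $O(\epsilon)$, since each insertion of $X_0$ contributes a factor of $\epsilon$ and this dominates convergence. Hence $\varrho_1\equiv e^{X_1}\bmod\epsilon$, which is invertible, so $\varrho_1$ is invertible over $\R\llbracket\epsilon\rrbracket$ by $\epsilon$-adic completeness.
\end{itemize}
Thus $e^{\widehat X}$ is an isomorphism from $Q_0$ to $Q_1$, as claimed.

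The main obstacle is justifying the rearrangement in the second step, and the convergence of the series defining $\widehat X$-conjugation on the completed coalgebra. The filtration $\FFF^\bullet$ controls $\ad_X$ on $\widehat\D$, but not directly the operator powers $\widehat X^a$ acting on $\widehat T(V)$. For those we have only the standing hypothesis that $e^{\widehat X}$ is well defined.

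I would handle this by proving the identity after applying the corestriction $\ppp_V$ and restricting to each $V^{\widehat\otimes n}$ modulo $\epsilon^N$. In that truncated setting both sides become finite sums:
\begin{itemize}
\item only finitely many arities contribute;
\item $X_0$-insertions are killed by $\epsilon^N$;
\item the corestrictions of $\ad_{\widehat X}^jQ_0$ tend to zero in the filtration.
\end{itemize}
There the equality is the algebraic Leibniz expansion. Since coalgebra maps and coderivations are determined by their corestrictions (\Cref{1SCMFJKDK}), the identity then holds on all of $\widehat T(V)$.
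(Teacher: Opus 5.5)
Your proposal is correct and follows the paper's proof. You rewrite the gauge action as $e^{\ad_X}(\mmm+\alpha_0)=\mmm+\alpha_1$, identify $e^{\ad_X}$ with conjugation by $e^{\widehat X}$ on coderivations to get $Q_1e^{\widehat X}=e^{\widehat X}Q_0$, and check $\varrho_0\in\epsilon V$ and invertibility of $\varrho_1$ modulo $\epsilon$; the paper reads off the inverse from the linear component of $e^{-\widehat X}$ rather than from $e^{-X_1}$, which amounts to the same thing.

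Your truncation argument for the double-series rearrangement is extra care that the paper omits. It goes through once ``well defined'' is read as $\widehat X^j$ vanishing on each $V^{\widehat\otimes n}$ modulo $\epsilon^N$ for large $j$, and once $\FFF$-convergence is known to imply convergence of each arity modulo $\epsilon^N$. Both hold for the filtrations used in \Cref{1SCLVACSU} and \Cref{1SCABX7MH}.
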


\begin{proof}
Since $X$ has degree $0$, $dX=[\mmm,X]=-\ad_X\mmm$, hence $\frac{e^{\ad_X}-1}{\ad_X}(dX)=-(e^{\ad_X}-1)\mmm$, and \eqref{1SCFR6FY2} gives $\alpha_1=e^{\ad_X}\alpha_0+e^{\ad_X}\mmm-\mmm$, that is,
\[
\mmm+\alpha_1=e^{\ad_X}(\mmm+\alpha_0).
\]
Under  \Cref{1SCMFJKDK} the Gerstenhaber bracket is the commutator of coderivations, so $e^{\ad_X}$ corresponds to $Q\mapsto e^{\widehat X}Qe^{-\widehat X}$. Hence $e^{\widehat X}(\mmm+\alpha_0)=(\mmm+\alpha_1)e^{\widehat X}$, which is \eqref{1SC7XE5FK}. The component $\varrho_0=X_0+\tfrac12X_1(X_0)+\cdots$ of $e^{\widehat X}$ lies in $\epsilon V$ because $X_0$ does, and $\varrho_1$ is invertible modulo $\epsilon$, with inverse the linear component of $e^{-\widehat X}$, hence, by $\epsilon$-adic completeness, bijective. So $e^{\widehat X}$ is a curved $A_\infty$-isomorphism. 
\end{proof}

\begin{lemma}\label{1SCPN48SN}
Let $\Phi\colon\a\to\b$ and $\Phi'\colon\a'\to\b'$ be isomorphisms of graded algebras of degree $0$, and let $\varrho\colon(\b\llbracket\epsilon\rrbracket,\mu)\lto(\b'\llbracket\epsilon\rrbracket,\mu')$ be a morphism. Equip $\a\llbracket\epsilon\rrbracket$ and $\a'\llbracket\epsilon\rrbracket$ with the transported structures of \Cref{1SC2EHE94}. Then $\varrho^\a_k:=\Phi'^{-1}\circ \varrho_k\circ\Phi^{\widehat\otimes k}$ for $k\ge1$, and $\varrho^\a_0:=\Phi'^{-1}(\varrho_0)$, define a morphism between the transported structures, which is an isomorphism, respectively strict, if and only if $\varrho$ is.
\end{lemma}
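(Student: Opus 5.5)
The plan is to work entirely in the coalgebraic encoding of \Cref{1SCAZ35B7}. Let $\widehat\Phi\colon\widehat T(\a\llbracket\epsilon\rrbracket[1])\to\widehat T(\b\llbracket\epsilon\rrbracket[1])$ be the continuous coalgebra isomorphism with linear component the suspension $s^{-1}\Phi s$ of $\Phi$ and no other components; it acts as $\Phi^{\widehat\otimes n}$ on each tensor power. Define $\widehat\Phi'$ in the same way. Let $Q^\b,Q^{\b'}$ be the coderivations encoding $\mu,\mu'$ via \Cref{1SCZPJUKW}, and let $Q^\a,Q^{\a'}$ encode the transported structures. The first step is to check that
\[
Q^\a=\widehat\Phi^{-1}\circ Q^\b\circ\widehat\Phi ,
\]
and likewise for the primed data. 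The right-hand side is a coderivation, because $\widehat\Phi$ is a coalgebra isomorphism. By \Cref{1SCMFJKDK} it is determined by its corestriction, and that corestriction is $s^{-1}\Phi^{-1}s\circ Q^\b_k\circ(s^{-1}\Phi s)^{\widehat\otimes k}$. Since $\Phi$ has degree $0$, it commutes with the Koszul signs in $s^{\otimes k}$. So by \eqref{1SCVK82U3} this corestriction is the desuspension of $\Phi^{-1}\circ\mu^\b_k\circ\Phi^{\otimes k}=\mu^\a_k$, including $k=0$.

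The second step is to define $\varrho^\a:=\widehat\Phi'^{-1}\circ\varrho\circ\widehat\Phi$. As a composite of continuous coalgebra morphisms of degree $0$, it is a continuous coalgebra morphism of degree $0$. It intertwines the transported structures:
\[
Q^{\a'}\varrho^\a=\widehat\Phi'^{-1}Q^{\b'}\varrho\,\widehat\Phi=\widehat\Phi'^{-1}\varrho\, Q^\b\widehat\Phi=\varrho^\a Q^\a .
\]
Its components are obtained by projecting to $V'$. Since $\widehat\Phi'^{-1}$ has only a linear component, $\ppp_{V'}\circ\widehat\Phi'^{-1}=\Phi'^{-1}\circ\ppp_{V'}$, and so $\varrho^\a_k=\Phi'^{-1}\circ\varrho_k\circ\Phi^{\widehat\otimes k}$. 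In arity zero this gives $\varrho^\a_0=\Phi'^{-1}(\varrho_0)$. That element lies in $\epsilon V'$ because $\Phi'$ is $\R\llbracket\epsilon\rrbracket$-linear, so the convergence requirement of \Cref{1SCAZ35B7} still holds.

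The final step is to compare properties. Bijectivity of $\varrho^\a_1=\Phi'^{-1}\varrho_1\Phi$ is equivalent to bijectivity of $\varrho_1$. Vanishing of $\varrho^\a_0$ and of $\varrho^\a_k$ for $k\ge2$ is equivalent to the corresponding vanishing for $\varrho$. Together these give the isomorphism and strictness statements in both directions.

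The only delicate point is the sign bookkeeping in the first step. One has to make sure that the suspended map $\Phi^{\widehat\otimes k}$ on $V^{\widehat\otimes k}$ matches the Koszul rule for $s^{\otimes k}$. This holds because $\Phi$ has degree $0$, so no extra signs arise.
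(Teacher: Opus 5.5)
Your proposal is correct and follows the paper's proof: conjugate by the coalgebra isomorphisms induced by $\Phi[1]$ and $\Phi'[1]$, verify the intertwining relation, read off the components, and observe that bijectivity of the linear component and strictness are preserved. You also check explicitly two points the paper leaves implicit: that the transported coderivations are the conjugates (through the degree-$0$ Koszul sign argument), and that $\varrho^{\a}_0\in\epsilon V'$.
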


\begin{proof}
The maps $\Phi[1]$ and $\Phi'[1]$ induce continuous coalgebra isomorphisms $T(\Phi):=\widehat T(\Phi[1])\colon\widehat T(\a\llbracket\epsilon\rrbracket[1])\longrightarrow\widehat T(\b\llbracket\epsilon\rrbracket[1])$ and $T(\Phi'):=\widehat T(\Phi'[1])\colon\widehat T(\a'\llbracket\epsilon\rrbracket[1])\longrightarrow\widehat T(\b'\llbracket\epsilon\rrbracket[1])$. By \Cref{1SC2EHE94}, the transported coderivations are $T(\Phi)^{-1}QT(\Phi)$ and $T(\Phi')^{-1}Q'T(\Phi')$. The continuous coalgebra morphism $T(\Phi')^{-1}\varrho\,T(\Phi)$ has components $\varrho^\a_k$, and it intertwines the transported coderivations:
\begin{align*}
T(\Phi')^{-1}Q'T(\Phi')\,T(\Phi')^{-1}\varrho\,T(\Phi)
&=T(\Phi')^{-1}Q'\varrho\,T(\Phi)\\
&=T(\Phi')^{-1}\varrho\,Q\,T(\Phi)\\
&=T(\Phi')^{-1}\varrho\,T(\Phi)\,T(\Phi)^{-1}QT(\Phi).
\end{align*}
Since $\Phi$ and $\Phi'$ are bijective, transport preserves bijectivity of the linear component and strictness.
\end{proof}

\subsection{The curved \texorpdfstring{$A_\infty$}{A-infinity}-structure}\label{1SC9UZXPX}
Let $N$ be a pre-Poisson submanifold of $(M,\pi)$, with stabilizer Lie algebroid $L:=TN^\circ\cap\pi^{-1}(TN)$ and $\a:=C^\infty(\wedge^\bullet L^*)$. Choose a Poisson transversal $S\subset M$ containing $N$ coisotropically, with induced Poisson bivector $\pi_S$. As in \Cref{1SCS4XYSN}, write $\nu:=TS|_N/TN$, $\b:=C^\infty(\wedge^\bullet\nu)$, and $\Phi:=\Phi_S\colon\a\to\b$ for the isomorphism \eqref{1SCVX2YHQ}. We quantize the $P_\infty$-structure on $\a$ by applying the Cattaneo--Felder relative formality theorem to $N\subset S$ and transporting the result along $\Phi$. Independence of choices is proved in \Cref{1SCQPGX26}.

Throughout, $\epsilon$ is a formal parameter and all maps are $\R\llbracket\epsilon\rrbracket$-linear unless stated otherwise. The graded Lie algebra $\g_S$ of formal multivector fields along $N$ carries the Schouten bracket and the zero differential, with degrees shifted so that bivector fields have degree $1$. Thus $[\pi_S,\pi_S]=0$ says that $\epsilon\,\pi_S$ is a Maurer--Cartan element of $\epsilon\,\g_S\llbracket\epsilon\rrbracket$. Finally, $\widehat\D(\cdot)$ is the completed differential graded Lie algebra of multidifferential operators \eqref{1SCNASR4D}.

A \defn{quantization datum} is a triple $\mathbf{d}:=(S,\psi,\nabla)$: a Poisson transversal $S\subset M$ containing $N$ coisotropically, a tubular neighbourhood $\psi$ of $N$ in $S$ which is \defn{normalized} (it restricts to the inclusion on $N$ and has normal derivative $\id_\nu$ along $N$), and a connection $\nabla$ on $\nu\to N$. We fix once and for all a torsion-free connection on $N$ and an associated formal exponential map.

We use the fibrewise Fourier transform of Cattaneo--Felder \cite[\S\S 4.2--4.4]{Cattaneo2007}, which identifies formal multivector fields along $N$ in $S$ with completed multiderivations of $\b$. We choose the sign convention for which $\mathfrak F(j(b))=b$, so that it agrees with the derived brackets in \eqref{1SCRZZPXA}. The Hochschild--Kostant--Rosenberg map is normalized by $1/k!$ in arity $k$, as in \cite[\S 4.5]{Cattaneo2007}, whereas $\Alt_k$ is unnormalized. We now restate the Cattaneo--Felder theorem in these conventions.

\begin{proposition}[Cattaneo--Felder]\label{1SCKTNBKS}
Let $N$ be a submanifold of a Poisson manifold $(M,\pi)$, and let $\b_N:=C^\infty(\wedge^\bullet\nu_N)$. Fix a tubular neighbourhood of $N$ in $M$ and a connection on $\nu_N$. Then:
\begin{itemize}
\item[(1)] there is an $L_\infty$-quasi-isomorphism $\U\colon\g_{N}\lto\widehat\D(\b_N)$, where $\g_{N}$ is the graded Lie algebra of multivector fields on the formal neighbourhood of $N$, and $\U_1$ is the fibrewise Fourier transform followed by the Hochschild--Kostant--Rosenberg map.
\item[(2)] $\tau:=\U_*(\epsilon\pi)$, defined by \eqref{1SCS3HGKV}, is a curved $A_\infty$-structure on $\b_N\llbracket\epsilon\rrbracket$ with multidifferential structure maps, whose operation $\mu_2$ reduces modulo $\epsilon$ to the product of $\b_N$, while $\mu_k\equiv0\pmod\epsilon$ for $k\neq2$.
\item[(3)] for every $k\ge0$ the operator $\mu_k\circ\Alt_k$ is divisible by $\epsilon$, and $\frac1\epsilon(\mu_k\circ\Alt_k)\bmod\epsilon=(-1)^{(k-1)(k-2)/2}\ell_k$, where $\ell_k$ is the derived-bracket $P_\infty$-structure in the conventions of \eqref{1SCRZZPXA}.
\item[(4)] $\ell_0=-P_N(\pi)$. In particular $\mu_0=\epsilon\,P_N(\pi)+O(\epsilon^2)$, and $\mu_0=O(\epsilon^2)$ if and only if $N$ is coisotropic in $M$.
\end{itemize}
\end{proposition}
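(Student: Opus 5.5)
This statement is a restatement of the Cattaneo--Felder relative formality theorem \cite[\S\S 4--5]{Cattaneo2007} in the sign and normalization conventions of \Cref{1SCS7HS5G} and \eqref{1SCRZZPXA}. The plan is therefore to import their results and check only the conventions, proving the four parts in order.

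For (1), use the relative formality morphism of \cite[Theorem 4.2 and \S 4.5]{Cattaneo2007}. It is obtained in three steps. First, the tubular neighbourhood and the connection on $\nu_N$ identify the formal neighbourhood of $N$ with the formal completion of $\nu_N$. Second, Kontsevich's formality is applied fibrewise to the graded manifold $\nu_N^*[1]$, via the fibrewise Fourier transform, which turns formal multivector fields into completed multiderivations of $\b_N$. Third, the result is globalized by Fedosov--Dolgushev methods using the fixed torsion-free connection and formal exponential on $N$. The linear component is, by construction, the Fourier transform followed by HKR. The only thing to verify is that our sign choice $\mathfrak F(j(b))=b$ differs from theirs by an automorphism compatible with the Schouten bracket. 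That automorphism is the parity map of \Cref{1SCRESCALE} combined with $\pi\mapsto-\pi$, and conjugating $\U$ by it still gives an $L_\infty$-quasi-isomorphism.

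For (2), note that $\epsilon\pi\in\FFF^1$ for the $\epsilon$-adic filtration and that $\U$ is filtered. Then \Cref{1SCMF2KEY}(1) shows that $\tau$ is Maurer--Cartan in $\epsilon\,\widehat\D(\b_N)\llbracket\epsilon\rrbracket$. By \eqref{1SC32WQT7}, this is a curved $A_\infty$-structure whose $\mu_2$ reduces to the product modulo $\epsilon$, and all other $\mu_k$ are $O(\epsilon)$. The structure maps are multidifferential because $\U$ lands in $\widehat\D$.

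For (3) and (4), look at first order: $\tau=\epsilon\,\U_1(\pi)+O(\epsilon^2)$. By the HKR normalization, $\U_1(\pi)$ is the image of the Fourier-transformed multiderivation $\mathfrak F(\pi)$ under the symmetrization map. Applying the unnormalized $\Alt_k$ then recovers $\mathfrak F(\pi)$ evaluated on $k$ arguments. By \cite[Proposition 4.4]{Cattaneo2007}, this evaluation equals Voronov's derived bracket $P_N[\cdots[\pi,j(b_1)],\dots,j(b_k)]$. Comparing with \eqref{1SCRZZPXA} produces the factor $\varepsilon(b)=(-1)^{k-1}\operatorname{dec}_k(b)$, and comparing the desuspension $s^{\otimes k}$ of \eqref{1SCVK82U3} with $(s^{-1})^{\otimes k}$ of \eqref{EQDECALAGE} produces a further arity-only sign; together these should give $(-1)^{(k-1)(k-2)/2}$. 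Part (4) is the arity-zero case: $\mu_0=\epsilon\,P_N(\pi)+O(\epsilon^2)$, so $\mu_0=O(\epsilon^2)$ exactly when $P_N(\pi)=0$. This vanishing of $\pi$ in $\wedge^2\nu_N$ along $N$ is precisely coisotropy.

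The main obstacle is the sign bookkeeping in (3). The footnote to \Cref{1SCS7HS5G} already flags a sign discrepancy in \cite{Cattaneo2007}. The first thing to try is to compute the cases $k=0,1,2$ directly: $\mu_1=\epsilon d+\cdots$ fixes the Fourier sign, and the antisymmetrized $\mu_2$ fixes the overall sign of $\ell_2$ against \eqref{1SCQHGY3K}. The general $k$ then follows, since all remaining discrepancies are Koszul signs of the two desuspensions and depend only on $k$.
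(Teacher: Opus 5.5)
Your route is the paper's. You quote Cattaneo--Felder for (1); (2) is \Cref{1SCMF2KEY}(1) applied to the $\epsilon$-adically filtered extension of $\U$, together with \eqref{1SC32WQT7}; and (3)--(4) come from the first-order term $\epsilon\,\U_1(\pi)$. That term is handled through the HKR evaluation formula (symmetrization cancels the $1/k!$), the identities $\mathfrak F(j(b))=b$ and $\operatorname{pr}_0\mathfrak F=P_N$, and compatibility of $\mathfrak F$ with brackets, followed by the arity-zero case. The sign you leave as ``should give'' closes directly, with no calibration on $k=0,1,2$ needed. Since $(s^{\otimes k})^{-1}=(-1)^{k(k-1)/2}(s^{-1})^{\otimes k}$ and $(-1)^{k(k-1)/2}(-1)^{k-1}=(-1)^{(k-1)(k-2)/2}$, (3) reduces to a sign-free identity between the symmetrized components of $\U_1(\pi)$ and Voronov's brackets. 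You should also record that divisibility of $\mu_2\circ\Alt_2$ by $\epsilon$ uses $\mmm\circ\Alt_2=0$, by graded commutativity.

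One step in your treatment of (1) is wrong as written. The two Fourier conventions ($\partial_y\mapsto\pm\zeta$) differ only by the parity automorphism $D$ of $\b_N$. Since $D_*$ is a strict DGLA automorphism of $\widehat\D(\b_N)$ fixing $\mmm$, composing $\U$ with $D_*$ alone gives the required quasi-isomorphism with $\U_1=\varphi_{\mathrm{HKR}}\circ\mathfrak F$. Do not also precompose with $\pi\mapsto-\pi$. That operation belongs to the $t=-1$ rescaling of \Cref{1SCRESCALE}, which is already built into \eqref{1SCRZZPXA} and is unrelated to the Fourier sign. Including it would replace the arity-zero part of $\U_1(\pi)$ by $-P_N(\pi)$, giving $\mu_0=-\epsilon P_N(\pi)+O(\epsilon^2)$, which contradicts (4).
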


\begin{proof}
(1) is \cite[Theorem 4.8]{Cattaneo2007}: the tubular neighbourhood identifies $\g_{N}$ with the multivector fields on the formal neighbourhood of the zero section of $\nu_N$, and the connection enters through the globalization of \cite[\S A.9]{Cattaneo2007}.

For (2), extend $\U$ $\R\llbracket\epsilon\rrbracket$-linearly. Since $\U_k(\epsilon^{a_1}X_1,\dots,\epsilon^{a_k}X_k)=\epsilon^{a_1+\cdots+a_k}\U_k(X_1,\dots,X_k)$, the morphism $\U$ is filtered for the $\epsilon$-adic filtration in the sense of \eqref{1SCLC9EC8}, and that filtration is complete. As $\epsilon\pi\in\epsilon\,\g_{N}^1\llbracket\epsilon\rrbracket$ and $[\pi,\pi]=0$, \Cref{1SCMF2KEY}(1) shows that $\tau$ is a Maurer--Cartan element of $\epsilon\,\widehat\D(\b_N)\llbracket\epsilon\rrbracket$. By \eqref{1SC32WQT7} it is a curved $A_\infty$-structure on $\b_N\llbracket\epsilon\rrbracket$ reducing modulo $\epsilon$ to the product. This is \cite[Theorem 3.2]{Cattaneo2007}.

For (3), the $k$-th summand of \eqref{1SCS3HGKV} at $\epsilon\pi$ is $\frac{\epsilon^k}{k!}\U_k(\pi,\dots,\pi)$, so $\tau=\epsilon\,\U_1(\pi)+O(\epsilon^2)$. Divisibility by $\epsilon$ of $\mu_k\circ\Alt_k$ holds because $\mu_k\equiv0$ for $k\neq2$ and because $\mu_2\equiv \mmm$ with $(\mmm\circ\Alt_2)(a,b)=\mmm(a,b)-(-1)^{|a||b|}\mmm(b,a)=0$, the algebra $\b_N$ being graded commutative. By (1), $\U_1=\varphi_{\mathrm{HKR}}\circ\mathfrak F$. To identify its antisymmetrization, let $Q^{(1)}$ be the degree-$1$ coderivation of $\widehat T(\b_N[1])$ corresponding to $\U_1(\pi)$ under \Cref{1SCMFJKDK}, so that $(\U_1(\pi))_k=s\circ Q^{(1)}_k\circ(s^{\otimes k})^{-1}$ by \eqref{1SCVK82U3}. Write $\operatorname{dec}_k=(-1)^{\sum_i(k-i)|b_i|}$ for the sign of \eqref{EQDECALAGE}, which is the Koszul sign of $(s^{-1})^{\otimes k}$; under the Koszul rule $(s^{\otimes k})^{-1}=(-1)^{k(k-1)/2}(s^{-1})^{\otimes k}$. Since $\operatorname{dec}_k\circ\Alt_k=\Sym_k\circ\operatorname{dec}_k$, where $\Sym_k$ is graded symmetrization for the shifted degrees, and since $\ell_k=(-1)^{k-1}\operatorname{dec}_k\{\cdots\}$ by \eqref{1SCRZZPXA}, the identity $(\U_1(\pi))_k\circ\Alt_k=(-1)^{(k-1)(k-2)/2}\ell_k$, in which $(-1)^{(k-1)(k-2)/2}=(-1)^{k(k-1)/2}(-1)^{k-1}$, is equivalent to
\[
s\circ Q^{(1)}_k\circ\Sym_k\circ(s^{-1})^{\otimes k}=\{\,\cdot\,,\dots,\cdot\,\}
=\operatorname{pr}_0\bigl[\cdots[\mathfrak F(\pi),\,\cdot\,],\dots,\,\cdot\,\bigr],
\]
a statement in which no desuspension sign appears. Here $\operatorname{pr}_0$ denotes projection onto multivectors of arity zero on $\nu_N^*[1]$. With our Fourier convention, $\mathfrak F(j(b))=b$ and $\operatorname{pr}_0\mathfrak F=P_N$, and $\mathfrak F$ preserves the Schouten bracket, so the right-hand side is $P_N[\cdots[\pi,j(b_1)],\dots,j(b_k)]$. The left-hand side is the evaluation formula for the HKR map \cite[Theorem 4.6]{Cattaneo2007}, whose symmetrization is taken with the Koszul signs of the shifted degrees: graded symmetrization cancels its factor $1/k!$, and the identity follows on products of derivations, hence on formal multivectors, arity by arity. For $k=0$ it reads $(\U_1(\pi))_0=P_N(\pi)$. This proves (3) in every arity.

For (4), \eqref{1SCRZZPXA} gives $\ell_0=-P_N(\pi)$. The case $k=0$ of (3), where the sign is $-1$, gives $\mu_0=-\epsilon\ell_0+O(\epsilon^2)=\epsilon P_N(\pi)+O(\epsilon^2)$. Finally, $P_N(\pi)=0$ says that $\pi|_N$ lies in $\ker(\wedge^2\ppp_N)$, that is, $\pi(\nu_N^*)\subset TN$.
\end{proof}

Let $\mathbf{d}=(S,\psi,\nabla)$ be a quantization datum. Applying \Cref{1SCKTNBKS} to $(S,N)$ with the data $\psi$ and $\nabla$ gives a curved $A_\infty$-structure $\mu^{\b,\mathbf{d}}$ on $\b\llbracket\epsilon\rrbracket$. Set
\[
\mu_k^{\mathbf{d}}:=\Phi^{-1}\circ\mu_k^{\b,\mathbf{d}}\circ\Phi^{\otimes k}\ \ (k\ge1),
\quad
\mu_0^{\mathbf{d}}:=\Phi^{-1}\bigl(\mu_0^{\b,\mathbf{d}}\bigr).
\]

\begin{theorem}\label{1SCL9J8YC}
Let $\mathbf{d}=(S,\psi,\nabla)$ be a quantization datum. Then $\{\mu_k^{\mathbf{d}}\}_{k\ge0}$ is an $\R\llbracket\epsilon\rrbracket$-linear curved $A_\infty$-structure on $\a\llbracket\epsilon\rrbracket$ with multidifferential structure maps, and:
\begin{itemize}
\item[(1)] modulo $\epsilon$, $\mu_2^{\mathbf{d}}$ is the product of $\a$ and $\mu_k^{\mathbf{d}}=0$ for $k\neq2$;
\item[(2)] $\mu_0^{\mathbf{d}}=O(\epsilon^2)$;
\item[(3)]$\frac1\epsilon\bigl(\mu_k^{\mathbf{d}}\circ\Alt_k\bigr)\bmod\epsilon=(-1)^{(k-1)(k-2)/2}\ell_k^{\a,S,\psi}$ for every $k\ge0$. In particular, $\frac1\epsilon\mu_1^{\mathbf{d}}\bmod\epsilon=d_L$.
\end{itemize}
\end{theorem}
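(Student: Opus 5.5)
The plan is to obtain every assertion from \Cref{1SCKTNBKS} applied to the coisotropic submanifold $N\subset(S,\pi_S)$, and then transport along $\Phi=\Phi_S$ using \Cref{1SC2EHE94}. First, \Cref{1SCKTNBKS}(2), with the normalized tubular neighbourhood $\psi$ and the connection $\nabla$ on $\nu=\nu_S$, gives a curved $A_\infty$-structure $\mu^{\b,\mathbf d}$ on $\b\llbracket\epsilon\rrbracket$. Its structure maps are multidifferential, $\mu_2^{\b,\mathbf d}$ reduces to the product of $\b$, and $\mu_k^{\b,\mathbf d}\equiv0$ modulo $\epsilon$ for $k\ne2$. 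Since $\Phi$ is the $\R\llbracket\epsilon\rrbracket$-linear extension of the graded algebra isomorphism \eqref{1SCVX2YHQ}, \Cref{1SC2EHE94}(1) shows that $\{\mu_k^{\mathbf d}\}$ is a curved $A_\infty$-structure. \Cref{1SC2EHE94}(3) gives the reduction of $\mu_2^{\mathbf d}$ to the product of $\a$. The vanishing of the other $\mu_k^{\mathbf d}$ modulo $\epsilon$ is immediate from conjugation. Finally, $\Phi$ is induced by the vector bundle isomorphism $\wedge^\bullet(r_S^*)^{-1}\colon\wedge^\bullet L^*\to\wedge^\bullet\nu$ over $\id_N$, so \Cref{1SC2EHE94}(4) shows the structure maps are multidifferential. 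This proves the first sentence and (1).

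For (2), I would invoke \Cref{1SCKTNBKS}(4). Since $N$ is coisotropic in $S$, we have $P_S(\pi_S)=0$, hence $\mu_0^{\b,\mathbf d}=O(\epsilon^2)$. Applying $\Phi^{-1}$, which is $\epsilon$-linear and preserves the $\epsilon$-adic order, gives $\mu_0^{\mathbf d}=O(\epsilon^2)$.

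For (3), I would use \Cref{1SCKTNBKS}(3) on $\b$. It states that $\frac1\epsilon(\mu_k^{\b,\mathbf d}\circ\Alt_k)\bmod\epsilon=(-1)^{(k-1)(k-2)/2}\ell_k^{\b_S}$, where $\ell_k^{\b_S}$ are the brackets \eqref{1SCRZZPXA}. The key observation is that conjugation by a degree-$0$ map commutes with graded alternation: $\Phi$ preserves degrees, so $\chi(\sigma;\Phi a)=\chi(\sigma;a)$, and therefore
\[
\bigl(\Phi^{-1}\circ\mu_k^{\b,\mathbf d}\circ\Phi^{\otimes k}\bigr)\circ\Alt_k=\Phi^{-1}\circ(\mu_k^{\b,\mathbf d}\circ\Alt_k)\circ\Phi^{\otimes k}.
\]
Dividing by $\epsilon$ and reducing modulo $\epsilon$ also commute with conjugation. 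We thus obtain $(-1)^{(k-1)(k-2)/2}\Phi^{-1}\circ\ell_k^{\b_S}\circ\Phi^{\otimes k}$, which is $(-1)^{(k-1)(k-2)/2}\ell_k^{\a,S,\psi}$ by the definition in the proof of \Cref{1SCMMB63H}. The case $k=0$ works the same way with $\Phi^{-1}$ applied to elements, and both sides vanish there by flatness.

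For $k=1$ the sign $(-1)^{(k-1)(k-2)/2}$ equals $+1$, and $\Alt_1=\id$. The statement therefore reduces to $\ell_1^{\a,S,\psi}=d_L$, which is \Cref{1SCMMB63H}.

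The only potential obstacle is bookkeeping. One must confirm that the hypotheses of \Cref{1SCKTNBKS} are met for the pair $(S,N)$ with the normalized $\psi$, and that its sign conventions for $\ell_k$ agree exactly with those of \eqref{1SCRZZPXA} used to define $\ell_k^{\a,S,\psi}$. Both points are built into the way \Cref{1SCKTNBKS} was stated, so I expect no genuine difficulty beyond checking them.
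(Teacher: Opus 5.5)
Your proposal is correct and follows essentially the same route as the paper. You apply \Cref{1SCKTNBKS} to the coisotropic $N\subset(S,\pi_S)$ and transport along $\Phi_S$ via \Cref{1SC2EHE94}; you then obtain (2) from \Cref{1SCKTNBKS}(4) by coisotropy, and (3) from the commutation $\Phi^{\otimes k}\circ\Alt_k=\Alt_k\circ\Phi^{\otimes k}$ together with \Cref{1SCKTNBKS}(3) and \Cref{1SCMMB63H}, while spelling out the $k=0$ case and the bundle-map hypothesis of \Cref{1SC2EHE94}(4), which the paper leaves implicit.
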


\begin{proof}
The map $\Phi$ is an isomorphism of graded algebras of degree $0$, so \Cref{1SC2EHE94} applies: $\{\mu_k^{\mathbf{d}}\}$ is a curved $A_\infty$-structure with multidifferential structure maps, and (1) follows from $\mu_2^{\b,\mathbf{d}}\equiv \mmm\pmod\epsilon$.

(2) is \Cref{1SCKTNBKS}(4) for $(S,N)$, where $N$ is coisotropic, together with the $\R\llbracket\epsilon\rrbracket$-linearity of $\Phi^{-1}$.

For (3), $\Phi$ preserves degrees, so $\chi(\sigma;\Phi a)=\chi(\sigma;a)$ for every $\sigma\in\mathfrak S_k$ and every homogeneous tuple $a$, whence
\begin{equation}\label{1SCNXARAB}
\Phi^{\otimes k}\circ\Alt_k=\Alt_k\circ\Phi^{\otimes k}
\quad\text{and}\quad
\mu_k^{\mathbf{d}}\circ\Alt_k=\Phi^{-1}\circ\bigl(\mu_k^{\b,\mathbf{d}}\circ\Alt_k\bigr)\circ\Phi^{\otimes k}.
\end{equation}
Since $\Phi$ is bijective and $\R\llbracket\epsilon\rrbracket$-linear, divisibility by $\epsilon$ transfers from $\b$ to $\a$. \Cref{1SCKTNBKS}(3) gives $\frac1\epsilon(\mu_k^{\b,\mathbf{d}}\circ\Alt_k)\bmod\epsilon=(-1)^{(k-1)(k-2)/2}\ell_k^{\b_S,\psi}$, the brackets of \Cref{1SCS4XYSN}. Conjugating by $\Phi$ and using $\ell_k^{\a,S,\psi}=\Phi^{-1}\circ\ell_k^{\b_S,\psi}\circ\Phi^{\otimes k}$ gives (3). The last assertion is the case $k=1$, where $\Alt_1=\id$, together with $\ell_1^{\a,S,\psi}=d_L$ from \Cref{1SCMMB63H}.
\end{proof}

\begin{remark}\label{1SCASEHBT}
For $k=2$, \Cref{1SCL9J8YC}(3) reads
\[
\ell_2^{\a,S,\psi}(a,b)
=\frac1\epsilon\Bigl(\mu_2^{\mathbf{d}}(a,b)-(-1)^{|a||b|}\mu_2^{\mathbf{d}}(b,a)\Bigr)\bmod\epsilon,
\]
the semiclassical limit of a graded commutator; on functions, by \eqref{1SCQHGY3K}, the commutator of $\mu_2^{\mathbf d}$ is $\epsilon\{f,g\}+O(\epsilon^2)$, as for a star product.
\end{remark}

Write $H^\bullet(L)$ for the cohomology of $(\a,d_L)$, that is, the Lie algebroid cohomology of $L$ in the sense of \eqref{1SCYQ3DDG}. By \Cref{1SCVT88UN}, $H^0(L)=C^\infty(N)^\HHH$. Since $r\colon L\to\nu^*$ is an isomorphism of Lie algebroids and $\Phi$ intertwines $d_L$ with $\ell_1^{\b}$ (\Cref{1SCMMB63H}), $\Phi$ is an isomorphism of complexes, so
\begin{equation}\label{1SCG5RSQW}
H^k(L)\cong H^k(\b,\ell_1^{\b})\quad\text{for all }k .
\end{equation}

\begin{theorem}\label{1SCJQ3RB7}
Let $\mathbf{d}=(S,\psi,\nabla)$ be a quantization datum. For $a\in\epsilon\,\a^1\llbracket\epsilon\rrbracket$, write $\mu^{\mathbf d,a}$ for the change of base point of $\{\mu_k^{\mathbf d}\}$ by $a$, in the sense of \eqref{1SCT49PYH}.
\begin{itemize}
\item[(1)] If $H^2(L)=0$, there is $a\in\epsilon\,\a^1\llbracket\epsilon\rrbracket$ such that $\mu^{\mathbf d,a}$ is flat. In particular $\mu_2^{\mathbf{d},a}$ induces an associative product on $H^\bullet(\a\llbracket\epsilon\rrbracket,\mu_1^{\mathbf{d},a})$.
\item[(2)] If moreover $H^1(L)=0$, then $H^0(\a\llbracket\epsilon\rrbracket,\mu_1^{\mathbf{d},a})$ is a topologically free $\R\llbracket\epsilon\rrbracket$-module, that is, isomorphic to $H^0(L)\llbracket\epsilon\rrbracket$ with the $\epsilon$-adic topology, with
\[
H^0\bigl(\a\llbracket\epsilon\rrbracket,\mu_1^{\mathbf{d},a}\bigr)\big/\epsilon\,H^0\bigl(\a\llbracket\epsilon\rrbracket,\mu_1^{\mathbf{d},a}\bigr)\ \cong\ H^0(L),
\]
and the induced product is a star product on $H^0(L)$ deforming the bracket induced by $\ell_2^{\a,S,\psi}$, i.e.\ the Poisson bracket of \Cref{1SCJHB7Z7}. Under the assumptions of \Cref{1SCVT88UN}, $H^0(L)=C^\infty(N)^\HHH$.
\end{itemize}
\end{theorem}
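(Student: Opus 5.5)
Both parts are standard order-by-order obstruction arguments in $\epsilon$. The key input is that the linear term of $\mu_1^{\mathbf d}$ is $\epsilon\,d_L$ (\Cref{1SCL9J8YC}(3)), together with the identification $H^\bullet(L)\cong H^\bullet(\b,\ell_1^{\b})$ of \eqref{1SCG5RSQW}.

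For (1), I construct $a=\sum_{n\ge1}\epsilon^n a_n$ inductively so that $\widetilde\mu_0=\sum_k\mu_k^{\mathbf d}(a,\dots,a)$ from \eqref{1SCT49PYH} vanishes to every order. Suppose $a^{(n)}=\sum_{i\le n}\epsilon^i a_i$ is chosen so that the curvature $\mu_0^{(n)}$ of $\mu^{\mathbf d,a^{(n)}}$ is $O(\epsilon^{n+2})$. The base case $n=0$ is \Cref{1SCL9J8YC}(2). Write $\mu_0^{(n)}=\epsilon^{n+2}c+O(\epsilon^{n+3})$ with $c\in\a^2$. The arity-zero curved $A_\infty$-relation $\mu_1(\mu_0)=0$ holds for the shifted structure, which is again curved $A_\infty$. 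Since $\mu_1^{(n)}=\epsilon d_L+O(\epsilon^2)$, the lowest order of this relation gives $d_Lc=0$. As $H^2(L)=0$, we can write $c=-d_Lb$ with $b\in\a^1$. Changing base point further by $\epsilon^{n+1}b$ changes the curvature by $\epsilon^{n+1}\mu_1^{(n)}(b)+O(\epsilon^{2n+2})=\epsilon^{n+2}d_Lb+O(\epsilon^{n+3})$. The quadratic terms are harmless because $2(n+1)\ge n+3$. So the new curvature is $O(\epsilon^{n+3})$. The resulting series for $a$ converges $\epsilon$-adically, and composites of changes of base point are changes of base point, so the limit structure is flat. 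In the flat case $\mu_1$ is a differential and $\mu_2$ is associative up to the $\mu_3$-homotopy, so the associative product on cohomology follows.

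For (2), set $\delta:=\mu_1^{\mathbf d,a}=\epsilon(d_L+\epsilon D)$ for some $\R\llbracket\epsilon\rrbracket$-linear $D$, and put $\delta':=\delta/\epsilon$. Then $\ker\delta=\ker\delta'$ on the torsion-free module $\a\llbracket\epsilon\rrbracket$. In degree $0$, $H^0(\a\llbracket\epsilon\rrbracket,\delta)=\ker\delta'|_{\a^0\llbracket\epsilon\rrbracket}$ because there is nothing in degree $-1$. To lift $f_0\in H^0(L)$ to a cocycle $f=\sum\epsilon^nf_n$, I proceed inductively. At each step the error term is $d_L$-closed in $\a^1$, by the lowest-order part of $\delta'^2=0$. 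Since $H^1(L)=0$ it is $d_L$-exact, so it can be killed by correcting the next coefficient. This gives a splitting $H^0(L)\llbracket\epsilon\rrbracket\to\ker\delta'$. Injectivity and surjectivity follow by comparing leading coefficients: if $\delta'f=0$ and $f\equiv 0\bmod\epsilon^n$, then $f/\epsilon^n$ is again a cocycle whose leading term lies in $H^0(L)$. This yields topological freeness and the quotient identification with $H^0(L)$. In degree $0$ the map $\mu_2^{\mathbf d,a}$ restricts to a product on $\ker\delta'$: it is associative there because $\mu_3$-homotopy terms vanish on cocycles with no degree $-1$ boundaries. It is also a bidifferential deformation of the pointwise product by \Cref{1SCL9J8YC}(1). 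Its commutator is $\epsilon\,\ell_2^{\a,S,\psi}+O(\epsilon^2)$ by \Cref{1SCL9J8YC}(3) and Remark \ref{1SCASEHBT}. The shift by $a=O(\epsilon)$ only adds terms $\epsilon\mu_3(a,\cdot,\cdot)=O(\epsilon^2)$, so the semiclassical limit is unchanged. \Cref{1SCJHB7Z7} then identifies this bracket with the reduced Poisson bracket.

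The main obstacle I expect is the sign and order bookkeeping in the change of base point. Specifically, I need that the first-order effect of the shift on both $\mu_0$ and $\mu_1$ is exactly $\epsilon d_L$, and that associativity and the commutator descend correctly. I would handle this in the coderivation picture of \Cref{1SCZPJUKW}, where a change of base point is conjugation by $e^{\widehat a}$. That realizes the shift as an isomorphism of curved $A_\infty$-structures, as in \Cref{1SCVEVYP9}.
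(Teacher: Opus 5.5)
Your proof is correct, but it takes a different route from the paper. The paper does not redo the obstruction theory. It applies \cite[Corollary 3.3]{Cattaneo2007} to the coisotropic submanifold $N\subset S$ on $\b\llbracket\epsilon\rrbracket$ and transports the conclusion along $\Phi$:
\begin{itemize}
\item the element $\widehat a\in\epsilon\,\b^1\llbracket\epsilon\rrbracket$ becomes $a:=\Phi^{-1}(\widehat a)$;
\item the change of base point is transported by \Cref{1SCPN48SN}, using that a morphism with components $(a,\id)$ determines its source;
\item flatness is preserved by \Cref{1SC2EHE94}(2);
\item the obstruction classes, which lie in $H^2(\b,\ell_1^{\b})$ and $H^1(\b,\ell_1^{\b})$, are matched with $H^2(L)$ and $H^1(L)$ through \eqref{1SCG5RSQW}.
\end{itemize}
The semiclassical limit is then handled exactly as you do it: the terms of $\mu_2^{\mathbf d,a}$ involving $a$ come from $\mu_k^{\mathbf d}$ with $k\ge3$ and are $O(\epsilon^2)$.

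You instead run the Cattaneo--Felder induction directly on $\a\llbracket\epsilon\rrbracket$. You use only the properties recorded in \Cref{1SCL9J8YC}, graded commutativity of $\a$, and $\a^{-1}=0$. This makes the argument self-contained and shows exactly where $\mu_0^{\mathbf d}=O(\epsilon^2)$ and the two vanishing hypotheses enter. It also makes \eqref{1SCG5RSQW} unnecessary: you cite it but never use it, since your obstructions are $d_L$-cocycles from the start. Your argument would moreover apply verbatim to \Cref{1SCQSTAR2} via \Cref{1SCQTHM01}, without a second transport. The paper's route is shorter because it delegates the inductive bookkeeping to a published result, at the price of the transport lemmas.

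One justification needs repair. In (1), the inequality $2(n+1)\ge n+3$ fails for $n=0$, so in the first step the quadratic term $\epsilon^2\mu_2^{\mathbf d}(b,b)$ is not controlled by order counting. It is still $O(\epsilon^3)$, because $\mu_2^{\mathbf d}\equiv\wedge\pmod\epsilon$ and $b\wedge b=0$ for $b\in\a^1$.

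Graded commutativity also settles the point you flagged. With the Koszul signs of \Cref{1SCZPJUKW}, the only order-$\epsilon$ contribution of the shift to $\mu_1$ is $x\mapsto a\wedge x-(-1)^{|x|}x\wedge a$. This is the graded commutator of $x$ with the odd element $a$, and it vanishes in $\a$. Hence $\mu_1^{\mathbf d,a}=\epsilon\,d_L+O(\epsilon^2)$, as you need both for the induction in (1) and in (2).

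Finally, to make the splitting $H^0(L)\llbracket\epsilon\rrbracket\to\ker\delta'$ genuinely $\R\llbracket\epsilon\rrbracket$-linear, choose the corrections linearly, for instance via a linear right inverse of $d_L\colon\a^0\to d_L(\a^0)$.
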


\begin{proof}
Both statements are \cite[Corollary 3.3]{Cattaneo2007} applied to $(S,N)$ and transported along $\Phi$. The transport is justified as follows.
\begin{itemize}
\item $\Phi$ is bijective, of degree $0$, and $\R\llbracket\epsilon\rrbracket$-linear. So it carries the element $\widehat a\in\epsilon\,\b^1\llbracket\epsilon\rrbracket$ produced by \cite[Corollary 3.3]{Cattaneo2007} to $a:=\Phi^{-1}(\widehat a)\in\epsilon\,\a^1\llbracket\epsilon\rrbracket$, and it identifies the two cohomologies by \eqref{1SCG5RSQW}.
\item The change of base point by $\widehat a$ is an invertible morphism with components $(\widehat a,\id)$ to $\mu^{\b,\mathbf d}$. By \Cref{1SCPN48SN}, its transport along $\Phi$ is a morphism with components $(a,\id)$ to $\mu^{\mathbf d}$. Since such a morphism determines its source, $\mu^{\mathbf d,a}$ is the transport of the change of base point of $\mu^{\b,\mathbf d}$ by $\widehat a$. It is therefore flat by \Cref{1SC2EHE94}(2).
\item The two hypotheses are the vanishing of the obstruction classes of that corollary. These classes lie in $H^2(\b,\ell_1^\b)$ and $H^1(\b,\ell_1^\b)$, hence in $H^2(L)$ and $H^1(L)$. The induction there starts at order $\epsilon^2$ because $\mu_0^{\mathbf{d}}=O(\epsilon^2)$, that is, because $N$ is coisotropic in $S$.
\end{itemize}
Finally, the terms of $\mu^{\mathbf d,a}_2$ containing $a$ involve some $\mu^{\mathbf d}_k$ with $k\ge3$, so $\mu^{\mathbf d,a}_2\equiv\mu^{\mathbf d}_2\pmod{\epsilon^2}$. The product therefore deforms $\ell_2^{\a,S,\psi}$ by \Cref{1SCL9J8YC}(3).
\end{proof}

\begin{remark}\label{1SCAWUTDP}
Part (2) recovers \cite[Theorem 6.2]{CattaneoZambon2009}. \Cref{1SCL9J8YC} refines it: without assuming the vanishing of $H^1(L)$ or $H^2(L)$, it constructs a curved $A_\infty$-quantization of the full $P_\infty$-algebra $\a=C^\infty(\wedge^\bullet L^*)$. If $H^2(L)=0$, the curvature can be removed by a change of base point. If in addition $H^1(L)=0$, the degree-zero cohomology of the resulting flat $A_\infty$-algebra carries the star product of \cite{CattaneoZambon2009}.
\end{remark}

The construction is summarized in \Cref{1SCQT9DWL}. Its two rows are the classical and the quantum structures attached to $\mathbf d$, and its vertical arrows are the semiclassical limit of \Cref{1SCL9J8YC}(3).

\begin{figure}[H]
\centering
{\hypersetup{linkcolor=figref}
\begin{tikzpicture}[
  font=\small,
  ent/.style={align=flush center, inner sep=3pt},
  lab/.style={font=\scriptsize, align=flush center, inner sep=2pt},
  ref/.style={
    font=\scriptsize,
    text=figref,
    align=flush center,
    inner sep=2pt
  },
  ar/.style={-{To[length=1.4mm]}, line width=.55pt, draw=figarrow},
  sq/.style={
    -{To[length=1.4mm]},
    line width=.55pt,
    draw=figarrow,
    decorate,
    decoration={
      zigzag,
      amplitude=.8pt,
      segment length=3.2pt,
      pre=lineto,
      pre length=1.5pt,
      post=lineto,
      post length=2.5pt
    }
  }
]
\def\A{0}\def\B{6.2}\def\C{12.4}
\def\YT{0}\def\YC{-1.9}\def\YQ{-4.0}

\node[ent] (s1) at (\A,\YT)
  {\textsc{Data}\\[1pt]
   $(M,\pi)$, $N\subset M$ pre-Poisson\\
   $L=\pi^{-1}(TN)\cap TN^\circ$};

\node[ent] (s3) at (\C,\YT)
  {\textsc{Datum} $\mathbf d=(S,\psi,\nabla)$\\[1pt]
   $S$ Poisson transversal,\\
   $N$ coisotropic in $S$};

\draw[ar] (s1) --
  node[lab,above]{choose a Poisson transversal,\\
    tubular neighbourhood, and connection}
  node[ref,below]{\S\ref{1SCS4XYSN}}
  (s3);

\node[ent] (g) at (\A,\YC)
  {$\pi_S\in\MC(\g_S)$\\
   $[\pi_S,\pi_S]=0$};

\node[ent] (lb) at (\B,\YC)
  {$(\b,\ell^{\b})$ flat $P_\infty$\\
   $\ell^{\b}_0=-P_S(\pi_S)=0$};

\node[ent] (la) at (\C,\YC)
  {$(\a,\ell^{\a,S,\psi})$ flat $P_\infty$\\
   $\ell_1=d_L$};

\draw[ar, rounded corners=2pt]
  (s3.south) -- ++(0,-0.4) -| (g.north);

\draw[ar] (g) --
  node[lab,above]{derived brackets\\ of $\pi_S$}
  node[ref,below]{Voronov, \eqref{1SCRZZPXA}}
  (lb);

\draw[ar] (lb) --
  node[lab,above]{$\Phi^{-1}(\cdot)\Phi^{\otimes k}$}
  node[ref,below]{Prop.~\ref{1SCMMB63H}}
  (la);

\node[ent] (eg) at (\A,\YQ)
  {$\epsilon\pi_S\in
    \MC\bigl(\epsilon\,\g_S\llbracket\epsilon\rrbracket\bigr)$};

\node[ent] (mub) at (\B,\YQ)
  {$\mu^{\b,\mathbf d}$ curved $A_\infty$\\
   on $\b\llbracket\epsilon\rrbracket$};

\node[ent] (mua) at (\C,\YQ)
  {$\mu^{\mathbf d}$ curved $A_\infty$
   on $\a\llbracket\epsilon\rrbracket$\\
   $\mu^{\mathbf d}_0=O(\epsilon^2)$};

\draw[sq] (eg) --
  node[lab,above,yshift=1pt]
    {$\U_*$\\ relative formality}
  node[ref,below]{Prop.~\ref{1SCKTNBKS}}
  (mub);

\draw[ar] (mub) --
  node[lab,above]{$\Phi^{-1}(\cdot)\Phi^{\otimes k}$}
  node[ref,below]{Thm.~\ref{1SCL9J8YC}}
  (mua);

\draw[ar] (eg) --
  node[lab,right]{coefficient\\ of $\epsilon$}
  (g);

\draw[ar] (mub) --
  node[lab,right,align=flush left]
    {$\pm\tfrac1\epsilon(\cdot\circ\Alt_k)\bmod\epsilon$\\
     {\color{figref}Prop.~\ref{1SCKTNBKS}(3)}}
  (lb);

\draw[ar] (mua) --
  node[lab,left,align=flush right]
    {$\pm\tfrac1\epsilon(\cdot\circ\Alt_k)\bmod\epsilon$\\
     {\color{figref}Thm.~\ref{1SCL9J8YC}(3)}}
  (la);

\end{tikzpicture}}
\caption{The classical and quantum structures attached to a quantization datum.}
\label{1SCQT9DWL}
\end{figure}

\subsection{Independence of the quantization datum}\label{1SCQPGX26}
The curved $A_\infty$-structure of \Cref{1SCL9J8YC} is built from a quantization datum $\mathbf d=(S,\psi,\nabla)$. We show that, up to isomorphism, it does not depend on $\mathbf d$. We compare the three entries one at a time: the transversal (\Cref{1SC82SQTE}), the tubular neighbourhood (\Cref{1SCLVACSU}), and the connection (\Cref{1SCABX7MH}). \Cref{1SCDN9DFE} composes them.

Throughout, $q:=\rank\nu$ and $\g_\nu$ denotes the multivector fields on the formal neighbourhood of the zero section of $\nu$. We set $\pi_\psi:=(\psi^{-1})_*\pi_S\in\g_\nu$, and write $\U^{\nu,\nabla}\colon\g_\nu\lto\widehat\D(\b)$ for the $L_\infty$-morphism of \Cref{1SCKTNBKS}(1) for $N\subset S$. By construction, $\mu^{\b,\mathbf d}$ corresponds by \eqref{1SC32WQT7} to the Maurer--Cartan element $\U^{\nu,\nabla}_*(\epsilon\pi_\psi)$ of $\epsilon\,\widehat\D(\b)\llbracket\epsilon\rrbracket$. Since $\Phi_S$ is a strict isomorphism from $\mu^{\mathbf d}$ to $\mu^{\b,\mathbf d}$ (\Cref{1SC2EHE94}(1)), an isomorphism $\varrho\colon\mu^{\b,\mathbf d}\lto\mu^{\b',\mathbf d'}$ gives, by \Cref{1SCPN48SN}, the isomorphism $\bigl(\Phi_{S'}^{-1}\circ \varrho_k\circ\Phi_S^{\otimes k}\bigr)_{k\ge0}$ from $\mu^{\mathbf d}$ to $\mu^{\mathbf d'}$. We may therefore work on $\b\llbracket\epsilon\rrbracket$.

The first ingredient is the naturality of $\U^{\nu,\nabla}$ under bundle isomorphisms.

\begin{lemma}\label{1SCVVM8AU}
Let $\Lambda\colon\nu\to\nu'$ be a vector bundle isomorphism covering $\id_N$, $\b':=C^\infty(\wedge^\bullet\nu')$, and $\theta:=\wedge^\bullet\Lambda\colon\b\to\b'$. Then
\[
\theta_*\U^{\nu,\nabla}_n(X_1,\dots,X_n)=\U^{\nu',\Lambda_*\nabla}_n(\Lambda_*X_1,\dots,\Lambda_*X_n)
\]
for all $X_1,\dots,X_n\in\g_\nu$.
\end{lemma}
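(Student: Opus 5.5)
The plan is to unwind the construction of $\U^{\nu,\nabla}$ in \cite[\S\S 4.2--4.5, A.9]{Cattaneo2007} and check that every ingredient is natural under the bundle isomorphism $\Lambda$. Here $\theta_*$ means conjugation of multidifferential operators, $D\mapsto\theta\circ D\circ(\theta^{-1})^{\otimes k}$. The morphism $\U^{\nu,\nabla}$ is assembled in three stages. First, there is the local Kontsevich-type relative formality morphism on the model $\R^m\times\R^q$, where $q=\rank\nu$. Second, the connection $\nabla$ on $\nu$, together with the fixed torsion-free connection and formal exponential map on $N$, produces Fedosov-type globalization data: a flat connection on the bundle of fibrewise formal jets. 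Third, the fibrewise Fourier transform $\mathfrak F$ identifies multivector fields on the formal neighbourhood of the zero section of $\nu$ with multiderivations of $\b=C^\infty(\wedge^\bullet\nu)$. I would show that $\Lambda$ intertwines each stage with the corresponding stage for $(\nu',\Lambda_*\nabla)$.

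The steps, in order, are as follows.

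\emph{Step 1: the Fourier transform.} The map $\mathfrak F$ is defined fibrewise and linearly. It exchanges the linear fibre coordinates of $\nu$ with the odd generators $\partial/\partial y$, that is, sections of $\nu$. Hence $\theta_*\circ\mathfrak F=\mathfrak F'\circ\Lambda_*$, and likewise for the Hochschild--Kostant--Rosenberg map, which is intrinsic. This gives the statement for $n=1$.

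\emph{Step 2: the globalization data.} Since $\Lambda$ covers $\id_N$, the formal exponential map on $N$ is unchanged. In the fibre directions, the formal exponential induced by $\nabla$ on the total space of $\nu$ is carried by $\Lambda$ to the one induced by $\Lambda_*\nabla$ on $\nu'$. Consequently the Fedosov-type flat connection and the resolutions of $\g_\nu$ and $\widehat\D(\b)$ are transported by $\Lambda$ to those of $\nu'$.

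\emph{Step 3: the local formality morphism.} In a local frame of $\nu$, $\Lambda$ is fibrewise linear, given by $(x,y)\mapsto(x,A(x)y)$. The Cattaneo--Felder local morphism is equivariant under fibrewise linear transformations in $\mathrm{GL}(q)$. This is exactly the property used in \cite[\S A.9]{Cattaneo2007} to glue the local morphisms through the connection. I expect the proof to reduce to this invariance property together with the property that the local morphism vanishes when one argument is a fibrewise-linear vector field.

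Combining the three steps and comparing component by component gives the identity for every $n$.

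The main obstacle is Step 2. The globalization twists $\U$ by the Maurer--Cartan element built from $\nabla$, so one has to check that this element transforms covariantly under $\Lambda$. I would first verify this on the level of formal exponential maps, using that pulling back a connection by a bundle isomorphism preserves geodesic-type exponentials. I would then argue that the resulting twisted morphism is functorial, since it is given by universal formulas in these data.
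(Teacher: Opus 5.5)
Your proposal is correct and has the same skeleton as the paper's proof. You check that the Fourier transform, the local formality morphism and the Fedosov globalization are each natural under $\Lambda$. You then conclude that the identity is local because both sides are multidifferential.

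The difference is in how you handle the step you call the main obstacle. You work with unrelated frames of $\nu$ and $\nu'$, in which $\Lambda$ reads $(x,y)\mapsto(x,A(x)y)$. You therefore have to show that the Fedosov twist transforms covariantly under an $x$-dependent $A$. For this you use fibrewise $\mathrm{GL}(q)$-equivariance, and the vanishing of $\U_n$, $n\ge2$, on linear vector fields to absorb the $A^{-1}dA$ terms.

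The paper sidesteps this by taking on $\nu'$ the transported frame $e'_\beta:=\Lambda e_\beta$. In this frame:
\begin{itemize}
\item $\Lambda$ reads $(x,y)\mapsto(x,y)$ and $\theta(\zeta_\beta)=\zeta'_\beta$;
\item both $\Lambda_*$ and $\theta_*$ act on coefficients by the relabelling $(y,\zeta)\mapsto(y',\zeta')$;
\item the Fourier rules $y^\beta\mapsto-\partial/\partial\zeta_\beta$, $\partial/\partial y^\beta\mapsto\zeta_\beta$ are visibly intertwined;
\item the connection matrices of $\nabla$ and $\Lambda_*\nabla$ coincide.
\end{itemize}
Since $\nabla$ enters the Fedosov differential over $N$ (fibre algebra $\R[[u]]\otimes\wedge^\bullet\nu_x$) only through this matrix, the local formulas for $\U^{\nu,\nabla}_n$ and $\U^{\nu',\Lambda_*\nabla}_n$ agree verbatim. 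Your Step 2 then becomes automatic.

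In effect, your route re-proves that the globalized morphism does not depend on the local frame. The paper's route is shorter but uses this frame-independence implicitly. It does so when it asserts that the local expressions depend only on the coordinates, the frame, the connection matrix and the data fixed on $N$.
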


\begin{proof}
Let $W\subset N$ be a chart with coordinates $(x^i)$, $(e_\beta)$ a frame of $\nu$ over $W$, and $e'_\beta:=\Lambda e_\beta$, a frame of $\nu'$ since $\Lambda$ covers $\id_N$. Let $(y^\beta)$, $(y'^\beta)$ be the fibre coordinates, $\zeta_\beta:=e_\beta\in\b$ and $\zeta'_\beta:=e'_\beta\in\b'$.

Since $\Lambda(y^\beta e_\beta)=y^\beta e'_\beta$, we have $y'^\beta\circ\Lambda=y^\beta$. So $\Lambda$ reads $(x,y)\mapsto(x,y)$ in these coordinates, and $\Lambda_*\frac{\partial}{\partial y^\beta}=\frac{\partial}{\partial y'^\beta}$, $\Lambda_*\frac{\partial}{\partial x^i}=\frac{\partial}{\partial x^i}$. Likewise $\theta(\zeta_\beta)=\zeta'_\beta$, and $\theta\frac{\partial}{\partial\zeta_\beta}\theta^{-1}(\zeta'_\gamma)=\delta^\gamma_\beta$ gives $\theta_*\frac{\partial}{\partial\zeta_\beta}=\frac{\partial}{\partial\zeta'_\beta}$. Hence $\Lambda_*$ and $\theta_*$ both act on coefficients by the substitution $(y,\zeta)\mapsto(y',\zeta')$. In particular, the Fourier rules $y^\beta\mapsto-\frac{\partial}{\partial\zeta_\beta}$, $\frac{\partial}{\partial y^\beta}\mapsto\zeta_\beta$ give $\mathfrak F'\circ\Lambda_*=\theta_*\circ\mathfrak F$ over $W$, where $\mathfrak F,\mathfrak F'$ are the Fourier transforms; recall that the Fourier transform is canonical and does not depend on the connection \cite[Theorem 4.1 and Lemma 4.2]{Cattaneo2007}.

If $\nabla e_\beta=\omega^\gamma_{\ \beta}e_\gamma$, then $(\Lambda_*\nabla)e'_\beta=\Lambda(\nabla e_\beta)=\omega^\gamma_{\ \beta}\,e'_\gamma$.

Over $W$, the local expressions defining $\U^{\nu,\nabla}$ (Fourier transform, local formality, Fedosov globalization \cite[\S A.9]{Cattaneo2007}) depend only on the coordinates, the frame, the connection matrix of $\nabla$ in that frame, and the data fixed on $N$. The Fourier transforms are intertwined as above, the local formality morphism is equivariant under linear changes of fibre coordinates, and the connection matrices of $\nabla$ and $\Lambda_*\nabla$ in the frames $(e_\beta)$ and $(e'_\beta)$ coincide. Hence $\U^{\nu',\Lambda_*\nabla}_n$ is obtained from $\U^{\nu,\nabla}_n$ by $(y,\zeta)\mapsto(y',\zeta')$, which is the identity over $W$. Both sides are multidifferential, so the identity is local, and $W$ is arbitrary.
\end{proof}

\noindent\textbf{Comparing two Poisson transversals.}
Let $\varphi\colon S\to S'$ be a Poisson diffeomorphism with $\varphi|_N=\id_N$, let $\nu':=\nu_{S'}$, and set
\begin{equation}\label{1SCD3LY62}
\Lambda:=\nu(\varphi),\quad
\psi':=\varphi\circ\psi\circ\Lambda^{-1},\quad
\nabla':=\Lambda_*\nabla,\quad
\theta:=\wedge^\bullet\Lambda .
\end{equation}

\begin{proposition}\label{1SC82SQTE}
The triple $\mathbf d':=(S',\psi',\nabla')$ is a quantization datum, and
\[
\Psi:=\Phi_{S'}^{-1}\circ\theta\circ\Phi_S
\]
is a strict isomorphism from $\{\mu^{\mathbf d}_k\}$ to $\{\mu^{\mathbf d'}_k\}$.
\end{proposition}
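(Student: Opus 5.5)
First, $\mathbf d'$ is a quantization datum. Since $\varphi$ is a Poisson diffeomorphism fixing $N$ pointwise, $S'$ is a Poisson transversal containing $N$ coisotropically. This is the same observation as in \Cref{1SCFBTZ34}. The map $\psi'=\varphi\circ\psi\circ\Lambda^{-1}$ is a tubular neighbourhood of $N$ in $S'$, and it is normalized. It restricts to the identity on $N$ because $\psi$, $\varphi$ and $\Lambda^{-1}$ do. Its normal derivative is $\nu(\varphi)\circ\id_\nu\circ\Lambda^{-1}=\id_{\nu'}$, because $\Lambda=\nu(\varphi)$. Finally, $\nabla'=\Lambda_*\nabla$ is a connection on $\nu'$.

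Next, I identify the two Maurer--Cartan elements in $\g_{\nu'}$. By definition $\pi_{\psi'}=(\psi'^{-1})_*\pi_{S'}=(\Lambda\circ\psi^{-1}\circ\varphi^{-1})_*\pi_{S'}$. Since $\varphi$ is Poisson, $(\varphi^{-1})_*\pi_{S'}=\pi_S$, so $\pi_{\psi'}=\Lambda_*\pi_\psi$. Applying \Cref{1SCVVM8AU} termwise to the series \eqref{1SCS3HGKV} at $\epsilon\pi_\psi$ gives
\[
\theta_*\,\U^{\nu,\nabla}_*(\epsilon\pi_\psi)=\U^{\nu',\Lambda_*\nabla}_*(\epsilon\Lambda_*\pi_\psi)=\U^{\nu',\nabla'}_*(\epsilon\pi_{\psi'}).
\]
Here $\theta_*D:=\theta\circ D\circ(\theta^{-1})^{\otimes k}$ on an arity-$k$ operator $D$. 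The identity says exactly that $\mu^{\b',\mathbf d'}_k=\theta\circ\mu^{\b,\mathbf d}_k\circ(\theta^{-1})^{\otimes k}$ for every $k\ge0$, with $\mu_0^{\b',\mathbf d'}=\theta(\mu_0^{\b,\mathbf d})$. So $\theta$ is a strict isomorphism $\mu^{\b,\mathbf d}\lto\mu^{\b',\mathbf d'}$, being a degree-$0$ algebra isomorphism that intertwines all operations. One small check is needed here: the fixed globalization data on $N$ (torsion-free connection and formal exponential) are the same on both sides, since $\Lambda$ covers $\id_N$. This is what \Cref{1SCVVM8AU} requires.

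Finally, I transport along $\Phi_S$ and $\Phi_{S'}$. By \Cref{1SC2EHE94}(1), $\Phi_S$ is a strict isomorphism $\mu^{\mathbf d}\to\mu^{\b,\mathbf d}$, and $\Phi_{S'}$ is one for $\mathbf d'$. By \Cref{1SCPN48SN}, the composite $\Psi=\Phi_{S'}^{-1}\circ\theta\circ\Phi_S$ is then a strict isomorphism $\mu^{\mathbf d}\lto\mu^{\mathbf d'}$. Concretely, $\Psi\circ\mu_k^{\mathbf d}=\mu_k^{\mathbf d'}\circ\Psi^{\otimes k}$ for $k\ge1$ and $\Psi(\mu_0^{\mathbf d})=\mu_0^{\mathbf d'}$.

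The main obstacle is already absorbed into \Cref{1SCVVM8AU}, namely the naturality of the Fedosov-globalized formality morphism under $\Lambda$. What remains delicate is only bookkeeping. One must check that pushforward by $\psi'^{-1}$ of the germ of $\pi_{S'}$ along $N$ is well defined after shrinking, as in \Cref{1SCFBTZ34}. One must also check that the sign convention for $\theta_*$ matches the transport convention of \Cref{1SC2EHE94}; both are plain conjugation by a degree-$0$ map, so no Koszul signs arise.
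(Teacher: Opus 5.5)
Your proposal is correct and follows the paper's proof step for step. It establishes normalization of $\psi'$, derives $\pi_{\psi'}=\Lambda_*\pi_\psi$ from the fact that $\varphi$ is Poisson, applies \Cref{1SCVVM8AU} to $\U_*(\epsilon\pi_\psi)$, and transports along $\Phi_S,\Phi_{S'}$ via \Cref{1SCPN48SN}.

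Two small points could be tightened. First, that $S'$ is a Poisson transversal in $M$ is a standing hypothesis of this comparison, not a consequence of $\varphi$ being Poisson; only the coisotropy of $N$ in $S'$ follows from that. Second, in arity $2$ you also need $\theta_*\mmm=\mmm$, since $\mu_2=\mmm+\varpi_2$; your remark that $\theta$ is an algebra isomorphism supplies this.
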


\begin{proof}
Clearly $\psi'|_N=\id_N$. Since $\Lambda^{-1}$ is linear, $\nu(\Lambda^{-1})=\Lambda^{-1}$, so
$\nu(\psi')=\Lambda\circ\nu(\psi)\circ\Lambda^{-1}=\id_{\nu'}$, and $\psi'$ is normalized. Since $\varphi$ is Poisson,
\begin{equation}\label{1SCMS9RZV}
\pi_{\psi'}=\Lambda_*(\psi^{-1})_*(\varphi^{-1})_*\pi_{S'}=\Lambda_*\pi_\psi .
\end{equation}
By \Cref{1SCVVM8AU} and \eqref{1SCMS9RZV},
\[
\theta_*\U^{\nu,\nabla}_*(\epsilon\pi_\psi)=\U^{\nu',\nabla'}_*(\epsilon\pi_{\psi'}).
\]
Since $\theta$ is an algebra isomorphism, $\theta_*\mmm=\mmm$. Hence $\theta$ is a strict isomorphism from $\mu^{\b,\mathbf d}$ to $\mu^{\b',\mathbf d'}$, and $\Psi$ is the corresponding isomorphism on $\a\llbracket\epsilon\rrbracket$.
\end{proof}

\noindent \textbf{Comparing two tubular neighbourhoods.} Let $\psi_0,\psi_1$ be normalized tubular neighbourhoods of $N$ in $S$, and
\[
\delta:=\psi_1^{-1}\circ\psi_0 .
\]
Then $\delta|_N=\id_N$, $\nu(\delta)=\nu(\psi_1)^{-1}\circ\nu(\psi_0)=\id_\nu$, and, since $\psi_0=\psi_1\circ\delta$,
\begin{equation}\label{1SCGY3AA2}
\pi_{\psi_1}=\delta_*\pi_{\psi_0}.
\end{equation}

We will show that $\epsilon\pi_{\psi_0}$ and $\epsilon\pi_{\psi_1}$ are gauge equivalent and transport this equivalence through $\U^{\nu,\nabla}$ by \Cref{1SCMF2KEY}. The difficulty is that $\delta_{*}-\id$ is not divisible by $\epsilon$, so the $\epsilon$-adic filtration does not control the change of tubular neighbourhood. We need a notion of smallness that measures vanishing in the fibre directions and is compatible with multivector fields. The natural one is the fibrewise grading given by the degree in the fibre coordinates minus the number of fibre derivatives, which also appears in Fedosov globalization \cite{dolgushev2005covariant}. We use it to control $\delta$ and its logarithm, and  then combine it with the $\epsilon$-adic order into a filtration in which both the gauge generator and the Maurer--Cartan element have positive degree.

Let $(x,y)$ be local coordinates on $\nu$, with $(x^i)$ on $N$ and $(y^j)$ linear fibre coordinates. For $t\in\R^\times$, let
\[
m_t(x,y):=(x,ty),\quad \theta_t:=\wedge^\bullet m_t .
\]
Then $m_t$ covers $\id_N$ and $(m_t)_*\nabla=\nabla$.

\begin{definition}\label{1SCNVG8WE}
An element $X\in\g_\nu$ has \defn{weight} $k$ if $(m_t)_*X=t^{-k}X$ for all $t\in\R^\times$. An element $D\in\widehat\D(\b)$ has weight $k$ if $(\theta_t)_*D=t^{-k}D$ for all $t\in\R^\times$. We write $\wt(X)\ge a$ if every homogeneous component of $X$ has weight $\ge a$.
\end{definition}

\begin{lemma}\label{1SCSR6LN4}
\begin{itemize}
\item[(1)] For every multi-index $\alpha$,
\[
\wt\Bigl(f(x)y^\alpha\frac{\partial}{\partial x^{i_1}}\wedge\cdots\wedge\frac{\partial}{\partial x^{i_p}}\wedge\frac{\partial}{\partial y^{j_1}}\wedge\cdots\wedge\frac{\partial}{\partial y^{j_s}}\Bigr)=|\alpha|-s .
\]
\item[(2)] If $D\colon C^\infty(\wedge^{p_1}\nu)\otimes\cdots\otimes C^\infty(\wedge^{p_n}\nu)\to C^\infty(\wedge^{p_0}\nu)$, then
$\wt(D)=p_1+\cdots+p_n-p_0$.
\item[(3)] The weight is additive for the Schouten and Gerstenhaber brackets, and $\wt(\mmm)=0$.
\item[(4)] If $\wt(X_i)\ge a_i$ for $i=1,\dots,n$, then
$\wt\bigl(\U^{\nu,\nabla}_n(X_1,\dots,X_n)\bigr)\ge a_1+\cdots+a_n$.
\end{itemize}
In particular, all weights are $\ge-q$, and the weights of an $n$-ary operator are $\le nq$.
\end{lemma}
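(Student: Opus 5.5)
The plan is to check (1) and (2) directly in local coordinates, deduce (3) by equivariance, and then obtain (4) from the naturality of \Cref{1SCVVM8AU} applied to $\Lambda=m_t$.

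For (1), fix adapted coordinates $(x,y)$ on $\nu$. The map $m_t$ is $(x,y)\mapsto(x,ty)$, so pushforward acts as follows:
\begin{itemize}
\item a coefficient $f(x)y^\alpha$ becomes $t^{-|\alpha|}f(x)y^\alpha$, since $(m_t)_*g=g\circ m_t^{-1}$;
\item each $\partial/\partial y^j$ is multiplied by $t$;
\item each $\partial/\partial x^i$ is fixed.
\end{itemize}
The monomial is therefore multiplied by $t^{s-|\alpha|}=t^{-(|\alpha|-s)}$, which gives weight $|\alpha|-s$. These monomials span $\g_\nu$ topologically, since it consists of formal power series in $y$. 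Hence every element decomposes into weight components, and each weight component is $\ge -q$ because $s\le q$.

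For (2), $\theta_t$ acts on $C^\infty(\wedge^p\nu)$ by multiplication by $t^p$. For a homogeneous operator $D$ as stated,
\[
(\theta_t)_*D=\theta_t\circ D\circ(\theta_t^{-1})^{\otimes n}=t^{p_0-p_1-\cdots-p_n}D,
\]
so $D$ has weight $p_1+\cdots+p_n-p_0$. The product $\mmm$ maps $\wedge^{p_1}\otimes\wedge^{p_2}$ to $\wedge^{p_1+p_2}$, so it has weight $0$. Since $p_0\ge0$ and each $p_i\le q$, an $n$-ary operator has weights $\le nq$.

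For (3), the pushforward $(m_t)_*$ is a Lie algebra automorphism for the Schouten bracket, and $(\theta_t)_*$ is an automorphism of the Gerstenhaber bracket, since both are conjugations by an algebra automorphism. Additivity of weights on homogeneous components follows immediately.

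The only real input is (4). Apply \Cref{1SCVVM8AU} with $\Lambda=m_t$, which is a bundle automorphism of $\nu$ covering $\id_N$; then $\theta=\theta_t$. Since $(m_t)_*\nabla=\nabla$, the lemma gives
\[
(\theta_t)_*\U^{\nu,\nabla}_n(X_1,\dots,X_n)=\U^{\nu,\nabla}_n\bigl((m_t)_*X_1,\dots,(m_t)_*X_n\bigr).
\]
For homogeneous $X_i$ of weights $k_i$, multilinearity turns the right-hand side into $t^{-\sum k_i}\U_n(X_1,\dots,X_n)$, so $\U_n(X_1,\dots,X_n)$ has weight exactly $\sum k_i$.

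For general $X_i$ with $\wt(X_i)\ge a_i$, expand each $X_i$ into homogeneous components. The step needing care is this passage from homogeneous to general arguments, because the expansion is an infinite sum: formal power series in $y$ give unboundedly many weights. Each $\U_n$ is multidifferential and depends continuously on the jets along $N$, so it commutes with the weight decomposition. Concretely, a fixed weight $w$ of the output can only receive contributions from finitely many weight combinations of the inputs, because output weights are bounded by $nq$ and input weights by $-q$ from below. The finitely many contributing terms each have weight $\ge\sum a_i$, which gives (4).
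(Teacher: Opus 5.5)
Your proof is correct and follows the paper's argument: (1)--(3) are direct computations of $(m_t)_*$ and $(\theta_t)_*$, and (4) comes from \Cref{1SCVVM8AU} with $\Lambda=m_t$ and $(m_t)_*\nabla=\nabla$, extended from homogeneous to general inputs using that $\U^{\nu,\nabla}_n$ is well defined and continuous on completed inputs, which the paper also invokes. One harmless slip: finiteness of the tuples $(k_1,\dots,k_n)$ with $\sum_i k_i=w$ follows from the lower bound $k_i\ge-q$ alone, and the claim that output weights are bounded by $nq$ is false, since $\U^{\nu,\nabla}_n(X_1,\dots,X_n)$ has components in every arity (the bound $nq$ applies to arity-$n$ operators, not to the number of inputs of $\U_n$).
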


\begin{proof}
(1) We have $(m_t)_*y^\alpha=t^{-|\alpha|}y^\alpha$ and $(m_t)_*\frac{\partial}{\partial y^j}=t\,\frac{\partial}{\partial y^j}$, while $f(x)$ and $\frac{\partial}{\partial x^i}$ are fixed.

(2) The map $\theta_t$ is multiplication by $t^p$ on $C^\infty(\wedge^p\nu)$, so
$\theta_t\circ D\circ(\theta_t^{-1})^{\otimes n}=t^{\,p_0-(p_1+\cdots+p_n)}D$.

(3) The brackets are natural under $(m_t)_*$ and $(\theta_t)_*$, and $\theta_t$ is an algebra automorphism.

(4) \Cref{1SCVVM8AU} with $\Lambda=m_t$ gives
\[
(\theta_t)_*\U^{\nu,\nabla}_n(X_1,\dots,X_n)=\U^{\nu,\nabla}_n\bigl((m_t)_*X_1,\dots,(m_t)_*X_n\bigr).
\]
This proves (4) for homogeneous $X_i$. The general case follows since $\U^{\nu,\nabla}_n$ is well defined on completed inputs \cite[Theorem 4.7]{Cattaneo2007}.
\end{proof}

The only term of $\pi_\psi$ that could have weight $-2$ vanishes by coisotropy.

\begin{lemma}\label{1SC545338}
For every normalized tubular neighbourhood $\psi$ of $N$ in $S$, $\wt(\pi_\psi)\ge-1$.
\end{lemma}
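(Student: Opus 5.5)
We work in the local coordinates $(x,y)$ on $\nu$ provided by $\psi$, and expand $\pi_\psi$ as a formal Taylor series in the fibre coordinates $y$. By \Cref{1SCSR6LN4}(1), a bivector monomial $f(x)y^\alpha\,\partial_{x^{i_1}}\wedge\cdots\wedge\partial_{y^{j_s}}$ with $s$ fibre derivatives has weight $|\alpha|-s$. For a bivector $s\le2$, so every monomial has weight at least $-2$. Weight exactly $-2$ forces $s=2$ and $|\alpha|=0$. Hence the only terms of weight $-2$ are
\[
\tfrac12\pi^{jk}(x,0)\,\frac{\partial}{\partial y^j}\wedge\frac{\partial}{\partial y^k},
\]
the purely fibre-vertical part of $\pi_\psi$ restricted to the zero section. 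The whole argument therefore reduces to showing that these coefficients vanish.

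Next I will identify this term intrinsically. Since $\psi$ is normalized, it restricts to the inclusion on $N$ and has normal derivative $\id_\nu$. Consequently $T_{0_x}\nu=T_xN\oplus\nu_x$ is identified with $T_xS$ in such a way that the vertical directions project isomorphically onto $\nu_x=T_xS/T_xN$. Under this identification, the weight $-2$ part of $\pi_\psi$ at $0_x$ is the image of $\pi_S|_x$ under $\wedge^2\ppp_S\colon\wedge^2T_xS\to\wedge^2\nu_x$, that is, $P_S(\pi_S)$ at $x$. In coordinates this is the component $\pi^{jk}(x,0)$, because projecting onto $\nu$ kills exactly the $\partial_{x^i}$ factors.

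Now I will use that $N$ is coisotropic in $(S,\pi_S)$, as guaranteed by \cite[Theorem 3.3]{CattaneoZambon2009} and the choice of $S$. Coisotropy gives $\pi_S(TN^\circ)\subset TN$, which is equivalent to $P_S(\pi_S)=0$; this was already noted in \eqref{1SCRZZPXA} and in \Cref{1SCKTNBKS}(4). Concretely, the annihilator $TN^\circ$ is spanned by the $dy^j$ along $y=0$, and $\pi_\psi(dy^j,dy^k)|_{y=0}=\pi^{jk}(x,0)$ must vanish. It follows that every remaining monomial has weight at least $-1$, which gives $\wt(\pi_\psi)\ge-1$.

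The only point requiring care is checking that weights are well defined independently of the local frame. This is clear, however, because $m_t$ is intrinsic to the vector bundle $\nu$, and the statement is local.
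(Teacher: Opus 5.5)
Your proof is correct and follows the paper's argument: by \Cref{1SCSR6LN4}(1) the only possible weight $-2$ term is the purely vertical part $\pi_\psi^{jk}(x,0)\,\frac{\partial}{\partial y^j}\wedge\frac{\partial}{\partial y^k}$, which the normalization $\nu(\psi)=\id_\nu$ identifies with $P_S(\pi_S)$, and this vanishes because $N$ is coisotropic in $S$. Your extra coordinate check, $\pi_\psi(dy^j,dy^k)|_{y=0}=0$, is also valid; it shows that the vanishing needs only that $\psi$ maps the zero section onto $N$, not the normalization.
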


\begin{proof}
By \Cref{1SCSR6LN4}(1), the only possible term of weight $-2$ is
$\pi_\psi^{jk}(x,0)\,\frac{\partial}{\partial y^j}\wedge\frac{\partial}{\partial y^k}$.
Since $\nu(\psi)=\id_\nu$, this term is $(\wedge^2\ppp_S)(\pi_S|_N)=P_S(\pi_S)$, which vanishes because $N$ is coisotropic in $S$.
\end{proof}

The gauge generator comes from the fact that $\delta$ fixes $N$ pointwise and induces the identity on $\nu$.

\begin{lemma}\label{1SC3P8VEF}
There is a formal vector field $X$ on $\nu$ with $\wt(X)\ge1$ and $\delta_*=e^{\ad_X}$.
\end{lemma}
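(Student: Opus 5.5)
The plan is to take $X$ to be the formal logarithm of $\delta$, viewed as an automorphism of the algebra of formal functions on the neighbourhood of the zero section of $\nu$. Write $\widehat{\mathcal O}:=C^\infty(N)\llbracket y\rrbracket$, globalized as formal functions along the zero section. Then $\delta^*$ is a filtered algebra automorphism of $\widehat{\mathcal O}$. Since $\delta|_N=\id_N$ and $\nu(\delta)=\id_\nu$, in local coordinates we have
\[
\delta(x,y)=\bigl(x+O(y^2),\,y+O(y^2)\bigr).
\]
So $\delta^*-\id$ maps $y^\alpha f(x)$ to terms of total fibre degree $\ge|\alpha|+1$. In the language of \Cref{1SCNVG8WE}, $\delta^*-\id$ strictly raises weight by at least one on functions.

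The first step is to define $\log\delta^*:=\sum_{n\ge1}\frac{(-1)^{n+1}}{n}(\delta^*-\id)^n$. This series converges in the $y$-adic topology, because the $n$-th term raises fibre degree by at least $n$. The standard argument shows that the logarithm of an automorphism that is unipotent in this sense is a derivation, so $\log\delta^*$ is a formal vector field. I would check this via the identity $\log(AB)=\log A+\log B$ for commuting unipotent operators, applied to $\mmm\circ(\delta^*\otimes\delta^*)=\delta^*\circ\mmm$, or equivalently via the one-parameter group $(\delta^*)^t=\exp(t\log\delta^*)$, which is polynomial in $t$ modulo each power of the fibre ideal and multiplicative for all integer $t$, hence for all real $t$. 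Setting $X:=-\log\delta^*$, or $+\log$ depending on the pushforward convention $\delta_*=(\delta^{-1})^*$ on functions, gives $\delta_*f=e^{X}f$ on functions.

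Next, $\wt(X)\ge1$. Write $X=a^i(x,y)\partial_{x^i}+b^j(x,y)\partial_{y^j}$. Because $\log\delta^*$ raises fibre degree by at least one, $a^i=O(y)$ and $b^j=O(y^2)$. By \Cref{1SCSR6LN4}(1) these terms have weights $\ge1$ and $\ge 2-1=1$ respectively. A coordinate-free way to say the same thing is that weight $\ge k$ on vector fields means raising the $y$-adic degree of functions by $\ge k$, so this step is immediate once the logarithm is shown to raise degree.

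Finally, I extend from functions to multivector fields. Both $\delta_*$ and $e^{\ad_X}$ are automorphisms of the Gerstenhaber algebra $\g_\nu$. Here $e^{\ad_X}$ converges because $\ad_X$ raises weight by at least one by \Cref{1SCSR6LN4}(3), and weights in a fixed arity are bounded below. The two automorphisms agree on functions. They also agree on vector fields, since each is determined by its action on functions through $(\Phi_* Y)(f)=\Phi(Y(\Phi^{-1}f))$ and $e^{\ad_X}Y=e^XYe^{-X}$. They therefore agree on all of $\g_\nu$, which is generated by functions and vector fields under wedge product. The main obstacle is the first step: justifying convergence and the derivation property of the logarithm globally on the formal neighbourhood rather than in charts. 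I would handle this by working with the canonical fibre-degree filtration of $\widehat{\mathcal O}$, which is chart-independent since $\delta$ preserves the ideal of $N$, so the series is intrinsically defined.
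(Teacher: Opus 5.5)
Your argument is essentially the paper's: take $X=-\log\delta^*$, get convergence and the weight bound from the fact that $\delta^*-\id$ raises fibre degree, obtain the derivation property from the polynomial-in-$t$ argument, and pass to multivector fields by conjugation. One correction: writing $\delta(x,y)=\bigl(x+a(x,y),\,y+b(x,y)\bigr)$, the hypotheses $\delta|_N=\id_N$ and $\nu(\delta)=\id_\nu$ only give $a=O(y)$ and $b=O(y^2)$, not $a=O(y^2)$. The derivative $\frac{\partial a}{\partial y}(x,0)$ is unconstrained, because two normalized tubular neighbourhoods can induce different splittings $\nu\to TS|_N$. Even so, $\delta^*-\id$ still raises fibre degree by at least one, so nothing downstream changes; your estimate on the $\partial_{x^i}$-coefficients of $X$ already uses only $O(y)$.
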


\begin{proof}
Write
\begin{equation}\label{1SC4DPVZX}
\delta(x,y)=\bigl(x+a(x,y),\,y+b(x,y)\bigr).
\end{equation}
Since $\delta|_N=\id_N$ and $\nu(\delta)=\id_\nu$,
\[
a(x,0)=0,\quad b(x,0)=0,\quad \frac{\partial b}{\partial y}(x,0)=0 .
\]
Hence $\wt(a)\ge1$ and $\wt(b)\ge2$. Note that $\frac{\partial a}{\partial y}(x,0)$ need not vanish.

We claim that $R:=\delta^*-\id$ raises the weight by at least $1$. For $f=g(x)y^\alpha$,
\[
\delta^*f-f=\bigl(g(x+a)-g(x)\bigr)(y+b)^\alpha+g(x)\bigl((y+b)^\alpha-y^\alpha\bigr),
\]
and each term on the right contains a factor $a$, or a factor $b$ in place of a factor $y$.

It follows that
\[
Y:=\log\delta^*=\sum_{n\ge1}\frac{(-1)^{n+1}}{n}R^n
\]
converges and raises the weight by at least $1$. We claim that $Y$ is a derivation. Set $T:=\delta^*=e^Y$. Modulo any fixed level of the weight filtration, $e^{tY}$ is polynomial in $t$. For $f,g$, the expression
$e^{tY}(fg)-e^{tY}(f)e^{tY}(g)$
is polynomial in $t$ modulo that level and vanishes for every $t=n\in\mathbb N$, since $e^{nY}=T^n$ is an algebra automorphism. Hence it vanishes identically. Differentiating at $t=0$ gives $Y(fg)=Y(f)g+fY(g)$. Thus $Y$ is a formal vector field with $\wt(Y)\ge1$. On multivector fields,
\[
(\delta^{-1})_*A=e^Y\circ A\circ(e^{-Y})^{\otimes k}=e^{\ad_Y}A .
\]
Hence $X:=-Y$ satisfies $\delta_*=e^{\ad_X}$.
\end{proof}

The weight controls $X$ and the factor $\epsilon$ controls $\epsilon\pi_\psi$. We combine them into a single  filtration. For $k\in\Z$, set
\begin{equation}\label{1SCG34NBG}
\FFF^k\g_\nu\llbracket\epsilon\rrbracket:=\Bigl\{\sum_a\epsilon^aA_a:\wt(A_a)+2a\ge k\Bigr\},\quad
\FFF^k\widehat\D(\b)\llbracket\epsilon\rrbracket:=\Bigl\{\sum_a\epsilon^aD_a:\wt(D_a)+2a\ge k\Bigr\}.
\end{equation}

\begin{lemma}\label{1SC7A3T2F}
\begin{itemize}
\item[(1)] The filtrations \eqref{1SCG34NBG} are complete, and $\U^{\nu,\nabla}$ is filtered in the sense of \eqref{1SCLC9EC8}.
\item[(2)] $\epsilon\pi_\psi\in\FFF^1$ for every normalized $\psi$, and $X\in\FFF^1$.
\item[(3)] Let $Y\in\FFF^1\widehat\D(\b)\llbracket\epsilon\rrbracket$ have degree $0$, and let $\widehat Y$ be the coderivation of the tensor coalgebra of $V:=\b\llbracket\epsilon\rrbracket[1]$ with components $Y$. Then $e^{\widehat Y}$ is well defined.
\end{itemize}
\end{lemma}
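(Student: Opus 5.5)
We prove the three parts in order, using the weight bounds of \Cref{1SCSR6LN4} throughout.

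For (1), completeness comes from the bounds on weights. On $\g_\nu\llbracket\epsilon\rrbracket$ every weight is $\ge -q$, so the condition $\wt(A_a)+2a\ge k$ holds automatically once $2a\ge k+q$. Hence $\FFF^k$ contains $\epsilon^{\lceil (k+q)/2\rceil}\g_\nu\llbracket\epsilon\rrbracket$. Conversely, $\FFF^k$ forces high weight in the low $\epsilon$-orders, and $\g_\nu$ is already complete for the fibrewise weight, being formal power series in $y$. An element of $\varprojlim_k$ is therefore a compatible family: in each fixed $\epsilon$-order it is a formal series of weight-homogeneous terms, which converges in $\g_\nu$, so the canonical map is bijective. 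For $\widehat\D(\b)$ the argument is the same in each fixed arity $n$, using weights $\ge -q$ and $\le nq$ by \Cref{1SCSR6LN4}(2). Since $\widehat\D$ is a product over arities within each total degree, completeness is checked arity by arity. Compatibility with brackets follows from additivity of weights and of $\epsilon$-orders (\Cref{1SCSR6LN4}(3)). Compatibility with the differential $[\mmm,\cdot]$ follows from $\wt(\mmm)=0$, and the zero differential on $\g_\nu$ is trivially compatible. Finally, $\U^{\nu,\nabla}$ is filtered: extended $\R\llbracket\epsilon\rrbracket$-multilinearly, it adds $\epsilon$-orders, and by \Cref{1SCSR6LN4}(4) it adds weight lower bounds. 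Both contributions to $\wt+2a$ are therefore additive.

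For (2), by \Cref{1SC545338} we have $\wt(\pi_\psi)\ge -1$, so $\epsilon\pi_\psi$ satisfies $\wt+2\ge 1$. By \Cref{1SC3P8VEF} we have $\wt(X)\ge 1$ with $a=0$.

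For (3), which we expect to be the main obstacle, we must show that $\sum_j \widehat Y^j/j!$ converges $\epsilon$-adically on $\widehat T(V)$ componentwise. Since $Y$ is in $\FFF^1$, each component $Y_n$ of arity $n$ and $\epsilon$-order $a$ has weight $w\ge 1-2a$. Weight is additive under composition of coderivations and counts the exterior degree change, by \Cref{1SCSR6LN4}(2). Now fix an input in $V^{\widehat\otimes m}$ of fixed exterior degrees and fix an $\epsilon$-order $A$ of the output. The key point is that a term of $\widehat Y^j$ of total $\epsilon$-order $\le A$ has total weight $\ge j-2A$.

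Two complications remain. First, arity-zero components $Y_0$ raise the tensor length, so the output length may grow. Second, weight alone does not bound $j$ unless we also control exterior degrees, and these are bounded above by $q$ in each slot, which is the essential use of finite rank. Suppose the output lies in a fixed tensor power of length $r$. Its exterior degree is then at most $rq$, while the input degree is fixed, so the total weight of any contributing term is bounded by a quantity depending only on $m$ and $r$. Hence for fixed $\epsilon$-order and fixed output length only finitely many $j$ contribute. This is exactly the convergence needed in the description of $\widehat T(V)$ in \Cref{1SCB2252C}.

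It remains to check that the output lengths are $\epsilon$-adically controlled, which is the condition $x_n\in\epsilon^N V^{\widehat\otimes n}$ for $n\gg0$. Each $Y_0$ insertion has weight $-p_0\ge -q$, where $p_0$ is the exterior degree of the inserted element. For such an insertion to lie in $\FFF^1$ it must carry an $\epsilon$-order $a$ with $2a\ge 1+p_0$, so in particular $a\ge1$. Thus raising the length by $\ell$ costs at least $\epsilon^{\ell}$, which gives the required $\epsilon$-adic convergence.
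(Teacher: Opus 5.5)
Your proposal is correct and follows essentially the same route as the paper. Completeness comes from comparing the weight filtration with the $y$-adic one (the paper makes this explicit by noting that weight $\ge k$ lies between $y$-adic orders $k+q$ and $k$) and from finiteness of the weight filtration in each arity of $\widehat\D(\b)$; filteredness comes from \Cref{1SCSR6LN4}(3),(4); part (2) comes from \Cref{1SC545338,1SC3P8VEF}; and part (3) compares total weight with the change in total exterior degree.

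The one difference is in (3), where your detour through the output length $r$ and the $\epsilon$-cost of arity-zero insertions is unnecessary. The upper bound on the total weight comes from the output exterior degree being $\ge 0$, not from it being $\le rq$. So on $V^{\widehat\otimes m}$ you get $j-2(N-1)\le\sum w_i\le mq$ directly, that is, $\widehat Y^j\equiv0\bmod\epsilon^N$ for $j>mq+2N-2$, whatever the output length.
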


\begin{proof}
(1) Compatibility with $d=[\mmm,\cdot]$ and with the brackets, and filteredness, follow from \Cref{1SCSR6LN4}(3),(4).

Let $I^r\subset\g_\nu$ be the elements whose monomials satisfy $|\alpha|\ge r$, and $W^k\subset\g_\nu$ the elements whose homogeneous components have weight $\ge k$. Since $0\le s\le q$, \Cref{1SCSR6LN4}(1) gives $|\alpha|-q\le\wt\le|\alpha|$ for every monomial, hence
$I^{k+q}\subset W^k\subset I^k$.
So $W^\bullet$ and $I^\bullet$ define the same topology, and $\g_\nu$ is complete and separated for $W^\bullet$ because it is for $I^\bullet$.

By definition, $\sum_a\epsilon^aA_a\in\FFF^k$ if and only if $A_a\in W^{k-2a}$ for all $a$. Since all weights are $\ge-q$, $\FFF^{-q}=\g_\nu\llbracket\epsilon\rrbracket$, and $\bigcap_k\FFF^k=0$. Let $Z_r=\sum_a\epsilon^aA_{r,a}$ be a Cauchy sequence. For each $k$ there is $R_k$ with
\[
A_{r,a}-A_{s,a}\in W^{k-2a}\quad(r,s\ge R_k,\ a\ge0).
\]
For fixed $a$, taking $k=j+2a$ with $j$ arbitrary shows that $(A_{r,a})_r$ is Cauchy for $W^\bullet$, so it converges to some $A_a\in\g_\nu$. Set $Z:=\sum_a\epsilon^aA_a$. Let $r\ge R_k$. For each $a$, choose $s\ge R_k$ with $A_{s,a}-A_a\in W^{k-2a}$. Then
$A_{r,a}-A_a=(A_{r,a}-A_{s,a})+(A_{s,a}-A_a)\in W^{k-2a}$,
so $Z_r-Z\in\FFF^k$. Hence $Z_r\to Z$.

By \Cref{1SCSR6LN4}(2), an operator of arity $n$ has weights in $[-q,nq]$. So $W^k$ contains no nonzero operator of arity $n$ when $k>nq$, and a Cauchy sequence in $\widehat\D(\b)$ has stationary components in each arity. It converges to the element whose components are these stationary values. Thus $\widehat\D(\b)$ is complete and separated for $W^\bullet$, and the argument above applies verbatim.

(2) By \Cref{1SC545338}, $\wt(\pi_\psi)+2\ge1$. By \Cref{1SC3P8VEF}, $\wt(X)\ge1$.

(3) Write $Y=\sum\epsilon^aY_{a,w}$ with $\wt(Y_{a,w})=w$ and $w+2a\ge1$. By \Cref{1SCSR6LN4}(2), $\widehat Y_{a,w}$ lowers the total exterior degree by $w$. On $V^{\otimes n}$, this degree lies in $[0,nq]$, and it stays $\ge0$ after any number of applications, even though the tensor length may grow. Hence, if $\widehat Y_{a_1,w_1}\cdots\widehat Y_{a_j,w_j}$ is nonzero on $V^{\otimes n}$, then
\[
nq\ \ge\ w_1+\cdots+w_j\ \ge\ j-2(a_1+\cdots+a_j).
\]
So $\widehat Y^j\equiv0\bmod\epsilon^N$ on $V^{\otimes n}$ for $j>nq+2N-2$.
\end{proof}

\begin{remark}\label{1SCTLB42Z}
The filtration \eqref{1SCG34NBG} starts at $\FFF^{-q}$ rather than at $\FFF^1$. However, $\FFF^1$ is a sub-DGLA, since it is closed under $d$ and $[\FFF^1,\FFF^1]\subset\FFF^2\subset\FFF^1$. It contains $\epsilon\pi_\psi$ and $X$, and $\mathcal U^{\nu,\nabla}$ maps $\FFF^1\g_\nu\llbracket\epsilon\rrbracket$ into $\FFF^1\widehat\D(\b)\llbracket\epsilon\rrbracket$ by (1). We may therefore apply \Cref{1SCMF2KEY} to the restriction of $\U^{\nu,\nabla}$ to $\FFF^1$, with the induced filtration $\FFF^1\supset\FFF^2\supset\cdots$.
\end{remark}

\begin{proposition}\label{1SCLVACSU}
The curved $A_\infty$-algebras of $(S,\psi_0,\nabla)$ and $(S,\psi_1,\nabla)$ are isomorphic.
\end{proposition}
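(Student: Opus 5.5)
The plan is to realize the change of tubular neighbourhood as a gauge equivalence of Maurer--Cartan elements in the filtered DGLA $\FFF^1\g_\nu\llbracket\epsilon\rrbracket$, push it forward through the formality morphism, and convert the resulting gauge equivalence into a curved $A_\infty$-isomorphism.

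First, by \Cref{1SC3P8VEF} there is a formal vector field $X$ on $\nu$ with $\wt(X)\ge1$ and $\delta_*=e^{\ad_X}$. Combined with \eqref{1SCGY3AA2}, this gives
\[
\epsilon\pi_{\psi_1}=e^{\ad_X}(\epsilon\pi_{\psi_0}).
\]
Since $\g_\nu$ has zero differential, \Cref{1SCV8CXM4} identifies the right-hand side with the gauge action $X\cdot(\epsilon\pi_{\psi_0})$. By \Cref{1SC7A3T2F}(2), both $X$ and $\epsilon\pi_{\psi_i}$ lie in $\FFF^1$. Hence, in the complete filtered DGLA $\FFF^1\g_\nu\llbracket\epsilon\rrbracket$ of \Cref{1SCTLB42Z}, the elements $\epsilon\pi_{\psi_0}$ and $\epsilon\pi_{\psi_1}$ are gauge equivalent Maurer--Cartan elements. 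The point here is that $X$ is not divisible by $\epsilon$, so the weight part of the filtration is what makes the exponential series converge.

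Next, apply \Cref{1SCMF2KEY}(1),(2) to the filtered restriction of $\U^{\nu,\nabla}$ to $\FFF^1$, which is legitimate by \Cref{1SC7A3T2F}(1) and \Cref{1SCTLB42Z}. This shows that $\U^{\nu,\nabla}_*(\epsilon\pi_{\psi_0})$ and $\U^{\nu,\nabla}_*(\epsilon\pi_{\psi_1})$ are gauge equivalent in $\FFF^1\widehat\D(\b)\llbracket\epsilon\rrbracket$, through some degree-$0$ element $Z\in\FFF^1$. These two elements are the Maurer--Cartan elements encoding $\mu^{\b,(S,\psi_0,\nabla)}$ and $\mu^{\b,(S,\psi_1,\nabla)}$.

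The remaining step is to apply \Cref{1SCVEVYP9}, which requires two things of $Z$. The first is that $e^{\widehat Z}$ is well defined, and this is exactly \Cref{1SC7A3T2F}(3). The second, which I expect to be the main obstacle, is that the arity-zero component $Z_0$ lies in $\epsilon\,\b\llbracket\epsilon\rrbracket$. By \Cref{1SCSR6LN4}(2), an arity-zero operator landing in $C^\infty(\wedge^{p_0}\nu)$ has weight $-p_0\le0$. If its $\epsilon^a$-coefficient is nonzero, membership in $\FFF^1$ forces $-p_0+2a\ge1$, so $a\ge1$. Thus $Z_0\in\epsilon\,\b\llbracket\epsilon\rrbracket$ automatically.

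One subtlety remains: \Cref{1SCMF2KEY}(2) may produce the equivalence only as a homotopy, or as a composite of gauge steps, rather than as a single gauge element. I would handle this using the coincidence of the homotopy and gauge relations for complete filtered DGLAs, as cited in the proof of that lemma, which yields one gauge element $Z$.

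Finally, \Cref{1SCVEVYP9} gives a curved $A_\infty$-isomorphism $e^{\widehat Z}$ on $\b\llbracket\epsilon\rrbracket$. Transporting it to $\a\llbracket\epsilon\rrbracket$ by \Cref{1SCPN48SN}, with $\Phi=\Phi'=\Phi_S$, gives the claimed isomorphism between the curved $A_\infty$-algebras of $(S,\psi_0,\nabla)$ and $(S,\psi_1,\nabla)$.
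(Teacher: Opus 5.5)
Your proposal is correct and follows essentially the same route as the paper's proof. Both arguments write $\epsilon\pi_{\psi_1}=X\cdot(\epsilon\pi_{\psi_0})$ with $X\in\FFF^1$, push this gauge equivalence through $\U^{\nu,\nabla}$ by \Cref{1SCMF2KEY}(2), use the weight bound to place the arity-zero component of the resulting gauge element in $\epsilon\,\b\llbracket\epsilon\rrbracket$, and then conclude with \Cref{1SC7A3T2F}(3) and \Cref{1SCVEVYP9}; your final transport to $\a\llbracket\epsilon\rrbracket$ via \Cref{1SCPN48SN} is the reduction the paper makes at the start of \S\ref{1SCQPGX26}.
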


\begin{proof}
Since $\g_\nu$ has zero differential, \eqref{1SCGY3AA2}, \Cref{1SCV8CXM4} and \Cref{1SC3P8VEF} give
\[
\epsilon\pi_{\psi_1}=X\cdot(\epsilon\pi_{\psi_0}),\quad X\in\FFF^1 .
\]
By \Cref{1SC7A3T2F}(1), \Cref{1SCTLB42Z} and \Cref{1SCMF2KEY}(2), the Maurer--Cartan elements $\varpi_i:=\U^{\nu,\nabla}_*(\epsilon\pi_{\psi_i})$ are gauge equivalent through some $Y\in\FFF^1$ of degree $0$. The arity-zero component of $Y$ is an element of $\b$, so by \Cref{1SCSR6LN4}(2) its weight is $\le0$, and the condition $\wt+2a\ge1$ in \eqref{1SCG34NBG} forces $a\ge1$: it lies in $\epsilon\,\b\llbracket\epsilon\rrbracket$. By \Cref{1SC7A3T2F}(3), $e^{\widehat Y}$ is well defined, and \Cref{1SCVEVYP9} shows that it is an isomorphism from $\mu^{\b,(S,\psi_0,\nabla)}$ to $\mu^{\b,(S,\psi_1,\nabla)}$.
\end{proof}

\noindent\textbf{Comparing two connections.} Let $\nabla,\nabla'$ be connections on $\nu$. Here $\epsilon\pi_\psi$ is fixed, and only the morphism $\U^{\nu,\nabla}$ changes. We show that $\U^{\nu,\nabla}$ and $\U^{\nu,\nabla'}$ are homotopic, following the proof of \cite[Theorem 4.13]{kraftschnitzer2021}, in which only the shortening step needs to change.

Let $(x^i)$ be local coordinates on $N$, $(u^i)$ the fibre coordinates of $TN$ used in the Fedosov construction, and $(\zeta_\beta)$ the generators of $\b$ given by a local frame of $\nu$. Let $D,D'$ be the Fedosov differentials of $\nabla,\nabla'$, let $\delta_{\mathrm K}:=dx^i\frac{\partial}{\partial u^i}$, and let $\kappa:=\omega'-\omega$ be the difference of the connection matrices.

\begin{lemma}\label{1SCJ5VX3L}
There is a fibrewise vector field $\xi$ of form degree $0$ and positive Fedosov filtration degree, with coefficients vanishing at $u=0$, such that
$D'=e^{\ad_\xi}D$.
Its first term is
\[
\xi_1=u^i\kappa^\gamma_{\ i\beta}(x)\,\zeta_\gamma\frac{\partial}{\partial\zeta_\beta}.
\]
\end{lemma}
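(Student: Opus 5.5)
The plan is to follow the standard Fedosov comparison argument, as in \cite{dolgushev2005covariant} and the proof of \cite[Theorem 4.13]{kraftschnitzer2021}.

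First, write $D=-\delta_{\mathrm K}+d_x+\Gamma+\omega+A$ and $D'=-\delta_{\mathrm K}+d_x+\Gamma+\omega'+A'$. Here $\Gamma$ comes from the fixed torsion-free connection on $N$ and is common to both. The terms $\omega,\omega'$ are the connection $1$-forms acting as fibrewise vector fields $dx^i\omega^\gamma_{\ i\beta}\zeta_\gamma\partial_{\zeta_\beta}$. The corrections $A,A'$ are determined by the Fedosov iteration and have higher filtration degree. Both $D$ and $D'$ square to zero and are derivations by fibrewise vector fields. Hence $\Delta:=D'-D$ is a $1$-form valued fibrewise vector field. Its leading term is $dx^i\kappa^\gamma_{\ i\beta}\zeta_\gamma\partial_{\zeta_\beta}$, of Fedosov degree $1$, where $u^i$ and $dx^i$ count with degree $1$ and $\zeta,\partial_\zeta$ with degree $0$. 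In this grading $-\delta_{\mathrm K}$ lowers the degree by $1$. Note that $\delta_{\mathrm K}\xi_1$ equals this leading term, because $\delta_{\mathrm K}(u^i)=dx^i$ and the $\zeta$-part is untouched. So $\xi_1$ is the natural first guess, and the sign will be fixed by the convention $D=-\delta_{\mathrm K}+\cdots$.

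Next, construct $\xi=\sum_{k\ge1}\xi_k$ inductively in the Fedosov degree, solving at each order an equation of the form
\[
\delta_{\mathrm K}\xi_{k}=\text{(degree-}k\text{ part of }e^{\ad_{\xi_{\le k-1}}}D-D'\text{)}=:R_k .
\]
Use the contracting homotopy $\delta_{\mathrm K}^{-1}$, which multiplies by $u^i$ and contracts with $\partial_{dx^i}$, normalized on monomials. The inductive step requires two facts.
\begin{itemize}
\item[(i)] $R_k$ is $\delta_{\mathrm K}$-closed. This follows from $D'^2=0$ and $(e^{\ad_\xi}D)^2=e^{\ad_\xi}D^2=0$ by the usual leading-order argument: the lowest-degree part of $[D,\cdot]$ is $-\delta_{\mathrm K}$.
\item[(ii)] $R_k$ has form degree $1$, so the cohomology of $\delta_{\mathrm K}$ vanishes there. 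Then $\xi_k:=\delta_{\mathrm K}^{-1}R_k$ solves the equation.
\end{itemize}
Using $\delta_{\mathrm K}^{-1}$ also enforces the normalization $\delta_{\mathrm K}^{-1}\xi=0$. This guarantees that $\xi$ has form degree $0$ and that every coefficient is divisible by some $u^i$, hence vanishes at $u=0$.

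Convergence is automatic, since each $\xi_k$ raises the Fedosov degree and $e^{\ad_\xi}$ converges in the completed fibrewise algebra.

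The step I expect to be the main obstacle is (i): checking that the error term is $\delta_{\mathrm K}$-closed at each order. This needs the usual identity comparing $[D,\cdot]$ with its leading part $-\delta_{\mathrm K}$, applied to $e^{\ad_{\xi_{\le k-1}}}D-D'$. The check must also confirm that the difference of the two fixed points of the Fedosov iteration is correctly accounted for, since $A'$ depends on $\kappa$ through the curvature terms. I would handle this by rewriting $e^{\ad_\xi}D$ itself as a flat Fedosov-type differential with the same $\delta_{\mathrm K}$-leading term. Then the uniqueness part of the Fedosov construction (a flat differential with normalized correction is determined by its connection part) identifies it with $D'$ once the degree-$1$ parts agree.
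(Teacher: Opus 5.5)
Your proposal is correct and is essentially the paper's argument: build $\xi$ order by order in the Fedosov filtration, take $\xi_k:=\delta_{\mathrm K}^{-1}$ of the lowest-order part of the discrepancy, and get $\delta_{\mathrm K}$-closedness from the lowest-degree part of $0=[D_{k-1},E]+E^2$, where $D_{k-1}:=e^{\ad_{\xi_{\le k-1}}}D$ and $E:=D'-D_{k-1}$. The paper takes the BCH product of the $\xi_k$ rather than their sum, which makes no difference.

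There are three small slips to fix:
\begin{itemize}
\item With $u^i$ and $dx^i$ both of degree $1$, $\delta_{\mathrm K}$ has degree $0$, not $-1$, and $\delta_{\mathrm K}^{-1}$ preserves degree.
\item The equation to solve is $\delta_{\mathrm K}\xi_k=(D'-D_{k-1})_k$. This sign is what produces the stated $+\xi_1$.
\item The fallback via uniqueness of normalized Fedosov differentials is unnecessary, and it would need extra work because the conjugated correction need not stay normalized. Your step (i) uses only $D^2=D'^2=0$ and already suffices.
\end{itemize}
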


\begin{proof}
We adapt the recursive argument of \cite[Appendix C]{bursztyndolgushevwaldmann2012}; see also \cite[Remark 4.6 and Proposition 4.7]{kraftschnitzer2021}. We use the Fedosov filtration of \cite[Remark 4.6]{kraftschnitzer2021}: $dx^i$ and $u^i$ have degree $1$, $\frac{\partial}{\partial u^i}$ has degree $-1$, and $\zeta_\beta,\frac{\partial}{\partial\zeta_\beta}$ have degree $0$; on fibrewise multidifferential operators we pass to the $u$-adic completion, as in loc.\ cit. This degree is additive under brackets and compositions. For it, $\delta_{\mathrm K}$ has degree $0$, the connection terms of $D$ and $D'$ have degree $1$, and the Fedosov corrections have degree $\ge2$. Hence $D=-\delta_{\mathrm K}+(\text{degree}\ge1)$, and $D'-D$ has degree $\ge1$, with degree-$1$ component
\[
E_1=dx^i\kappa^\gamma_{\ i\beta}(x)\,\zeta_\gamma\frac{\partial}{\partial\zeta_\beta}.
\]

Suppose we have constructed fibrewise vector fields $\xi_1,\dots,\xi_r$ of form degree $0$, with $\xi_s$ of degree $s$ and coefficients vanishing at $u=0$, such that
$D_r:=e^{\ad_{\xi_r}}\cdots e^{\ad_{\xi_1}}D$
satisfies $E:=D'-D_r$ of degree $\ge r+1$ (for $r=0$, $D_0=D$ and $E=D'-D$). Both $D_r$ and $D'$ square to zero, since $D_r$ is conjugate to $D$. Hence
\[
0=(D_r+E)^2-D_r^2=[D_r,E]+E^2 .
\]
In degree $r+1$ the only contribution is $[-\delta_{\mathrm K},E_{r+1}]$, where $E_{r+1}$ is the component of $E$ of degree $r+1$: all other terms of $[D_r,E]$ have degree $\ge r+2$, and $E^2$ has degree $\ge2r+2\ge r+2$. Thus $\delta_{\mathrm K}E_{r+1}=0$. Moreover, $E_{r+1}$ is the adjoint action of a fibrewise vector field of form degree $1$. Since $\delta_{\mathrm K}\delta_{\mathrm K}^{-1}+\delta_{\mathrm K}^{-1}\delta_{\mathrm K}$ is the identity in positive form degree, $E_{r+1}=\delta_{\mathrm K}(\delta_{\mathrm K}^{-1}E_{r+1})$. Set
$\xi_{r+1}:=\delta_{\mathrm K}^{-1}E_{r+1}$.
It is a fibrewise vector field of form degree $0$ and of degree $r+1$, since $\delta_{\mathrm K}^{-1}$ preserves the degree, and its coefficients have positive $u$-degree, hence vanish at $u=0$. Since $[\xi_{r+1},-\delta_{\mathrm K}]=\delta_{\mathrm K}(\xi_{r+1})=E_{r+1}$, the difference $D'-e^{\ad_{\xi_{r+1}}}D_r$ has degree $\ge r+2$. For $r=0$ this gives $\xi_1=\delta_{\mathrm K}^{-1}E_1=u^i\kappa^\gamma_{\ i\beta}\zeta_\gamma\frac{\partial}{\partial\zeta_\beta}$.

Since the degrees of the $\xi_r$ increase, the Baker--Campbell--Hausdorff product $\xi$ of $\xi_1,\xi_2,\dots$ (in this order) converges, and $e^{\ad_\xi}=\lim_r e^{\ad_{\xi_r}}\cdots e^{\ad_{\xi_1}}$. Brackets of fibrewise vector fields of degree $\ge1$ whose coefficients vanish at $u=0$ again have both properties, so $\xi$ has positive degree and coefficients vanishing at $u=0$, and $D'=e^{\ad_\xi}D$.

Finally, $\ad_\xi$ raises the degree by at least $1$, also on fibrewise multidifferential operators containing $\frac{\partial}{\partial u^i}$: for instance $\bigl[\xi_1,\frac{\partial}{\partial u^j}\bigr]=-\kappa^\gamma_{\ j\beta}\,\zeta_\gamma\frac{\partial}{\partial\zeta_\beta}$ has degree $0$, while $\frac{\partial}{\partial u^j}$ has degree $-1$. Hence $(\ad_\xi)^m$ raises the degree by at least $m$, and $e^{\ad_\xi}$ converges on the completed Fedosov resolutions. In particular $\xi$ lies in the first filtration step, as required in \cite[Proposition 3.10]{kraftschnitzer2021}.
\end{proof}

\begin{remark}\label{1SCNRLG2F}
Here $\xi_1$ is only linear in $u$, and
$\bigl[\xi_1,\frac{\partial}{\partial u^j}\bigr]=-\kappa^\gamma_{\ j\beta}\,\zeta_\gamma\frac{\partial}{\partial\zeta_\beta}$
need not vanish at $u=0$. So the shortening argument of \cite[Corollary 4.10]{kraftschnitzer2021} does not apply verbatim. \Cref{1SCJEC9RK} replaces it.
\end{remark}

Let $\tau,\lambda$ and $\tau',\lambda'$ be the lifting and shortening maps of $D$ and $D'$. For a $D$-flat multivector field or multidifferential operator $w$ of form degree $0$,
\[
\lambda(w)(a_0,\dots,a_k)=\mathsf{ev}_0\bigl(w(\tau(a_0),\dots,\tau(a_k))\bigr),
\]
and similarly for $\lambda'$, where $\mathsf{ev}_0$ is evaluation at $u=0$.

\begin{lemma}\label{1SCJEC9RK}
\begin{itemize}
\item[(1)] $e^\xi\tau(a)=\tau'(a)$ for every $a\in\b$.
\item[(2)] $e^{\ad_\xi}$ sends $D$-flat multivector fields and multidifferential operators of form degree $0$ to $D'$-flat ones, and $\lambda'\circ e^{\ad_\xi}=\lambda$.
\end{itemize}
In particular,
\begin{equation}\label{1SCTE6QPW}
e^{\ad_\xi}\circ\lambda^{-1}=(\lambda')^{-1}.
\end{equation}
\end{lemma}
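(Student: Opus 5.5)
The plan is to use the uniqueness characterization of the Fedosov lift: for $a\in\b$, $\tau(a)$ is the unique $D$-flat section of form degree $0$ whose evaluation at $u=0$ is $a$, constructed recursively through $\delta_{\mathrm K}^{-1}$. The same holds for $\tau'$ with $D'$.

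For (1), apply $e^\xi$ to $\tau(a)$, where $\xi$ is the fibrewise vector field of \Cref{1SCJ5VX3L}. This exponential is a fibrewise algebra automorphism and converges because $\xi$ has positive Fedosov degree. Since $D'=e^{\ad_\xi}D=e^\xi D e^{-\xi}$, we get $D'(e^\xi\tau(a))=e^\xi D\tau(a)=0$, so $e^\xi\tau(a)$ is $D'$-flat. It still has form degree $0$, because $\xi$ has form degree $0$. The step needing care is $\mathsf{ev}_0(e^\xi\tau(a))=a$. Every term $\xi^m\tau(a)$ with $m\ge1$ is $\xi$ applied to something, and the coefficients of $\xi$ vanish at $u=0$. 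Although $\xi$ acts through $\zeta$-derivatives, it has no $\partial/\partial u$ components (it is built from $\delta_{\mathrm K}^{-1}$ applied to operators in $\zeta$, and brackets of such fields are again of this form). Hence applying $\xi$ produces a factor vanishing at $u=0$, and $\mathsf{ev}_0\circ\xi=0$. Uniqueness of the lift then gives $e^\xi\tau(a)=\tau'(a)$.

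For (2), flatness is transported the same way: $[D',e^{\ad_\xi}w]=e^{\ad_\xi}[D,w]=0$, and the form degree is preserved. For the identity $\lambda'\circ e^{\ad_\xi}=\lambda$, I would write $e^{\ad_\xi}w=e^\xi\circ w\circ(e^{-\xi})^{\otimes(k+1)}$, which holds since $e^\xi$ is an algebra automorphism acting on multidifferential operators by conjugation. By (1), $e^{-\xi}\tau'(a_i)=\tau(a_i)$. Therefore
\[
\lambda'(e^{\ad_\xi}w)(a_0,\dots,a_k)=\mathsf{ev}_0\bigl(e^\xi\,w(\tau(a_0),\dots,\tau(a_k))\bigr)=\mathsf{ev}_0\bigl(w(\tau(a_0),\dots,\tau(a_k))\bigr)=\lambda(w)(a_0,\dots,a_k),
\]
using $\mathsf{ev}_0\circ e^\xi=\mathsf{ev}_0$ again. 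For multivector fields, I would argue identically by viewing them as polyderivations, or equivalently push forward by the automorphism $e^\xi$, which fixes $u=0$.

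Finally, $\lambda$ and $\lambda'$ are bijections from flat elements of form degree $0$ onto the global objects, with inverses the Fedosov lifts. Composing $\lambda'\circ e^{\ad_\xi}=\lambda$ on the right with $\lambda^{-1}$ and on the left with $(\lambda')^{-1}$ gives \eqref{1SCTE6QPW}.

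The main obstacle is the claim $\mathsf{ev}_0\circ\xi=0$, precisely because of \Cref{1SCNRLG2F}: $\ad_\xi$ does not preserve vanishing at $u=0$ once $\partial/\partial u$ appears. The argument must therefore use only the action of $e^\xi$ on sections, never $e^{\ad_\xi}$ evaluated directly at $u=0$. This is why the proof rewrites $e^{\ad_\xi}w$ as conjugation by $e^\xi$ and then relies on (1).
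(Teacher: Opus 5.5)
Your proof is correct and is essentially the paper's argument. It uses the same ingredients: transport of flatness by conjugation with $e^{\xi}$, the identity $\mathsf{ev}_0\circ e^{\pm\xi}=\mathsf{ev}_0$, uniqueness of $D'$-flat lifts with prescribed value at $u=0$, and the rewriting $e^{\ad_\xi}w=e^\xi\circ w\circ(e^{-\xi})^{\otimes(k+1)}$ combined with (1). One remark: your claim that $\xi$ has no $\partial/\partial u^i$ components is not needed. The vanishing of all coefficients of $\xi$ at $u=0$ already gives $\mathsf{ev}_0\circ\xi=0$, whatever derivations appear, and that is all the paper uses.
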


\begin{proof}
Since the coefficients of $\xi$ vanish at $u=0$,
\begin{equation}\label{1SCQGUY28}
\mathsf{ev}_0(e^{\pm\xi}b)=\mathsf{ev}_0(b)
\end{equation}
for every fibrewise function $b$.

(1) We have $D'(e^\xi\tau(a))=e^\xi D\tau(a)=0$, and $\mathsf{ev}_0(e^\xi\tau(a))=a$ by \eqref{1SCQGUY28}. A $D'$-flat function is determined by its value at $u=0$.

(2) We have $[D',e^{\ad_\xi}w]=e^{\ad_\xi}[D,w]=0$. Since $e^{\ad_\xi}w=e^\xi\circ w\circ(e^{-\xi})^{\otimes(k+1)}$, (1) and \eqref{1SCQGUY28} give
\[
\lambda'(e^{\ad_\xi}w)(a_0,\dots,a_k)=\mathsf{ev}_0\bigl(e^\xi\,w(\tau(a_0),\dots,\tau(a_k))\bigr)=\lambda(w)(a_0,\dots,a_k). \qedhere
\]
\end{proof}

\begin{lemma}\label{1SC3ZLF6S}
The morphisms $\U^{\nu,\nabla}$ and $\U^{\nu,\nabla'}$ are $L_\infty$-homotopic, through a homotopy compatible with the $\epsilon$-adic filtration.
\end{lemma}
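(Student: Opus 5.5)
We follow the proof of \cite[Theorem 4.13]{kraftschnitzer2021} and change only the shortening step, using \Cref{1SCJEC9RK} in place of \cite[Corollary 4.10]{kraftschnitzer2021}. Recall the structure of $\U^{\nu,\nabla}$ from the globalization of \cite[\S A.9]{Cattaneo2007}. It is the composite of three maps: the inverse lifting $\lambda^{-1}$ from $\g_\nu$ to $D$-flat fibrewise multivector fields; the fibrewise formality morphism, twisted by the Maurer--Cartan element encoding $D$; and the shortening $\lambda$ back to $\widehat\D(\b)$. The same holds for $\nabla'$ with $D'$, $\lambda'$. The fibrewise formality morphism $\U^{\mathrm{fib}}$ does not depend on the connection. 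By \Cref{1SCJ5VX3L}, $D'=e^{\ad_\xi}D$ with $\xi$ of form degree $0$, positive Fedosov degree and coefficients vanishing at $u=0$.

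\textbf{Step 1: a path of Maurer--Cartan elements.} Interpolate by $D_t:=e^{\ad_{t\xi}}D$, $t\in[0,1]$. Each $D_t$ squares to zero, and $\frac{d}{dt}D_t=[\xi,D_t]$. This is the gauge-flow equation in the twisted fibrewise DGLA of multivector fields, with gauge parameter $\xi$ in the first filtration step (\cite[Proposition 3.10]{kraftschnitzer2021}). The pushforward $\U^{\mathrm{fib}}_*$ sends this path to a path of Maurer--Cartan elements on the multidifferential side, gauge equivalent through some $\Xi$, as in \Cref{1SCMF2KEY}(2). The twisted fibrewise formality morphisms at $D$ and $D'$ then differ by conjugation by $e^{\ad_\xi}$ on the source and $e^{\ad_\Xi}$ on the target, up to an $L_\infty$-homotopy. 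This is the content of \cite[Propositions 3.10 and 3.5]{kraftschnitzer2021}: gauge-equivalent twists give homotopic twisted morphisms.

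\textbf{Step 2: identify the lifting and shortening.} By \eqref{1SCTE6QPW}, $e^{\ad_\xi}\circ\lambda^{-1}=(\lambda')^{-1}$ on the multivector side. On the operator side, \Cref{1SCJEC9RK}(2) gives $\lambda'\circ e^{\ad_\xi}=\lambda$. This replaces the shortening argument of loc.\ cit., which fails by \Cref{1SCNRLG2F}.

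Since $e^{\ad_\Xi}$ is not literally $e^{\ad_\xi}$, we would first try to arrange $\Xi=\xi$ acting by conjugation on fibrewise multidifferential operators. Fibrewise formality is equivariant under fibrewise linear vector fields in the $\zeta$-directions: $\xi_1$ is of that form, and the $u$-dependence is a parameter. Where this equivariance fails, the discrepancy is $\ad_\Xi-\ad_\xi$. It has positive Fedosov degree and vanishes at $u=0$ on $D'$-flat elements, so \eqref{1SCQGUY28} still gives $\lambda'\circ e^{\ad_\Xi}=\lambda$ on flat elements. Composing then yields $\U^{\nu,\nabla'}\simeq\lambda'\circ e^{\ad_\Xi}\circ\U^{\mathrm{fib}}_{D}\circ e^{-\ad_\xi}\circ(\lambda')^{-1}=\lambda\circ\U^{\mathrm{fib}}_D\circ\lambda^{-1}=\U^{\nu,\nabla}$.

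\textbf{Step 3: compatibility with the filtration.} The homotopy is built from $\U^{\mathrm{fib}}$ and $\xi$, which are $\R\llbracket\epsilon\rrbracket$-linear and do not involve $\epsilon$. So its components are homogeneous in $\epsilon$, and the homotopy is filtered for the $\epsilon$-adic filtration, as in the proof of \Cref{1SCKTNBKS}(2). Convergence of $e^{\ad_\xi}$ and of the gauge flow on the completed Fedosov resolutions follows from the last paragraph of the proof of \Cref{1SCJ5VX3L}.

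\textbf{Main obstacle.} The hardest step is Step 2. One must check that the gauge $\Xi$ produced on the multidifferential side still commutes with the shortening, i.e.\ that $\mathsf{ev}_0$-invariance as in \eqref{1SCQGUY28} survives pushing $\xi$ through $\U^{\mathrm{fib}}$. The first thing to try is to prove $\U^{\mathrm{fib}}$-equivariance for the $u$-vanishing vector fields $\xi$ directly.
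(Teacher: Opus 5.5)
\textbf{Gap in Step 2.} Your skeleton matches the paper's: gauge-equivalent Fedosov twists, \cite[Proposition 3.10]{kraftschnitzer2021}, the shortening identity \eqref{1SCTE6QPW} in place of \cite[Corollary 4.10]{kraftschnitzer2021}, and $\epsilon$-homogeneity for the filtration. However, the step you flag as the main obstacle is left unresolved, and your proposed workaround does not work.

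If the operator-side gauge generator $\Xi$ differs from $\xi$, it is a general degree-$0$ element of the Hochschild DGLA, with components in several arities. Then $e^{\ad_\Xi}$ is not conjugation by an algebra automorphism. So neither $e^{\ad_\xi}w=e^\xi\circ w\circ(e^{-\xi})^{\otimes(k+1)}$ nor $e^\xi\tau(a)=\tau'(a)$ has an analogue. Vanishing of coefficients at $u=0$ alone does not yield $\lambda'\circ e^{\ad_\Xi}=\lambda$.

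The equivariance you propose does not rescue this either. The $u^i$ are coordinates of the fibre algebra $A_x=\R[[u^1,\dots,u^m]]\otimes\wedge^\bullet\nu_x$ on which the fibrewise morphism acts, not parameters. So $\xi_1=u^i\kappa^\gamma_{\ i\beta}\zeta_\gamma\frac{\partial}{\partial\zeta_\beta}$ is quadratic rather than linear, and the later $\xi_r=\delta_{\mathrm K}^{-1}E_{r+1}$ need not be linear either.

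\textbf{Missing idea.} The fibrewise formality morphism fixes vector fields on the nose. For vector fields $v,v_1,\dots,v_n\in\T(A_x)$, also with coefficients in forms, one has $\U_1(v)=v$ because $\U_1$ is the HKR map, and $\U_n(v_1,\dots,v_n)=0$ for $n\ge2$. The second fact is a graph count: each aerial vertex carrying a vector field has exactly one outgoing edge, so a contributing graph has $n$ edges. The condition $n=2n+m-2$ then forces $(n,m)=(2,0)$, where the only graph is the two-cycle, whose weight vanishes by \cite[Lemma 7.3]{Kontsevich2003}.

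The Fedosov twisting elements $B_t$ along your path and the generator $\xi$ are all fibrewise vector fields. Hence $\U_*$ maps the gauge path to itself, and its generator on the operator side is
$\U^{B_t}_1(\xi)=\sum_k\frac1{k!}\U_{k+1}(B_t,\dots,B_t,\xi)=\xi$.
So $\Xi=\xi$, and \eqref{1SCTE6QPW} and \Cref{1SCJEC9RK}(2) apply literally.

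\textbf{Secondary omission.} You model $\U^{\nu,\nabla}$ as $\lambda\circ\U^{\mathrm{fib}}_D\circ\lambda^{-1}$. But the twisted components contain insertions of the form-degree-one twisting element and leave form degree zero. The globalized morphism therefore also involves an $L_\infty$-quasi-inverse built from the Fedosov homotopy and splitting. The proof must compare the two projections onto zero $D$-cochains, as in \cite[Lemma 4.12]{kraftschnitzer2021}, with \Cref{1SCJEC9RK}(2) replacing \cite[Lemma 4.11]{kraftschnitzer2021}. Your proposal omits this step.
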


\begin{proof}
We follow the proof of \cite[Theorem 4.13]{kraftschnitzer2021}, with \Cref{1SCJ5VX3L} in place of \cite[Proposition 4.7]{kraftschnitzer2021}. The ingredients are the following.
\begin{itemize}
\item[(1)] The conjugacy $D'=e^{\ad_\xi}D$ with $\xi$ of positive Fedosov filtration degree (\Cref{1SCJ5VX3L}), so that the difference of the two Fedosov twisting elements is gauge equivalent to zero, and the general comparison of twisted morphisms \cite[Proposition 3.10]{kraftschnitzer2021}, which applies verbatim.
\item[(2)] Let $A_x:=\R[[u^1,\dots,u^m]]\otimes\wedge^\bullet\nu_x$ be the fibre algebra of the Fedosov resolution, and $\T(A_x)$ its multivector fields \cite[\S A.5]{Cattaneo2007}, generated over $A_x$ by $\frac{\partial}{\partial u^i}$ and $\frac{\partial}{\partial\zeta_\beta}$. For vector fields $v,v_1,\dots,v_n\in\T(A_x)$, that is, elements linear in these generators, $\U_1(v)=v$ and $\U_n(v_1,\dots,v_n)=0$ $(n\ge2)$,
as for Kontsevich's morphism \cite[\S7]{Kontsevich2003}. The first identity holds because $\U_1$ is the HKR map. For the second, in \cite[Eq.(9)]{Cattaneo2007} each vertex of the first type must have exactly one outgoing edge, since $v_i$ is linear in the generators. So a contributing graph has $n$ edges, and $|E_\Gamma|=2n+m-2$ forces $(n,m)=(2,0)$. The only graph is then the two-cycle, whose weight vanishes by \cite[Lemma 7.3]{Kontsevich2003}. The same holds with coefficients in differential forms. Hence $\xi$ and the difference of the two Fedosov twisting elements are mapped to themselves.

\item[(3)] The compatibility of the shortening maps, which in \cite[Corollary 4.10]{kraftschnitzer2021} uses that the gauge generator is at least quadratic in the fibre coordinates, and which is replaced here by \eqref{1SCTE6QPW}.
\item[(4)] The comparison of the two projections onto the zero $D$-cochains, as in \cite[Lemma 4.12]{kraftschnitzer2021}. Its proof uses the analogue of \cite[Lemma 4.11]{kraftschnitzer2021}, namely \Cref{1SCJEC9RK}(2), together with the Fedosov homotopy and the splitting of \cite[Proposition 4.2 and Lemma 4.5]{kraftschnitzer2021}, which hold verbatim for the resolutions of \cite[\S A.9]{Cattaneo2007}.
\end{itemize}
Assembling (1)--(4) as in \cite[Theorem 4.13]{kraftschnitzer2021} gives an $L_\infty$-homotopy between $\U^{\nu,\nabla}$ and $\U^{\nu,\nabla'}$. Its $\R\llbracket\epsilon\rrbracket$-linear extension is compatible with the $\epsilon$-adic filtration.
\end{proof}

\begin{proposition}\label{1SCABX7MH}
The curved $A_\infty$-algebras of $(S,\psi,\nabla)$ and $(S,\psi,\nabla')$ are isomorphic.
\end{proposition}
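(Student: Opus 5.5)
The plan is to transport the homotopy of \Cref{1SC3ZLF6S} to Maurer--Cartan elements, and then to turn the resulting gauge equivalence into a curved $A_\infty$-isomorphism, exactly as in the proof of \Cref{1SCLVACSU}. Fix $(S,\psi)$ and write $\varpi:=\U^{\nu,\nabla}_*(\epsilon\pi_\psi)$ and $\varpi':=\U^{\nu,\nabla'}_*(\epsilon\pi_\psi)$. These are the Maurer--Cartan elements of $\epsilon\,\widehat\D(\b)\llbracket\epsilon\rrbracket$ corresponding under \eqref{1SC32WQT7} to $\mu^{\b,(S,\psi,\nabla)}$ and $\mu^{\b,(S,\psi,\nabla')}$. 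By the reduction explained at the start of \Cref{1SCQPGX26}, via \Cref{1SCPN48SN}, it suffices to produce an isomorphism between these two structures on $\b\llbracket\epsilon\rrbracket$.

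\emph{Step 1.} Equip both sides with the $\epsilon$-adic filtration. The element $\epsilon\pi_\psi$ lies in $\FFF^1$, and both $\U^{\nu,\nabla}$ and $\U^{\nu,\nabla'}$ are filtered, by the proof of \Cref{1SCKTNBKS}(2). \Cref{1SC3ZLF6S} says they are homotopic through a homotopy compatible with this filtration. \Cref{1SCMF2KEY}(3) then shows that $\varpi$ and $\varpi'$ are gauge equivalent, so there is a degree-$0$ element $Y\in\epsilon\,\widehat\D(\b)^0\llbracket\epsilon\rrbracket$ with $\varpi'=Y\cdot\varpi$.

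\emph{Step 2.} Check the hypotheses of \Cref{1SCVEVYP9}. The arity-zero component of $Y$ lies in $\epsilon\,\b\llbracket\epsilon\rrbracket$ automatically, since all of $Y$ is divisible by $\epsilon$. The point needing care is that $e^{\widehat Y}$ is well defined on $\widehat T(V)$. For this I would argue $\epsilon$-adically: $\widehat Y$ is divisible by $\epsilon$, so $\widehat Y^j\equiv0\bmod\epsilon^j$, and the exponential series converges on each $V^{\widehat\otimes n}$ and on the completion $\widehat T(V)$. \Cref{1SCVEVYP9} then gives that $e^{\widehat Y}$ is a curved $A_\infty$-isomorphism from $\mu^{\b,(S,\psi,\nabla)}$ to $\mu^{\b,(S,\psi,\nabla')}$. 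Conjugating by $\Phi_S$ via \Cref{1SCPN48SN} gives the statement on $\a\llbracket\epsilon\rrbracket$.

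The main obstacle I expect is Step 1. \Cref{1SCMF2KEY}(3) is stated for filtered morphisms between complete filtered DGLAs, homotopic in the sense of \cite{kraftschnitzer2021}. One must check that the homotopy of \Cref{1SC3ZLF6S}, built on Fedosov resolutions, really is a homotopy of the $\epsilon$-linearly extended morphisms in the convolution algebra with the filtration of \cite[Eq.(22)]{kraftschnitzer2021}. My first approach is to note that the homotopy is $\R\llbracket\epsilon\rrbracket$-linear and does not involve $\epsilon$. Its convolution-algebra components are then extended by multilinearity, so evaluating at $\epsilon\pi_\psi$ produces $\epsilon^n$ in arity $n$. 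This is precisely the filtration compatibility asserted at the end of \Cref{1SC3ZLF6S}.
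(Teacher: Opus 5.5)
Your proposal is correct and follows the paper's proof essentially verbatim. \Cref{1SC3ZLF6S} together with \Cref{1SCMF2KEY}(3) gives a gauge equivalence through a degree-$0$ element $Y\in\epsilon\,\widehat\D(\b)\llbracket\epsilon\rrbracket$. Its exponential $e^{\widehat Y}$ converges $\epsilon$-adically and has arity-zero component in $\epsilon\,\b\llbracket\epsilon\rrbracket$, so \Cref{1SCVEVYP9} yields the isomorphism, which transports to $\a\llbracket\epsilon\rrbracket$ via \Cref{1SCPN48SN}. The filtration-compatibility issue you flag as the main obstacle is exactly what the final sentence of \Cref{1SC3ZLF6S} asserts, so nothing more is needed at this level.
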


\begin{proof}
By \Cref{1SC3ZLF6S} and \Cref{1SCMF2KEY}(3), $\U^{\nu,\nabla}_*(\epsilon\pi_\psi)$ and $\U^{\nu,\nabla'}_*(\epsilon\pi_\psi)$ are gauge equivalent through some $Y\in\epsilon\,\widehat\D(\b)\llbracket\epsilon\rrbracket$ of degree $0$. Then $e^{\widehat Y}$ converges $\epsilon$-adically, its arity-zero component lies in $\epsilon\,\b\llbracket\epsilon\rrbracket$, and \Cref{1SCVEVYP9} shows that $e^{\widehat Y}$ is an isomorphism from $\mu^{\b,(S,\psi,\nabla)}$ to $\mu^{\b,(S,\psi,\nabla')}$.
\end{proof}

We can now compose the three comparisons.

\begin{theorem}\label{1SCDN9DFE}
For any two quantization data $\mathbf d$ and $\mathbf d'$, the curved $A_\infty$-algebras $\{\mu^{\mathbf d}_k\}$ and $\{\mu^{\mathbf d'}_k\}$ are isomorphic.
\end{theorem}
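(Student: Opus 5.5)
We compose the three comparisons \Cref{1SC82SQTE}, \Cref{1SCLVACSU} and \Cref{1SCABX7MH}, using that isomorphisms of curved $A_\infty$-algebras compose. Composites of continuous coalgebra morphisms intertwining the coderivations again intertwine them. The linear component of a composite is, modulo the $\epsilon$-corrections from the arity-zero components, the composite of the linear components. It is therefore bijective modulo $\epsilon$, hence bijective by $\epsilon$-adic completeness, exactly as in the proof of \Cref{1SCVEVYP9}. By the reduction at the start of \Cref{1SCQPGX26}, via \Cref{1SCPN48SN} and the strict isomorphisms $\Phi_S$, it suffices to compare the structures on the $\b$-side, or directly on $\a\llbracket\epsilon\rrbracket$.

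Let $\mathbf d=(S,\psi,\nabla)$ and $\mathbf d'=(S',\psi'',\nabla'')$. First, by \cite[Theorem 4.3]{CattaneoZambon2009}, after shrinking to tubular neighbourhoods of $N$, there is a Poisson diffeomorphism $\varphi\colon S\to S'$ with $\varphi|_N=\id_N$. Since the structures only depend on the formal neighbourhood of $N$, the shrinking is harmless. Define $\Lambda$, $\psi'$ and $\nabla'$ as in \eqref{1SCD3LY62}. \Cref{1SC82SQTE} then shows that $(S',\psi',\nabla')$ is a quantization datum and gives a strict isomorphism $\mu^{\mathbf d}\lto\mu^{(S',\psi',\nabla')}$.

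Next, inside the fixed transversal $S'$, both $\psi'$ and $\psi''$ are normalized tubular neighbourhoods. \Cref{1SCLVACSU} therefore gives an isomorphism $\mu^{(S',\psi',\nabla')}\lto\mu^{(S',\psi'',\nabla')}$. Finally, with $S'$ and $\psi''$ fixed, \Cref{1SCABX7MH} gives an isomorphism $\mu^{(S',\psi'',\nabla')}\lto\mu^{(S',\psi'',\nabla'')}=\mu^{\mathbf d'}$. Composing these three isomorphisms proves the theorem. The globalization data on $N$ (torsion-free connection, formal exponential) are fixed throughout.

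The only point requiring care is the composition step. The morphisms from \Cref{1SCVEVYP9} have nonzero arity-zero components, so one must check that the composite coalgebra morphism is well defined. The composite $\varrho'\circ\varrho$ is given by the standard tensor-coalgebra formula, in which the $\varrho_0$ insertions appear arbitrarily often. This series converges $\epsilon$-adically because $\varrho_0,\varrho'_0\in\epsilon V$, which is precisely the requirement built into \Cref{1SCAZ35B7}. The arity-zero component of the composite again lies in $\epsilon V$. Its linear component is $\varrho'_1\circ\varrho_1$ plus terms divisible by $\epsilon$, hence bijective. The strict isomorphism of the first step has no arity-zero term, so it causes no issue.
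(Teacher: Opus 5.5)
Your argument is correct and is essentially the paper's proof: both compose Propositions \ref{1SC82SQTE}, \ref{1SCLVACSU} and \ref{1SCABX7MH}. The only difference is the order of the first two steps. The paper first changes the tubular neighbourhood inside $S$ to $\psi_1:=\varphi^{-1}\circ\psi''\circ\Lambda$ (in your notation), so that transport by $\varphi$ lands exactly on $\psi''$, whereas you transport $\psi$ first and then compare tubular neighbourhoods inside $S'$. Your explicit check that curved $A_\infty$-isomorphisms with arity-zero components in $\epsilon V$ compose to an isomorphism is a point the paper leaves implicit.
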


\begin{proof}
Let $\mathbf d=(S,\psi,\nabla)$ and $\mathbf d'=(S',\psi',\nabla')$. As in the proof of \Cref{1SCFBTZ34}, all the data depend only on germs along $N$, and \cite[Theorem 4.3]{CattaneoZambon2009} gives a Poisson diffeomorphism $\varphi\colon S\to S'$ with $\varphi|_N=\id_N$. Let $\Lambda:=\nu(\varphi)$ and
$\psi_1:=\varphi^{-1}\circ\psi'\circ\Lambda$.
Then $\psi_1|_N=\id_N$ and $\nu(\psi_1)=\Lambda^{-1}\circ\id_{\nu'}\circ\Lambda=\id_\nu$, so $\psi_1$ is normalized. Since $\varphi\circ\psi_1\circ\Lambda^{-1}=\psi'$,
\[
(S,\psi,\nabla)\xrightarrow{\ \text{\Cref{1SCLVACSU}}\ }(S,\psi_1,\nabla)
\xrightarrow{\ \text{\Cref{1SC82SQTE}}\ }(S',\psi',\Lambda_*\nabla)
\xrightarrow{\ \text{\Cref{1SCABX7MH}}\ }(S',\psi',\nabla'),
\]
and each arrow is an isomorphism of curved $A_\infty$-algebras.
\end{proof}

\begin{remark}\label{1SCRMKTFC}
The structures $\mu^{\mathbf d}$ also depend on the torsion-free connection on $N$ and the formal exponential map fixed in \Cref{1SC9UZXPX}, that is, on the globalization data of \cite[\S 4.1]{kraftschnitzer2021}. Their isomorphism class does not depend on these data. Indeed, a change of torsion-free connection modifies the connection part of the Fedosov differential by $[\delta_{\mathrm K}s,\cdot\,]$ with $s=-\tfrac12u^iu^j(\Gamma'^k_{ij}-\Gamma^k_{ij})\frac{\partial}{\partial u^k}$, and the normalization of the Fedosov correction has no constant or linear term in $u$; neither involves the generators $\zeta_\beta$ of $\b$. The recursion of \Cref{1SCJ5VX3L} then gives a gauge generator at least quadratic in $u$, as in \cite[Proposition 4.7]{kraftschnitzer2021}. The proof of \Cref{1SC3ZLF6S} applies, with the arguments of \cite[Corollary 4.10 and Lemmas 4.11--4.12]{kraftschnitzer2021} in place of \Cref{1SCJEC9RK}, and gives an $L_\infty$-homotopy between the two formality morphisms. \Cref{1SCMF2KEY}(3) and \Cref{1SCVEVYP9} conclude as in \Cref{1SCABX7MH}. The same applies to the data fixed on $\Delta$ in \Cref{1SCQ7ZKH4}, which are transported from $N$ by $J$.
\end{remark}

\Cref{1SCIQFIG1} summarizes the argument.
\begin{figure}[H]
\centering
{\hypersetup{linkcolor=figref}
\begin{tikzpicture}[
  font=\small,
  ent/.style={align=flush center, inner sep=3pt},
  lab/.style={font=\scriptsize, align=flush center, inner sep=2pt},
  vlab/.style={font=\scriptsize, align=flush left, inner sep=3pt},
  ref/.style={
    font=\scriptsize,
    text=figref,
    align=flush center,
    inner sep=2pt
  },
  ar/.style={-{To[length=1.4mm]}, line width=.55pt, draw=figarrow},
  sq/.style={
    -{To[length=1.4mm]},
    line width=.55pt,
    draw=figarrow,
    decorate,
    decoration={
      zigzag,
      amplitude=.8pt,
      segment length=3.2pt,
      pre=lineto,
      pre length=1.5pt,
      post=lineto,
      post length=2.5pt
    }
  }
]
\def\A{0}\def\B{4.4}\def\C{8.8}\def\D{13.2}

\node[ent] (d0) at (\A,0) {$(S,\psi,\nabla)$};
\node[ent] (d1) at (\B,0) {$(S,\psi_1,\nabla)$};
\node[ent] (d2) at (\C,0) {$(S',\psi',\Lambda_*\nabla)$};
\node[ent] (d3) at (\D,0) {$(S',\psi',\nabla')$};

\draw[ar] (d0) --
  node[lab,above]{change $\psi$}
  coordinate[midway] (m1) (d1);
\draw[ar] (d1) --
  node[lab,above]{change $S$}
  coordinate[midway] (m2) (d2);
\draw[ar] (d2) --
  node[lab,above]{change $\nabla$}
  coordinate[midway] (m3) (d3);

\node[ent] (t1) at ([yshift=-1.5cm]m1)
  {$\delta=\psi_1^{-1}\psi$, $\delta_*=e^{\ad_X}$\\
   $\wt(X)\ge1$, $\wt(\pi_\psi)\ge-1$\\
   filtration $\mathrm{wt}+2a$\\[1pt]
   {\scriptsize\color{figref}
    Lem. \ref{1SC3P8VEF}, Lem. \ref{1SC545338},
    \eqref{1SCG34NBG}}};

\node[ent] (t2) at ([yshift=-1.5cm]m2)
  {$\varphi\colon S\to S'$ Poisson\\
   $\varphi|_N=\id_N$, $\pi_{\psi'}=\Lambda_*\pi_\psi$\\[1pt]
   {\scriptsize\color{figref}\eqref{1SCMS9RZV}}};

\node[ent] (t3) at ([yshift=-1.5cm]m3)
  {$D'=e^{\ad_\xi}D$\\
   $e^{\ad_\xi}\circ\lambda^{-1}=(\lambda')^{-1}$\\[1pt]
   {\scriptsize\color{figref}
    Lem. \ref{1SCJ5VX3L}, Lem. \ref{1SCJEC9RK}}};

\draw[figref!60, dashed, line width=.4pt] (m1) -- (t1.north);
\draw[figref!60, dashed, line width=.4pt] (m2) -- (t2.north);
\draw[figref!60, dashed, line width=.4pt] (m3) -- (t3.north);

\node[ent] (r1) at ([yshift=-3.6cm]m1)
  {$\mathcal U^{\nu,\nabla}_*(\epsilon\pi_{\psi})
    \sim
    \mathcal U^{\nu,\nabla}_*(\epsilon\pi_{\psi_1})$\\
   $e^{\widehat Y}$ isomorphism};

\node[ent] (r2) at ([yshift=-3.6cm]m2)
  {$\theta=\wedge^\bullet\Lambda$\\
   strict isomorphism};

\node[ent] (r3) at ([yshift=-3.6cm]m3)
  {$\mathcal U^{\nu,\nabla}_*(\epsilon\pi_\psi)
    \sim
    \mathcal U^{\nu,\nabla'}_*(\epsilon\pi_\psi)$\\
   $e^{\widehat Y}$ isomorphism};

\draw[ar] (t1) --
  node[vlab,right]
    {gauge\\
     {\color{figref}Lem. \ref{1SCMF2KEY}(2)}}
  (r1);

\draw[ar] (t2) --
  node[vlab,right]
    {naturality of $\mathcal U$\\
     {\color{figref}Lem.~\ref{1SCVVM8AU}}}
  (r2);

\draw[ar] (t3) --
  node[vlab,right]
    {$\mathcal U^{\nu,\nabla}\simeq\mathcal U^{\nu,\nabla'}$\\
     {\color{figref}
      Lem. \ref{1SC3ZLF6S}, Lem. \ref{1SCMF2KEY}(3)}}
  (r3);

\node[ent] (f) at ([yshift=-5.0cm]m2)
  {$\mu^{\mathbf d}\cong\mu^{\mathbf d'}$
   for all quantization data $\mathbf d,\mathbf d'$\\[1pt]
   {\scriptsize\color{figref}Thm. \ref{1SCDN9DFE}}};

\draw[sq] (r1.south) |-
  node[ref,pos=.25,left,xshift=-1pt]
    {Prop. \ref{1SCLVACSU}}
  (f.west);

\draw[ar] (r2) --
  node[ref,right]{Prop. \ref{1SC82SQTE}}
  (f);

\draw[sq] (r3.south) |-
  node[ref,pos=.25,right,xshift=1pt]
    {Prop. \ref{1SCABX7MH}}
  (f.east);

\end{tikzpicture}}
\caption{Independence of the quantization datum: each entry of
$\mathbf d=(S,\psi,\nabla)$ is changed in turn, and the three
isomorphisms are composed.}
\label{1SCIQFIG1}
\end{figure}

\subsection{Quantization of infinitesimal $1$-shifted coisotropics}\label{1SCQ7ZKH4}
We now apply the preceding construction to the $P_\infty$-algebra of \Cref{1SCW34U8K}: we quantize $\Delta\subset(U,\widehat\pi_0^{\mathbf e})$ and transport along $\Phi^{\mathbf e}$.

Let $(\ccc,L_N,L_M)$ be a regular datum. On $\Delta$, we fix the torsion-free connection and the formal exponential map transported from $N$ by $J$. They do not depend on $\mathbf e$. Let $\mathbf d=(\Sigma,\varsigma,\nabla)$ be a quantization datum for $\Delta\subset(U,\widehat\pi_0^{\mathbf e})$, and let $\mu^{\widehat\a^{\mathbf e},\mathbf d}$ be the curved $A_\infty$-structure of \Cref{1SCL9J8YC} on $\widehat\a^{\mathbf e}\llbracket\epsilon\rrbracket$. We extend $\Phi^{\mathbf e}\colon\a\to\widehat\a^{\mathbf e}$ $\R\llbracket\epsilon\rrbracket$-linearly to $\a\llbracket\epsilon\rrbracket\to\widehat\a^{\mathbf e}\llbracket\epsilon\rrbracket$, and set
\[
\mu_k^{(\mathbf e,\mathbf d)}:=(\Phi^{\mathbf e})^{-1}\circ\mu_k^{\widehat\a^{\mathbf e},\mathbf d}\circ(\Phi^{\mathbf e})^{\otimes k}\ \ (k\ge1),
\quad
\mu_0^{(\mathbf e,\mathbf d)}:=(\Phi^{\mathbf e})^{-1}\bigl(\mu_0^{\widehat\a^{\mathbf e},\mathbf d}\bigr).
\]

\begin{theorem}\label{1SCQTHM01}
Let $(\ccc,L_N,L_M)$ be a regular datum, $\mathbf e$ a choice, and $\mathbf d=(\Sigma,\varsigma,\nabla)$ a quantization datum for $\Delta\subset(U,\widehat\pi_0^{\mathbf e})$. Then $\{\mu_k^{(\mathbf e,\mathbf d)}\}_{k\ge0}$ is a curved $A_\infty$-structure on $\a\llbracket\epsilon\rrbracket$ with multidifferential structure maps, and:
\begin{itemize}
\item[(1)] modulo $\epsilon$, $\mu_2^{(\mathbf e,\mathbf d)}$ is the product of $\a$ and $\mu_k^{(\mathbf e,\mathbf d)}=0$ for $k\ne2$;
\item[(2)] $\mu_0^{(\mathbf e,\mathbf d)}=O(\epsilon^2)$;
\item[(3)] $\frac1\epsilon\bigl(\mu_k^{(\mathbf e,\mathbf d)}\circ\Alt_k\bigr)\bmod\epsilon=(-1)^{(k-1)(k-2)/2}\ell_k^{(\mathbf e,\tau)}$ for every $k\ge0$, where $\tau=(\Sigma,\varsigma)$. In particular $\frac1\epsilon\mu_1^{(\mathbf e,\mathbf d)}\bmod\epsilon=d_{B_\ccc}$;
\item[(4)] the isomorphism class of $\{\mu_k^{(\mathbf e,\mathbf d)}\}$ does not depend on $(\mathbf e,\mathbf d)$.
\end{itemize}
\end{theorem}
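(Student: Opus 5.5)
The plan is to reduce everything to the pre-Poisson results of \Cref{1SCL9J8YC} and \Cref{1SCDN9DFE}, applied to $\Delta\subset(U,\widehat\pi_0^{\mathbf e})$, and transport along $\Phi^{\mathbf e}$. By \Cref{1SCW34U8K}(1), $\Delta$ is pre-Poisson in $(U,\widehat\pi_0^{\mathbf e})$ with stabilizer Lie algebroid $\widehat B^{\mathbf e}$. Hence \Cref{1SCL9J8YC} applies and gives a curved $A_\infty$-structure $\mu^{\widehat\a^{\mathbf e},\mathbf d}$ on $\widehat\a^{\mathbf e}\llbracket\epsilon\rrbracket$ with properties (1)--(3) relative to $\ell_k^{\widehat\a^{\mathbf e},\Sigma,\varsigma}$. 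The map $\Phi^{\mathbf e}$ is an algebra isomorphism of degree $0$, induced by the bundle isomorphism $\varphi^{\mathbf e}$ covering the diffeomorphism $J\colon N\to\Delta$. So \Cref{1SC2EHE94}(1)--(4) show that $\mu^{(\mathbf e,\mathbf d)}$ is a curved $A_\infty$-structure with multidifferential maps, reducing to the product of $\a$ modulo $\epsilon$. This gives (1). Since $\Phi^{\mathbf e}$ is $\R\llbracket\epsilon\rrbracket$-linear, (2) follows from \Cref{1SCL9J8YC}(2).

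For (3), I will use, as in \eqref{1SCNXARAB}, that $\Phi^{\mathbf e}$ preserves degrees and hence commutes with $\Alt_k$. Then
\[
\mu_k^{(\mathbf e,\mathbf d)}\circ\Alt_k=(\Phi^{\mathbf e})^{-1}\circ\bigl(\mu_k^{\widehat\a^{\mathbf e},\mathbf d}\circ\Alt_k\bigr)\circ(\Phi^{\mathbf e})^{\otimes k}.
\]
Dividing by $\epsilon$ and reducing, \Cref{1SCL9J8YC}(3) identifies the right side with $(-1)^{(k-1)(k-2)/2}(\Phi^{\mathbf e})^{-1}\ell_k^{\widehat\a^{\mathbf e},\Sigma,\varsigma}(\Phi^{\mathbf e})^{\otimes k}$. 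By definition in \Cref{1SCW34U8K}(2), this is $(-1)^{(k-1)(k-2)/2}\ell_k^{(\mathbf e,\tau)}$. The case $k=1$ gives $d_{B_\ccc}$ by \Cref{1SCW34U8K}(2).

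For (4), there are two sources of choice. First fix $\mathbf e$ and vary $\mathbf d$. \Cref{1SCDN9DFE} gives a curved $A_\infty$-isomorphism between $\mu^{\widehat\a^{\mathbf e},\mathbf d}$ and $\mu^{\widehat\a^{\mathbf e},\mathbf d'}$, and \Cref{1SCPN48SN} with $\Phi=\Phi'=\Phi^{\mathbf e}$ transports it to $\a\llbracket\epsilon\rrbracket$. The globalization data on $\Delta$ are transported from $N$ and do not depend on $\mathbf e$, and \Cref{1SCRMKTFC} covers any change in them.

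Next vary $\mathbf e$. \Cref{1SC454C3Y} gives a Poisson diffeomorphism $\Theta$ with $\Theta|_\Delta=\id$ and $\Theta_*\widehat\pi_0^{\mathbf e}=\widehat\pi_0^{\mathbf e'}$. Transport $\mathbf d=(\Sigma,\varsigma,\nabla)$ to $\mathbf d^\Theta:=(\Theta(\Sigma),\Theta\circ\varsigma\circ\Lambda^{-1},\Lambda_*\nabla)$ as in \Cref{1SCEXNP25}, where $\Lambda=\nu(\Theta|_\Sigma)$. The claim to establish is that $\widehat\Theta=\wedge^\bullet(\Theta_B^*)^{-1}$ is a strict isomorphism from $\mu^{\widehat\a^{\mathbf e},\mathbf d}$ to $\mu^{\widehat\a^{\mathbf e'},\mathbf d^\Theta}$. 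This is the main point to check. On the $\b$-level, \Cref{1SC82SQTE} requires $\Theta|_\Sigma$ to restrict to the identity on $\Delta$, which holds, and the naturality \Cref{1SCVVM8AU} gives $\theta_*\U_*(\epsilon\pi_{\varsigma})=\U_*(\epsilon\pi_{\varsigma'})$. The remaining step is the compatibility $\theta\circ\Phi_\Sigma=\Phi_{\Theta(\Sigma)}\circ\widehat\Theta$, which is exactly as in the proof of \Cref{1SCEXNP25}.

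Setting $\Psi:=(\Phi^{\mathbf e'})^{-1}\widehat\Theta\,\Phi^{\mathbf e}$ then gives a strict isomorphism from $\mu^{(\mathbf e,\mathbf d)}$ to $\mu^{(\mathbf e',\mathbf d^\Theta)}$, by the same computation as in \Cref{1SCW34U8K}(3). Composing with the $\mathbf d$-independence for $\mathbf e'$, through \Cref{1SCPN48SN}, finishes the proof of (4).
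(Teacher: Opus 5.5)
Your proposal is correct and follows the paper's proof essentially step by step. Parts (1)--(3) come from transporting \Cref{1SCL9J8YC} along $\Phi^{\mathbf e}$ via \Cref{1SC2EHE94} and \eqref{1SCNXARAB}. Part (4) uses the diffeomorphism $\Theta$ of \Cref{1SC454C3Y} to move $\mathbf d$ to $\mathbf d^\Theta$, which gives a strict isomorphism through the argument of \Cref{1SC82SQTE} and the identity $(\wedge^\bullet\Lambda)\circ\Phi_\Sigma=\Phi_{\Sigma'}\circ\widehat\Theta$, and then composes with \Cref{1SCDN9DFE} on the $\mathbf e'$ side. The only detail the paper makes explicit that you leave implicit is shrinking $\Sigma$ into the domain of $\Theta$, which is harmless because all the data depend only on germs along $\Delta$.
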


\begin{proof}
By \Cref{1SCW34U8K}(1), \Cref{1SCL9J8YC} applies to $\Delta\subset(U,\widehat\pi_0^{\mathbf e})$. Identify $N$ with $\Delta$ through $J$. Then $\Phi^{\mathbf e}$ is an algebra isomorphism of degree $0$, induced by the vector bundle isomorphism $\varphi^{\mathbf e}\colon B_\ccc\to\widehat B^{\mathbf e}$. So \Cref{1SC2EHE94} gives the first assertion and (1). Part (2) follows from \Cref{1SCL9J8YC}(2).

(3) Since $\Phi^{\mathbf e}$ preserves degrees, \eqref{1SCNXARAB} holds for $\Phi^{\mathbf e}$. Conjugate \Cref{1SCL9J8YC}(3) by $\Phi^{\mathbf e}$ and use \Cref{1SCW34U8K}(2).

(4) Let $(\mathbf e',\mathbf d')$ be another choice. Write $\widehat\pi_0:=\widehat\pi_0^{\mathbf e}$, $\widehat\a:=\widehat\a^{\mathbf e}$, $\Phi:=\Phi^{\mathbf e}$, and denote by primes the objects of $\mathbf e'$. By \Cref{1SC454C3Y} there is a Poisson diffeomorphism $\Theta\colon(V,\widehat\pi_0)\to(V',\widehat\pi_0')$, where $V\subset U$ and $V'\subset U'$ are open neighbourhoods of $\Delta$, with $\Theta|_\Delta=\id_\Delta$. The data depend only on germs along $\Delta$, so we may assume $\Sigma\subset V$. Set
\[
\Sigma':=\Theta(\Sigma),\quad \Lambda:=\nu(\Theta|_\Sigma)\colon\nu_\Sigma\to\nu_{\Sigma'},\quad
\mathbf d^\Theta:=\bigl(\Sigma',\,\Theta\circ\varsigma\circ\Lambda^{-1},\,\Lambda_*\nabla\bigr).
\]
Here $\Lambda$ is a vector bundle isomorphism over $\id_\Delta$. Then $\Sigma'$ is a Poisson transversal of $(V',\widehat\pi_0')$ containing $\Delta$ coisotropically, and $\Theta|_\Sigma\colon\Sigma\to\Sigma'$ is a Poisson diffeomorphism which is the identity on $\Delta$. The proof of \Cref{1SC82SQTE} uses only these two facts. Hence $\mathbf d^\Theta$ is a quantization datum for $\Delta\subset(U',\widehat\pi_0')$, and $\wedge^\bullet\Lambda\colon\b_\Sigma\to\b_{\Sigma'}$ is a strict isomorphism from $\mu^{\b_\Sigma,\mathbf d}$ to $\mu^{\b_{\Sigma'},\mathbf d^\Theta}$. The data fixed on $\Delta$ are unchanged, since $\Theta|_\Delta=\id_\Delta$.

Let $\Phi_\Sigma\colon\widehat\a\to\b_\Sigma$ and $\Phi_{\Sigma'}\colon\widehat\a'\to\b_{\Sigma'}$ be the maps \eqref{1SCVX2YHQ} for $\Delta$, and $\widehat\Theta:=\wedge^\bullet(\Theta_B^*)^{-1}\colon\widehat\a\to\widehat\a'$, with $\Theta_B\colon\widehat B\to\widehat B'$ as in the proof of \Cref{1SCW34U8K}(3). By the proof of \Cref{1SCEXNP25}, $(\wedge^\bullet\Lambda)\circ\Phi_\Sigma=\Phi_{\Sigma'}\circ\widehat\Theta$. By \Cref{1SCPN48SN}, $\widehat\Theta$ is a strict isomorphism from $\mu^{\widehat\a,\mathbf d}$ to $\mu^{\widehat\a',\mathbf d^\Theta}$, and
\[
\Psi:=(\Phi')^{-1}\circ\widehat\Theta\circ\Phi\colon\a\to\a
\]
is a strict isomorphism from $\{\mu_k^{(\mathbf e,\mathbf d)}\}$ to $\{\mu_k^{(\mathbf e',\mathbf d^\Theta)}\}$.

By \Cref{1SCDN9DFE} for $\Delta\subset(U',\widehat\pi_0')$ and \Cref{1SCPN48SN} with $\Phi'$ on both sides, there is an isomorphism $\varrho$ from $\{\mu_k^{(\mathbf e',\mathbf d^\Theta)}\}$ to $\{\mu_k^{(\mathbf e',\mathbf d')}\}$. Since $\Psi$ is strict, the composite $\varrho\circ\widehat T(\Psi[1])$ has components $\varrho_0$ and $\varrho_k\circ\Psi^{\otimes k}$, and $\varrho_1\circ\Psi$ is bijective. So it is an isomorphism from $\{\mu_k^{(\mathbf e,\mathbf d)}\}$ to $\{\mu_k^{(\mathbf e',\mathbf d')}\}$.
\end{proof}

Write $H^\bullet(B_\ccc)$ for the Lie algebroid cohomology of $B_\ccc$. By \eqref{1SCGM8GSM}, $\Phi^{\mathbf e}$ induces an isomorphism $H^\bullet(B_\ccc)\to H^\bullet(\widehat B^{\mathbf e})$.

\begin{corollary}\label{1SCQSTAR2}
Let $(\ccc,L_N,L_M)$ be a regular datum and $(\mathbf e,\mathbf d)$ as in \Cref{1SCQTHM01}. For $a\in\epsilon\,\a^1\llbracket\epsilon\rrbracket$, let $\mu^{(\mathbf e,\mathbf d),a}$ be the change of base point of $\{\mu_k^{(\mathbf e,\mathbf d)}\}$ by $a$, in the sense of \eqref{1SCT49PYH}.
\begin{itemize}
\item[(1)] If $H^2(B_\ccc)=0$, there is $a\in\epsilon\,\a^1\llbracket\epsilon\rrbracket$ such that $\mu^{(\mathbf e,\mathbf d),a}$ is flat.
\item[(2)]  If moreover $H^1(B_\ccc)=0$, then $H_a:=H^0(\a\llbracket\epsilon\rrbracket,\mu_1^{(\mathbf e,\mathbf d),a})$ is a topologically free $\R\llbracket\epsilon\rrbracket$-module, isomorphic to $H^0(B_\ccc)\llbracket\epsilon\rrbracket$ with the $\epsilon$-adic topology, and reduction modulo $\epsilon$ induces an isomorphism $H_a/\epsilon H_a\to H^0(B_\ccc)$. Through any $\R\llbracket\epsilon\rrbracket$-linear isomorphism $H^0(B_\ccc)\llbracket\epsilon\rrbracket\to H_a$ reducing to the identity modulo $\epsilon$, the product $\mu^{(\mathbf e,\mathbf d),a}_2$ is a star product on $H^0(B_\ccc)$ deforming the bracket induced by $\ell_2^{(\mathbf e,\tau)}$.
\item[(3)] If moreover $(\ccc,L_N,L_M)$ comes from a $1$-shifted coisotropic structure $\ccc\colon\C\to\G$ with $\C$ source-connected, this is a star product on $C^\infty(N)^\C$ deforming \eqref{1SC2RZH94}.
\end{itemize}
\end{corollary}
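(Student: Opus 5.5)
The plan is to reduce everything to \Cref{1SCJQ3RB7}, applied to the pre-Poisson submanifold $\Delta\subset(U,\widehat\pi_0^{\mathbf e})$, and then transport the result along $\Phi^{\mathbf e}$. This is the same way \Cref{1SCQTHM01} was obtained from \Cref{1SCL9J8YC}. By \Cref{1SCW34U8K}(1), $\Delta$ is pre-Poisson with stabilizer Lie algebroid $\widehat B^{\mathbf e}$. By \eqref{1SCGM8GSM}, $\Phi^{\mathbf e}$ is an isomorphism of complexes $(\a,d_{B_\ccc})\to(\widehat\a^{\mathbf e},d_{\widehat B^{\mathbf e}})$. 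Hence $H^k(B_\ccc)\cong H^k(\widehat B^{\mathbf e})$ for all $k$, and the vanishing hypotheses of (1) and (2) become exactly those of \Cref{1SCJQ3RB7} for $\Delta$.

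For (1), \Cref{1SCJQ3RB7}(1) gives $\widehat a\in\epsilon\,(\widehat\a^{\mathbf e})^1\llbracket\epsilon\rrbracket$ such that the change of base point of $\mu^{\widehat\a^{\mathbf e},\mathbf d}$ by $\widehat a$ is flat. Set $a:=(\Phi^{\mathbf e})^{-1}(\widehat a)$. The change of base point by $\widehat a$ is a morphism with components $(\widehat a,\id)$. By \Cref{1SCPN48SN}, its transport is the change of base point by $a$ of $\mu^{(\mathbf e,\mathbf d)}$, and this is precisely the transport of the flat structure. It is therefore flat by \Cref{1SC2EHE94}(2). The argument is verbatim the second bullet in the proof of \Cref{1SCJQ3RB7}.

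For (2), $\Phi^{\mathbf e}$ is $\R\llbracket\epsilon\rrbracket$-linear, bijective and intertwines $\mu_1^{(\mathbf e,\mathbf d),a}$ with the transported differential. It therefore identifies $H_a$ with the corresponding $H^0$ for $\Delta$. The latter is topologically free and reduces modulo $\epsilon$ to $H^0(\widehat B^{\mathbf e})$ by \Cref{1SCJQ3RB7}(2), and $\Phi^{\mathbf e}$ carries this over to $H^0(B_\ccc)$. For the product, I would argue as at the end of the proof of \Cref{1SCJQ3RB7}. The terms of $\mu_2^{(\mathbf e,\mathbf d),a}$ involving $a$ contain some $\mu_k$ with $k\ge3$ and hence are $O(\epsilon^2)$. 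So modulo $\epsilon^2$ the product agrees with $\mu_2^{(\mathbf e,\mathbf d)}$. By \Cref{1SCQTHM01}(1),(3) with $k=2$, its zeroth order is the pointwise product and its commutator is $\epsilon\,\ell_2^{(\mathbf e,\tau)}+O(\epsilon^2)$.

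The point needing care is independence of the chosen isomorphism $H^0(B_\ccc)\llbracket\epsilon\rrbracket\to H_a$. Two such isomorphisms differ by an $\R\llbracket\epsilon\rrbracket$-linear automorphism $1+O(\epsilon)$. Conjugating by it changes the star product by an equivalence that does not alter the first-order commutator. So the semiclassical bracket is still the one induced by $\ell_2$. Associativity holds because $\mu_2$ descends to an associative product on cohomology in the flat case.

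For (3), \Cref{1SC64QN3W} gives $H^0(B_\ccc)=H^0(\a,\ell_1)=C^\infty(N)^\C$, and shows that the bracket induced by $\ell_2^{(\mathbf e,\tau)}$ there equals \eqref{1SC2RZH94}. The main obstacle is only bookkeeping in the transport of (2), namely checking that the identification of $H_a$ is compatible with reduction modulo $\epsilon$. This follows since $\Phi^{\mathbf e}$ is the $\epsilon$-linear extension of an isomorphism defined at $\epsilon=0$.
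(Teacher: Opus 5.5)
Your proposal is correct and follows the paper's proof: you apply \Cref{1SCJQ3RB7} to $\Delta\subset(U,\widehat\pi_0^{\mathbf e})$ using $H^\bullet(B_\ccc)\cong H^\bullet(\widehat B^{\mathbf e})$, transport the change of base point along $\Phi^{\mathbf e}$ exactly as in the proof of that theorem, note that the $a$-dependent terms of $\mu_2$ are $O(\epsilon^2)$ so the semiclassical limit comes from \Cref{1SCQTHM01}(3), and use \Cref{1SC64QN3W} for (3). Your extra check, that changing the identification $H^0(B_\ccc)\llbracket\epsilon\rrbracket\cong H_a$ by an automorphism $1+O(\epsilon)$ does not affect the first-order commutator, is a valid detail that the paper leaves implicit.
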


\begin{proof}
(1) and (2). Apply \Cref{1SCJQ3RB7} to $\Delta\subset(U,\widehat\pi_0^{\mathbf e})$ and $\mathbf d$. Its hypotheses hold since $H^\bullet(\widehat B^{\mathbf e})\cong H^\bullet(B_\ccc)$. Let $\widehat a\in\epsilon\,(\widehat\a^{\mathbf e})^1\llbracket\epsilon\rrbracket$ be the resulting element, and $a:=(\Phi^{\mathbf e})^{-1}(\widehat a)\in\epsilon\,\a^1\llbracket\epsilon\rrbracket$. As in the proof of \Cref{1SCJQ3RB7}, $\mu^{(\mathbf e,\mathbf d),a}$ is the transport along $\Phi^{\mathbf e}$ of the change of base point of $\mu^{\widehat\a^{\mathbf e},\mathbf d}$ by $\widehat a$. So it is flat, and $\Phi^{\mathbf e}$ induces an isomorphism of $\R\llbracket\epsilon\rrbracket$-algebras from $H_a$ onto the degree-zero cohomology of \Cref{1SCJQ3RB7}(2) for $\Delta$. The terms of $\mu^{(\mathbf e,\mathbf d),a}_2$ containing $a$ involve $\mu^{(\mathbf e,\mathbf d)}_k$ with $k\ge3$, so they are $O(\epsilon^2)$. The semiclassical limit then follows from \Cref{1SCQTHM01}(3).

(3) This is \Cref{1SC64QN3W}.
\end{proof}

Together with Theorem \ref{1SCQTHM01}, this proves Theorem \ref{d4fmvhyq} from the introduction.

\begin{remark}[Quantization of generalized reductions]
Consider a $1$-shifted coisotropic $\C\to\{*\}$ as in Example \ref{39d9asdt}(2), with $\C\tto N$ source-connected. Here $B_\ccc=\ker L_N\cong A_\C$, so the datum is automatically regular. Theorems \ref{1SCW34U8K} and \ref{1SCQTHM01} therefore give a flat $P_\infty$-structure on $\Gamma(\wedge^\bullet A_\C^*)$ and a curved $A_\infty$-quantization, with the independence of auxiliary choices stated there. By \Cref{1SC64QN3W}, its classical bracket in degree zero is the reduced Poisson bracket on $C^\infty(N)^\C$. If $H^1(A_\C)=H^2(A_\C)=0$, Corollary \ref{1SCQSTAR2} gives a star product on this algebra. In particular, under the hypotheses of Example \ref{39d9asdt}(2), this applies to symplectic and quasi-Hamiltonian reduction, and their generalizations to symplectic reduction along a submanifold \cite{CrooksMayrand2022} and reduction of strong Dirac maps \cite{bualibanu2026reduction}.
\end{remark}

\bibliographystyle{amsplain}
\bibliography{poisson}

\end{document}